\titlespacing{\paragraph}{0em}{0em}{0.5em}
\titlespacing{\subparagraph}{0em}{0em}{0.5em}
\theoremstyle{plain}
\newtheorem{theorem}{Theorem}[section]
\newtheorem{proposition}[theorem]{Proposition}
\newtheorem{lemma}[theorem]{Lemma}
\newtheorem{corollary}[theorem]{Corollary}
\newtheorem*{theorem*}{Theorem}
\newtheorem*{main}{Main Theorem}
\newtheorem*{thmgg}{Theorem GG}
\theoremstyle{definition}
\newtheorem{definition}[theorem]{Definition}
\newtheorem{notation}[theorem]{Notation}
\newtheorem{hypothesis}[theorem]{Hypothesis}
\theoremstyle{remark}
\newtheorem*{remark}{Remark}
\renewcommand{\Gamma}{\varGamma}
\renewcommand{\epsilon}{\varepsilon}
\renewcommand{\bar}{\overline}
\renewcommand{\hat}{\widehat}
\renewcommand{\leq}{\leqslant}
\renewcommand{\geq}{\geqslant}
\newcommand{\normaleq}{\trianglelefteq}
\newcommand{\divides}{\bigm|}
\newcommand{\fs}{\mathcal{F}}
\newcommand{\N}{\mathbb{N}}
\newcommand{\C}{\mathbb{C}}
\newcommand{\SL}{\mathrm{SL}} 
\newcommand{\GF}{\mathrm{GF}} 
\newcommand{\syl}{\mathrm{Syl}}
\newcommand{\GL}{\mathrm{GL}}
\newcommand{\PSL}{\mathrm{PSL}}
\newcommand{\PSp}{\mathrm{PSp}}
\newcommand{\PSU}{\mathrm{PSU}}
\newcommand{\Sz}{\mathrm{Sz}}
\newcommand{\Ree}{\mathrm{Ree}}
\newcommand{\Sym}{\mathrm{Sym}} 
\newcommand{\Alt}{\mathrm{Alt}} 
\newcommand{\Dih}{\mathrm{Dih}}
\newcommand{\Aut}{\mathrm{Aut}}
\newcommand{\Out}{\mathrm{Out}}
\newcommand{\Inn}{\mathrm{Inn}}
\newcommand{\Comp}{\mathrm{Comp}}
\newcommand{\Hom}{\mathrm{Hom}}
\newcommand{\hyp}{\mathfrak{hyp}}
\def \wt {\widetilde}
\begin{document}

\title{Fusion Systems and Simple Groups With Class Two Sylow $p$-subgroups}
\author{Martin van Beek}
\thanks{The author is supported by the Heilbronn Institute for Mathematical Research. The author wishes to thank Prof. Justin Lynd for helpful conversations at the beginning of this project.}
\address{Alan Turing Building, University of Manchester, UK, M13 9PL}
\email{\href{mailto:martin.vanbeek@manchester.ac.uk}{martin.vanbeek@manchester.ac.uk}}

\keywords{Fusion systems; Finite Simple Groups}
\subjclass[2020]{20D20; 20D05; 20E42}

\begin{abstract}
We determine all reduced saturated fusion systems supported on a finite $p$-group of nilpotency class two. As a consequence, we obtain a new proof of Gilman \& Gorenstein's classification of finite simple groups with class two Sylow $2$-subgroups.
\end{abstract}

\maketitle

\section{Introduction}
In recent years, much consideration has been given to fusion systems with restricted or small structure. This generally takes one of two paths. The structure of the fusion system itself may be restricted (e.g. by limiting its essential subgroups or the structure of its local subsystems); or the structure of the underlying $p$-group of the fusion system may be limited. In this article, we will take the latter path.

Let $\fs$ be a saturated fusion system on a finite $p$-group $S$. By a classical result of Burnside, if $S$ is abelian then $S$ is \emph{normal} in $\fs$. That is, $\fs=N_{\fs}(S)$ and $\fs$ is isomorphic to the $p$-fusion category of a finite $p$-solvable group. With regards to the nilpotency of the underlying $p$-group, one next natural step is to analyze the saturated fusion systems supported on $p$-groups of nilpotency class two, which is the main remit of this article. We wish to indicate that this is not the only next natural step. For instance, fusion systems supported on $p$-groups of maximal nilpotency class have been investigated in \cite{ParkerMax}.

For some motivation for the fusion systems under analysis, one may consider the non-abelian $p$-groups of order $p^3$. Necessarily, these groups have nilpotency class two. If $\fs$ is a saturated fusion system on a non-abelian $p$-group of order $p^3$ then $\fs$ is known. The interesting outcomes here are in the work of Ruiz and Viruel who handled the case when $p$ is odd and the underlying $p$-group has exponent $p$ \cite{RV1+2}. That result provides a large number of examples of realizable systems associated to certain finite simple groups, and provides three examples of exotic fusion systems at the prime $7$. In fact, as our \hyperlink{MainThm}{Main Theorem} indicates, these are, in some sense, the only exotic fusion systems supported on a $p$-group of nilpotency class at most two.

\begin{main}\hypertarget{MainThm}{}
Suppose that $\fs$ is a saturated fusion system on a finite $p$-group $S$, where $S$ has nilpotency class two. Assume that $O_p(\fs)=\{1\}$. Then $O^{p'}(\fs)=\fs_1\times \dots \times\fs_n$ for some $n\in \N$ where each $\fs_i$ is isomorphic to either:
\begin{enumerate}
    \item a saturated fusion system on a $p$-group isomorphic to a Sylow $p$-subgroup of $\PSL_3(p^{a_i})$ for some $a_i\in\N$;
    \item $\fs_T(G)$ where $G\cong \PSp_4(2^{a_i})$ and $p=2$  for some $a_i\in\N$; or
    \item $\fs_T(G)$ where $G\cong (\Alt(6)\times\dots\times \Alt(6)):A$ where $A\le \Omega_1(Z(S))$ and $p=2$.
\end{enumerate}
\end{main}

The groups arising in (iii) are isomorphic to normal subgroups of $\Sym(6)\times \dots \times \Sym(6)$ of index a power of $2$. 

To illustrate the difference between finite groups and fusion systems, one can compare this paper with Gilman and Gorenstein's treatment of the finite simple groups which have a Sylow $2$-subgroup of nilpotency class two \cite{ggtwo}. We being by pointing out some parallels with their work. First, both works operate under a ``minimal counterexample" hypothesis and so have access to detailed information about substructures in the relevant groups/fusion systems. Next, both works partition the analysis into a \emph{component type} case and a \emph{characteristic $p$} case, albeit we use a broader definition of characteristic $p$ than the direct translation of Gilman and Gorenstein's choice for groups. In spite of this, the characteristic $p$ analysis for fusion systems is similar in spirit to the finite group analysis. 

In contrast to the work of Gilman and Gorenstein, we answer the question for every prime $p$, rather than solely focusing on the case for $p=2$, and consider the larger class of \emph{reduced} fusion systems in place of simple fusion systems. Moreover, our analysis in the component type case is quite different. Gilman and Gorenstein require delicate results concerning balance and signalizer functor theory, whereas our treatment primarily makes use of Aschbacher's balance argument for fusion systems \cite[Theorem 7]{aschfit}. Aschbacher's result is far easier than the analogous statement for finite groups. From the perspective of analyzing  finite groups locally, this is one of the key reasons for using fusion systems in the first place and is the mantra behind Aschbacher's programme to reprove large parts of the classification of the finite simple groups. Aschbacher's programme has made significant progress towards this goal, and while this work will not play a part in that programme (indeed, the fusion systems we considering end up being of characteristic $2$-type), we hope that the analysis within highlights the benefits of working with fusion systems to make statements about finite simple groups. See \cite{AschComp} for an overview of Aschbacher's programme. With this in mind, in the final section of this paper, we use our fusion systems result to reprove Gorenstein and Gilman's result concerning finite simple groups with Sylow $2$-subgroups of nilpotency class two.

\begin{thmgg}\hypertarget{thmgg}{}
Suppose that $G$ is a finite simple group such that a Sylow $2$-subgroup $S$ of $G$ has nilpotency class at most two. Then one of the following holds:
\begin{enumerate}
    \item $S$ is abelian and $G$ is isomorphic to $\mathrm{J}_1$, $\PSL_2(q)$ where $q\equiv 3,5 \mod 8$, $\Ree(3^{2n+1})$, or $\PSL_2(2^n)$ for $n>1$;
    \item $S\cong \Dih(8)$ and $G$ is isomorphic to $\PSL_3(2)$, $\Alt(7)$, or $\PSL_2(q)$ where $q\equiv 7,9 \mod 16$; or
    \item $G\cong \Sz(2^n)$, $\PSU_3(2^n)$, $\PSL_3(2^n)$, or $\PSp_4(2^n)$ for $n>1$.
\end{enumerate}
\end{thmgg}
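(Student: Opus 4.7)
The plan is to reduce Theorem GG to the Main Theorem by analysing the $2$-fusion system $\fs := \fs_S(G)$ of a finite simple group $G$ with Sylow $2$-subgroup $S$ of class at most two, and splitting on whether $O_2(\fs) = 1$.

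First suppose $O_2(\fs) \neq 1$; this covers the case $S$ abelian, since then $O_2(\fs) = S$. Set $A := Z(O_2(\fs))$, a non-trivial abelian characteristic subgroup of $O_2(\fs)$, hence $\fs$-normal and in particular strongly closed in $S$ with respect to $G$. Goldschmidt's theorem on strongly closed abelian $2$-subgroups then forces $G$ into a short explicit list; those entries with class-at-most-two Sylow $2$-subgroups are $\PSL_2(2^n)$ for $n > 1$, $\PSL_2(q)$ with $q \equiv 3, 5 \pmod 8$, $\mathrm{J}_1$, and $\Ree(3^{2n+1})$ (all having abelian Sylow and appearing in case (i) of Theorem GG), together with $\Sz(2^{2n+1})$ and $\PSU_3(2^n)$ for $n > 1$ (class-two Sylow, part of case (iii)).

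Now suppose $O_2(\fs) = 1$, so $S$ is non-abelian of class exactly two. Since $G$ is simple with $|G|$ even we have $O^{2'}(G) = G$ and hence $O^{2'}(\fs) = \fs$, so $\fs$ is reduced. The Main Theorem applies, yielding $\fs = \fs_1 \times \cdots \times \fs_n$. Simplicity of $G$ forces $n = 1$: each $S_i$ would otherwise be a non-trivial proper strongly closed subgroup of $S$ commuting with the other $S_j$, which (using that a simple group has a unique component, namely itself) cannot happen. Hence $\fs$ is isomorphic to a single fusion system of Main Theorem type (i), (ii), or (iii). In each case I identify the realising simple group: type (i) gives $G \cong \PSL_3(2^a)$, with the additional groups $\Alt(7)$ and $\PSL_2(q)$ for $q \equiv 7, 9 \pmod{16}$ arising when $a = 1$ via the Gorenstein--Walter theorem on dihedral Sylow $2$-subgroups; type (ii) gives $G \cong \PSp_4(2^a)$; type (iii) reduces to $k = 1$ by simplicity, giving $G \cong \Alt(6) \cong \PSL_2(9)$, already captured by case (ii) of Theorem GG.

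The main obstacle is expected to be the final fusion-to-group identification, especially for Main Theorem type (i) at small $a$, where several different simple groups share the same $2$-fusion system and distinguishing them requires classical external input (the Gorenstein--Walter dihedral Sylow $2$-subgroup theorem, and characterisations of $\PSL_3(2^a)$ and $\PSp_4(2^a)$ by their $2$-fusion systems). By contrast, the Main Theorem does the bulk of the fusion-theoretic lifting, and Goldschmidt's theorem absorbs the entire $O_2(\fs) \neq 1$ branch in a single step.
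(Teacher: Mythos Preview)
Your outline has the right shape, but the decisive step --- passing from the fusion system back to the group --- is treated as a black box, and this is precisely where the paper does all its work. Knowing that $\fs$ is isomorphic to the $2$-fusion category of $\PSL_3(q)$ or $\PSp_4(q)$ does not, by itself, pin down $G$; the ``characterisations of $\PSL_3(2^a)$ and $\PSp_4(2^a)$ by their $2$-fusion systems'' you invoke as external input are exactly what the paper establishes in its final section. There the argument runs: control involution centralizers via \cref{Centcent} and \cref{BigO2New}, apply the Bender method to show $N_G(A)$ and $N_G(B)$ are maximal with $F^*(N_G(X))=X$, recognise a rank~$2$ BN-pair, and finish with the Tits--Weiss classification of Moufang polygons. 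If you appeal instead to pre-existing characterisations in the literature, you must be careful they do not themselves rest on Gilman--Gorenstein, or the argument becomes circular; the paper's point is to \emph{reprove} that theorem.

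A secondary gap is your $n=1$ reduction. The sentence ``each $S_i$ would otherwise be a non-trivial proper strongly closed subgroup \dots\ which (using that a simple group has a unique component) cannot happen'' does not stand on its own: a non-abelian strongly closed $2$-subgroup in a simple group is not an immediate contradiction, and Goldschmidt's theorem only handles the abelian case. The paper sidesteps this by invoking Goldschmidt's $2$-fusion theorem together with \cite{aschbachersimple} to conclude directly that $\fs$ is almost simple (hence $O^{2'}(\fs)$ is simple), rather than arguing from a direct-product decomposition of $\fs$. Your $O_2(\fs)\ne 1$ branch via Goldschmidt's strongly closed abelian theorem is fine and close in spirit to what the paper does, but note that the paper also absorbs the rank~$1$ Lie-type groups ($\Sz$, $\PSU_3$) into that same Goldschmidt step rather than reaching them through the Main Theorem.
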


Throughout, a group of nilpotency class two will be a non-abelian group. Our notation and terminology for groups is reasonably standard and generally follows that used in \cite{asch2} and \cite{gor}. For fusion systems, we follow the notation used in \cite{ako} and \cite{aschfit}.

\section{Group Theory Preliminaries}

We begin with some elementary results regarding finite $p$-groups. 

\begin{lemma}\label{NoWreath}
Suppose that $S$ is a finite $p$-group of nilpotency class two, $T\normaleq S$ and $T=T_1\times\dots \times T_n$ where the $T_i$ are pairwise isomorphic and of nilpotency class two. Assume that $s\in S\setminus T$ permutes the set $\{T_i\}$. Then $s$ normalizes each $T_i$.
\end{lemma}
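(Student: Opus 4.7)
The plan is to rule out any non-trivial permutation action of $s$ on the $T_i$ by exploiting the class-two hypothesis on $S$ together with the fact that each $T_i$ is itself non-abelian. The key identity to set up is $[t,s]=t^{-1}t^s$, and the key containment is the chain $[S,S]\le Z(S)$ and $Z(S)\cap T\le Z(T)=Z(T_1)\times\dots\times Z(T_n)$, which lets us pin down the component-wise decomposition of any commutator $[t,s]$ inside $T$.

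First I would assume for contradiction that $s$ does not normalize some $T_i$. After relabelling, we may take $T_1^s=T_j$ with $j\ne 1$. Then for every $t\in T_1$, conjugation gives $t^s\in T_j$, and hence
\[
[t,s]=t^{-1}t^s\in T_1\cdot T_j\le T.
\]
Because $S$ has nilpotency class two, $[t,s]\in[S,S]\le Z(S)$, so $[t,s]\in Z(S)\cap T\le Z(T)$. Since $T$ is a direct product, $Z(T)=Z(T_1)\times\dots\times Z(T_n)$, and the uniqueness of the decomposition forces the $T_1$-component $t^{-1}$ of $[t,s]$ to lie in $Z(T_1)$.

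As this holds for every $t\in T_1$, we conclude $T_1=Z(T_1)$, i.e.\ $T_1$ is abelian, contradicting the hypothesis that $T_1$ has nilpotency class two. Hence $s$ must fix each $T_i$ under its permutation action, which is exactly the conclusion. The argument is short and no step looks obstructive; the only point requiring a little care is the appeal to $Z(S)\cap T\le Z(T)$ and the fact that the centre of a direct product decomposes componentwise, both of which are standard.
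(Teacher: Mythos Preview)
Your proof is correct and follows essentially the same approach as the paper: both use that $[t,s]\in Z(S)$ forces $t$ and $t^s$ to differ by a central element, and then deduce that $T_1$ is abelian. Your phrasing via the componentwise decomposition $Z(T)=Z(T_1)\times\cdots\times Z(T_n)$ is a clean way to package what the paper does by picking $x\in T_k\setminus Z(T_k)$ and checking it commutes with every $y\in T_k$, but the underlying argument is the same.
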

\begin{proof}
Assume that $S$ is as described in the lemma, and choose $s\in S\setminus T$ such that $s$ permutes the set $\{T_i\}$. Aiming for a contradiction, suppose that there is $T_k$ with $T_k^s=T_j$ for $k\ne j$. Let $y\in T_k$ and choose $x\in T_k\setminus Z(T_k)$. Then $y^s\in T_j$ and so $[x, y^s]=1$. On the other hand, $y^{-1}y^s=[y, s]\in Z(S)$ since $S$ has nilpotency class two. Hence, $[x, y^{-1}]=1$ and since $y$ was chosen arbitrarily, we deduce that $x\in Z(T_k)$, a contradiction. Hence, no such $k$ exists and $s$ normalizes each $T_i$. 
\end{proof}

In the analysis of fusion systems on $2$-groups of class two, those with components which are isomorphic to the $2$-fusion category of $\Alt(6)$ cause considerably more difficulty than any other type of component. We provide the following lemma which helps in treating these cases.

\begin{lemma}\label{DihCent}
Suppose that $S$ is a $2$-group of nilpotency class $2$, $P\normaleq S$ and $P\cong \Dih(8)$. Then $S=PC_S(P)$.
\end{lemma}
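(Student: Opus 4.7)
The plan is to study the conjugation action of $S$ on $P$ and show that this action induces only inner automorphisms of $P$; once this is established, each $s\in S$ agrees with some element of $P$ on $P$, forcing $s \in P\, C_S(P)$.

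First I would exploit that $S$ has nilpotency class two to observe that for any $s\in S$ and $x\in P$, the commutator $[x,s]$ lies in $Z(S)$. Since $P\normaleq S$, the same commutator lies in $P$, and so $[x,s]\in P\cap Z(S)\leq Z(P)$. Rewriting this as $x^s\in xZ(P)$, I see that the automorphism $c_s\in\Aut(P)$ induced by conjugation by $s$ acts trivially on the quotient $P/Z(P)$.

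The key step is then a direct inspection of $\Aut(\Dih(8))$: I would verify that every automorphism of $P\cong \Dih(8)$ which is trivial on $P/Z(P)\cong C_2\times C_2$ is already inner. Since $|\Aut(\Dih(8))|=8$, $|\Inn(P)|=|P/Z(P)|=4$, and $\Inn(P)$ consists (tautologically) of such automorphisms, it suffices to check that the subgroup of $\Aut(P)$ fixing $P/Z(P)$ pointwise has order at most $4$; this is a short calculation using a presentation $P=\langle r,t\mid r^4=t^2=1,\ trt=r^{-1}\rangle$, noting that such an automorphism must send $r\mapsto r\epsilon_1$ and $t\mapsto t\epsilon_2$ with $\epsilon_i\in Z(P)$, giving at most the four candidates.

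Putting the two ingredients together: for any $s\in S$ the automorphism $c_s$ is inner on $P$, so there exists $p\in P$ with $c_s=c_p$ on $P$; equivalently $sp^{-1}\in C_S(P)$, giving $s\in P\,C_S(P)$. I do not expect any serious obstacle here: the only delicate point is the automorphism count for $\Dih(8)$, which is a finite check, and the ``class two'' hypothesis is used in exactly the natural way to push commutators into $Z(P)$.
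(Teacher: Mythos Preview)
Your proof is correct and follows essentially the same route as the paper. Both arguments use the class-two hypothesis to force $[S,P]\le P\cap Z(S)\le Z(P)$ and then identify the relevant subgroup of $\Aut(\Dih(8))$ with $\Inn(P)$; the paper phrases the last step as $\Aut_S(P)\le C_{\Aut(P)}(\Inn(P))=\Inn(P)$ (using that $\Aut(P)$ is non-abelian of order $8$), while you compute directly that the pointwise stabiliser of $P/Z(P)$ in $\Aut(P)$ has order at most $4$, which is the same fact.
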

\begin{proof}
We observe that $Z(S)\cap P=Z(P)$ and so, as $S$ has nilpotency class two, we deduce that $[S, P]=Z(P)$. Indeed, $[\Aut_S(P), \Inn(P)]=\{1\}$. Since $|\Inn(P)|=4$ and $\Aut(P)$ is non-abelian of order $8$, we deduce that $\Aut_S(P)=\Inn(P)$ and $S=PC_S(P)$.
\end{proof}

Finally, comparing with the \hyperlink{MainThm}{Main Theorem}, we see that the simple $2$-fusion systems supported on a $2$-group of nilpotency class two are either isomorphic to the $2$-fusion category of $\PSL_3(2^a)$, or to the $2$-fusion category of $\PSp_4(2^{a})$ where $a\in \N$. The underlying $2$-group in each case has exactly two elementary abelian subgroups of maximal order. We present a well known lemma which identifies such structure in $2$-groups. For this, and for a $p$-group $S$, we denote by $\mathcal{A}(S)$ the elementary abelian subgroups of $S$ of maximal rank.

\begin{lemma}\label{elementary2}
Let $S$ be a $2$-group and suppose that there is $A,B\in\mathcal{A}(S)$ with $AB=S$. If $C_B(a)=A\cap B$ for every $a\in A\setminus (A\cap B)$, then $\mathcal{A}(S)=\{A, B\}$ and every involution of $S$ lies in $A\cup B$.
\end{lemma}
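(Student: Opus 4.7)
The plan is to compute the squares of products $ab$ with $a\in A$ and $b\in B$ and thereby identify the involutions of $S$. Since $A$ and $B$ are elementary abelian, we have $a^{-1}=a$ and $b^{-1}=b$, so directly $(ab)^2 = abab = a^{-1}b^{-1}ab = [a,b]$. Hence $ab$ is either trivial or an involution precisely when $[a,b]=1$, that is, when $a$ and $b$ commute.

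Next I would use the hypothesis to pin down when this happens. Since $AB=S$ and $A, B$ are abelian, $A\cap B\le Z(S)$. Thus for $a\in A\cap B$, $[a,b]=1$ for every $b\in B$; and for $a\in A\setminus(A\cap B)$, the hypothesis $C_B(a)=A\cap B$ forces $b\in A\cap B$ whenever $[a,b]=1$. In the first situation $a\in B$ and so $ab\in B$, while in the second $b\in A$ and so $ab\in A$. Since every element of $S$ has the form $ab$ with $a\in A$ and $b\in B$, it follows that every involution of $S$ lies in $A\cup B$, which is half of the claim.

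For the statement about $\mathcal{A}(S)$, take any $E\in\mathcal{A}(S)$. Each non-identity element of $E$ is an involution, so by the above $E\subseteq A\cup B$. Invoking the classical observation that a group is never the union of two proper subgroups, applied to the decomposition $E=(E\cap A)\cup(E\cap B)$, forces $E\le A$ or $E\le B$. Since $A$ and $B$ already attain the maximal elementary abelian rank in $S$ and $E$ achieves the same rank, we conclude $E=A$ or $E=B$, so $\mathcal{A}(S)=\{A,B\}$.

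The whole argument is elementary once the identity $(ab)^2=[a,b]$ is noted; the key ``trick'' is that in a product of two elementary abelian subgroups whose intersection is central, squares of elements reduce to commutators, which in turn are controlled entirely by the centralizer hypothesis. I do not anticipate any substantial obstacle.
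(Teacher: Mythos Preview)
Your proof is correct and follows essentially the same route as the paper: both reduce the involution claim to the identity $(ab)^2=[a,b]$ for $a\in A$, $b\in B$ and then split into the cases $a\in A\cap B$ and $a\notin A\cap B$. The only difference is cosmetic: to conclude $\mathcal{A}(S)=\{A,B\}$ you invoke the fact that a group is not the union of two proper subgroups, whereas the paper argues directly that if $c\in(C\cap A)\setminus B$ then $C\cap B\le C_B(c)=A\cap B$, forcing $C\le A$.
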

\begin{proof}
Aiming for a contradiction, let $C\in\mathcal{A}(S)$ with $A\ne C\ne B$ and choose $c$ an involution in $S$. Then $c=ab$ for some $a\in A$ and $b\in B$ and $1=c^2=(ab)^2=[a, b]$. Hence, either $a\in A\cap B$ so that $c\in B$, or $a\not\in B$ and $b\in C_B(a)=A\cap B$ so that $c=ab\in A$. Thus, $c\in A\cup B$ and we now have that $C=(C\cap A)(C\cap B)$. 

Let $c\in C\cap A$ with $c\not\in B$. Then $c\in A\setminus (A\cap B)$ so that $C_B(c)=A\cap B$. But $C\cap B\le C_B(c)$ so that $C\cap B\le A$ and $C\le A$. Since $A,C\in\mathcal{A}(S)$, we deduce that $C=A$, a contradiction. Hence, no such $C$ exists and $\mathcal{A}(S)=\{A, B\}$.
\end{proof}

\section{Fusion Systems}

In this section, we collect various important concepts regarding fusion systems, all the time assuming a familiarity with fusion systems in line with the first part of \cite{ako}. We recall some of the most pivotal results which shall be used throughout this work.

We first recall the notion of an \emph{essential subgroup} of a saturated fusion system $\fs$, and denote the set of essential subgroups of $\fs$ by $\mathcal{E}(\fs)$. As with many other works, the following theorem underpins large parts of our methodology in classifying fusion systems.

\begin{theorem}[Alperin -- Goldschmidt Fusion Theorem]
Let $\fs$ be a saturated fusion system on a $p$-group $S$. Then \[\fs=\langle \Aut_{\fs}(Q) \mid Q\,\, \text{is essential or}\,\, Q=S \rangle.\]
\end{theorem}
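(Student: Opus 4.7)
The plan is to proceed by induction on the index $[S:P]$ to show that every $\fs$-isomorphism $\phi: P \to P'$ between subgroups of $S$ factors as a composition of restrictions of elements of $\Aut_\fs(Q)$ where $Q$ is either essential in $\fs$ or $Q=S$. Since every morphism in $\fs$ is an isomorphism onto its image followed by an inclusion, this reduces the theorem to a statement about $\fs$-isomorphisms between subgroups of $S$.

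As a first reduction, I would apply the extension (receptivity) axiom of saturation to replace $P$ and $P'$ by fully $\fs$-normalized representatives of their $\fs$-conjugacy class. For any subgroup $R$ in the $\fs$-class of $P$, a fully normalized representative $R^*$ admits an $\fs$-isomorphism $\psi: R \to R^*$ which, by receptivity at $R^*$, extends to an $\fs$-isomorphism defined on a subgroup of $N_S(R)$ strictly larger than $R$. Pre- and post-composing $\phi$ with such extensions absorbs the non-normalized pieces into restrictions of $\fs$-automorphisms of strictly larger subgroups, to which the inductive hypothesis applies. The base case $P = S$ is immediate.

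For the inductive step, assume $P$ is fully normalized with $P < S$ and not essential. If $P$ is not $\fs$-centric, then $C_S(P)\nleq P$, and classical arguments allow every element of $\Aut_\fs(P)$ to be recovered by restriction from $\Aut_\fs(PC_S(P))$, where $[S:PC_S(P)] < [S:P]$, so induction applies. Otherwise $P$ is $\fs$-centric and $\Out_\fs(P)$ contains no strongly $p$-embedded subgroup. The key group-theoretic ingredient is then the classical fact that a finite group $X$ with $p \mid |X|$ is generated by the normalizers $N_X(U_0)$ of nontrivial $p$-subgroups $U_0$ of a fixed Sylow $p$-subgroup if and only if $X$ has no strongly $p$-embedded subgroup. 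Applied to $X = \Out_\fs(P)$ with Sylow $p$-subgroup $\Out_S(P)$, and lifted to $\Aut_\fs(P)$, this gives that $\Aut_\fs(P)$ is generated by $\{N_{\Aut_\fs(P)}(\Aut_T(P)) \mid P < T \leq N_S(P)\}$. The extension axiom then lifts each such automorphism to an $\fs$-automorphism of the relevant $T$, and since $[S:T] < [S:P]$, the inductive hypothesis completes the decomposition.

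The main obstacle is the careful interplay between the two saturation axioms and the characterization of essential subgroups via strongly $p$-embedded subgroups. In particular, at each stage of the induction one must choose the correct $\fs$-conjugate of the source so that subsequent lifts land inside the correct normalizer tower, and then verify that the resulting chain of automorphisms of larger subgroups glues together into a composition whose restriction recovers the original morphism. Once this bookkeeping is in place, the induction terminates at $S$ or at an essential subgroup, yielding the claimed generation.
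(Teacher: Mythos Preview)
Your outline is correct and follows the standard argument; it is essentially the proof given in \cite[Theorem I.3.5]{ako}, which is exactly what the paper cites in lieu of a proof. The paper does not supply its own argument for this theorem, so there is nothing further to compare.
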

\begin{proof}
See \cite[Theorem I.3.5]{ako}.
\end{proof}

\begin{lemma}\label{Chain}
Let $\fs$ be a saturated fusion system on a $p$-group $S$ and $E\in\fs^{frc}$. Set $\{1\}=E_0 \normaleq E_1  \normaleq E_2 \normaleq \dots \normaleq E_m=E$ such that, for all $0 \le i \le m$, $E_i\alpha = E_i$ for each $\alpha \in \Aut_{\fs}(E)$. If $Q\le S$ is such that $[Q, E_i]\le E_{i-1}$ for all $1 \le i \le m$ then $Q\le E$.
\end{lemma}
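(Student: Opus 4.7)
The plan is to run the standard ``stabilizer of a chain is a $p$-group'' argument inside $\Aut_{\fs}(E)$ and then use the defining properties of essential (here, $\fs$-centric and $\fs$-radical) subgroups to pull the conclusion back into $E$.

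First I would observe that the hypothesis applied with $i=m$ gives $[Q,E]\leq E_{m-1}\leq E$, so $Q\leq N_S(E)$ and conjugation gives a well-defined map $c\colon Q\to \Aut_{\fs}(E)$. Next I would define
\[
A=\{\alpha\in \Aut_{\fs}(E):[\alpha,E_i]\leq E_{i-1}\text{ for all } 1\leq i\leq m\},
\]
i.e.\ the subgroup of $\Aut_{\fs}(E)$ stabilizing the chain $E_0\normaleq E_1\normaleq\dots\normaleq E_m$. Because each $E_i$ is by assumption invariant under every $\alpha\in \Aut_{\fs}(E)$, a routine computation shows $A\normaleq \Aut_{\fs}(E)$.

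The key technical step is to show $A$ is a $p$-group; this is the classical fact that an automorphism of a finite $p$-group which acts trivially on every factor of a normal series has $p$-power order, so $A\leq O_p(\Aut_{\fs}(E))$. Since $E$ is $\fs$-radical, $O_p(\Aut_{\fs}(E))=\Inn(E)$, and hence $A\leq \Inn(E)$.

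Finally, the hypothesis $[Q,E_i]\leq E_{i-1}$ translates exactly to $c_q\in A$ for each $q\in Q$, and so for every $q\in Q$ there exists $e\in E$ with $c_q=c_e$ as elements of $\Aut_{\fs}(E)$. Then $qe^{-1}\in C_S(E)$, and because $E$ is $\fs$-centric we have $C_S(E)\leq E$, giving $q\in E$ and therefore $Q\leq E$, as required. I expect the main (minor) obstacle to be bookkeeping around the ``stabilizer of the chain is a $p$-group'' step, which is standard but needs the invariance of the $E_i$ under $\Aut_{\fs}(E)$ to ensure $A$ is actually normal (so that $A\leq O_p(\Aut_{\fs}(E))$ is legitimate); everything else is a direct consequence of the $\fs$-centric and $\fs$-radical conditions.
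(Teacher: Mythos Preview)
Your proof is correct and follows essentially the same approach as the paper: both arguments show that the automorphisms of $E$ induced by $Q$ lie in the chain stabilizer, which is a normal $p$-subgroup of $\Aut_{\fs}(E)$, hence contained in $\Inn(E)$ by radicality, and then centricity finishes. Your version is in fact slightly cleaner, since you observe directly that $[Q,E]\le E$ forces $Q\le N_S(E)$ and work with the full chain stabilizer (already normal), whereas the paper passes through $N_{QE}(E)$ and takes a normal closure; but this is a cosmetic difference only.
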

\begin{proof}
Let $\fs$, $S$, $E$ and $Q$ be as described in the lemma. Then $N_{QE}(E)$ embeds in $\Aut(E)$ as $\Aut_{QE}(E)$ and we can arrange that $N_{QE}(E)$, and hence $\Aut_{QE}(E)$, centralizes (an $\Aut_{\fs}(E)$-invariant refinement of) $E_0 \normaleq E_1  \normaleq E_2 \normaleq \dots \normaleq E_m$. Setting $A:=\langle \Aut_{QE}(E)^{\Aut_{\fs}(E)}\rangle$, we have by \cite[Corollary I.5.3.3]{gor} that $A$ is a $p$-group. Since $A\normaleq \Aut_{\fs}(E)$, we see that $A\le O_p(\Aut_{\fs}(E))=\Inn(E)$, as $E$ is $\fs$-radical. Hence, $\Aut_{QE}(E)\le \Inn(E)$ and since $E$ is $\fs$-centric, we infer that $N_{QE}(E)\le E$ and $N_{QE}(E)=E$. The only possibility is that $E=QE$ and $Q\le E$, as required.
\end{proof}

Again, looking to the typical examples of fusion systems supported on $p$-groups of class two, as in the \hyperlink{MainThm}{Main Theorem}, we notice that the local subsystems often involve the group $\SL_2(p^n)$ acting on its \emph{natural module}. In order to prove the \hyperlink{MainThm}{Main Theorem}, we will have to recognize this action.

\begin{definition}
\sloppy{A \emph{natural $\SL_2(p^n)$-module} is any irreducible $2$-dimensional $\GF(p^n)\SL_2(p^n)$-module regarded as a $2n$-dimension module for $\GF(p)\SL_2(p^n)$ by restriction.}
\end{definition}

\begin{theorem}\label{SEFF}
Suppose that $E$ is an essential subgroup of a saturated fusion system $\fs$ on a $p$-group $S$, and assume that there are $\Aut_{\fs}(E)$-invariant subgroups $U\le V\le E$ such that $E=C_S(V/U)$ and $V/U$ is an FF-module for $G:=\Out_{\fs}(E)$. Then, writing $L:=O^{p'}(G)$ and $W:=V/U$, we have that $L/C_L(W)\cong \SL_2(p^n)$, $C_L(W)$ is a $p'$-group and $W/C_W(O^p(L))$ is a natural $\SL_2(p^n)$-module.
\end{theorem}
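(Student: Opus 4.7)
The plan is to combine the standard structural consequences of essentiality with the FF-module hypothesis and then invoke a known FF-module recognition theorem. First, I would use that $E$ is essential to extract two facts about $G := \Out_{\fs}(E)$: it contains a strongly $p$-embedded subgroup, and $O_p(G) = 1$ (the latter because $E$ is $\fs$-radical). Replacing $E$ by a fully normalized $\fs$-conjugate, which is permissible since the hypotheses and the conclusion are preserved under $\fs$-conjugation, I may assume that $T := \Out_S(E)$ is a Sylow $p$-subgroup of $G$.

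Second, I would verify that $T$ acts faithfully on $W = V/U$. If $sE \in T$ acts trivially on $W$, then $[s,V] \le U$, so $s \in C_S(V/U) = E$ and $sE$ is trivial in $T$. Writing $L := O^{p'}(G)$, the quotient $G/L$ is a $p'$-group, so $T$ is also a Sylow $p$-subgroup of $L$. Since $C_L(W) \normaleq L$ and $C_L(W) \cap T = C_T(W) = 1$ by the faithful action just established, Sylow theory in $L$ forces the Sylow $p$-subgroups of $C_L(W)$ to be trivial, so $C_L(W)$ is a $p'$-group.

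Third, the FF-module hypothesis provides a non-trivial elementary abelian $p$-subgroup $A \le G$ (an offender) with $|A|\cdot |C_W(A)| \ge |W|$. Combined with the existence of a strongly $p$-embedded subgroup in $G$ and with $O_p(G) = 1$, this places us squarely in the setting of a standard FF-module recognition theorem in the spirit of Meierfrankenfeld--Stellmacher and the related results used in the amalgam method and fusion systems literature. That recognition theorem forces $L/C_L(W) \cong \SL_2(p^n)$ for some $n \in \N$ and identifies $W/C_W(O^p(L))$ as a natural $\SL_2(p^n)$-module, which is exactly the desired conclusion.

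The hard part is step three: the recognition theorem invoked relies on the classification of FF-modules for quasisimple groups together with the substantial constraints imposed by the presence of a strongly $p$-embedded subgroup. The earlier steps are largely formal consequences of the definitions --- faithfulness of $T$ on $W$ is immediate from $E = C_S(V/U)$, and the $p'$-property of $C_L(W)$ then follows from this faithfulness via Sylow theory in $L$. I would therefore expect the paper to cite the recognition theorem rather than prove it from first principles, and to spend its words only on verifying its hypotheses in the present fusion-theoretic setting.
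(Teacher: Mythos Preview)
Your proposal is correct and matches the paper's approach exactly: the paper's entire proof is a one-line citation to \cite[Theorem 5.6]{henkesl2} (cf.\ \cite[Theorem 1]{henkesl2}), which is precisely the FF-module recognition theorem you anticipated invoking. You even correctly predicted that the paper would cite rather than reprove this result; in fact the paper is terser than your outline, omitting the hypothesis verifications you sketched and deferring entirely to Henke's theorem.
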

\begin{proof}
This follows from \cite[Theorem 5.6]{henkesl2} (c.f. \cite[Theorem 1]{henkesl2}).
\end{proof}

The following definitions are standard in the study of fusion systems.

\begin{definition}
Let $\fs$ be a saturated fusion system on a $p$-group $S$ and $Q\le S$. 
\begin{enumerate}
    \item Say that $Q$ is strongly closed in $\fs$ if, for all $P\le Q$ and $\alpha\in\Hom_{\fs}(P, S)$ we have that $P\alpha\le Q$.
    \item Say that $Q$ is normal in $\fs$, written $Q\normaleq \fs$, if for all $P,R\le S$ and $\alpha\in \Hom_{\fs}(P, R)$ there is $\hat{\alpha}\in\Hom_{\fs}(PQ, RQ)$ such that $\hat{\alpha}|_P=\alpha$ and $Q\hat{\alpha}=Q$.
\end{enumerate}
We note that if $Q$ is normal in $\fs$, then $Q$ is strongly closed in $\fs$; and if $Q$ is strongly closed in $\fs$ then $Q\normaleq S$.

There is a unique largest normal subgroup of $\fs$, denoted by $O_p(\fs)$. We will sometimes refer to this group as the \emph{$p$-core} of $\fs$.
\end{definition}

\begin{proposition}\label[proposition]{normalinF}
Let $\fs$ be a saturated fusion system over a $p$-group $S$. Then the following are equivalent for a subgroup $Q\le S$:
\begin{enumerate}
    \item $Q\normaleq \fs$;
    \item $Q$ is strongly closed in $\fs$ and contained in every centric radical subgroup of $\fs$; and
    \item $Q$ is contained in each essential subgroup, $Q$ is $\Aut_{\fs}(E)$-invariant for any essential subgroup $E$ of $\fs$ and $Q$ is $\Aut_{\fs}(S)$-invariant.
\end{enumerate}
Moreover, if $Q$ is an abelian subgroup of $S$, then $Q\normaleq \fs$ if and only if $Q$ is strongly closed in $\fs$.
\end{proposition}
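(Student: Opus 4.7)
The plan is to cycle through $(i) \Rightarrow (ii) \Rightarrow (iii) \Rightarrow (i)$ and then to handle the abelian moreover separately. For $(i) \Rightarrow (ii)$, strong closure of $Q$ follows by applying the extension property of normality to cyclic subgroups of $Q$. To obtain $Q \le R$ for an arbitrary centric radical $R$, I would consider $\Aut_{N_Q(R)}(R) \le \Aut_\fs(R)$ and show it is a normal $p$-subgroup: using the normality of $Q$ in $\fs$, every $\alpha \in \Aut_\fs(R)$ extends to $\hat{\alpha} \in \Aut_\fs(RQ)$ with $R\hat{\alpha} = R$ and $Q\hat{\alpha} = Q$, whence a direct conjugation computation gives $c_q^\alpha = c_{\hat{\alpha}^{-1}(q)}$ for $q \in N_Q(R)$. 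By $\fs$-radicality, $\Aut_{N_Q(R)}(R) \le O_p(\Aut_\fs(R)) = \Inn(R)$, and by $\fs$-centricity $N_Q(R) \le R \cdot C_S(R) = R$. A standard $p$-group normaliser argument inside $QR$ (where $R$ is properly contained in $N_{QR}(R)$ unless $Q \le R$) then forces $Q \le R$. For $(ii) \Rightarrow (iii)$, essential subgroups are centric radical, so $Q \le E$ at once, and the $\Aut_\fs$-invariance at each essential $E$ and at $S$ is immediate from strong closure.

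For $(iii) \Rightarrow (i)$, I would apply the Alperin--Goldschmidt fusion theorem. An arbitrary $\fs$-morphism $\alpha : P \to R$ factors as $\alpha_k \circ \dots \circ \alpha_1$, where each $\alpha_i$ is the restriction of some $\beta_i \in \Aut_\fs(E_i)$ with $E_i$ either essential or equal to $S$. By (iii), $Q \le E_i$ and $Q\beta_i = Q$, so each $\alpha_i$ extends to $\beta_i|_{P_{i-1}Q}$, which preserves $Q$. Composing these extensions yields $\hat{\alpha} : PQ \to RQ$ agreeing with $\alpha$ on $P$ and satisfying $Q\hat{\alpha} = Q$, which is exactly what is required for $Q \normaleq \fs$.

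For the moreover clause, $\Aut_\fs$-invariance at essentials and at $S$ follows from strong closure as above, so the substantive content is $Q \le E$ for each essential $E$. I would try to apply \cref{Chain} to an $\Aut_\fs(E)$-invariant chain refining $\{1\} \le Q \cap E \le E$: the condition $[Q, Q \cap E] = \{1\}$ is immediate from abelianness of $Q$, while strong closure yields $[Q, E] \le Q$ because $E$ normalises $Q$. The main obstacle is the remaining condition $[Q, E] \le Q \cap E$, which is not automatic from $[Q, E] \le Q$ since $Q$ need not normalise $E$. To handle this I would interpose further $\Aut_\fs(E)$-invariant terms such as $Q \cap Z(E)$ and $Z(E)$ in the chain and verify the commutator conditions step by step using the abelian hypothesis; if a direct chain argument proves too coarse, one falls back on a minimal counterexample, choosing $q \in Q \setminus E$ of smallest order modulo $Q \cap E$ and deriving a contradiction from how $\Aut_\fs(E)$ permutes the $\fs$-conjugates of $q$ inside $\langle q \rangle (Q \cap E)$. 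This is the step where the abelian hypothesis on $Q$ is genuinely used.
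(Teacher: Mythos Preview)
The paper's proof is simply a citation to \cite[Proposition I.4.5]{ako} and \cite[Corollary I.4.7]{ako}, so your write-up is already more detailed than what appears there. Your cycle $(i)\Rightarrow(ii)\Rightarrow(iii)\Rightarrow(i)$ is the standard argument and is correct (modulo a harmless sign in the conjugation formula: with the paper's right-action conventions one gets $c_q^{\alpha} = c_{q\hat\alpha}$ rather than $c_{\hat\alpha^{-1}(q)}$, but the conclusion is unaffected since $q\hat\alpha\in Q$ and $q\hat\alpha$ normalises $R$).

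The only genuine gap is in the abelian ``moreover'' clause. You correctly diagnose that applying \cref{Chain} directly to $Q$ requires $[Q,E]\le Q\cap E$, which fails when $Q$ does not normalise $E$. Your proposed workarounds (interposing $Z(E)$-type terms in the chain, or a minimal-counterexample argument on $\fs$-conjugates of a chosen $q$) are more elaborate than necessary and it is not clear they go through as sketched. The clean fix is simply to apply \cref{Chain} with $N_Q(E)$ in place of $Q$: then $[N_Q(E),E]\le E$ holds trivially and $[N_Q(E),E]\le Q$ since $Q\normaleq S$, so $[N_Q(E),E]\le Q\cap E$; and $[N_Q(E),Q\cap E]=\{1\}$ since $Q$ is abelian. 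Hence \cref{Chain} gives $N_Q(E)\le E$, so $N_{QE}(E)=N_Q(E)E=E$, and normaliser growth in the $p$-group $QE$ forces $QE=E$, i.e.\ $Q\le E$. This is essentially what the proof of \cref{Chain} already does internally, so alternatively you can just unpack that proof rather than invoke the lemma as a black box.
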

\begin{proof}
See \cite[Proposition I.4.5]{ako} and \cite[Corollary I.4.7]{ako}.
\end{proof}

\begin{definition}
Let $\fs$ be a saturated fusion system on $S$. Say that $\fs$ is \emph{constrained} if $C_S(O_p(\fs))\le O_p(\fs)$.
\end{definition}

We will make liberal use of the following theorem, often without reference.

\begin{theorem}[Model Theorem]
Let $\fs$ be a constrained fusion system on a $p$-group $S$ and set $Q:=O_p(\fs)$. Then the following hold:
\begin{enumerate} 
\item There is a model for $\fs$. That is, there is a finite group $G$ with $S\in\syl_p(G)$, $F^*(G)=O_p(G)$ and $\fs=\fs_S(G)$.
\item If $G_1$ and $G_2$ are two models for $\fs$, then there is an isomorphism $\phi: G_1\to G_2$ such that $\phi|_S = \mathrm{Id}_S$.
\item For any finite group $G$ with $S\in\syl_p(G)$, $F^*(G)=Q$ and $\Aut_G(Q)=\Aut_{\fs}(Q)$, there is $\beta\in\Aut(S)$ such that $\beta|_Q = \mathrm{Id}_Q$ and $\fs_S(G) =\fs^\beta$. Thus, there is a model for $\fs$ which is isomorphic to $G$.
\end{enumerate}
\end{theorem}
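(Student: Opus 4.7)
The plan is to realise the fusion data $(Q,\Aut_{\fs}(Q))$ as a group extension. Set $Q:=O_p(\fs)$, $Z:=Z(Q)$, and $L:=\Aut_{\fs}(Q)$; constrainedness gives $C_S(Q)\le Q$, hence $C_S(Q)=Z$, and saturation together with $Q$ being fully normalised gives $\Aut_S(Q)\in\syl_p(L)$, with $\Inn(Q)\normaleq L$. The guiding observation throughout is that a constrained saturated fusion system on $S$ with $O_p(\fs)=Q$ is determined by $L\le \Aut(Q)$: since $\fs=N_{\fs}(Q)$, every morphism extends to one that normalises $Q$, and because $C_S(Q)\le Q$ such an extension is pinned down by its restriction to $Q$. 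Hence proving (i)--(iii) reduces to producing, and showing uniqueness of, a finite group $G$ with $O_p(G)=Q$, $\Aut_G(Q)=L$, and $S\in\syl_p(G)$.

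For (i), I would build a group extension $1\to Q\to G\to \bar L\to 1$, where $\bar L:=L/\Inn(Q)\le \Out(Q)$, realising the prescribed outer action. Eilenberg--MacLane obstruction theory attaches to this data a class $\omega\in H^3(\bar L; Z)$ that vanishes iff such an extension exists. Since $Z$ is a $p$-group, $H^3(\bar L;Z)$ is $p$-torsion, so by the Cartan--Eilenberg stable-elements theorem the restriction $H^3(\bar L;Z)\to H^3(\bar P;Z)$ is injective, where $\bar P:=\Aut_S(Q)/\Inn(Q)$ is a Sylow $p$-subgroup of $\bar L$. But over $\bar P$ the extension is realised concretely by $S$ itself (since $S/Q\cong \bar P$ with the prescribed outer action), so the restricted obstruction vanishes, hence $\omega=0$ and $G$ exists. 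A parallel adjustment inside $H^2(\bar L;Z)$, again controlled by restriction to $\bar P$, lets me choose $G$ so that a chosen preimage of $\bar P$ is exactly $S$. Then $C_G(Q)=Z\le Q$ forces $F^*(G)=O_p(G)=Q$, an order count gives $S\in\syl_p(G)$, and the reduction above yields $\fs_S(G)=\fs$.

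For (ii), if $G_1,G_2$ are two models then each has $O_p(G_i)=Q$ (being $O_p(\fs)$) and $\Aut_{G_i}(Q)=L$, so both sit in extensions of $\bar L$ by $Q$ inducing the same outer action. These extensions are classified by $H^2(\bar L;Z)$; the two classes restrict to the same element of $H^2(\bar P;Z)$, because both $G_i$ restrict over $\bar P$ to the fixed Sylow $S$, and the Sylow--transfer argument gives equality of the classes, and hence an isomorphism $\phi\colon G_1\to G_2$. A further $1$-cochain adjustment, with the analogous Sylow restriction argument in $H^1$, arranges $\phi|_S=\mathrm{Id}_S$. For (iii), any $G$ with the stated properties gives a constrained fusion system $\fs_S(G)$ with $O_p(\fs_S(G))=Q$ and $\Aut_{\fs_S(G)}(Q)=\Aut_G(Q)$; two identifications of this group with $L$ as subgroups of $\Aut(Q)$ differ by an automorphism of $Q$ normalising $L$, which lifts to $\beta\in\Aut(S)$ acting trivially on $Q$, and the uniqueness-from-$L$ principle forces $\fs_S(G)=\fs^{\beta}$.

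The main obstacle I anticipate is carrying out the cohomological obstruction argument cleanly: one must set up the Eilenberg--MacLane classifying data for the abstract outer action so that ``restriction to a Sylow $p$-subgroup of $\bar L$'' really is realised by the inclusion of $S$ as the preimage of $\bar P$, and then verify that the stable-elements theorem applies to the obstruction class. Once this compatibility is in place, both existence and uniqueness reduce to transfer arguments on the relevant $H^i(\bar L;Z)$, and the verification that the resulting group is actually a model becomes routine group-theoretic bookkeeping.
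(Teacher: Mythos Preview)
The paper does not give its own proof of this theorem: it simply cites \cite[Theorem I.4.9]{ako}. So there is no argument in the paper to compare your proposal against.

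That said, your sketch is essentially the standard proof as it appears in the cited reference (and in the earlier work of Broto--Castellana--Grodal--Levi--Oliver on which it is based): realise the extension data $(Q,\bar L)$ via the Eilenberg--MacLane obstruction in $H^3(\bar L;Z)$, kill the obstruction by restricting to a Sylow $p$-subgroup of $\bar L$ where $S$ itself witnesses the extension, and use injectivity of restriction on $p$-torsion cohomology. Uniqueness in (ii) is then an $H^2$ argument, and the refinement to $\phi|_S=\mathrm{Id}_S$ is an $H^1$ adjustment. This is the right architecture.

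Two places deserve more care. First, your ``guiding observation'' that a constrained saturated fusion system is determined by $L=\Aut_{\fs}(Q)$ is precisely what must be proved to conclude $\fs_S(G)=\fs$ at the end of (i); invoking it at the outset is circular. The actual argument shows that every morphism in $\fs$ extends to a morphism normalising $Q$ (since $Q\normaleq\fs$), and then uses $C_S(Q)\le Q$ to see that such extensions, and hence the whole system, are controlled by their restrictions to $Q$; but one must check that the group $G$ one has built really induces \emph{all} of these morphisms, not merely the correct $\Aut_{\fs}(Q)$. Second, your treatment of (iii) is slightly off: the hypothesis already gives $\Aut_G(Q)=\Aut_{\fs}(Q)$ as \emph{equal} subgroups of $\Aut(Q)$, so there is no conjugating automorphism of $Q$ to find. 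The point is rather that $G$ and a model for $\fs$ are two extensions of $\bar L$ by $Q$ with the same outer action, hence differ by a class in $H^2(\bar L;Z)$; this class need not vanish, but its restriction to $\bar P$ produces the required $\beta\in\Aut(S)$ with $\beta|_Q=\mathrm{Id}_Q$ intertwining the two copies of $S$.
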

\begin{proof}
See \cite[Theorem I.4.9]{ako}.
\end{proof}

As with the proofs of many theorems in finite group theory, the proof of the \hyperlink{MainThm}{Main Theorem} comes through analysis of a minimal counterexample. For this inductive approach, we require a reasonable handle on subsystems and quotient systems of fusion systems. In \cite[Section II.5]{ako}, quotients are defined with respect to strongly closed subgroups of a fusion system. We will only ever require quotients with respect to normal subgroups, and so we specialize the definition.

\begin{definition}
Let $\fs$ be a saturated fusion system on a $p$-group $S$ and $Q\normaleq \fs$. For each $P,R\le S$ where $Q\le P\cap R$, and $\alpha\in\Hom_{\fs}(P, R)$ define $\alpha^+:P/Q\to R/Q$ by $(xQ)\alpha^+:=(x\alpha)Q$. 

Define $\fs/Q$ to be the category whose objects are subgroups of $S/Q$ such that for $Q\le P\cap R$ where $P,R\le S$, we have $\Hom_{\fs/Q}(P/Q, R/Q):=\{\alpha^+ \mid \alpha\in \Hom_{\fs}(P, R)\}$.
\end{definition}

\begin{proposition}
Let $\fs$ be a saturated fusion system on a $p$-group $S$ and $Q\normaleq \fs$. Then $\fs/Q$ is a saturated fusion system supported on $S/Q$.
\end{proposition}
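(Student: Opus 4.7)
The plan is to establish three things in order: (i) that the morphism sets $\Hom_{\fs/Q}(P/Q, R/Q)$ really do make $\fs/Q$ into a category, (ii) that this category is a fusion system on $S/Q$, and (iii) that it is saturated. Items (i) and (ii) are essentially bookkeeping; the content of the proposition is entirely in (iii).

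First I would verify that $\alpha^+$ is well-defined. Since $Q\normaleq \fs$, \cref{normalinF} gives that $Q$ is strongly closed in $\fs$, so any $\alpha\in\Hom_{\fs}(P, R)$ with $Q\le P$ satisfies $Q\alpha\le Q$; injectivity of $\alpha$ then forces $Q\alpha=Q$, and the rule $(xQ)\alpha^+:=(x\alpha)Q$ gives a well-defined injective homomorphism. Functoriality of $(-)^+$ (compatibility with composition and identities) is immediate, so $\fs/Q$ is a category. To see it is a fusion system on $S/Q$, one checks that conjugation morphisms by $sQ\in S/Q$ are images of conjugation by $s\in S$ (so $\fs_{S/Q}(S/Q)$ is a subcategory), and that any $\alpha^+\colon P/Q\to R/Q$ factors as the isomorphism $P/Q\to (P\alpha)/Q$ followed by inclusion into $R/Q$, this factorisation being the image under $(-)^+$ of the analogous factorisation in $\fs$.

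Next I would check saturation. The Sylow axiom is straightforward: the surjection $\Aut_{\fs}(S)\twoheadrightarrow \Aut_{\fs/Q}(S/Q)$ induced by $(-)^+$ sends $\Aut_S(S)\in\syl_p(\Aut_{\fs}(S))$ (Sylow by saturation of $\fs$) onto $\Aut_{S/Q}(S/Q)$, and the image of a Sylow $p$-subgroup under a surjection is Sylow in the image. For the extension axiom, I would take an isomorphism $\hat\phi\in\Iso_{\fs/Q}(\overline{P}, \overline{P'})$ with $\overline{P'}$ fully normalized in $\fs/Q$, lift it to some $\phi\in\Iso_{\fs}(P, P')$ where $P,P'$ are the preimages of $\overline{P},\overline{P'}$ in $S$, and argue that $P'$ can be chosen fully normalized in $\fs$ among such preimages. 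The $\fs$-extender $N_\phi$ then contains $Q$ because $\phi$ preserves $Q$; applying the extension axiom in $\fs$ to $\phi$ and taking quotients produces an extension of $\hat\phi$ to $N_\phi/Q\ge N_{\hat\phi}$ in $\fs/Q$.

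The main obstacle is the compatibility step in the preceding paragraph, namely showing that being fully normalized (and fully automized / receptive in the Broto--Levi--Oliver formulation) lifts correctly from $\fs/Q$ to $\fs$ on preimages. This is proved by comparing the orders $|N_S(P)/Q|$ as $P$ varies over preimages of a single $\fs/Q$-conjugacy class, using strong closure of $Q$ to match up $\fs$-conjugacy of preimages with $\fs/Q$-conjugacy of their images, and the Sylow axiom of $\fs$ at the subgroup level to choose a preimage in which $\Aut_S(P)$ is Sylow in $\Aut_{\fs}(P)$. As this is carried out in full generality for strongly closed subgroups in \cite[Section II.5]{ako}, the proposition follows by specialising to the case where the strongly closed subgroup is in fact normal.
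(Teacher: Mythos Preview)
Your proposal is correct and takes essentially the same approach as the paper: both defer to \cite[Section II.5]{ako} for the proof, with the paper simply citing Lemma II.5.2(a) and Lemma II.5.5 directly while you provide an outline of the argument before appealing to the same source. Your sketch of the saturation argument (Sylow axiom via the surjection $\Aut_{\fs}(S)\twoheadrightarrow\Aut_{\fs/Q}(S/Q)$, extension axiom by lifting to a fully normalized preimage in $\fs$) is accurate and identifies the genuine technical point correctly.
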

\begin{proof}
See \cite[Lemma II.5.2(a), Lemma II.5.5]{ako}.
\end{proof}

Again in parallel with finite group theory, the notion of normality and simplicity in fusion systems is of key importance. More than this, we will heavily focus on the structure of certain \emph{subnormal} systems of fusion systems. 

\begin{definition}
Let $\fs$ be a saturated fusion system on $S$ and $\mathcal{E}$ a fusion subsystem of $\fs$ supported on $T\le S$. Then $\mathcal{E}$ is normal in $\fs$, written $\mathcal{E}\normaleq \fs$, if $T$ is strongly closed in $\fs$ and the following conditions are satisfied:
\begin{enumerate}
    \item $\mathcal{E}$ is saturated;
    \item $\mathcal{E}^\alpha= \mathcal{E}$ for each $\alpha\in\Aut_{\fs}(T)$;
    \item for each $P\le T$ and each $\phi\in\Hom_{\fs}(P, T)$ there are $\alpha\in \Aut_{\fs}(T)$ and $\phi_0\in \Hom_{\mathcal{E}}(P, T)$ such that $\phi=\phi_0\circ\alpha$; and
    \item each $\alpha\in \Aut_{\mathcal{E}}(T)$ extends to some $\alpha\in \Aut_{\fs}(TC_S(T))$ such that $[\alpha, C_S(T)]\le Z(T)$.
\end{enumerate}

Condition (ii) will be referred to as the invariance condition, while (iii) will be referred to as the Frattini condition. If a subsystem $\mathcal{E}\subseteq \fs$ satisfies (ii) and (iii) then it is said to be $\fs$-invariant. 

Define subnormality in fusion systems to be the transitive extension of normality in fusion systems, and write $\mathcal{E}\normaleq \normaleq \fs$ if $\mathcal{E}$ is a subnormal subsystem of $\fs$.
\end{definition}

We recall that for $Q\le S$, we have that $Q\normaleq \fs$ if and only if $\fs_Q(Q)\normaleq \fs$.

\begin{definition}
Let $\fs$ be a saturated fusion system on a $p$-group $S$ and let $\mathcal{H}$ be a saturated subsystem of $\fs$ supported on $T$. Say that
\begin{enumerate}
\item $\mathcal{H}$ has \emph{index prime to $p$} in $\fs$ if $S=T$ and $O^{p'}(\Aut_{\fs}(P))\le \Aut_{\mathcal{H}}(P)$ for all $P\le S$; and
\item $\mathcal{H}$ has \emph{$p$-power index} in $\fs$ if \[\hyp(\fs):=\langle g^{-1}(g\alpha) \mid g\in P\le S, \alpha\in O^p(\Aut_{\fs}(P))\rangle \le T\] and $O^p(\Aut_{\fs}(P))\le \Aut_{\mathcal{H}}(P)$ for all $P\le S$.
\end{enumerate}
\end{definition}

By \cite[Theorem I.7.7]{ako}, there is a unique smallest fusion subsystem of index prime to $p$ in $\fs$ which we denote by $O^{p'}(\fs)$. This fusion system is normal in $\fs$.

Similarly, as in \cite[Theorem I.7.4]{ako}, we denote the unique smallest fusion system of $p$-power index in $\fs$ by $O^p(\fs)$. We remark that $O^p(\fs)$ is normal in $\fs$ and supported on $\hyp(\fs)$.

\begin{lemma}\label{SNorm}
Suppose that $\fs$ is a saturated fusion system on a $p$-group $S$. If $\mathcal{E}\normaleq \fs$ is supported on $S$ then $\mathcal{E}$ has index prime to $p$ in $\fs$ and $O^{p'}(\fs)=O^{p'}(\mathcal{E})\normaleq \mathcal{E}$. 
\end{lemma}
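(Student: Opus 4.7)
The plan is to first show that $\mathcal{E}$ has index prime to $p$ in $\fs$ directly from the axioms of a normal subsystem, and then derive the equality $O^{p'}(\fs) = O^{p'}(\mathcal{E})$ formally from the minimality characterisation of $O^{p'}$.

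The initial move is at the top of the fusion system. The invariance condition (ii) makes $\Aut_{\mathcal{E}}(S)$ normal in $\Aut_{\fs}(S)$, and since $\mathcal{E}$ and $\fs$ are both saturated on $S$, we have $\Inn(S) = \Aut_S(S) \le \Aut_{\mathcal{E}}(S)$ with $\Aut_S(S) \in \syl_p(\Aut_{\fs}(S))$. A normal subgroup containing a Sylow $p$-subgroup has $p'$-index, so $\Aut_{\fs}(S)/\Aut_{\mathcal{E}}(S)$ is a $p'$-group. I would then lift this to an arbitrary $P \le S$. Given $\psi \in \Aut_{\fs}(P)$, the Frattini condition (iii) yields $\alpha \in \Aut_{\fs}(S)$ and $\phi_0 \in \Hom_{\mathcal{E}}(P, S)$ with $\psi = \phi_0 \circ \alpha$; after adjusting $\alpha$ within its $\Aut_{\mathcal{E}}(S)$-coset and using saturation of $\mathcal{E}$ to move $\phi_0$ along its $\mathcal{E}$-conjugacy class, one arranges that $\alpha$ normalises $P$, so that inside $\Aut_{\fs}(P)$ we have $\psi = \phi_0 \cdot \alpha|_P$ with $\phi_0 \in \Aut_{\mathcal{E}}(P)$ and $\alpha|_P \in \Aut_{\fs}(P)$. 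This factorisation identifies $\Aut_{\fs}(P)/\Aut_{\mathcal{E}}(P)$ with a section of the $p'$-quotient on top; hence $\Aut_{\fs}(P)/\Aut_{\mathcal{E}}(P)$ is itself a $p'$-group, which is exactly the statement that $\mathcal{E}$ has index prime to $p$ in $\fs$.

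With that in hand, the equality follows formally. By minimality of $O^{p'}(\fs)$ among $p'$-index subsystems of $\fs$, we get $O^{p'}(\fs) \subseteq \mathcal{E}$. Since every $p$-element of $\Aut_{\fs}(P)$ lies in $\Aut_{\mathcal{E}}(P)$ by the previous step, the subgroups $O^{p'}(\Aut_{\fs}(P))$ and $O^{p'}(\Aut_{\mathcal{E}}(P))$ coincide for every $P \le S$, being in each case generated by the $p$-elements of $\Aut_{\fs}(P)$. This yields both containments at once: $O^{p'}(\fs)$ has index prime to $p$ in $\mathcal{E}$, giving $O^{p'}(\mathcal{E}) \subseteq O^{p'}(\fs)$ by minimality within $\mathcal{E}$; and $O^{p'}(\mathcal{E})$ has index prime to $p$ in $\fs$, giving $O^{p'}(\fs) \subseteq O^{p'}(\mathcal{E})$ by minimality within $\fs$. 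The normality $O^{p'}(\fs) \normaleq \mathcal{E}$ is then immediate from the general fact that $O^{p'}(\mathcal{E}) \normaleq \mathcal{E}$.

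The main obstacle is the lift from $P = S$ to arbitrary $P \le S$. The Frattini factorisation $\psi = \phi_0 \circ \alpha$ does not \emph{a priori} decompose $\psi$ as a genuine $\mathcal{E}$-automorphism of $P$ times a restriction of an $\fs$-automorphism of $S$, because the intermediate subgroup $P\phi_0 = P\alpha^{-1}$ can be a proper $\fs$-conjugate of $P$. Reducing to the case where $\alpha$ normalises $P$ requires a careful interplay of the invariance and Frattini conditions together with the extension axiom for saturation of $\mathcal{E}$; once this is done, everything else in the proof is either immediate from the definitions or is a routine minimality argument.
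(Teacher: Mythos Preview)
Your overall strategy and the second half of the argument (deriving $O^{p'}(\fs)=O^{p'}(\mathcal{E})$ from the minimality characterisation once $p'$-index is established) match the paper's proof essentially verbatim.

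The gap is exactly where you flag it. You need, for each $P\le S$ and each $\psi\in\Aut_{\fs}(P)$, a factorisation $\psi=\phi_0\cdot(\alpha|_P)$ with $\phi_0\in\Aut_{\mathcal{E}}(P)$ and $\alpha\in N_{\Aut_{\fs}(S)}(P)$. Your sketch for achieving this---replacing $\alpha$ by $\beta\alpha$ for some $\beta\in\Aut_{\mathcal{E}}(S)$ sending $P\alpha^{-1}$ back to $P$---requires that $P$ and $P\phi_0$ be $\Aut_{\mathcal{E}}(S)$-conjugate, not merely $\mathcal{E}$-conjugate. That fails in general (take $P$ not fully $\mathcal{E}$-normalised), and ``using saturation of $\mathcal{E}$ to move $\phi_0$ along its $\mathcal{E}$-conjugacy class'' does not repair this without further argument. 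So as written, the lift from $P=S$ to arbitrary $P$ is not justified.

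The paper sidesteps this entirely: rather than attempting the factorisation, it invokes \cite[Proposition I.6.4]{ako}, which gives directly that $\Aut_{\mathcal{E}}(P)\normaleq\Aut_{\fs}(P)$ for every $P\le S$ whenever $\mathcal{E}$ is $\fs$-invariant. Since $\Aut_S(P)\le\Aut_{\mathcal{E}}(P)$ and $\Aut_S(P)\in\syl_p(\Aut_{\fs}(P))$, the $p'$-index of $\Aut_{\mathcal{E}}(P)$ in $\Aut_{\fs}(P)$ is then immediate, and indeed $O^{p'}(\Aut_{\fs}(P))=O^{p'}(\Aut_{\mathcal{E}}(P))$. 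What you are trying to prove by hand is precisely the content of that proposition; citing it is both shorter and avoids the technical pitfall.
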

\begin{proof}
Since $\mathcal{E}$ is supported on $S$, for all $P\le S$ we have that $\Aut_S(P)\le \Aut_{\mathcal{E}}(P)$. Since $\mathcal{E}$ is normal in $\fs$, it is $\fs$-invariant and so by \cite[Proposition I.6.4]{ako}, we have that $\Aut_{\mathcal{E}}(P)\normaleq \Aut_{\fs}(P)$. In particular, $O^{p'}(\Aut_{\mathcal{E}}(P))=O^{p'}(\Aut_{\fs}(P))\le \Aut_{\mathcal{E}}(P)$ and $\mathcal{E}$ has index prime to $p$ in $\fs$. 

Now, $O^{p'}(\Aut_{\fs}(P))=O^{p'}(\Aut_{\mathcal{E}}(P))\le \Aut_{O^{p'}(\mathcal{E})}(P)$ and we deduce that $O^{p'}(\mathcal{E})$ has index prime to $p$ in $\fs$. Applying \cite[Theorem I.7.7]{ako}, we conclude that $O^{p'}(\fs)\subseteq O^{p'}(\mathcal{E})$. On the other hand, we have that $O^{p'}(\Aut_{\mathcal{E}}(P))=O^{p'}(\Aut_{\fs}(P))\le \Aut_{O^{p'}(\fs)}(P)$ and $O^{p'}(\fs)$ has index prime to $p$ in $\mathcal{E}$. A further application of \cite[Theorem I.7.7]{ako} implies that $O^{p'}(\mathcal{E})\subseteq O^{p'}(\fs)$, and the lemma holds.
\end{proof}

\begin{proposition}\label{HenkeNormal}
For all $T\le S$ with $\hyp(\fs)\le T$, there is a unique normal subsystem $O^p(\fs)T$ of $\fs$ supported on $T$.
\end{proposition}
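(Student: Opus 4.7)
The plan is to apply \cite[Theorem I.7.4]{ako} to produce the unique saturated subsystem $\mathcal{E}$ of $\fs$ of $p$-power index supported on $T$, set $O^p(\fs)T := \mathcal{E}$, and then verify that $\mathcal{E}$ satisfies the four conditions in the definition of a normal subsystem. Existence of $\mathcal{E}$, together with $\mathcal{E} \supseteq O^p(\fs)$ and $\Aut_{\mathcal{E}}(P) \supseteq O^p(\Aut_{\fs}(P))$ for every $P \le T$, is immediate from \cite[Theorem I.7.4]{ako} since $\hyp(\fs) \le T \le S$, so the substantive work is normality and uniqueness.

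For normality I would verify the four defining conditions in turn. First, $T$ is strongly closed in $\fs$ because any $\fs$-morphism from $P \le T$ can be factored into contributions from $O^p(\Aut_{\fs}(P))$-terms, whose images lie inside $\hyp(\fs) \le T$, together with restrictions of elements of $\Aut_S(P)$, which preserve $T \normaleq S$. Saturation of $\mathcal{E}$ is built in. The invariance condition is automatic from uniqueness: for $\alpha \in \Aut_{\fs}(T)$, conjugation by $\alpha$ preserves the defining containment $O^p(\Aut_{\fs}(P)) \le \Aut_{\mathcal{E}^\alpha}(P)$, so $\mathcal{E}^\alpha$ is again a saturated subsystem of $p$-power index on $T$, and hence equals $\mathcal{E}$. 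For the Frattini condition, given $\phi \in \Hom_{\fs}(P, T)$ I would extend by receptivity of $T$ and Alperin's fusion theorem to a morphism that can be decomposed against $\Aut_{\mathcal{E}}(T) \normaleq \Aut_{\fs}(T)$, which has $p'$-index since it contains $O^p(\Aut_{\fs}(T))$. Finally, for the extension condition, any $\alpha \in \Aut_{\mathcal{E}}(T) \subseteq \Aut_{\fs}(T)$ extends to some $\bar{\alpha} \in \Aut_{\fs}(TC_S(T))$ because $T \normaleq S$ places $C_S(T)$ inside the extension normalizer $N_\alpha$; a standard inner-automorphism adjustment then yields $[\bar{\alpha}, C_S(T)] \le Z(T)$.

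For uniqueness among normal subsystems on $T$, if $\mathcal{N}$ is any normal subsystem of $\fs$ on $T$ then $\mathcal{N}$ is saturated and $\Aut_{\mathcal{N}}(P) \normaleq \Aut_{\fs}(P)$ for each $P \le T$ by $\fs$-invariance; since $\Aut_{\mathcal{N}}(P)$ contains a Sylow $p$-subgroup of $\Aut_{\fs}(P)$ (using saturation of $\mathcal{N}$ together with the Frattini condition to transport Sylow subgroups of $\Aut_{\fs}(P)$ into $\mathcal{N}$), the quotient $\Aut_{\fs}(P)/\Aut_{\mathcal{N}}(P)$ is a $p'$-group, so $O^p(\Aut_{\fs}(P)) \le \Aut_{\mathcal{N}}(P)$; thus $\mathcal{N}$ has $p$-power index and equals $O^p(\fs)T$ by the uniqueness clause of \cite[Theorem I.7.4]{ako}. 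The main obstacle throughout is the Frattini condition, which demands a careful factorization of arbitrary $\fs$-morphisms through $\mathcal{E}$ and $\Aut_{\fs}(T)$; the extension condition is technically delicate but follows fairly directly from $T \normaleq S$ and the structure of $TC_S(T)$.
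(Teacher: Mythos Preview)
The paper does not prove this proposition at all: it simply cites \cite[Theorem 5]{aschfit} and \cite[Theorem 2]{HenkeNormal}. Your proposal is therefore not a comparison of approaches but an attempt to supply a proof where the paper outsources one. That is a legitimate thing to try, but your sketch underestimates where the difficulty lies.

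The existence of the unique saturated subsystem $\mathcal{E}$ of $p$-power index on $T$, strong closure of $T$, and the invariance condition (ii) via uniqueness are all fine. The uniqueness argument at the end is also essentially correct. The real issue is the extension condition (iv), which you dismiss as following ``fairly directly'' after a ``standard inner-automorphism adjustment''. It does not. You correctly observe that $C_S(T)\le N_\alpha$, so an extension $\bar\alpha\in\Aut_\fs(TC_S(T))$ exists; but you then need $\bar\alpha$ to act trivially on $TC_S(T)/T\cong C_S(T)/Z(T)$. Adjusting $\bar\alpha$ by $c_g$ while preserving $\bar\alpha|_T=\alpha$ forces $g\in C_S(T)$, and such $c_g$ acts on $C_S(T)/Z(T)$ only through $\Inn(C_S(T))$, which in general does not act transitively enough to kill the action of an arbitrary extension. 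There is no reason $C_S(T)'\le Z(T)$ here (note $S'$ need not lie in $\hyp(\fs)$), so the adjustment you invoke is not available. Producing an extension with $[\bar\alpha,C_S(T)]\le Z(T)$ is precisely the substantive content of Aschbacher's and Henke's theorems, and it requires either Aschbacher's machinery of constricted covers or Henke's careful analysis of morphism factorisations; it is not a routine step.

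Your treatment of the Frattini condition (iii) is also too thin to count as a proof, as you yourself flag; ``decompose against $\Aut_{\mathcal{E}}(T)\normaleq\Aut_\fs(T)$'' presupposes exactly the kind of factorisation that needs to be established. In short: the paper's one-line citation is appropriate because this is a genuine theorem, and your outline, while pointing in the right direction, does not close the gap at condition (iv).
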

\begin{proof}
This follows from \cite[Theorem 5]{aschfit}, but see also \cite[Theorem 2]{HenkeNormal}.
\end{proof}

\begin{definition}
Let $\fs$ be a saturated fusion system. Then $\fs$ is \emph{simple} if $\fs$ has no proper non-trivial normal subsystems; and $\fs$ is \emph{reduced} if $O^{p'}(\fs)=O^p(\fs)=\fs$ and $O_p(\fs)=\{1\}$. Clearly, any saturated fusion system which is simple is also reduced. 
\end{definition}

Contrary to the situation for finite groups, one has to be careful in defining the intersection of two normal fusion subsystems if the result is to remain normal. We will not provide an exact description of the appropriate intersections here, but the existence of a suitable subsystem provided by the theorem below is enough for our purposes.

\begin{theorem}\label{wedge}
Let $\mathcal{E}_i$ be a normal subsystem of $\fs$ on a subgroup $T_i$ of $S$, for $i\in\{1,2\}$. Then there exists a normal subsystem $\mathcal{E}_1\wedge \mathcal{E}_2$ of $\fs$ on $T_1\cap T_2$ normal in $\mathcal{E}_1$ and $\mathcal{E}_2$. Moreover $\mathcal{E}_1 \wedge \mathcal{E}_2$ is the largest normal subsystem of $\fs$ on $T_1\cap T_2$ normal in $\mathcal{E}_1$ and $\mathcal{E}_2$.
\end{theorem}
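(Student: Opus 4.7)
The plan is to construct $\mathcal{E}_1 \wedge \mathcal{E}_2$ as the unique maximum element of the poset of normal subsystems of $\fs$ supported on $T := T_1 \cap T_2$ which happen to be normal in both $\mathcal{E}_1$ and $\mathcal{E}_2$. I would first note that each $T_i$ is strongly closed in $\fs$ (being the support of a normal subsystem), so $T$ is itself strongly closed in $\fs$, giving the correct underlying $p$-group on which to attempt the construction.

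The most direct attempt---setting $\Aut_{\mathcal{E}_1 \wedge \mathcal{E}_2}(P) := \Aut_{\mathcal{E}_1}(P) \cap \Aut_{\mathcal{E}_2}(P)$ for $P \leq T$ and taking the generated fusion system---is unlikely to deliver a saturated system on the nose, and the extension axiom (iv) of the normal-subsystem definition is especially unpleasant to verify from such a hand-built generating set. Instead, I would pass to a linking locality $(\mathcal{L}, \Delta, S)$ realizing $\fs$ in the sense of Chermak. Under the bijection between normal subsystems of $\fs$ and partial normal subgroups of $\mathcal{L}$, the subsystems $\mathcal{E}_i$ correspond to partial normal subgroups $\mathcal{N}_i \normaleq \mathcal{L}$. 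The set-theoretic intersection $\mathcal{N}_1 \cap \mathcal{N}_2$ is immediately a partial normal subgroup of $\mathcal{L}$---partial normality is a conjugation-closure condition that is visibly preserved under intersection---and its Sylow $p$-subgroup is $T$. The saturated fusion system associated to $\mathcal{N}_1 \cap \mathcal{N}_2$ is the candidate $\mathcal{E}_1 \wedge \mathcal{E}_2$; its normality in $\fs$ is immediate from the correspondence, and the same reasoning applied to the subsidiary localities attached to each $\mathcal{N}_i$ yields normality in $\mathcal{E}_i$.

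Maximality is then automatic: any $\mathcal{D} \normaleq \fs$ on $T$ that is normal in both $\mathcal{E}_i$ corresponds to a partial normal subgroup $\mathcal{M} \normaleq \mathcal{L}$ with $\mathcal{M} \leq \mathcal{N}_1$ and $\mathcal{M} \leq \mathcal{N}_2$, hence $\mathcal{M} \leq \mathcal{N}_1 \cap \mathcal{N}_2$, which forces $\mathcal{D} \subseteq \mathcal{E}_1 \wedge \mathcal{E}_2$. The main obstacle is the reliance on the heavy machinery: Chermak's existence theorem for centric linking localities and the bijection between normal subsystems and partial normal subgroups are nontrivial inputs. Avoiding them pushes one into Aschbacher's direct fusion-theoretic route, where one must verify saturation together with all four clauses of normality for the candidate subsystem essentially simultaneously, and it is the extension condition (iv)---requiring every $\mathcal{E}_1 \wedge \mathcal{E}_2$-automorphism of $T$ to lift to $\Aut_{\fs}(T C_S(T))$ in a centralizer-compatible way---that I expect to be the genuinely delicate step.
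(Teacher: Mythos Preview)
The paper does not prove this statement; it simply cites \cite[Theorem 1]{aschfit}, i.e.\ Aschbacher's original result. Your proposal therefore goes well beyond what the paper does, and takes a genuinely different route from the one being cited.

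Aschbacher's proof in \cite{aschfit} is purely fusion-theoretic: he builds the candidate subsystem directly and verifies saturation and the four normality axioms by hand, which is exactly the ``direct'' route you describe as the fallback. Your approach via Chermak's linking localities and the partial-normal-subgroup correspondence is the more modern alternative developed by Chermak and Henke; it trades the delicate axiom-checking (particularly saturation and the extension condition (iv)) for the heavy existence and correspondence theorems, after which the intersection is essentially for free. Both are valid; the locality route is cleaner once the machinery is in place, while Aschbacher's argument is self-contained within fusion systems. One point to be slightly careful about in your sketch: deducing that $\mathcal{E}_1\wedge\mathcal{E}_2$ is normal in each $\mathcal{E}_i$ (not just in $\fs$) requires knowing that the correspondence between normal subsystems and partial normal subgroups is compatible with passage to the sublocality on $\mathcal{N}_i$, which is true but is an additional input beyond the bijection for $\mathcal{L}$ itself.
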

\begin{proof}
This is \cite[Theorem 1]{aschfit}.
\end{proof}

\begin{definition}
Let $\fs$ be a saturated fusion system on a finite $p$-group $S$. Define \[\Aut(\fs)=\{\alpha\in \Aut(S) \mid \fs^\alpha=\fs\}\] and \[\Out(\fs)=\Aut(\fs)/\Aut_{\fs}(S).\] 

We will not define \emph{tameness} in this article, but refer to \cite{AOV} for details. For the purposes of this paper, we will understand $\fs$ to be \emph{tamely realized} by some finite group $G$ if, upon identifying $S\in \syl_p(G)$ we have that $\fs=\fs_S(G)$ and $\Out(\fs)\cong\Out(G)$. That is, automorphisms of $\fs$ may be understood by automorphisms of $G$. Often $G$ will be a finite simple group and we will require the results provided in \cite{GLS3} concerning automorphisms (especially fixed points of automorphisms).
\end{definition}

\begin{theorem}\label{krullschmidt}
Suppose that $\fs$ is a saturated fusion system on a $p$-group $S$. Then there exists indecomposable fusion subsystems $\mathcal{E}_1,\dots,\mathcal{E}_k\le \fs$ such that
\[\fs=\mathcal{E}_1\times \dots \times \mathcal{E}_k.\]
If $\fs=\mathcal{E}_1^*\times \dots \times \mathcal{E}_m^*$ is another such factorization then $k=m$ and there is an automorphism $\alpha\in\Aut(\fs)$ and a permutation $\sigma\in\Sym(k)$ such that $\mathcal{E}_i\alpha=\mathcal{E}_{i\sigma}^*$ for each $i$.

Moreover, if either $Z(\fs)=\{1\}$ or $\hyp(\fs)=S$, then $\fs$ factors as a product of indecomposable fusion subsystems in a unique way. Thus $\fs$ is the direct product of all of its indecomposable direct factors.
\end{theorem}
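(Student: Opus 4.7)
The existence of an indecomposable factorization is immediate by induction on $|S|$: if $\fs$ is itself indecomposable, set $k=1$; otherwise write $\fs = \mathcal{A}\times\mathcal{B}$ as a nontrivial direct product, necessarily supported on a decomposition $S = A\times B$ with $A,B$ both proper in $S$, and apply the inductive hypothesis to $\mathcal{A}$ and $\mathcal{B}$ in turn, concatenating the resulting factorizations.

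For the uniqueness up to $\Aut(\fs)$, first observe that any direct product decomposition $\fs = \mathcal{E}_1\times\dots\times\mathcal{E}_k$ induces a direct product decomposition $S = S_1\times\dots\times S_k$ in which each $S_i$ is strongly closed in $\fs$, and each $\mathcal{E}_i$ is recovered from the pair $(\fs, S_i)$. Given two such decompositions $\fs = \mathcal{E}_1\times\dots\times\mathcal{E}_k = \mathcal{E}_1^*\times\dots\times\mathcal{E}_m^*$ supported on $S_1\times\dots\times S_k = T_1\times\dots\times T_m$, the plan is to invoke Theorem~\ref{wedge} to form, for each $(i,j)$, the normal subsystem $\mathcal{D}_{ij} := \mathcal{E}_i\wedge\mathcal{E}_j^*$ of $\fs$, supported on $S_i\cap T_j$ and normal in both $\mathcal{E}_i$ and $\mathcal{E}_j^*$. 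Fixing $i$, the collection $\{\mathcal{D}_{ij}\}_j$ assembles (via the second decomposition) into a direct-product expression for $\mathcal{E}_i$; indecomposability of $\mathcal{E}_i$ then forces exactly one index $j=:\sigma(i)$ to satisfy $\mathcal{D}_{i,\sigma(i)} = \mathcal{E}_i$, with the remaining $\mathcal{D}_{ij}$ trivial. The symmetric argument for the $\mathcal{E}_j^*$ shows $\sigma\in\Sym(k)$, forces $k = m$, and identifies $\mathcal{E}_i$ with $\mathcal{E}_{\sigma(i)}^*$ on the common subgroup $S_i = S_i\cap T_{\sigma(i)} = T_{\sigma(i)}$. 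The classical Krull--Schmidt theorem applied to the finite group $S$ then supplies $\beta\in\Aut(S)$ realizing the matching of the two decompositions of $S$, and the strong closure of the $S_i$ and $T_j$ together with the wedge-matching above allows one to arrange $\beta\in\Aut(\fs)$.

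For the ``moreover'' claim, under $Z(\fs)=\{1\}$ or $\hyp(\fs)=S$, the automorphism $\beta$ above can be taken to be the identity. In the centreless case, the diagonal ``twists'' that produce the usual automorphism ambiguity in classical Krull--Schmidt for finite groups collapse exactly as they do classically: any ambiguity factors through automorphisms concentrated on $Z(\fs) = \{1\}$. In the case $\hyp(\fs) = S$, one has $\fs = O^p(\fs)$, and Proposition~\ref{HenkeNormal} combined with the rigidity of subsystems of $p$-power index leaves no room for a nontrivial twist. In either situation the indecomposable factorization is then unique up to a permutation of the factors.

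The main obstacle is the uniqueness step: while matching the two direct-product decompositions of the underlying $p$-group $S$ via classical Krull--Schmidt is routine, verifying that this identification is realized by an honest automorphism of $\fs$ (and not merely an abstract group isomorphism lifting to some isomorphism of fusion subsystems) requires careful bookkeeping via the wedge construction and the invariance and Frattini conditions governing normal subsystems. Extracting the ``moreover'' sharpening is a clean analog of the classical result, but still demands an explicit check that the residual twist-group is trivial under the hypothesis in question.
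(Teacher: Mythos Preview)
The paper does not prove this theorem at all: its ``proof'' is a citation to Oliver's paper \cite{oliverkrull}, where the general statement is Theorem~A and the sharpened uniqueness under $Z(\fs)=\{1\}$ or $\hyp(\fs)=S$ is Corollary~5.3. So there is no argument in the paper to compare your sketch against.

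That said, your uniqueness argument has a genuine gap. You claim that the collection $\{\mathcal{D}_{ij}\}_j$, with $\mathcal{D}_{ij}=\mathcal{E}_i\wedge\mathcal{E}_j^*$ supported on $S_i\cap T_j$, ``assembles into a direct-product expression for $\mathcal{E}_i$.'' But for this even to make sense at the level of the underlying $p$-group you would need $S_i=\prod_j(S_i\cap T_j)$, and this is simply false for two arbitrary direct decompositions $S=\prod_i S_i=\prod_j T_j$, even when all factors are normal (hence strongly closed). Already in $S=C_p\times C_p$ one can take $S_1=\langle(1,0)\rangle$, $S_2=\langle(0,1)\rangle$, $T_1=\langle(1,1)\rangle$, $T_2=\langle(0,1)\rangle$; then $S_1\cap T_1=S_1\cap T_2=\{1\}$. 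This is precisely why classical Krull--Schmidt only gives uniqueness up to a central automorphism, and why Oliver's proof proceeds not by intersecting factors but by analysing idempotent endomorphisms (a Fitting-lemma style argument) to produce the matching automorphism of $\fs$. Your later invocation of the group-theoretic Krull--Schmidt for $S$ is on the right track, but it does not rescue the wedge step: you would have to bypass the $\mathcal{D}_{ij}$ construction entirely and instead show directly that the automorphism of $S$ furnished by group Krull--Schmidt can be chosen fusion-preserving, which is exactly the nontrivial content of Oliver's argument.
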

\begin{proof}
This comes from \cite[Theorem A]{oliverkrull} and the specialized statement is \cite[Corollary 5.3]{oliverkrull}.
\end{proof}

The more specialized statement is what we intend to apply at various points in this paper. In particular, we apply this technique to a set of components of certain local subsystems later in our analysis.

\begin{definition}
Let $\fs$ be a saturated fusion system on a $p$-group $S$. Then $\mathcal{C}$ is a \emph{component} of $\fs$ if $\mathcal{C}=O^p(\mathcal{C})$ is a subnormal saturated subsystem of $\fs$ such that $\mathcal{C}/Z(\mathcal{C})$ is simple. Write $\Comp(\fs)$ for the set of components of $\fs$ and $E(\fs)$ the normal subsystem of $\fs$ generated by $\Comp(\fs)$.
\end{definition}

In understanding quasisimple fusion systems, it is necessary to understand ``central extensions" of simple fusion systems. Fortunately, in our situation, every such simple subsystem we encounter will suitably contain the $p$-fusion category of some finite group. Thus, we are often able to apply the following result:

\begin{proposition}\label{SchurMultiplier}
Let $G$ be a perfect finite group such that $H^2(G,\C^\times)$ is a $p'$-group, let $P$ be a Sylow $p$-subgroup of $G$ and let $\fs$ be a saturated fusion system on $P$ such that $\fs_P(G) \le \fs$. If there is a central extension of finite $p$-groups
\[\{1\} \to Z \to \hat{P} \to P \to \{1\}\]
such that there exists a fusion system $\hat{\fs}$ on $\hat{P}$ with $Z\le Z(\hat{\fs})$ and $\hat{\fs}/Z\cong \fs$ then 
\[\hat{P}\cong Z \times P\] and \[\hat{\fs}\cong \fs_Z(Z)\times \fs.\]
\end{proposition}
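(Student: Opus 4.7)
The plan is to use the hypothesis on the Schur multiplier to show that any central $p$-extension of $\fs_P(G)$ is trivial, from which the splitting of $\hat{P}$ and then of $\hat{\fs}$ will follow.

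First, I would let $\hat{\mathcal{E}}\le \hat{\fs}$ denote the preimage of $\fs_P(G)$ under the quotient $\hat{\fs}\to \hat{\fs}/Z\cong \fs$. Then $\hat{\mathcal{E}}$ is a saturated fusion subsystem on $\hat{P}$ with $Z\le Z(\hat{\mathcal{E}})$ and $\hat{\mathcal{E}}/Z\cong \fs_P(G)$, so it is a central $p$-extension of $\fs_P(G)$ by $Z$. By the theory of centric linking systems and their classifying spaces, such central extensions are classified by $H^2(|\mathcal{L}^c_P(G)|,Z)$, and since $\fs_P(G)$ is realized by $G$, the space $|\mathcal{L}^c_P(G)|^{\wedge}_p$ is homotopy equivalent to $BG^{\wedge}_p$ (Martino--Priddy / Oliver). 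For the $p$-group $Z$ this cohomology equals $H^2(G,Z)$.

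Second, since $G$ is perfect, $H_1(G,\Z)=\{1\}$, so universal coefficients yield
\[H^2(G,Z)\cong \Hom(H_2(G,\Z),Z)=\Hom(M(G),Z),\]
where $M(G)\cong H^2(G,\C^\times)$ is the Schur multiplier. As $M(G)$ is a $p'$-group by hypothesis and $Z$ is a $p$-group, $\Hom(M(G),Z)=0$. Hence the central extension $\hat{\mathcal{E}}$ is trivial, giving $\hat{P}\cong Z\times P$ and $\hat{\mathcal{E}}\cong \fs_Z(Z)\times \fs_P(G)$.

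Third, to conclude $\hat{\fs}\cong \fs_Z(Z)\times \fs$, I would argue that, with $\hat{P}=Z\times P$ fixed and $Z\le Z(\hat{\fs})$, every morphism of $\hat{\fs}$ fixes $Z$ pointwise on its domain and descends to a morphism of $\fs$ under $\hat{\fs}/Z\cong\fs$. Using the product subsystem $\hat{\mathcal{E}}\cong\fs_Z(Z)\times \fs_P(G)\subseteq \hat{\fs}$ together with the Alperin--Goldschmidt generation of $\hat{\fs}$ by automorphism groups of $\hat{\fs}$-essential and $\hat{P}$-subgroups, one verifies that every morphism of $\hat{\fs}$ already lies in $\fs_Z(Z)\times \fs$, so that $\hat{\fs}$ splits as claimed.

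The main obstacle is Step~1 -- the identification of the classifying cohomology of central $p$-extensions of a group-realizable fusion system with the ordinary group cohomology of the realizing group. This relies on nontrivial homotopical input (centric linking systems and the Martino--Priddy equivalence). Once this is granted, the remainder is a universal coefficients computation and a formal product decomposition argument for fusion systems.
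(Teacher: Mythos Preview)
The paper does not give its own proof; it simply cites \cite[Corollary~4.4]{FusSchur}. Your outline is in the right spirit, and Steps~1--2 are essentially correct (modulo the nontrivial homotopical input you already flag).

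The genuine weak point is Step~3. Knowing that $\hat{\mathcal{E}}\cong\fs_Z(Z)\times\fs_P(G)$ only tells you that morphisms of $\hat{\fs}$ lying over $\fs_P(G)$ respect the direct factor $P\le\hat P$; it says nothing about morphisms lying over $\fs\setminus\fs_P(G)$, and invoking Alperin--Goldschmidt does not close that gap by itself. A cleaner way to finish, staying inside the framework you have already set up, is to note that the extension class of $\hat{\fs}$ over $\fs$ lies in $H^2(|\mathcal{L}_{\fs}|;Z)$ and restricts along $BP\hookrightarrow|\mathcal{L}_{\fs}|$ to the class of $\hat P$ over $P$ in $H^2(P;Z)$. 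Since $H^*(|\mathcal{L}_{\fs}|;Z)$ is identified with the $\fs$-stable elements in $H^*(P;Z)$, this restriction is injective; as the image vanishes (because $\hat P\cong Z\times P$), the class of $\hat{\fs}$ is zero and $\hat{\fs}\cong\fs_Z(Z)\times\fs$.
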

\begin{proof}
This is \cite[Corollary 4.4]{FusSchur}.
\end{proof}

Finally, in parallel with groups, we can define the \emph{generalized Fitting subsystem} of a saturated fusion system.

\begin{definition}
Let $\fs$ be a saturated fusion system on a $p$-group $S$, and assume that $E(\fs)$ is supported on $T$. We define $F^*(\fs):=\langle E(\fs), \fs_{O_p(\fs)}(O_p(\fs))\rangle_{TO_p(\fs)}$ for the generalized Fitting subsystem of $\fs$ and write $F^*(\fs)=O_p(\fs)E(\fs)$. 
\end{definition}

We have by \cite[{}9.9]{aschfit} that $F^*(\fs)\normaleq \fs$ and that $F^*(\fs)$ is central product of $E(\fs)$ and $O_p(\fs)$ (so that $E(\fs)\subseteq C_{\fs}(O_p(\fs))$ and $O_p(\fs)\le C_S(E(\fs))$).

\begin{proposition}\label{conlayer}
Let $\fs$ be a saturated fusion system. Then $\fs$ is constrained if and only if $E(\fs)=\{1\}$. 
\end{proposition}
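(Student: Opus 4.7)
The plan is to prove the two implications separately, with both directions making essential use of the central product description $F^*(\fs)=O_p(\fs)E(\fs)$ recorded immediately before the proposition.

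For the reverse direction, assume $E(\fs)=\{1\}$, so that $F^*(\fs)$ coincides with $\fs_{O_p(\fs)}(O_p(\fs))$. I would then invoke the fusion-theoretic analogue of the classical Fitting theorem, established by Aschbacher in \cite{aschfit}, which asserts that $C_S(F^*(\fs))\le F^*(\fs)$. Specialising to our situation gives $C_S(O_p(\fs))\le O_p(\fs)$, which is exactly the definition of $\fs$ being constrained.

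For the forward direction, I would argue by contradiction. Suppose $\fs$ is constrained yet some component $\mathcal{C}\in\Comp(\fs)$ exists, supported on a non-trivial subgroup $T_\mathcal{C}$. The central product assertion from \cite[{}9.9]{aschfit} gives $T_\mathcal{C}\le C_S(O_p(\fs))$, and constraint collapses this to $T_\mathcal{C}\le O_p(\fs)$, so in fact $T_\mathcal{C}\le Z(O_p(\fs))$; in particular $T_\mathcal{C}$ is abelian. The argument then finishes by noting that no component can be supported on an abelian $p$-group: indeed, Burnside forces $T_\mathcal{C}\normaleq\mathcal{C}$, the simple quotient $\mathcal{C}/Z(\mathcal{C})$ then lives on the abelian group $T_\mathcal{C}/Z(\mathcal{C})$, and simplicity against abelianness restricts the automiser to a $p'$-group acting on $T_\mathcal{C}/Z(\mathcal{C})$. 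This forces $\hyp(\mathcal{C})$ to be a proper subgroup of $T_\mathcal{C}$, contradicting the quasisimplicity condition $\mathcal{C}=O^p(\mathcal{C})$.

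The main obstacle I anticipate is formalising this last step, ruling out components whose underlying $p$-group is abelian. Although the intuition is clear, a rigorous argument requires care with the interaction between the notion of simplicity of $\mathcal{C}/Z(\mathcal{C})$, the hyperfocal construction, and the passage to quotients. The cleanest approach is probably to extract this fact as a short preliminary lemma (\emph{quasisimple fusion systems have non-abelian underlying $p$-groups}) and invoke it at the critical moment, rather than inlining the verification.
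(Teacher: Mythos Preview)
The paper does not supply its own proof here; it simply cites \cite[{}14.2]{aschfit}. Your proposal therefore goes further than the paper by sketching an actual argument, built from other structural results in \cite{aschfit}.

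Your reverse implication is sound. The input you need is \cite[{}9.11]{aschfit}, which gives $C_S(F^*(\fs))=Z(F^*(\fs))$ (the paper itself uses this in the proof of \cref{MinSimple}), and this is established independently of 14.2 in Aschbacher's development.

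Your forward implication is also correct, and you have put your finger on the only delicate point. The completion you sketch does work and can be made slightly cleaner: once $T_\mathcal{C}$ is abelian, saturation forces $\Aut_\mathcal{C}(T_\mathcal{C})$ to be a $p'$-group (since $\Inn(T_\mathcal{C})=\{1\}$ is Sylow in it), so $\hyp(\mathcal{C})=[T_\mathcal{C},\Aut_\mathcal{C}(T_\mathcal{C})]$; coprime action then gives $T_\mathcal{C}=\hyp(\mathcal{C})\times Z(\mathcal{C})$, and $\mathcal{C}=O^p(\mathcal{C})$ forces $Z(\mathcal{C})=\{1\}$. Now $\mathcal{C}$ itself is simple with $O_p(\mathcal{C})=T_\mathcal{C}\ne\{1\}$, so $\fs_{T_\mathcal{C}}(T_\mathcal{C})\normaleq\mathcal{C}$ is either trivial (impossible) or all of $\mathcal{C}$, whence $\Aut_\mathcal{C}(T_\mathcal{C})=\{1\}$ and $\hyp(\mathcal{C})=\{1\}$, a contradiction. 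So the obstacle you flag is real but easily overcome; extracting it as a preliminary lemma is reasonable but not strictly necessary.
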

\begin{proof}
See \cite[{}14.2]{aschfit}.
\end{proof}

\begin{proposition}\label{p'genfit}
$F^*(\fs)=O^{p'}(F^*(\fs))$.
\end{proposition}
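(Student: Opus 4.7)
My plan is to prove $F^*(\fs)=O^{p'}(F^*(\fs))$ by leveraging the central product decomposition $F^*(\fs)=O_p(\fs)E(\fs)$, showing that both the $p$-core factor and each component individually absorb into $O^{p'}(F^*(\fs))$. Throughout, I write $\mathcal{F}^*:=F^*(\fs)$, let $T$ be the support of $E(\fs)$, so that the support of $\mathcal{F}^*$ is $S^*:=TO_p(\fs)$; and I note that $O^{p'}(\mathcal{F}^*)$ is, by definition, a normal subsystem of $\mathcal{F}^*$ supported on $S^*$.

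First I would dispense with the $p$-core. For every $P\le O_p(\fs)$, the automizer $\Aut_{\fs_{O_p(\fs)}(O_p(\fs))}(P)=N_{O_p(\fs)}(P)/C_{O_p(\fs)}(P)$ is a $p$-group, so $O^{p'}$ acts trivially on it. Hence $\fs_{O_p(\fs)}(O_p(\fs))$ has no proper subsystem of index prime to $p$, and by the characterization of $O^{p'}$ in the definition of ``index prime to $p$'' (and \cref{SNorm}), the entire $p$-core subsystem lies inside $O^{p'}(\mathcal{F}^*)$.

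Next I would treat the components. For each $\mathcal{C}\in \Comp(\fs)$ on support $T_{\mathcal{C}}\le T$, we have $\mathcal{C}=O^p(\mathcal{C})$ and $\mathcal{C}/Z(\mathcal{C})$ simple. Since $O^{p'}(\mathcal{C})$ is normal in $\mathcal{C}$ on the full support $T_{\mathcal{C}}$, its image in $\mathcal{C}/Z(\mathcal{C})$ is normal on the full quotient $p$-group and hence, by simplicity of $\mathcal{C}/Z(\mathcal{C})$, equals the whole quotient. Applying \cref{SNorm} in $\mathcal{C}$ then forces $O^{p'}(\mathcal{C})=\mathcal{C}$. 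Because $\mathcal{C}$ is subnormal in $\mathcal{F}^*$, iterating \cref{SNorm} up the subnormal chain (combined with the Frattini condition for each normal link) transports the equality $O^{p'}(\mathcal{C})=\mathcal{C}$ into the statement that $\mathcal{C}\subseteq O^{p'}(\mathcal{F}^*)$; the key point is that on the subnormal step from $\mathcal{C}$ to its ``parent,'' the relevant automizer is extended only by elements that are themselves $O^{p'}$-trivial modulo the factor below, via the central product structure established in \cite[{}9.9]{aschfit}.

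Finally I would combine these two ingredients via the central product description $\mathcal{F}^*=O_p(\fs)E(\fs)$: since both the $p$-core subsystem and every component lie inside the normal subsystem $O^{p'}(\mathcal{F}^*)$, and since $\mathcal{F}^*$ is generated (in the sense of \cref{wedge} / \cite[{}9.9]{aschfit}) by exactly this data on the support $S^*$, we conclude $O^{p'}(\mathcal{F}^*)\supseteq \mathcal{F}^*$, and the reverse inclusion is automatic. The main obstacle I anticipate is the ``generation by central product'' step in the last paragraph: turning the set-theoretic fact that every generator of $\mathcal{F}^*$ lies in a particular normal subsystem into the fusion-systems statement that the two subsystems coincide requires careful invocation of Aschbacher's machinery in \cite{aschfit}. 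In practice, then, this proposition is almost certainly a one-line citation to the relevant item of \cite{aschfit}, the work above being the content hidden inside that citation.
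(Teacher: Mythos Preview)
Your overall strategy matches the paper's: show $O^{p'}(\mathcal{C})=\mathcal{C}$ for each component, observe that the $p$-core subsystem is trivially contained, and then assemble via the central product decomposition. However, your execution of the key step---passing from $O^{p'}(\mathcal{C})=\mathcal{C}$ to $\mathcal{C}\subseteq O^{p'}(F^*(\fs))$---has a genuine gap. You invoke ``iterating \cref{SNorm} up the subnormal chain,'' but \cref{SNorm} only applies when the normal subsystem is supported on the \emph{full} Sylow of the ambient system. A component $\mathcal{C}$ is subnormal in $F^*(\fs)$ on a support strictly smaller than $S^*$, so you cannot feed $\mathcal{C}$ directly into \cref{SNorm} at the level of $F^*(\fs)$ or even at the level of $E(\fs)$. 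Your parenthetical about automizers and the central product structure gestures at the missing ingredient but does not supply it.

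The paper closes this gap using the wedge construction of \cref{wedge}. Concretely: $E(\fs)\wedge O^{p'}(F^*(\fs))$ is normal in $E(\fs)$ on the full support $T$, so \cref{SNorm} applies there; if the wedge were proper, then $O^{p'}(E(\fs))<E(\fs)$. Then for each component $\mathcal{C}$, the wedge $O^{p'}(E(\fs))\wedge\mathcal{C}$ is normal in $\mathcal{C}$ on the full Sylow of $\mathcal{C}$, so \cref{SNorm} (together with $\mathcal{C}=O^{p'}(\mathcal{C})$) forces $\mathcal{C}\subseteq O^{p'}(E(\fs))$, yielding the contradiction. The wedge is precisely the device that produces, at each level, a normal subsystem on the \emph{correct} support so that \cref{SNorm} becomes applicable. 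Finally, your closing prediction is off: the paper does not cite this out to \cite{aschfit}; it gives exactly the argument just described.
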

\begin{proof}
Let $T$ be Sylow in $E(\fs)$. We have that $E(\fs)\wedge O^{p'}(F^*(\fs))$ is a normal subsystem of $E(\fs)$ supported on $T$ by \cref{wedge}. If $E(\fs)\wedge O^{p'}(F^*(\fs))<E(\fs)$ then $O^{p'}(E(\fs))<E(\fs)$ by \cref{SNorm}. But now, for each $\mathcal{C}\in\Comp(\fs)$, we have that $O^{p'}(E(\fs))\wedge \mathcal{C}$ is a normal subsystem of $\mathcal{C}$ supported on a Sylow of $\mathcal{C}$. Since $\mathcal{C}$ is quasisimple, $\mathcal{C}=O^{p'}(\mathcal{C})$ and applying \cref{SNorm}, we conclude that $\mathcal{C}=O^{p'}(E(\fs))\wedge \mathcal{C} \subset O^{p'}(E(\fs))$ from which we deduce a contradiction. Hence, $E(\fs)\subseteq O^{p'}(F^*(\fs))$ and since $F^*(\fs)=\fs_{O_p(\fs)}(O_p(\fs))E(\fs)$ and $\fs_{O_p(\fs)}(O_p(\fs))\subseteq O^{p'}(\fs)$, we have that $F^*(\fs)\subseteq O^{p'}(F^*(\fs))$, as desired.
\end{proof}

After all of this one may wonder why it is better to prove results in the context of fusion systems as opposed to finite groups. A key reason is that certain \emph{balance} arguments are far easier in fusion systems than in finite groups. We exemplify this with the following theorem due to Aschbacher.

\begin{theorem}\label{balance}
Let $\fs$ be a saturated fusion system and $U\in\fs^f$. Then $E(N_{\fs}(U))\le E(\fs)$.
\end{theorem}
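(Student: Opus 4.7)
The plan is to prove that every component $\mathcal{D}$ of $N_\fs(U)$ is contained in $E(\fs)$; since $E(N_\fs(U))$ is generated as a normal subsystem by its components, the theorem will then follow. Let $R$ denote the Sylow $p$-subgroup of $E(\fs)$. Because $E(\fs)\normaleq\fs$, $R$ is strongly closed in $\fs$, so $N_R(U\cap R)=N_S(U)\cap R$ is strongly closed in $N_\fs(U)$, and the natural candidate for the ``restriction of the layer to the normalizer'' is $\mathcal{L}:=N_{E(\fs)}(U\cap R)$.

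The first (and most technical) step is to verify that $\mathcal{L}\normaleq N_\fs(U)$. Saturation of $\mathcal{L}$ is arranged by using the Frattini axiom for $E(\fs)\normaleq\fs$ to replace $U\cap R$ by a fully $E(\fs)$-normalized conjugate (justified by $U\in\fs^f$), while the invariance and Frattini axioms defining normality of $\mathcal{L}$ in $N_\fs(U)$ descend directly from the corresponding axioms for $E(\fs)\normaleq\fs$. Next, let $\mathcal{D}=\mathcal{D}_0\normaleq \mathcal{D}_1\normaleq\cdots\normaleq\mathcal{D}_n=N_\fs(U)$ be a subnormal chain witnessing that $\mathcal{D}$ is a component. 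Applying \cref{wedge} iteratively down this chain to the pair $(\mathcal{D}_i,\mathcal{L})$ inside $\mathcal{D}_{i-1}$ produces a normal subsystem $\mathcal{K}:=\mathcal{D}\wedge\mathcal{L}$ of $\mathcal{D}$. Since $\mathcal{D}=O^{p'}(\mathcal{D})$ with $\mathcal{D}/Z(\mathcal{D})$ simple, \cref{SNorm} forces $\mathcal{K}$ to be either all of $\mathcal{D}$ (in which case $\mathcal{D}\subseteq\mathcal{L}\subseteq E(\fs)$, as desired) or to be supported inside $Z(\mathcal{D})$.

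To rule out the central case, one invokes the self-centralizing property of $F^*(\fs)=O_p(\fs)E(\fs)$: if the support of $\mathcal{D}$ lies in $C_S(R)$ and $\mathcal{D}$ acts trivially on $R$, then combined with $\mathcal{D}$ commuting with $O_p(\fs)$, the subsystem $\mathcal{D}$ embeds in $C_\fs(F^*(\fs))$, which is contained in $F^*(\fs)$ and has no non-trivial quasisimple subsystem by \cref{conlayer} together with \cref{p'genfit}; this contradicts $\mathcal{D}$ being quasisimple. The main obstacle is precisely the normality claim in the first step: rigorously constructing $\mathcal{L}$ and verifying its normality in $N_\fs(U)$ requires careful bookkeeping of the Frattini and invariance axioms under the normalizer operation, and essentially constitutes the technical content of \cite[Theorem 7]{aschfit}, which is what the author appeals to.
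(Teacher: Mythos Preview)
The paper does not prove this statement: its entire proof is the citation ``This is \cite[Theorem 7]{aschfit}.'' You correctly identify this at the end of your proposal, so in that sense you and the paper agree.

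Your sketch of how such a proof might go has the right architecture (reduce to a single component $\mathcal{D}$, intersect with a normalized piece of $E(\fs)$, use quasisimplicity for a dichotomy, and kill the central case with the self-centralizing property of $F^*(\fs)$), but two steps do not go through as written. First, the appeal to \cref{SNorm} is misplaced: that lemma concerns normal subsystems supported on the full Sylow, whereas the dichotomy you need (a normal subsystem of a quasisimple $\mathcal{D}$ is either $\mathcal{D}$ or lies in $Z(\mathcal{D})$) comes directly from the definition of quasisimple together with passage to $\mathcal{D}/Z(\mathcal{D})$. Second, and more seriously, the final step has a genuine gap. From ``$\mathcal{K}=\mathcal{D}\wedge\mathcal{L}$ is supported inside $Z(\mathcal{D})$'' you conclude only that $D\cap N_R(U\cap R)\le Z(\mathcal{D})$, where $D$ is Sylow in $\mathcal{D}$. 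Since $R$ is strongly closed this yields $[D,R]\le D\cap R\le Z(D)$, but not $[D,R]=1$; so you do not get that the support of $\mathcal{D}$ lies in $C_S(R)$, let alone that $\mathcal{D}\subseteq C_\fs(F^*(\fs))$. Bridging this gap is exactly where Aschbacher's machinery (normal maps, the precise construction of $C_\fs(\mathcal{E})$ for normal $\mathcal{E}$, and the compatibility of these constructions with normalizers) is required, and it is not a matter of bookkeeping alone.
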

\begin{proof}
This is \cite[Theorem 7]{aschfit}.
\end{proof}

This result allows for an easy proof of certain \emph{dichotomy} theorems in fusion systems, in analogy to the Gorenstein--Walter dichotomy theorem in finite groups.

\begin{definition}
Let $\fs$ be a saturated fusion system on a $p$-group $S$. Say that
\begin{enumerate}
    \item $\fs$ is of parabolic characteristic $p$ if $N_{\fs}(Q)$ is constrained for all $Q\normaleq S$; and
    \item $\fs$ is of parabolic component type if there is $x\in \Omega_1(Z(S))$ such that $\Comp(C_{\fs}(x))\ne \emptyset$.
\end{enumerate}
\end{definition}

\begin{theorem}
Suppose that $\fs$ is a saturated fusion system on a $p$-group $S$. Then either $\fs$ is of parabolic component type or $\fs$ is of parabolic characteristic $p$.
\end{theorem}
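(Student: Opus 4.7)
The plan is to prove the contrapositive: assume $\fs$ is not of parabolic characteristic $p$ and deduce that $\fs$ is of parabolic component type. Under the hypothesis, there is a nontrivial $Q\normaleq S$ such that $N_{\fs}(Q)$ is not constrained, and so by \cref{conlayer} we have $E(N_{\fs}(Q))\ne \{1\}$. Since $N_S(Q)=S$, $Q$ is fully $\fs$-normalized, and \cref{balance} therefore gives
\[ E(N_{\fs}(Q))\le E(\fs),\]
so in particular $E(\fs)\ne\{1\}$ and $\fs$ has components.

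Next, I would locate an element of $\Omega_1(Z(S))$ on which to read off a component of a centralizer subsystem. Let $R$ denote the Sylow $p$-subgroup of $E(\fs)$. Because $E(\fs)$ is normal in $\fs$, $R$ is strongly closed in $\fs$, and hence normal in $S$; in particular $Z(R)\cap Z(S)$ is nontrivial (as any normal $p$-subgroup of $S$ meets $Z(S)$), so I may choose a nontrivial element $z\in \Omega_1(Z(R)\cap Z(S))$. Writing $E(\fs)$ as the central product of its components $\mathcal{C}_1,\dots,\mathcal{C}_k$ with Sylows $T_1,\dots,T_k$, we have $R=T_1\cdots T_k$, and the components pairwise centralize one another. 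Using that the $\fs$-class of $z$ interacts compatibly with the product decomposition of $R$ (together with the fact that elements of $Z(T_i)$ fixed by $\Aut_{\mathcal{C}_i}(T_i)$ generate $Z(\mathcal{C}_i)\le Z(R)$), one locates $z$ so that each component $\mathcal{C}_i$ is contained in $C_{\fs}(z)$.

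The final step is to upgrade the containment $\mathcal{C}_i\le C_{\fs}(z)$ to $\mathcal{C}_i\in \Comp(C_{\fs}(z))$. For this I would invoke \cref{wedge} to form $E(\fs)\wedge C_{\fs}(z)$, a normal subsystem of $C_{\fs}(z)$ which contains each $\mathcal{C}_i$. The subnormal chain $\mathcal{C}_i\normaleq E(\fs)\normaleq \fs$ thus descends, after intersection, to a subnormal chain in $C_{\fs}(z)$, and the quasisimplicity of $\mathcal{C}_i$ (which is preserved because the intersection does not split $\mathcal{C}_i$ by \cref{SNorm} and the identification of components as perfect subsystems) then shows $\mathcal{C}_i$ is a component of $C_{\fs}(z)$. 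Since $z\in \Omega_1(Z(S))$, this gives $\Comp(C_{\fs}(z))\ne \emptyset$, i.e.\ $\fs$ is of parabolic component type, as required.

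The principal obstacle is the final step, the descent of subnormality from $\fs$ to $C_{\fs}(z)$. The naive inclusion $\mathcal{C}_i\le C_{\fs}(z)$ needs to be refined to a subnormal inclusion, and the transfer of quasisimplicity through the intersection operation in \cref{wedge} must be checked carefully; choosing $z$ in the right way (so that the $\mathcal{C}_i$-morphisms genuinely fix $z$, not merely preserve $\langle z\rangle$) is likewise a delicate point which relies on the interaction between $Z(\mathcal{C}_i)$, $Z(T_i)$, and the action of $\Aut_{\fs}(S)$ permuting the components.
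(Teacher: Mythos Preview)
Your overall contrapositive strategy is right, and your first paragraph is fine: from a non-constrained $N_{\fs}(Q)$ you correctly extract $E(N_{\fs}(Q))\ne\{1\}$ via \cref{conlayer}. But the remainder goes astray in two places.

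First, your proposed element $z$ need not exist. You want $z\in\Omega_1(Z(R)\cap Z(S))$ with each $\mathcal{C}_i\subseteq C_{\fs}(z)$, which forces $z\in Z(\mathcal{C}_i)$ for every $i$, i.e.\ $z\in Z(E(\fs))$. If every component of $\fs$ is simple then $Z(E(\fs))=\{1\}$ and there is no such $z$; your appeal to ``the $\fs$-class of $z$ interacting compatibly with the product decomposition'' does not rescue this. Second, your invocation of \cref{wedge} is illegitimate: that result requires \emph{both} subsystems to be normal in $\fs$, and $C_{\fs}(z)$ is a local subsystem, not a normal one.

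The paper avoids both problems by choosing the central element differently and applying balance at a different level. Rather than passing up to $E(\fs)$ and trying to descend, it picks $x\in O_p(N_{\fs}(Q))\cap\Omega_1(Z(S))$ (nonempty since $Q\le O_p(N_{\fs}(Q))\normaleq S$). Because $F^*(N_{\fs}(Q))$ is a central product of $O_p(N_{\fs}(Q))$ and $E(N_{\fs}(Q))$, the layer $E(N_{\fs}(Q))$ automatically centralizes $x$, so $E(N_{\fs}(Q))\subseteq N_{C_{\fs}(x)}(Q)$ and indeed is subnormal there. Now apply \cref{balance} \emph{inside} $C_{\fs}(x)$ to the fully normalized subgroup $Q$: this gives $E(N_{C_{\fs}(x)}(Q))\le E(C_{\fs}(x))$, and since the left side contains the nontrivial $E(N_{\fs}(Q))$, we get $\Comp(C_{\fs}(x))\ne\emptyset$. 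The key point you missed is that balance should be applied in the centralizer subsystem, not in $\fs$ itself.
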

\begin{proof}
The proof of this result is essentially the same as the proof of \cite[Theorem II.14.3]{ako}. Assume that $\fs$ is not of parabolic characteristic $p$ and let $\{1\}\ne U\normaleq S$ with $N_{\fs}(U)$ not constrained. Let $1\ne x\in O_p(N_{\fs}(U))\cap \Omega_1(Z(S))$ so that $E(N_{\fs}(U))\le C_{\fs}(x)$ by \cite[Theorem 6 (3)]{aschfit}. In particular, $E(N_{\fs}(U))\subseteq N_{C_{\fs}(x)}(U)$ and it follows that $E(N_{\fs}(U))\normaleq N_{C_{\fs}(x)}(U)$. Hence, $E(N_{\fs}(U))\subseteq E(N_{C_{\fs}(x)}(U))$. 

Then by \cref{balance}, we have that $E(N_{C_{\fs}(x)}(U))\subseteq E(C_{\fs}(x))$ and as $E(N_{\fs}(U))$ is non-trivial by \cref{conlayer}, we see that $E(C_{\fs}(x))$ is non-trivial. Hence, $C_{\fs}(x)$ has a component and $\fs$ is of parabolic component type.
\end{proof}

\begin{remark}
In the statement of the \hyperlink{MainThm}{Main Theorem}, $\fs$ is of parabolic characteristic $p$ if and only if $n=1$ and case (iii) does not occur.
\end{remark}

\section{Known Quasisimple Fusion Systems on Class Two Groups}

In this section, we characterize the ``known" quasisimple fusion systems supported on $p$-groups of nilpotency class two, and illuminate some of their structure and embeddings in larger fusion systems.

\begin{proposition}\label{KnownSL}
Suppose that $\fs$ is a saturated fusion system supported on a $p$-group $S$ such that $O_p(\fs)=\{1\}$. If $S$ is isomorphic to a Sylow $p$-subgroup of $\PSL_3(p^n)$ then $O^{p'}(\fs)$ is simple and one of the following holds:
\begin{enumerate}
    \item $O^{p'}(\fs)$ is isomorphic to the $p$-fusion category of $\PSL_3(p^n)$; or
    \item $S\cong p^{1+2}_+$, $p$ is odd and $\fs$ is classified in \cite{RV1+2}.
\end{enumerate}
\end{proposition}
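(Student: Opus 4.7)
The plan is to proceed by cases on $n$. When $n=1$ with $p$ odd, $S$ is extraspecial of order $p^3$ and exponent $p$, so $S\cong p^{1+2}_+$ and outcome (ii) follows by appealing to \cite{RV1+2}. When $n=1$ with $p=2$, $S\cong \Dih(8)$, whose only proper centric radical subgroups are the two Klein four subgroups; combined with $O_2(\fs)=\{1\}$ this forces both to be essential with $\Aut_{\fs}$-automizer $\Sym(3)$, yielding $\fs=\fs_S(\PSL_3(2))$, which is outcome (i).

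For $n>1$, the first step is to record the structure of $S$: it is a special $p$-group with $Z:=Z(S)=[S,S]$ of order $p^n$, $S/Z$ elementary abelian of order $p^{2n}$, and the commutator inducing a non-degenerate alternating $\GF(p^n)$-bilinear pairing $S/Z\times S/Z\to Z$. Inside $S$ are two canonical elementary abelian subgroups $U_1,U_2$ of order $p^{2n}$, each normal and self-centralizing in $S$, with $U_1\cap U_2=Z$. When $p=2$ the hypothesis of \cref{elementary2} is satisfied (take $A=U_1,B=U_2$), giving $\mathcal{A}(S)=\{U_1,U_2\}$ and that every involution of $S$ lies in $U_1\cup U_2$. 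For $p$ odd, $S$ has exponent $p$ and its maximal elementary abelian subgroups correspond bijectively to the $p^n+1$ isotropic $\GF(p^n)$-lines in $S/Z$.

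The heart of the argument is the identification of $\mathcal{E}(\fs)$. For any $E\in\mathcal{E}(\fs)$, $\fs$-centricity forces $Z\le C_S(E)\le E$, and the class-two assumption forces $[S,E]\le Z\le E$, so $E\normaleq S$. Applying \cref{SEFF} to $E$ with a suitable $\Aut_{\fs}(E)$-invariant chain (typically $\{1\}\normaleq Z(E)\normaleq E$, noting $E=C_S(E/Z(E))$ in the abelian case) extracts an $\SL_2(p^m)$ composition factor of $\Out_{\fs}(E)$ acting as a natural module on a chief factor. Comparing this with the structure of $S$ pins down $E$ as elementary abelian of order $p^{2n}$ equal to $U_1$ or $U_2$. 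For $p=2$ this follows directly from \cref{elementary2}, while for $p$ odd one must additionally exclude the ``diagonal'' maximal abelian subgroups; after replacing $\fs$ by its image under an appropriate element of $\Aut(S)$ we may assume $U_1,U_2$ are among the essentials. Then $O_p(\fs)=\{1\}$ together with \cref{normalinF} rules out having just one essential, since the remaining $U_i$ would be $\Aut_{\fs}(S)$-invariant and contained in every essential, hence lie in $O_p(\fs)$.

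Finally, once $\mathcal{E}(\fs)=\{U_1,U_2\}$ with $O^{p'}(\Out_{\fs}(U_i))\cong\SL_2(p^n)$ acting naturally, the Alperin--Goldschmidt fusion theorem assembles $O^{p'}(\fs)$ from $\Aut_{\fs}(U_1)$, $\Aut_{\fs}(U_2)$ and $\Aut_{\fs}(S)$ (after taking $O^{p'}$ of each automizer, which absorbs diagonal and field contributions). The resulting fusion system matches the Alperin--Goldschmidt presentation of $\fs_S(\PSL_3(p^n))$, giving (i), and simplicity of $O^{p'}(\fs)$ follows from the known simplicity of the $p$-fusion category of $\PSL_3(p^n)$. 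The main obstacle is in the third paragraph: combining \cref{SEFF} with the subgroup structure of $S$ to pin down the essentials, particularly excluding non-abelian essentials and, for $p$ odd, the $p^n-1$ ``diagonal'' maximal abelian subgroups which superficially admit the same kind of $\SL_2(p^n)$-compatible $S$-action as $U_1$ and $U_2$.
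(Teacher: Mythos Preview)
Your overall case split and the $n=1$ cases match the paper's treatment. For $n>1$, however, the paper does not attempt the self-contained argument you sketch: when $p$ is odd it simply cites \cite[Theorem 4.5.1]{Clelland} for the identification of $O^{p'}(\fs)$ and \cite[Lemma II.13.3]{ako} for simplicity, and when $p=2$ it gives a short direct argument followed by an appeal to \cite[Corollary A]{vbbook}. So the two routes diverge substantially in how much is proved in-house.

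For $p=2$, $n>1$, the paper's argument is more elementary than yours. It does not invoke \cref{SEFF}; instead it uses only \cref{elementary2} and \cref{Chain}. Given $E\in\mathcal{E}(\fs)$ with $Z(S)\le E\normaleq S$, one looks at $\Omega_1(E)\cap A$ and $\Omega_1(E)\cap B$: if both exceed $Z(S)$ the Chain lemma forces $A,B\le E$ so $E=S$; if only one does, say $\Omega_1(E)\cap A>Z(S)$, the Chain lemma gives $A\le E$ and then the previous step forces $E=A$; and $\Omega_1(E)=Z(S)$ is excluded by the Chain lemma applied to $\{1\}\normaleq Z(S)\normaleq E$. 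This pins down $\mathcal{E}(\fs)\subseteq\{A,B\}$ without any FF-module analysis, and then $|\mathcal{E}(\fs)|\le 1$ contradicts $O_2(\fs)=\{1\}$ via \cref{normalinF}. Your remark that ``for $p=2$ this follows directly from \cref{elementary2}'' skips exactly this Chain-lemma step.

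The genuine gap is the one you flag yourself: for $p$ odd with $n>1$ you have not shown that $\mathcal{E}(\fs)$ consists of exactly two elements. Conjugating by $\Aut(S)$ to arrange that $U_1,U_2$ lie among the essentials does nothing to rule out a third essential among the remaining $p^n-1$ maximal elementary abelian subgroups, and your final paragraph simply assumes $\mathcal{E}(\fs)=\{U_1,U_2\}$. This is precisely the content of Clelland's result that the paper imports. (An argument of this flavour does appear later in the paper, in Section~7, under the additional running hypothesis of a minimal counterexample; adapting it here would require real work.) Similarly, your last step --- reconstructing $O^{p'}(\fs)$ from the automizers via Alperin--Goldschmidt and matching it to $\fs_S(\PSL_3(p^n))$ --- is not automatic and is what the citation of \cite{vbbook} handles in the paper.
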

\begin{proof}
If $n=1$ then we apply \cite[Example I.2.8]{ako} when $p=2$ and \cite{RV1+2} when $p$ is odd. If $n>1$ and $p$ is odd, then the determination of $O^{p'}(\fs)$ follows from \cite[Theorem 4.5.1]{Clelland} and its simplicity from \cite[Lemma II.13.3]{ako}. We provide a short proof when $p=2$ and $n>1$, although this is certainly well known.

Let $E\in\mathcal{E}(\fs)$. Then $Z(S)\le E$ and so, as $S$ has class two, we have that $E\normaleq S$. Let $A,B$ be the unique maximal elementary abelian subgroups of $S$, as described in \cref{elementary2}. If $\Omega_1(E)\cap A>Z(S)$ then $Z(E)\le A$. Indeed, it follows that $A$ centralizes the chain $\{1\}\normaleq Z(E)\normaleq E$ and so $A\le E$ by \cref{Chain}. If $A<E$ then $\Omega_1(E)\cap B>Z(S)$ and by a similar argument, we conclude that $B\le E$ so that $S=E$, a contradiction. Hence, if $\Omega_1(E)\cap A>Z(S)$ then $E=A$. Similarly, if $\Omega_1(E)\cap B>Z(S)$ then $E=B$. The only other possibility is that $\Omega_1(E)=Z(S)$. But then $S$ centralizes the chain $\{1\}\normaleq Z(S)\normaleq E$, another contradiction. Hence, $\mathcal{E}(\fs)\subseteq \{A, B\}$.

We observe that if $|\mathcal{E}(\fs)|\leq 1$ then $O_2(\fs)\ne\{1\}$ by \cref{normalinF}. Thus, as $O_2(\fs)=\{1\}$, we have that $\mathcal{E}(\fs)=\{A, B\}$. Then $\fs$ is determined in \cite[Corollary A]{vbbook} (although in this context, we only really require certain results from \cite{Greenbook}).
\end{proof}

Implicit in the above proposition is the observation that the $2$-fusion category of $\Alt(6)$ is isomorphic to the $2$-fusion category of $\PSL_3(2)$.

\begin{proposition}\label{KnownSp}
Suppose that $\fs$ is a saturated fusion system supported on a $2$-group $S$ such that $O_2(\fs)=\{1\}$. If $S$ is isomorphic to a Sylow $2$-subgroup of $\PSp_4(2^n)$ then one of the following holds:
\begin{enumerate}
    \item $n=1$, $\fs$ is isomorphic to the $2$-fusion system of $\PSp_4(2)\cong \Sym(6)$, and $O^2(\fs)$ is simple and isomorphic to the $2$-fusion category of $\Alt(6)$; or
    \item $n>1$, and $O^{2'}(\fs)$ is simple and isomorphic to the $2$-fusion category of $\PSp_4(2^n)$.
\end{enumerate}
\end{proposition}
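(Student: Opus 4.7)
The plan is to mirror the proof of \cref{KnownSL}, splitting the analysis into the cases $n=1$ and $n\geq 2$. For $n=1$, $S\cong \Dih(8)\times C_2$ has order $16$, and I would enumerate candidate essential subgroups directly. Any essential $E$ must be $S$-centric radical, contained in $S$ (with $Z(S)$ contained in $E$), and admit $\Out_{\fs}(E)$ of order divisible by an odd prime. Since $O_2(\fs)=\{1\}$, \cref{normalinF} forces at least two essentials, and one checks by hand that the only saturated possibility is $\fs=\fs_S(\Sym(6))$. Then $O^2(\fs)$ is supported on the index-two subgroup $T\cong \Dih(8)$ and realizes $\fs_T(\Alt(6))$, which by the remark following \cref{KnownSL} coincides with the $2$-fusion category of $\PSL_3(2)$ and so is simple.

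For $n\geq 2$, I would begin by describing $S$ via its root-subgroup structure as the Sylow $2$-subgroup of $\Sp_4(2^n)$. This identifies two distinguished proper normal subgroups of $S$: the Siegel unipotent radical $U_1$, which is elementary abelian of order $2^{3n}$, and the point-stabilizer radical $U_2$, which is special of class $2$ with $Z(U_2)=Z(S)$ and $|U_2|=2^{3n}$. Because $S$ has nilpotency class two, any essential $E$ satisfies $Z(S)\leq E$ and $E\normaleq S$. Running \cref{Chain} along the $\Aut_{\fs}(E)$-invariant refinement of $\{1\}\normaleq Z(S)\normaleq E$, together with the root-subgroup lattice of $S$, I would rule out every $S$-normal subgroup of $S$ except $U_1$ and $U_2$. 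With $O_2(\fs)=\{1\}$, \cref{normalinF} then forces $\mathcal{E}(\fs)=\{U_1,U_2\}$.

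Having pinned down the essentials, I would apply \cref{SEFF} to the $\Aut_{\fs}(U_i)$-invariant sections that carry natural $\SL_2(2^n)$-modules: for $U_1$ this is $U_1/Z(S)$ with $Z(S)$ as complementary module, and for $U_2$ this is $U_2/Z(U_2)$. This identifies $O^{2'}(\Out_{\fs}(U_i))\cong \SL_2(2^n)$ with the expected natural module structure, exactly matching the automizers inside $\PSp_4(2^n)$. Uniqueness of the resulting fusion system on $S$ (by a result analogous to \cite[Corollary A]{vbbook}) then forces $O^{2'}(\fs)$ to agree with the $2$-fusion category of $\PSp_4(2^n)$, and simplicity follows via the analogue of \cite[Lemma II.13.3]{ako}. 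The main obstacle I foresee is the essential-identification step: unlike $\PSL_3(2^n)$, where the two essentials are the symmetric pair of maximal elementary abelian subgroups and \cref{elementary2} gives an immediate elementary-abelian classification, in the $\Sp_4(2^n)$ case $U_2$ is nonabelian and the analysis is genuinely asymmetric, so the chain arguments ruling out intermediate normal subgroups of $S$ must be tailored carefully to the root data.
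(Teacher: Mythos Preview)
Your proposal contains a structural error about the Sylow $2$-subgroup of $\PSp_4(2^n)$. In characteristic $2$, \emph{both} unipotent radicals of the maximal parabolics of $\Sp_4(2^n)$ are elementary abelian of order $2^{3n}$; there is no nonabelian special radical $U_2$ as you describe. (Your picture is correct in odd characteristic, where the point-stabilizer radical is extraspecial of type $q^{1+2}$, but this collapses when $p=2$: the relevant Chevalley structure constant is even.) This is precisely why the graph automorphism of $\Sp_4(2^n)$ exists only in characteristic $2$, and why the paper can reuse \cref{elementary2} verbatim. So the ``genuinely asymmetric'' analysis you anticipate as the main obstacle is based on a misapprehension, and your proposed \cref{SEFF} argument for $U_2$ (applied to $U_2/Z(U_2)$) would be operating on a group that does not occur.

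The paper's proof exploits this symmetry: with $A,B\in\mathcal{A}(S)$ both elementary abelian of order $2^{3n}$, $AB=S$, and $C_B(a)=A\cap B=Z(S)$ for $a\in A\setminus B$, \cref{elementary2} gives $\mathcal{A}(S)=\{A,B\}$ and that every involution of $S$ lies in $A\cup B$. Then exactly the same $\Omega_1$-and-chain argument as in \cref{KnownSL} shows $\mathcal{E}(\fs)\subseteq\{A,B\}$, and $O_2(\fs)=\{1\}$ forces equality. From there the paper cites \cite[Corollary A]{vbbook} to identify $O^{2'}(\fs)$ uniformly, handling $n=1$ via \cite[Theorem A]{FSLie} rather than a by-hand enumeration. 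Your $n=1$ sketch is workable but less economical; the real fix needed is to correct the description of $S$ for $n\geq 2$ and recognize that the essential-identification step is identical to the $\PSL_3$ case.
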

\begin{proof}
This proof is similar to \cref{KnownSL}. Applying \cref{elementary2}, and using that $O_2(\fs)=\{1\}$ we conclude that $\mathcal{E}(\fs)=\mathcal{A}(S)=\{A, B\}$ and so $\fs=\fs_S(G)$ where $F^*(G)$ is known by \cite[Corollary A]{vbbook} (again this only really requires \cite{Greenbook}). Since the isomorphism type of $S$ is given, we deduce that $O^{2'}(\fs)$ is isomorphic to the $2$-fusion category of $\PSp_4(2^n)$. If $n>1$ then $O^{2'}(\fs)$ is simple by \cite[Lemma II.13.3]{ako}. If $n=1$ then \cite[Theorem A]{FSLie} reveals that $\fs=O^{2'}(\fs)$. Moreover, since $\PSp_4(2)\cong \Sym(6)$, we have that $O^2(\fs)$ is isomorphic to the $2$-fusion category of $\Alt(6)$, which is simple and isomorphic to the $2$-fusion category of $\PSL_3(2)$ by \cite[Example I.2.8]{ako}. 
\end{proof}

To simplify notation, we define $\mathcal{S}$ to be the set of $p$-groups which are isomorphic to a Sylow $p$-subgroup of $\PSL_3(p^n)$ or a Sylow $2$-subgroup of $\PSp_4(2^n)$ for some $n\in\N$, and $\mathcal{K}$ to be fusion systems in \cref{KnownSL} and \cref{KnownSp}.

We will use the following fact in later sections of this article.

\begin{lemma}\label{PSp4lem}
Let $G\cong \PSp_4(2^b)$ for some $b\geq 2$, and let $S\in\syl_2(G)$. Choose $A\in \mathcal{A}(S)$. For $R\le S$ with $|RA/A|\geq 4$, we have $|[R, A]|=2^{2b}$.
\end{lemma}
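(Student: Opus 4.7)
My approach is a direct computation using the Chevalley-style presentation of the Sylow $2$-subgroup of $\Sp_4(q) = \PSp_4(q)$ with $q = 2^b$. I fix root subgroups $U_\gamma = \{x_\gamma(t) : t \in \GF(q)\}$ for the positive roots of type $B_2$, namely the simple roots $\alpha_1$ (short) and $\alpha_2$ (long) together with $\alpha_1 + \alpha_2$ and $2\alpha_1 + \alpha_2$. The two maximal elementary abelian subgroups (as identified via \cref{elementary2} in the proof of \cref{KnownSp}) are the unipotent radicals $U_{P_1} = U_{\alpha_2} U_{\alpha_1+\alpha_2} U_{2\alpha_1+\alpha_2}$ and $U_{P_2} = U_{\alpha_1} U_{\alpha_1+\alpha_2} U_{2\alpha_1+\alpha_2}$, each of order $q^3$; up to relabelling I take $A = U_{P_1}$, in which case $S/A$ is identified with the additive group of $\GF(q)$ via $x_{\alpha_1}(t) A \leftrightarrow t$. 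I also record that $[S,S] = Z(S) = U_{\alpha_1+\alpha_2} U_{2\alpha_1+\alpha_2}$ has order $q^2$, the key point being that the Chevalley structure constant in $[x_{\alpha_1}(t), x_{\alpha_1+\alpha_2}(u)]$ is divisible by $2$, so this commutator vanishes in characteristic $2$ and forces $x_{\alpha_1+\alpha_2}$ into $Z(S)$.

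Next I compute $[R, A]$. Since $A$ is abelian and $A \normaleq S$, for any $r \in R$ I may write $r = x_{\alpha_1}(t) a'$ with $a' \in A$, so that $[r, a] = [x_{\alpha_1}(t), a]$ for every $a \in A$. Writing $a = x_{\alpha_2}(u) \cdot z$ with $z \in Z(S)$, the Chevalley commutator formula yields
\[
[x_{\alpha_1}(t), a] = x_{\alpha_1+\alpha_2}(tu) \, x_{2\alpha_1+\alpha_2}(t^2 u),
\]
the remaining contributions being trivial (again by vanishing structure constants in characteristic $2$). Setting $T := \{t \in \GF(q) : x_{\alpha_1}(t) A \in RA/A\}$, so that $T$ is a $\GF(2)$-subspace of $\GF(q)$ with $|T| = |RA/A| \ge 4$, this identifies $[R, A]$ with the $\GF(2)$-subspace of $Z(S) \cong \GF(q)^2$ generated by $\{(tu, t^2 u) : t \in T,\, u \in \GF(q)\}$.

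Finally I need to show this span is all of $\GF(q)^2$. For any fixed $t \in T \setminus \{0\}$, the set $\{(tu, t^2 u) : u \in \GF(q)\}$ sweeps out the entire $\GF(q)$-line $\{(v, tv) : v \in \GF(q)\}$ of slope $t$ in $\GF(q)^2$. Since $|T| \ge 4$, $T$ contains two distinct nonzero elements $t_1 \ne t_2$, yielding two distinct $\GF(q)$-lines; these intersect only in $0$ and so together span all of $\GF(q)^2$. Hence $[R, A] = Z(S)$ and $|[R, A]| = q^2 = 2^{2b}$.

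The main obstacle will be careful bookkeeping with the Chevalley commutator formulas, and in particular justifying that $x_{\alpha_1+\alpha_2} \in Z(S)$ in characteristic $2$ so that $|Z(S)| = q^2$. One also has to handle the symmetric case $A = U_{P_2}$: the analogous computation produces pairs $(tu, u^2 t)$, and for fixed $t \ne 0$ the set $\{(tu, u^2 t) : u\}$ is the graph of $v \mapsto v^2/t$, a $b$-dimensional $\GF(2)$-subspace (but not a $\GF(q)$-line). A short direct check shows that for distinct nonzero $t_1 \ne t_2$ in $T$ these subspaces still meet trivially, so the same dimension count delivers $|[R, A]| = q^2$.
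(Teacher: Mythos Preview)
Your proof is correct and takes a genuinely different route from the paper's. The paper argues by contradiction using the ambient group: it passes to $H:=O^{2'}(N_G(A)/A)\cong\SL_2(2^b)$, picks an element $y$ from a second Sylow of $H$ so that $H=\langle RA/A,\,y\rangle$, and then observes that $[y,A][R,A]$ would be an $H$-invariant proper subgroup of $A$, contradicting the fact that $A$ is an indecomposable $\Omega_3(2^b)$-module for $H$ (a non-split extension of natural modules). Your argument, by contrast, never leaves $S$: you write down the Chevalley commutator $[x_{\alpha_1}(t),x_{\alpha_2}(u)]=x_{\alpha_1+\alpha_2}(tu)\,x_{2\alpha_1+\alpha_2}(t^2u)$, identify $[R,A]$ with the $\GF(2)$-span of the pairs $(tu,t^2u)$, and finish with the elementary observation that two distinct nonzero slopes $t_1,t_2\in T$ give $\GF(q)$-lines (respectively Frobenius graphs, in the $U_{P_2}$ case) meeting only in $0$.

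What each approach buys: the paper's argument is coordinate-free and explains structurally why $|RA/A|\ge 4$ is the right threshold (one needs $RA/A$ together with a single extra involution to generate $\SL_2$), but it imports the module-theoretic fact that the $\Omega_3$-module is uniserial. Your computation is entirely self-contained inside $S$ once the structure constants are known, and makes the threshold transparent for a different reason (two nonzero parameters are needed to span). Your verification that the $c_{11}$ constant in $[x_{\alpha_1},x_{\alpha_1+\alpha_2}]$ equals $\pm 2$ (via the $\alpha_1$-string through $\alpha_1+\alpha_2$ having length $2$) is the one place requiring care, and you have it right.
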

\begin{proof}
Choose $A$ and $R$ as in the lemma. We observe that $[R, A]\le S'$ and $S'$ has order $2^{2b}$. Aiming for a contradiction, assume that $|[R, A]|< 2^{2b}$. Write $H:=O^{2'}(N_G(A)/A)$ so that $H\cong \SL_2(2^b)$ and $A/C_A(H)$ is a natural $\SL_2(2^b)$-module for $H$. Let $1\ne y\in T\in\syl_2(H)\setminus \{S/A\}$. Then $|[y, A]|=2^b$ and $H=\langle RA/A, y\rangle$. But now, $[y, A][R, A]$ is normalized by $H$ and has order strictly less than $2^{3b}$. But one can check that in $\PSp_4(2^b)$, any normal $2$-subgroup of$N_G(A)$ which has order strictly less than $2^{3b}$ is centralized by $H$ (as a $\GF(2)H$-module, $A$ is an $\Omega_3(2^b)$-module and so is a non-split extension of modules). It follows that $[y, A]\le C_A(H)$ from which we deduce that $[H, A]=\{1\}$, a contradiction. Hence, $|[R, A]|=2^{2b}$ and the lemma holds. 
\end{proof}

\begin{proposition}\label{L34}
Suppose that $\fs$ is a quasisimple fusion system such that $S$ has class two and $S/Z(\fs)\in \mathcal{S}$. Then $\fs$ is simple or $p=2$ and $\fs$ is isomorphic to the $2$-fusion category of $2.\PSL_3(4)$ or $2^2.\PSL_3(4)$. In the latter case, $Z(\fs)$ is elementary abelian.
\end{proposition}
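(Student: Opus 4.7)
The plan is to analyze $Z := Z(\fs)$ and $\bar\fs := \fs/Z$. By quasisimplicity, $\bar\fs$ is a simple saturated fusion system supported on $\bar S := S/Z \in \mathcal{S}$, and so \cref{KnownSL} and \cref{KnownSp} force $\bar\fs$ to be isomorphic to $\fs_p(\PSL_3(p^n))$ for some $n \geq 1$, to one of the Ruiz--Viruel exotic systems on $7^{1+2}_+$, or to $\fs_2(\PSp_4(2^n))$ for some $n \geq 2$. In each case there is a natural perfect finite group $G$ with Sylow $p$-subgroup $\bar S$ and with $\fs_{\bar S}(G) \subseteq \bar\fs$: namely $G = \PSL_3(p^n)$, $G = \PSL_3(7)$ (sitting inside the R--V ambient fusion system), or $G = \PSp_4(2^n)$ respectively.

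The heart of the argument is an application of \cref{SchurMultiplier}: whenever $H^2(G, \C^\times)$ has order coprime to $p$, the central extension $S \twoheadrightarrow \bar S$ of $p$-groups with kernel $Z$ and lift $\fs$ of $\bar\fs$ must split, giving $S \cong Z \times \bar S$ and $\fs \cong \fs_Z(Z) \times \bar\fs$. The quasisimplicity of $\fs$ then prevents any nontrivial direct factor $\fs_Z(Z)$, forcing $Z = 1$ and $\fs = \bar\fs$ simple. The Schur multiplier of $\PSL_3(p^n)$ is $\gcd(3, p^n - 1)$, which is coprime to $p$ except when $(p, n) \in \{(2, 1), (2, 2)\}$; the multiplier of $\PSL_3(7)$ is $3$, coprime to $7$; and $\PSp_4(2^n)$ has trivial multiplier for $n \geq 2$. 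So this reduction handles every case other than $\bar\fs \cong \fs_2(\PSL_3(2))$ and $\bar\fs \cong \fs_2(\PSL_3(4))$.

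For the remaining exceptional cases I would argue as follows. If $\bar\fs \cong \fs_2(\PSL_3(2))$ on $\Dih(8)$, the only nontrivial perfect central $2$-extension of $\PSL_3(2)$ is $\SL_2(7)$, whose Sylow $2$-subgroup is $Q_8$; but $Q_8/Z(Q_8) \cong \Z_2 \times \Z_2 \not\cong \Dih(8)$, so no central $2$-extension of $\Dih(8)$ accommodating a lift of $\bar\fs$ arises from a finite quasisimple group. A direct module-theoretic analysis of the two essential subgroups $A, B \leq S$, which must be elementary abelian with $\GF(2)\Sym_3$-action faithful on $A/Z$, then rules out any nontrivial central kernel and gives $Z = 1$. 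If $\bar\fs \cong \fs_2(\PSL_3(4))$, then the $2$-part of $H^2(\PSL_3(4), \C^\times)$ is $\Z_4 \times \Z_4$; the central $2$-covers are indexed by quotients of $\Z_4 \times \Z_4$, and inspection of their Sylow $2$-subgroups (using the explicit description of the Schur cover, e.g.\ via ATLAS data) reveals that $2.\PSL_3(4)$ and $2^2.\PSL_3(4)$ are of class two while any cover whose kernel contains an element of order $4$ has Sylow $2$-subgroup of class at least three. The surviving kernels $Z$ are elementary abelian, giving the stated conclusion.

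The main obstacle is this last structural verification: rigorously identifying the nilpotency class of the Sylow $2$-subgroups of $4.\PSL_3(4)$, $(2 \times 4).\PSL_3(4)$, and $4^2.\PSL_3(4)$ as at least three. A secondary technical point is the module-theoretic ruling out of nontrivial lifts in the $\PSL_3(2)$ case without the benefit of \cref{SchurMultiplier}, which requires hands-on analysis of the elementary abelian essentials and their compatibility with the $\Sym_3$-automorphism groups.
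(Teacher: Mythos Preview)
Your overall architecture matches the paper's: apply \cref{SchurMultiplier} wherever a perfect $G$ with $\fs_{\bar S}(G)\le\bar\fs$ and $p'$-multiplier is available, then treat the residual cases $\PSL_3(2)$ and $\PSL_3(4)$ by hand. Two points, however, are genuine gaps rather than just sketches.

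\textbf{The exotic systems at $p=7$.} You assert that $\fs_{\bar S}(\PSL_3(7))\subseteq\bar\fs$ when $\bar\fs$ is one of the simple Ruiz--Viruel exotics, but this is not justified and is not obviously true: containment requires in particular $\Aut_{\PSL_3(7)}(\bar S)\le\Aut_{\bar\fs}(\bar S)$, and the outer $\fs$-automorphism groups of $7^{1+2}_+$ in the exotic systems are specific subgroups of $\GL_2(7)$ that need not contain the one arising from $\PSL_3(7)$. The paper avoids this by a different device: it prunes an $\fs$-class of essentials from each simple exotic to obtain a saturated subsystem with trivial $7$-core, identifies that subsystem with the $7$-fusion category of $\mathrm{Fi}_{24}$ (respectively $\mathrm{O'N}$), and then applies \cref{SchurMultiplier} with $G=\mathrm{Fi}_{24}$ or $G=\mathrm{O'N}$, both of which have $7'$-multiplier.

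\textbf{The $\PSL_3(2)$ case.} The observation about $\SL_2(7)$ having Sylow $Q_8$ is a red herring: the question is whether some quasisimple \emph{fusion system} $\fs$ on a class-two $2$-group $S$ has $\fs/Z(\fs)\cong\fs_2(\PSL_3(2))$, and this is not decided by the group-theoretic Schur cover of $\PSL_3(2)$. Your ``direct module-theoretic analysis'' is too vague to constitute a proof. The paper's argument is concrete: if $Z(\fs)\ne 1$, choose $Z<Z(\fs)$ of index $2$; since $O^2(\fs)=\fs$, a transfer argument forces $Z(S/Z)=[S/Z,S/Z]$ to have order $4$, and a small-group check at order $2^4$ shows no class-two group with these properties exists.

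For $\PSL_3(4)$ the paper does essentially what you propose (computationally verifying which central quotients of the $2$-cover have class-two Sylow), but it also extracts and uses the constraint $S=\Omega_1(S)$ coming from both essentials, which cuts the search down; you should expect to need something of that sort rather than a bare nilpotency-class check.
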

\begin{proof}
Suppose that $p=2$. Assume first that $S/Z(\fs)$ is isomorphic to a Sylow $2$-subgroup of $\PSp_4(2)\cong \Sym(6)$. Then $S/Z(\fs)\cong \Dih(8)\times 2$ and $\fs/Z(\fs)$ is not simple by \cref{KnownSp}. 

Hence, we continuing assuming that $\fs/Z(\fs)$ is a simple fusion system supported on a Sylow $2$-subgroup of $\PSL_3(2^a)$ for some $a\in \N$ or on a Sylow $2$-subgroup of $\PSp_4(2^b)$ for $b>1$. We have that $\fs/Z(\fs)$ is isomorphic to the $2$-fusion system of $\PSL_3(2^a)$ or $\PSp_4(2^b)$ in each case by \cref{KnownSL} and \cref{KnownSp}. Applying \cref{SchurMultiplier} and comparing with \cite[Section 6.1]{GLS3} we have that $\fs$ is simple unless $\fs/Z(\fs)$ is isomorphic to the $2$-fusion category of $\PSL_3(2)$ or $\PSL_3(4)$. 

Assume that $\fs/Z(\fs)$ is isomorphic to the $2$-fusion category of $\PSL_3(2)$ so that $S/Z(\fs)\cong \Dih(8)$. Assume that $|Z(\fs)|\ne \{1\}$ and let $Z<Z(\fs)$ with $|Z(\fs)/Z|=2$. Since $S/Z$ has nilpotency class two and $O^2(\fs)/Z=O^2(\fs/Z)$, by a transfer argument \cite[Corollary I.8.5]{ako} we have that $Z(S/Z)=[S/Z, S/Z]$ has order $4$. Perusing the Small Groups Library in MAGMA \cite{MAGMA}, we conclude that no group of order $2^4$ satisfies the properties required of $S/Z$ and we must have that $Z(\fs)=\{1\}$. Hence, $\fs$ is simple in this case.

Assume that $\fs/Z(\fs)$ is isomorphic to the $2$-fusion category of $\PSL_3(4)$. Then we have that $\mathcal{E}(\fs/Z(\fs))=\{A/Z(\fs), B/Z(\fs)\}=\mathcal{A}(S/Z(\fs))$. It follows quickly that $\mathcal{E}(\fs)=\{A, B\}$. Let $G$ be a model for $N_{\fs}(A)$ so that $C_G(A)\le A=O_2(G)$ and $G/A\cong \GL_2(4)$. Moreover, $Z(G)=Z(\fs)$. We have that $G=[G, G]Z(\fs)$ and as $O^2(\fs)=\fs$, we conclude that $G=[G, G]$. Hence, $G$ is a central quotient of the universal central extension of $G/Z(\fs)$, which we denote $\hat{G}$. Since the $2$-fusion category of $G/O_{2'}(G)$ is isomorphic to the $2$-fusion category of $G$, we need only consider the $2$-part of the central extension. 

Let $\hat{S}\in\syl_2(\hat{G})$ arranged such that $S$ is a quotient of $\hat{S}$, and let $Z\le Z(\fs)$ be such that $\hat{S}/Z=S$. Set $\hat{A}=O_2(\hat{G})$ so that $A=\hat{A}/Z$. We compute in MAGMA \cite{MAGMA} that for $S/Z$ to have nilpotency class two, we require $|Z(\fs)/Z|\leq 2^3$. Moreover, if $|Z(\fs)/Z|=2^3$ then $A/Z=\Omega_1(S/Z)$. Hence, if $|Z(\fs)|=2^3$ then $\Omega_1(B)=A\cap B=Z(S)$. But then $S$ centralizes the $\Aut_{\fs}(B)$-chain $\{1\}\normaleq Z(S)\normaleq B$, a contradiction by \cref{Chain}. By a similar argument, we observe that $S=\Omega_1(S)$ and we now search for $Z$ such that $|Z(\fs)/Z|\leq 4$ such that $S/Z=\Omega_1(S/Z)$. There is a unique group $Z$ with $|Z(\fs)/Z|=4$ and satisfying the required properties (moreover, any group $Y$ with $|Z(\fs)/Y|\leq 2$ satisfying the required properties properly contains $Z$). We finally verify that $S/Z$ is isomorphic to a Sylow $2$-subgroup of $2^2.\PSL_3(4)$. We leave the verification that $\fs$ is isomorphic to the $2$-fusion category of a central quotient of $2^2.\PSL_3(4)$ to the reader.

Suppose now that $p$ is odd and $S/Z(\fs)$ is isomorphic to a Sylow $p$-subgroup of $\PSL_3(p^n)$ for some $n\in\N$. If $n>1$, then $\fs/Z(\fs)$ is isomorphic to the $p$-fusion category of $\PSL_3(p^n)$ by \cref{KnownSL}. A consideration of Schur multipliers as provided in \cite[Section 6.1]{GLS3} combined with \cref{SchurMultiplier} yields that $\fs$ is simple. Hence, $S/Z(\fs)\cong p^{1+2}_+$ and $\fs/Z(\fs)$ is listed in \cite{RV1+2}. We again apply \cite[Section 6.1]{GLS3} and \cref{SchurMultiplier} unless $\fs/Z(\fs)$ is exotic to show that $\fs$ is simple.

We note that only two of the exotic systems given in \cite[Table 2]{RV1+2} are simple. One has two classes of essential subgroups: one of size $6$ and one of size $2$. We form a saturated subsystem of this system by pruning the class of size $2$ by \cite[Lemma 6.4]{Comp1}. This pruned system still has trivial $7$-core, and has a unique conjugacy class of essential subgroups of size $6$. Comparing again with \cite[Table 2]{RV1+2}, we ascertain that this subsystem is isomorphic to the $7$-fusion category of $\mathrm{Fi}_{24}$. Hence, applying \cite[Section 6.1]{GLS3} and \cref{SchurMultiplier} we have that $\fs$ is simple. The other simple exotic fusion system listed in \cite[Table 2]{RV1+2} has two classes of essentials: both of size $4$. We apply the same methodology as before to obtain a saturated subsystem with a unique class of essentials, which we identify with the $7$-fusion category of $\mathrm{O'N}$. Applying \cite[Section 6.1]{GLS3} and \cref{SchurMultiplier}, we deduce that $\fs$ simple.
\end{proof}

\begin{lemma}\label{Sym6}
Suppose that $\fs$ is a saturated fusion system on a $2$-group $S$ such that $S$ has nilpotency class two and $O_2(\fs)=\{1\}$. If $O^2(O^{2'}(\fs))$ is supported on a $2$-group isomorphic to $\Dih(8)$ then $\fs\cong \Alt(6)$ or $\Sym(6)$.
\end{lemma}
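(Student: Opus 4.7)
The plan is to first pin down $\mathcal{E}:=O^2(O^{2'}(\fs))$ as the $2$-fusion system of $\Alt(6)$, then exhibit it as a component of $\fs$, and finally use \cref{DihCent} together with the class-two hypothesis on $S$ to constrain the global structure. Let $T$ denote the support of $\mathcal{E}$; by hypothesis $T\cong\Dih(8)$. Since $\mathcal{E}\normaleq\normaleq\fs$, $T$ is strongly closed in $\fs$ and so $T\normaleq S$; moreover $O_2(\mathcal{E})$ is strongly closed in $\fs$, so $O_2(\mathcal{E})\le O_2(\fs)=\{1\}$. Up to isomorphism the saturated fusion systems on $\Dih(8)$ are exactly $\fs_T(T)$, $\fs_T(\Sym(4))$ and $\fs_T(\PSL_3(2))\cong\fs_T(\Alt(6))$, and only the last has trivial $2$-core. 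Thus $\mathcal{E}\cong\fs_T(\Alt(6))$, which is simple (so $Z(\mathcal{E})=\{1\}$) and satisfies $\mathcal{E}=O^2(\mathcal{E})$; combined with subnormality this shows $\mathcal{E}\in\Comp(\fs)$.

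Next I analyse the ambient group. By \cref{DihCent}, $S=T\cdot C_S(T)$; set $C:=C_S(T)$. Since $O_2(\fs)=\{1\}$, \cref{conlayer} and \cref{p'genfit} give $F^*(\fs)=E(\fs)\supseteq\mathcal{E}$. I plan to show that $C$ is elementary abelian of order at most $4$. The key ingredients are: (i) $\Aut_{\fs}(T)/\Aut_{\mathcal{E}}(T)$ embeds into $\Out(\Dih(8))\cong\mathbb{Z}/2$, so no new fusion can occur inside $T$; (ii) any other component of $\fs$ is supported on a subgroup of $C$ and commutes with $\mathcal{E}$; and (iii) any non-trivial abelian subgroup of $C$ that is strongly closed in $\fs$ is normal in $\fs$ by \cref{normalinF}, contradicting $O_2(\fs)=\{1\}$. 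The class-two hypothesis forces $C$ to have class at most $2$ with $[C,C]\le Z(S)$; a short case analysis using (i)--(iii) then rules out $|C|\ge 8$ as well as the central extension $C\cong\mathbb{Z}/4$ (with $c^2$ the central involution of $T$), since in each case a characteristic subgroup of $S$ contained in $C$ becomes strongly closed in $\fs$.

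Two possibilities remain. If $C=Z(T)$ then $S=T\cong\Dih(8)$, and the classification of fusion systems on $\Dih(8)$ with trivial $2$-core forces $\fs=\mathcal{E}\cong\fs_T(\Alt(6))$. If $C\cong V_4$ then $S\cong\Dih(8)\times\mathbb{Z}/2$; to prevent $C$ from being strongly closed (and hence normal) in $\fs$, the element of $C\setminus Z(T)$ must induce the non-trivial outer automorphism of $\mathcal{E}$---the graph automorphism of $\PSL_3(2)$ swapping the two classes of Klein four subgroups of $T$---and this uniquely determines $\fs$ as the $2$-fusion system of $\Sym(6)$. The main obstacle is the structural bound on $|C|$ in the middle step, where the class-two constraint on $S$, the rigidity of fusion inside $T$ inherited from $\mathcal{E}$, and the triviality of $O_2(\fs)$ must all be reconciled simultaneously.
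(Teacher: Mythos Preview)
Your opening moves are sound: identifying $\mathcal{E}$ with the $\Alt(6)$ system, recognising it as a component, and invoking \cref{DihCent} to write $S=TC$ with $C=C_S(T)$ are all correct. The trouble begins at the ``short case analysis using (i)--(iii)'' that is meant to force $|C|\le 4$. Ingredient (i) only constrains morphisms with source and target inside $T$; ingredient (iii) requires you to \emph{produce} an abelian strongly closed subgroup of $C$, and nothing you have written explains why one exists when $|C|\ge 8$. What is actually needed is control of the quotient $S/T$ via $\Out(\mathcal{E})$, and that is exactly what the paper obtains by invoking tameness: since $\Alt(6)$ tamely realises $\mathcal{E}$, one gets an embedding of $S$ into a Sylow $2$-subgroup of $\Aut(\Alt(6))$, and the class-two hypothesis then singles out $\Dih(8)$ and $\Dih(8)\times 2$. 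You never call on tameness or any substitute, so the bound on $|C|$ is simply asserted, not proved.

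There is also a genuine error in your last paragraph. Any $c\in C=C_S(T)$ centralises $T$, so conjugation by $c$ induces the \emph{identity} automorphism of $T$, and hence of $\mathcal{E}$; it cannot possibly realise the graph automorphism of $\PSL_3(2)$, which swaps the two Klein-four subgroups of $T$. The mechanism that distinguishes $\fs_S(\Sym(6))$ from the direct product $\mathcal{E}\times\fs_{\langle c\rangle}(\langle c\rangle)$ is not an outer action of $c$ on $\mathcal{E}$; rather, it is additional fusion among the elements of $S\setminus T$ (equivalently, the essential subgroups of $\fs$ are the rank-$3$ groups $V\langle c\rangle$ for $V\in\mathcal{A}(T)$, with automorphisers that do not split as a product). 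The paper avoids this analysis entirely by quoting \cref{KnownSp} once $S\cong\Dih(8)\times 2$ is known.
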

\begin{proof}
Since $O_2(\fs)=\{1\}$ and $O^2(O^{2'}(\fs))$ is a characteristic subsystem of $\fs$, we have $O_2(O^2(O^{2'}(\fs)))=\{1\}$. Hence, by \cite[Example I.2.8]{ako}, $O^2(O^{2'}(\fs))$ is isomorphic to the $2$-fusion category of $\Alt(6)$. Moreover, $\Alt(6)$ \emph{tamely realizes} the fusion system $O^2(O^{2'}(\fs))$ by \cite[Theorem I.C]{FSLie} and we deduce that that $S$ is isomorphic to a $2$-subgroup of $\Aut(\Alt(6))$. Since $S$ has nilpotency class two, one can verify that either $S$ is isomorphic to $\Dih(8)$ or to a Sylow $2$-subgroup of $\Sym(6)\cong \PSp_4(2)$, which itself is isomorphic to $\Dih(8)\times 2$. If $S\cong \Dih(8)$ then $\fs$ is isomorphic to the $2$-fusion category of $\Alt(6)$ by \cite[Example I.2.8]{ako}. If $|S|=2^4$, then $S$ is isomorphic to a Sylow $2$-subgroup of $\Sym(6)$ and $\fs$ is isomorphic to the $2$-fusion category of $\Sym(6)$ by \cref{KnownSp}.
\end{proof}

\begin{remark}
With regards to automorphisms, the $2$-fusion category of $\Alt(6)$ is far from the typical situation for simple fusion systems. See the comment after \cite[Definition I.1.3]{FSLie}.
\end{remark}

Before the next lemma, we explain what we mean when we say a component of $\fs$ is \emph{normalized} by $S$. Let $E(\fs)$ be supported on $T\le S$, and assume that $\mathcal{C}\in\Comp(\fs)$ with $\mathcal{C}$ supported on $P$. We have that $E(\fs)\normaleq \fs$ and so for each $\alpha\in \Aut_{\fs}(T)$, $E(\fs)^\alpha=E(\fs)$. In particular, since $T\normaleq S$, we have that $S$ acts on $T$ and for $\alpha\in \Aut_S(T)$ we have that $E(\fs)^\alpha=E(\fs)$. We say that $\mathcal{C}$ is normalized by $S$ if $P\normaleq S$ and for $\alpha\in \Aut_S(P)$, $\mathcal{C}^\alpha=\mathcal{C}$. 

\begin{lemma}\label{normcomp}
Suppose that $\fs$ is a saturated fusion system on a $p$-group $S$. Then each component of $\fs$ is normalized by $S$.
\end{lemma}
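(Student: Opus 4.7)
My plan is to combine the normality $E(\fs)\normaleq \fs$ with a variant of the no-wreath-product argument of \cref{NoWreath}, exploiting the class-two hypothesis on $S$ that is implicit throughout the paper. Let $\mathcal{C}$ be a component of $\fs$ with support $P$, and let $T$ be the support of $E(\fs)$. Since $E(\fs)$ is normal in $\fs$, the subgroup $T$ is strongly closed and so $T\normaleq S$. By Aschbacher's central-product decomposition of $E(\fs)$ \cite[{}9.9]{aschfit}, there are components $\mathcal{C}=\mathcal{C}_1,\dots,\mathcal{C}_n$ of $\fs$ with supports $P=P_1,\dots,P_n$ such that $T=P_1\cdots P_n$ and $[P_i,P_j]\le Z(E(\fs))$ for $i\ne j$.

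For each $s\in S$, the conjugation $c_s$ belongs to $\Aut_S(T)\le \Aut_{\fs}(T)$, and the invariance condition in the definition of $E(\fs)\normaleq \fs$ yields $E(\fs)^{c_s}=E(\fs)$. Hence $c_s$ acts as a fusion-system automorphism of $E(\fs)$ and must permute the canonical decomposition $\{\mathcal{C}_1,\dots,\mathcal{C}_n\}$, and correspondingly the set of supports $\{P_1,\dots,P_n\}$. The principal task is to show that this permutation is trivial.

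To show $P_i^s=P_i$ for each $i$, I would run the argument of \cref{NoWreath}. Supposing $P_i^s=P_j$ with $i\ne j$, pick $x\in P_i\setminus Z(P_i)$ and $y\in P_i$, so that $y^s\in P_j$. Commutation $[P_i,P_j]\le Z(E(\fs))\le Z(T)$ gives $[x,y^s]\in Z(T)$, while $[y,s]\in Z(S)$ (by class two of $S$) produces $[x,y^s]=[x,y]$ via the standard commutator identities. The contradiction to $x\notin Z(P_i)$ is then extracted from the fact that $P_i/(P_i\cap Z(E(\fs)))$ is a Sylow subgroup of the simple fusion system $\mathcal{C}_i/Z(\mathcal{C}_i)$, hence non-abelian, so that $\bar x$ cannot be forced into the centre of $\bar{P_i}$. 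Once $P_i^s=P_i$ is established for every $i$ and $s$, the support $P=P_1$ is $S$-invariant, so $P\normaleq S$; uniqueness of supports in the central-product decomposition of $E(\fs)$ then forces $\mathcal{C}^{c_s}=\mathcal{C}$, completing the proof. The main obstacle I anticipate is the central-product correction: unlike the direct-product setting of \cref{NoWreath}, the identity $[x,y^s]=1$ only weakens to $[x,y^s]\in Z(T)$, so the commutator analysis must be carried out modulo a suitable central subgroup in order to reduce to a genuine direct product of the $\bar{P_i}$ and apply \cref{NoWreath}.
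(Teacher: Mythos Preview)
Your proposal is correct and follows essentially the same route as the paper: the paper also passes to $\bar\fs=\fs/Z(E(\fs))$ so that $E(\bar\fs)$ is supported on a genuine direct product $T_1\times\cdots\times T_n$ of class-two groups, applies \cref{NoWreath} verbatim to see that $\bar S$ normalizes each $T_i$, and then lifts back via \cite[{}9.7(2)]{aschfit}. One small clarification: rather than choosing $x\in P_i\setminus Z(P_i)$ and claiming a contradiction with $\bar x\in Z(\bar P_i)$, you should either pick $x$ with $\bar x\notin Z(\bar P_i)$ from the start (possible since $\bar P_i$ is non-abelian), or observe that your commutator identity applies to every $x\in P_i$ and hence forces $\bar P_i$ abelian.
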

\begin{proof}
Let $\mathcal{K}\in\Comp(\fs)$ supported on $R$. Set $\bar{\fs}=\fs/Z(E(\fs))$. Then $\bar{\mathcal{K}}\in \Comp(\bar{\fs})$. Indeed, $E(\bar{\fs})=\bar{E(\fs)}$ is supported on $T=T_1\times \dots T_n$ where $T\normaleq \bar{S}$, $T_i$ is of nilpotency class $2$ and $\mathcal{C}_i\in\Comp(\bar{\fs})$ is the unique component supported on $T_i$ by \cite[{}9.7(1)]{aschfit}. We may arrange that $\bar{R}=T_1$ and $\bar{\mathcal{K}}=\mathcal{C}_1$. It follows that $\bar{S}$ acts on $\Comp(\bar{\fs})$ and so permutes the set $\{T_i\}_i$. But then, by \cref{NoWreath}, $\bar{S}$ normalizes each $T_i$ and so by \cite[{}9.7(2)]{aschfit}, normalizes each component of $\bar{\fs}$. Hence, $S$ normalizes $\langle \mathcal{K}, Z(E(\fs))\rangle_S$ and another application of \cite[{}9.7(2)]{aschfit} yields that $S$ normalizes $\mathcal{K}$. 
\end{proof}

\begin{proposition}\label{CompAutos}
Suppose that $\fs$ is a saturated fusion system on a $p$-group $S$ such that $S$ has class two. Assume there is $\mathcal{C}\in\Comp(\fs)$ supported on $T$ with $\mathcal{C}/Z(\mathcal{C})\in\mathcal{K}$. Then $S=TC_S(T)$ and unless $\mathcal{C}$ is isomorphic to the $2$-fusion system of $\Alt(6)$, we have that $S=C_S(\mathcal{C})T$.
\end{proposition}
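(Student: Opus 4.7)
Since $\mathcal{C}$ is a component of $\fs$, \cref{normcomp} gives $T\normaleq S$, and the class-two hypothesis forces $[S,T]\le [S,S]\cap T\le Z(S)\cap T\le Z(T)$, so $\Aut_S(T)$ acts trivially on $T/Z(T)$. This is the only information about $\Aut_S(T)$ supplied directly by the hypothesis, and the whole argument exploits it through the tame realisations identified in the previous propositions.

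For the first claim, my plan is to show $\Aut_S(T)=\Inn(T)$. By \cref{normcomp} we also have $\Aut_S(T)\le \Aut(\mathcal{C})$, and saturation of $\mathcal{C}$ gives $\Inn(T)=\Aut_T(T)\in \syl_p(\Aut_{\mathcal{C}}(T))$. Since $\Aut_S(T)$ is a $p$-group containing $\Inn(T)$, we deduce $\Aut_S(T)\cap \Aut_{\mathcal{C}}(T)=\Inn(T)$, so $\Aut_S(T)/\Inn(T)$ embeds in $\Out(\mathcal{C})$ with image landing in the subgroup of outer classes acting trivially on $T/Z(T)$. Using \cref{KnownSL}, \cref{KnownSp}, and \cref{L34}, $\mathcal{C}$ is tamely realised by some $G\in\{\PSL_3(p^n),\PSp_4(2^n),2.\PSL_3(4),2^2.\PSL_3(4),\Alt(6)\}$, so $\Out(\mathcal{C})\cong \Out(G)$. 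Inspecting the field, graph and diagonal classes case by case, every non-trivial outer class acts non-trivially on $T/Z(T)$, with the one potential exception being $G\cong \Alt(6)\cong \PSL_2(9)$, in which case $T\cong \Dih(8)$; but here a direct calculation in $\Aut(\Dih(8))$ shows that the central automorphisms of $T$ already coincide with $\Inn(T)$. In every case this forces $\Aut_S(T)=\Inn(T)$, giving $S=TC_S(T)$.

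For the second claim, I consider the action map $\mu\colon S\to \Out(\mathcal{C})$ sending $s$ to the outer class of conjugation by $s$ on $\mathcal{C}$. Unpacking the definitions, $\ker \mu = TC_S(\mathcal{C})$: if $\mu(s)=1$ then $c_s|_T\in \Aut_{\mathcal{C}}(T)$, and the $p$-element constraint places $c_s|_T\in \Inn(T)$, whence $st^{-1}\in C_S(T)$ for some $t\in T$, and the vanishing of $\mu(st^{-1})$ on the remaining morphisms of $\mathcal{C}$ puts $st^{-1}\in C_S(\mathcal{C})$ via Aschbacher's central-product framework. The image of $\mu$ once again lies in the subgroup of $\Out(\mathcal{C})$ of outer classes trivial on $T/Z(T)$, which by the case analysis above is trivial whenever $G\ne \Alt(6)$. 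Hence in those cases $\mu$ is the zero map and $S=TC_S(\mathcal{C})$.

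The main obstacle, and the source of the $\Alt(6)$ exception, is that $\Out(\Alt(6))\cong C_2\times C_2$ and both non-trivial classes act trivially on $T\cong\Dih(8)$ modulo $\Inn(T)$ --- the ``$\Sym(6)$'' class centralises $T$ in any ambient realisation, and the field class for $\PSL_2(9)$ happens to coincide with conjugation by an involution of $T$. Consequently, for $\mathcal{C}$ the $2$-fusion system of $\Alt(6)$, the image of $\mu$ need not be trivial, and this route cannot deliver $S=C_S(\mathcal{C})T$.
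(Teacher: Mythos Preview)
Your argument has a genuine gap in the second claim. With $\mu\colon S\to\Out(\mathcal{C})$ defined by $s\mapsto c_s|_T\cdot\Aut_{\mathcal{C}}(T)$, the kernel is $TC_S(T)$, not $TC_S(\mathcal{C})$: if $\mu(s)=1$ you correctly deduce $c_s|_T\in\Inn(T)$ and hence $s\in TC_S(T)$, but your further step ``the vanishing of $\mu(st^{-1})$ on the remaining morphisms of $\mathcal{C}$ puts $st^{-1}\in C_S(\mathcal{C})$'' does not follow. An element of $C_S(T)$ acts trivially on $T$, so its image under $\mu$ is automatically trivial; $\mu$ simply does not see the difference between $C_S(T)$ and $C_S(\mathcal{C})$. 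Concretely, once your first claim $S=TC_S(T)$ is established, $\mu$ is the zero map for \emph{every} $\mathcal{C}$, including the $\Alt(6)$ case where $S=TC_S(\mathcal{C})$ genuinely can fail --- so $\mu$ cannot be the tool that proves the second claim.

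There is also a problem in the first claim. The phrase ``the subgroup of outer classes acting trivially on $T/Z(T)$'' is not well-posed: $\Aut_{\mathcal{C}}(T)$ contains the image of a maximal torus of $G$, which acts nontrivially on $T/Z(T)$, so $\Out(\mathcal{C})=\Aut(\mathcal{C})/\Aut_{\mathcal{C}}(T)$ does not act on $T/Z(T)$. What you would actually need is that every $\beta\in\Aut(\mathcal{C})$ centralising $T/Z(T)$ already lies in $\Aut_{\mathcal{C}}(T)$; this amounts to showing that no product (field automorphism)$\cdot$(torus element) centralises $T/Z(T)$, which requires a genuine case check you have not carried out (and which becomes delicate for the exotic systems at $p=7$ and for the covers of $\PSL_3(4)$).

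The paper organises the proof differently. Outside the $\Alt(6)$ case it proves $S=TC_S(\mathcal{C})$ directly (which then yields $S=TC_S(T)$), by using tame realisation together with the constraint that $s$ centralises $Z(T)$ (since $Z(T)=T'\le S'\le Z(S)$): if $s$ projects to a graph or graph--field class it would swap the two members of $\mathcal{A}(T)$, contradicting $[s,T]\le Z(T)$; if $s=if$ with $f$ a field automorphism then $f$ centralises $Z(T)$, forcing $Z(T)\le Z(C_T(f))$, which contradicts \cite[Proposition~4.9.1(a)]{GLS3}. The $2^2.\PSL_3(4)$, $p^{1+2}_+$ and exotic cases are then handled by explicit computation. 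The key point you are missing is that the passage from $C_S(T)$ to $C_S(\mathcal{C})$ requires control over how morphisms of $\mathcal{C}$ extend over elements of $C_S(T)$, and this is precisely what the tame realisation and the $Z(T)$-centralising argument provide.
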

\begin{proof}
Assume first that $\mathcal{C}/Z(\mathcal{C})$ is isomorphic to the $2$-fusion system of $\Alt(6)$. Then \cref{L34} reveals that $Z(\mathcal{C})=\{1\}$. Hence $T$ is isomorphic to $\Dih(8)$. By \cref{normcomp}, $S$ normalizes $T$ so that the hypothesis of \cref{DihCent} are satisfied and $S=TC_S(T)$. For the remainder of the proof we may assume that $\mathcal{C}/Z(\mathcal{C})$ is not isomorphic to the $2$-fusion category of $\Alt(6)$.

Assume that $p=2$ and $Z(\fs)=\{1\}$ so that $\mathcal{C}$ is isomorphic to the $2$-fusion category of $G$, where $G$ is $\PSL_3(2^a)$ or $\PSp_4(2^a)$ for some $a>1$. Applying \cite[Theorem I.A]{FSLie}, we have that $G$ tamely realizes $\mathcal{C}$. Now, the $2$-part of $\Out(G)$ comprises of graph, field and graph-field automorphisms, and so we may assume that for $s\in S\setminus C_S(\mathcal{C})T$, modulo the action of an inner automorphism, $s$ acts on $\mathcal{C}$ as a graph, field or graph-field automorphism acts on $G$. Suppose that $s$ acts as a non-trivial graph or graph-field automorphism. Then $s$ permutes the two maximal elementary abelian subgroups of $T$, impossible since $[s, T]\le Z(S)\cap T\le Z(T)=A\cap B$ where $\mathcal{A}(T)=\{A, B\}$. Suppose that $s=if$ where $i\in T$ and $f$ acts as a field automorphism. Since $S$ has class two, we have that $Z(T)=T'\le S'\le Z(S)$ and so $Z(T)$ is centralized by $s$. Since $i$ centralizes $Z(T)$ we have that $f$ also centralizes $Z(T)$. Therefore, $Z(T)\le Z(C_T(f))$ and applying \cite[Proposition 4.9.1(a)]{GLS3}, we have a contradiction. Hence, $S=C_S(\mathcal{C})T$.

Suppose that $Z(\mathcal{C})\ne\{1\}$ so that by \cref{L34}, $\mathcal{C}/Z(\mathcal{C})$ is isomorphic to the $2$-fusion system of $\PSL_3(4)$. We present the argument for the $2$-fusion category of $2^2.\PSL_3(4)$. The other case follows working in suitable quotients. Let $E$ be an essential subgroup of $\mathcal{C}$ so that $E$ is elementary abelian of order $2^6$ and $\Out_{\mathcal{C}}(E)\cong \PSL_2(4)$. Let $G$ be a model for $N_{\mathcal{C}}(E)$. Note that upon recognizing $S$ as a Sylow $2$-subgroup of $2^2.\PSL_3(4)$, $N_{2^2.\PSL_3(4)}(E)$ is a model for $N_{\mathcal{C}}(E)$ and by the model theorem, using the uniqueness of models, we have that $G\cong N_{2^2.\PSL_3(4)}(E)$. Then $G=[G, G]$. In particular, for any $\alpha\in \Aut(N_{\mathcal{C}}(E))$ with $\alpha$ acting trivially on $N_{\mathcal{C}}(E)/Z(\mathcal{C})$, we see that $[G, \alpha]\le Z(\mathcal{C})$ the three subgroups lemma yields that $[G, \alpha]=\{1\}$. It follows that for $s\in S\setminus T$, if $sZ(\mathcal{C})\in C_{\fs}(\mathcal{C}/Z(\mathcal{C}))$ then $N_{\mathcal{C}}(E)\subseteq C_{\fs}(s)$. For $\hat{E}\in \mathcal{E}(\mathcal{C})\setminus \{E\}$, by similar argument we have that $N_{\mathcal{C}}(\hat{E})\subseteq C_{\fs}(s)$. Moreover, $\mathcal{C}=\langle N_{\mathcal{C}}(E), N_{\mathcal{C}}(\hat{E})\rangle_T$ from which we conclude that $\mathcal{C}\subseteq C_{\fs}(s)$ and $s\in C_S(\mathcal{C})$. Hence, if $TC_S(\mathcal{C})<S$ then there is $s\in S\setminus TC_S(\mathcal{C})$ with $sZ(\mathcal{C})\not\in C_S(\mathcal{C}/Z(\mathcal{C}))$. Since $\mathcal{C}/Z(\mathcal{C})$ is isomorphic to the $2$-fusion system of $\PSL_3(4)$, we obtain a contradiction as in the previous paragraph. Hence, $S=TC_S(\mathcal{C})$.

Assume now that $p$ is odd, so that $Z(\mathcal{C})=\{1\}$ by \cref{L34} and $\mathcal{C}$ is simple. Suppose that $\mathcal{C}$ is isomorphic to the $p$-fusion category of $G=\PSL_3(p^a)$ for some $a>1$. As in the $p=2$ case, we apply \cite[Theorem A]{FSLie} and observe that $s$ acts on $\fs$ as a field automorphism, modulo an inner automorphism, acts on $G$. As in the $p=2$ case, we write $s=if$ and observe that $Z(T)\le Z(C_T(f))$ and \cite[Proposition 4.9.1(a)]{GLS3} gives a contradiction. Suppose now that $T\cong p^{1+2}_+$. We apply \cite{RV1+2} for a list of simple fusion systems, and apply \cite[Theorem II. A]{FSLie} and \cite[Table II.0.3]{FSLie} to see that if $\fs$ is not exotic and not isomorphic to the $3$-fusion category of ${}^2\mathrm{F}_4(2)'$, then for $G$ which tamely realizes $\fs$, $\Out(G)$ has order coprime to $p$ from which we conclude that $S=TC_S(\mathcal{C})$. If $\fs$ is isomorphic to the $3$-fusion category of ${}^2\mathrm{F}_4(2)'$ then \cite[Theorem I.D]{FSLie} implies that $\fs$ is tamely realized by ${}^2\mathrm{F}_4(2)'$ and as $|\Out({}^2\mathrm{F}_4(2)')|$ is coprime to $3$, again we see that $S=TC_S(\mathcal{C})$. 

Thus, we are reduced to the case where $p=7$, $T\cong 7^{1+2}_+$ and $\fs$ is a simple exotic fusion system. We observe that a Sylow $7$-subgroup of $\Aut(T)$ is isomorphic to $7^{1+2}_+$. Assume that $TC_S(\mathcal{C})<S$ and set $\bar{S}:=S/C_S(\mathcal{C})$. Choose $TC_S(\mathcal{C})<P\le S$ such that $|\bar{P}|=7^4$. Observe that as $T\cong 7^{1+2}_+$, $S=TC_S(T)$ and $S$ normalizes each essential subgroup of $\mathcal{C}$. It follows that $P$ normalizes $N_{\mathcal{C}}(E)$ for each $E\in\mathcal{E}(\mathcal{C})$. We appeal to \cite{RV1+2} to see that for $G$ a model of $N_{\mathcal{C}}(E)$, we have that $G\cong 7^2:\SL_2(7):2$ or $7^2:\GL_2(7)$. We calculate in MAGMA \cite{MAGMA} that $Z(G)=\{1\}$ and $\Aut(G)\cong 7^2: \GL_2(7)$ and so we deduce that $P=TC_P(G)$. As $\mathcal{C}$ is simple, we have that $\mathcal{C}=\langle N_{\mathcal{C}}(E) \mid E\in\mathcal{E}(\mathcal{C})\rangle$ and we deduce that that $P=TC_P(\mathcal{C})$, a contradiction. Hence, $S=TC_S(\mathcal{C})$.
\end{proof}

\begin{corollary}\label{SFitting}
Suppose that $\fs$ is a saturated fusion system on a $p$-group $S$ such that $S$ has class two. Assume that $E(\fs)\ne\{1\}$, let $\mathcal{X}\subset \Comp(\fs)$ and $\mathcal{H}$ be the central product of the components in $\mathcal{X}$. Let $T_{\mathcal{X}}$ be Sylow in $\mathcal{H}$. Then $S=T_{\mathcal{X}}C_S(T_{\mathcal{X}})$. 

If $S=C_S(\mathcal{C})T$ for all components $\mathcal{C}$ in $\mathcal{X}$, then $S=C_S(\mathcal{H})T_{\mathcal{X}}$. In particular, this holds if $\mathcal{X}$ has no components isomorphic to the $2$-fusion system of $\Alt(6)$.
\end{corollary}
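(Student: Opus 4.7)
The plan is to reduce both claims to Proposition \ref{CompAutos} by induction on $|\mathcal{X}|$, with the base case $|\mathcal{X}|=1$ being that proposition. For the inductive step, I single out a component $\mathcal{C} \in \mathcal{X}$ with Sylow $T_{\mathcal{C}}$, let $\mathcal{H}'$ denote the central product of the remaining components in $\mathcal{X}\setminus\{\mathcal{C}\}$, and let $T'$ be Sylow in $\mathcal{H}'$. The crucial input is the standard fact (as used throughout the paper and recorded in \cite[9.7]{aschfit}) that distinct components of $\fs$ pairwise centralize one another as fusion subsystems: in particular $[T_{\mathcal{C}}, T'] = \{1\}$, $T_{\mathcal{X}} = T_{\mathcal{C}}T'$, and $T_{\mathcal{C}} \le C_S(\mathcal{H}')$ (in the subsystem centralizer sense), and symmetrically for $T'$.

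For the first assertion, given $s \in S$, the induction supplies a decomposition $s = t'c'$ with $t' \in T'$ and $c' \in C_S(T')$. Applying Proposition \ref{CompAutos} to $\mathcal{C}$ allows me to write $c' = t_{\mathcal{C}}c$ with $t_{\mathcal{C}} \in T_{\mathcal{C}}$ and $c \in C_S(T_{\mathcal{C}})$. Since $t_{\mathcal{C}} \in T_{\mathcal{C}} \le C_S(T')$, the element $c = t_{\mathcal{C}}^{-1}c'$ lies in $C_S(T')$ as well, and therefore $c \in C_S(T_{\mathcal{C}}) \cap C_S(T') = C_S(T_{\mathcal{X}})$; combined with $t't_{\mathcal{C}} \in T_{\mathcal{X}}$, this yields $s \in T_{\mathcal{X}}C_S(T_{\mathcal{X}})$.

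For the second assertion the argument is the same except it uses the stronger output of Proposition \ref{CompAutos} (namely $S = C_S(\mathcal{D})T_{\mathcal{D}}$ for every $\mathcal{D} \in \mathcal{X}$, which is the standing hypothesis) and the stronger inductive conclusion $S = C_S(\mathcal{H}')T'$. Writing $s = c't'$ with $c' \in C_S(\mathcal{H}')$ and then $c' = ct_{\mathcal{C}}$ with $c \in C_S(\mathcal{C})$, the key point is that $t_{\mathcal{C}} \in T_{\mathcal{C}} \le C_S(\mathcal{H}')$ (by the centralizing property of distinct components), so $c \in C_S(\mathcal{H}')$ as well, giving $c \in C_S(\mathcal{C}) \cap C_S(\mathcal{H}') = C_S(\mathcal{H})$ and $t_{\mathcal{C}}t' \in T_{\mathcal{X}}$. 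The final sentence of the corollary is immediate from Proposition \ref{CompAutos}, since the excluded case of Proposition \ref{CompAutos} is precisely the $\Alt(6)$ fusion system.

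The argument is essentially mechanical once the inductive setup is correct; the main thing to take care with is distinguishing the subgroup centralizer of the support $C_S(T')$ from the stronger fusion-subsystem centralizer $C_S(\mathcal{H}')$ at each step, and confirming that the ``distinct components centralize each other'' property holds in the strong fusion-theoretic sense so that $T_{\mathcal{C}}$ lies inside $C_S(\mathcal{H}')$ and not merely inside $C_S(T')$.
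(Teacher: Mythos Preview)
Your proof is correct and follows essentially the same approach as the paper: induction on $|\mathcal{X}|$ with base case \cref{CompAutos}, using that distinct components centralize one another (the paper cites \cite[9.2, 9.6]{aschfit} for $T_{\mathcal{C}_i}\le C_S(\mathcal{C}_j)$ when $i\ne j$). The only cosmetic difference is that the paper carries out the inductive step via two applications of the Dedekind modular law to the product $\bigl(\bigcap_{\mathcal{C}\in\mathcal{X}} C_S(\mathcal{C})\bigr)\bigl(\prod_{\mathcal{C}\in\mathcal{X}} T_{\mathcal{C}}\bigr)$, whereas you do the equivalent computation element by element; your care in distinguishing $C_S(T')$ from $C_S(\mathcal{H}')$ matches the paper's implicit handling of the two assertions.
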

\begin{proof}
Throughout, we set $T\le S$ such that $E(\fs)$ is supported on $T$. By \cref{normcomp}, $S$ normalizes each component of $\fs$ and by \cref{CompAutos} we have that $S=\bigcap_{\mathcal{C}\in\Comp(\fs)} C_S(\mathcal{C})T_{\mathcal{C}}$, where $\mathcal{C}\in\Comp(\fs)$ is supported on $T_{\mathcal{C}}$.

We claim that for a collection $\mathcal{X}$ of components of $\fs$, if $S=TC_S(\mathcal{C})$ for each $\mathcal{C}\in\mathcal{X}$ with $\mathcal{C}$ supported on $T$, then we have that \[S=\left(\bigcap_{\mathcal{C}\in\mathcal{X}} C_S(\mathcal{C})\right)\left(\prod_{\mathcal{C}\in\mathcal{X}} T_{\mathcal{C}}\right).\] Observing that $C_S(\mathcal{C})\le C_S(T)$, it is then easy to recreate the proof to show that $S=T_{\mathcal{X}}C_S(T_{\mathcal{X}})$.

Let $\mathcal{X}=\{\mathcal{C}_1,\dots, \mathcal{C}_n\}$. The case where $n=1$ follows holds by \cref{CompAutos} and so by induction on $n$, we have that \[S=\left(\bigcap\limits_{\mathcal{C}\in\mathcal{X}\setminus \{\mathcal{C}_n\}} C_S(\mathcal{C})\right)\left(\prod_{\mathcal{C}\in\mathcal{X}\setminus \{\mathcal{C}_n\}} T_{\mathcal{C}}\right)\cap T_{\mathcal{C}_n}C_S(\mathcal{C}_n).\] Now, $T_{\mathcal{C}_n}\le C_S(\mathcal{C}_i)$ for $i\in \{1,\dots, n-1\}$ by \cite[{}9.2, 9.6]{aschfit} and so we have that \[T_{\mathcal{C}_n}\le \bigcap\limits_{\mathcal{C}\in\mathcal{X}\setminus \{\mathcal{C}_n\}} C_S(\mathcal{C}).\] By the Dedekind modular law, \[S=T_{\mathcal{C}_n}\left(\left(\bigcap_{\mathcal{C}\in\mathcal{X}\setminus \{\mathcal{C}_n\}} C_S(\mathcal{C})\right)\left(\prod_{\mathcal{C}\in\mathcal{X}\setminus \{\mathcal{C}_n\}} T_{\mathcal{C}}\right)\cap C_S(\mathcal{C}_n)\right).\] Again by \cite[{}9.2, 9.6]{aschfit}, for all $i\in\{1,\dots, n-1\}$, $T_{\mathcal{C}_i}\le C_S(\mathcal{C}_n)$ so that \[\prod_{\mathcal{C}\in\mathcal{X}\setminus \{\mathcal{C}_n\}} T_{\mathcal{C}}\le C_S(\mathcal{C}_n).\] Another application of the Dedekind modular gives
\begin{align*}
    S&=T_{\mathcal{C}_n}\left(\prod_{\mathcal{C}\in\mathcal{X}\setminus \{\mathcal{C}_n\}} T_{\mathcal{C}}\right)\left(\left(\bigcap_{\mathcal{C}\in\mathcal{X}\setminus \{\mathcal{C}_n\}} C_S(\mathcal{C})\right)\cap C_S(\mathcal{C}_n)\right)\\
    &=\left(\bigcap_{\mathcal{C}\in\mathcal{X}} C_S(\mathcal{C})\right)\left(\prod_{\mathcal{C}\in\mathcal{X}} T_{\mathcal{C}}\right)
\end{align*}
and the claim holds.
\end{proof}

\section{A Minimal Counterexample to the Main Theorem}

With the aim of proving the \hyperlink{MainThm}{Main Theorem}, throughout the remainder of the fusion system portion of this work we operate under the following hypothesis.

\begin{hypothesis}\label{mainhyp}
$\fs$ is a saturated fusion system on a $p$-group $S$, where $S$ has nilpotency class two, such that $O_p(\fs)=\{1\}$ and $\fs$ is a minimal counterexample to the \hyperlink{MainThm}{Main Theorem} chosen first with respect to $|S|$ then with respect to the number of morphisms in $\fs$. 
\end{hypothesis}

We show that no such system exists in Sections 6 \& 7.

\begin{lemma}\label{GeneralEssentialStructure}
For $E\in\mathcal{E}(\fs)$ we have that $E\normaleq S$, $[O^{p'}(\Aut_{\fs}(E)), E]\le Z(E)$ and $E=C_S(Z(E))$. 
\end{lemma}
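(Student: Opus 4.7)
The plan is to establish the three claims in order, relying on the class two hypothesis, the fact that essential subgroups are centric and fully normalized, and a short application of the Chain Lemma (\cref{Chain}). A convenient preliminary observation, which I would record first, is that $Z(S) \le Z(E)$: since $E$ is $\fs$-centric, $Z(S) \le C_S(E) \le E$, and as $Z(S)$ centralizes $S$ it certainly centralizes $E$, so $Z(S) \le Z(E)$. Combined with $S$ having class two, this gives $[S,E] \le [S,S] \le Z(S) \le Z(E) \le E$, which immediately yields $E \normaleq S$.

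For the equality $E=C_S(Z(E))$, I would apply \cref{Chain} to the $\Aut_{\fs}(E)$-invariant chain $\{1\} \normaleq Z(E) \normaleq E$, which is invariant because $Z(E)$ is characteristic in $E$. Setting $Q:=C_S(Z(E))$, one has $[Q,Z(E)]=\{1\}$ by definition, and $[Q,E] \le [S,E] \le Z(S) \le Z(E)$ by the previous paragraph. \cref{Chain} then gives $Q \le E$, and since the reverse inclusion $E \le C_S(Z(E))$ is immediate, we conclude $E=C_S(Z(E))$.

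For the middle statement, the key point is that $\Aut_S(E)$ already centralizes $E/Z(E)$, since $[S,E] \le Z(S) \le Z(E)$. Set $K:=C_{\Aut_{\fs}(E)}(E/Z(E))$, which is normal in $\Aut_{\fs}(E)$ and contains $\Aut_S(E)$. Because $E$ is essential it is fully $\fs$-normalized, and so $\Aut_S(E) \in \syl_p(\Aut_{\fs}(E))$. Consequently $\Aut_{\fs}(E)/K$ has order coprime to $p$, forcing $O^{p'}(\Aut_{\fs}(E)) \le K$, which is exactly the desired inclusion $[O^{p'}(\Aut_{\fs}(E)), E] \le Z(E)$.

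There is no genuine obstacle here: the entire argument is a direct unpacking of the class two hypothesis together with one application of the Chain Lemma. The only point requiring a brief justification is that essentiality ensures $\Aut_S(E)$ is Sylow in $\Aut_{\fs}(E)$, which is what allows the $p'$-index conclusion in the second claim to go through.
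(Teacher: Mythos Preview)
Your proof is correct and follows essentially the same approach as the paper's: both derive $[S,E]\le S'\le Z(S)\le Z(E)$ from centricity and the class two hypothesis to get $E\normaleq S$, both apply the Chain Lemma to the chain $\{1\}\normaleq Z(E)\normaleq E$ to obtain $E=C_S(Z(E))$, and both observe that $\Aut_S(E)$ centralizes $E/Z(E)$ so that its normal closure $O^{p'}(\Aut_{\fs}(E))$ does as well. The only cosmetic difference is that the paper phrases the last step via $O^{p'}(\Aut_{\fs}(E))=\langle \Aut_S(E)^{\Aut_{\fs}(E)}\rangle$, while you argue via the $p'$-index of $\Aut_{\fs}(E)/C_{\Aut_{\fs}(E)}(E/Z(E))$; these are equivalent.
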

\begin{proof}
Let $E\in\mathcal{E}(\fs)$. Since $E$ is $\fs$-centric and $S$ has class two we have that $[S, E]\le S'\le Z(S)\le Z(E)$. In particular, $E\normaleq S$ and since $O^{p'}(\Aut_{\fs}(E))=\langle \Aut_S(E)^{\Aut_{\fs}(E)}\rangle$ normalizes $Z(E)$, we have that $[O^{p'}(\Aut_{\fs}(E)), E]\le Z(E)$. Finally, if $E<C_S(Z(E))$ then $C_S(Z(E))$ centralizes the $\Aut_{\fs}(E)$-invariant chain $\{1\}\normaleq Z(E)\normaleq E$, against \cref{Chain} since $E$ is $\fs$-essential. Hence, $E=C_S(Z(E))$.
\end{proof}

\begin{lemma}\label{EssentialGeneration}
We have that $\fs=\langle O^{p'}(\Aut_{\fs}(E)), \Aut_{\fs}(S) \mid E\in\mathcal{E}(\fs)\rangle_S$.
\end{lemma}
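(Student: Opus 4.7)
The plan is to combine the Alperin--Goldschmidt fusion theorem with a Frattini argument and the extension axiom for saturated fusion systems. By the Alperin--Goldschmidt theorem it suffices to show that for every $E\in\mathcal{E}(\fs)$, every element of $\Aut_{\fs}(E)$ can be written as a composite of a morphism in $O^{p'}(\Aut_{\fs}(E))$ with the restriction of some element of $\Aut_{\fs}(S)$ to $E$.

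First I would exploit \cref{GeneralEssentialStructure}, which gives $E\normaleq S$ for every $E\in\mathcal{E}(\fs)$. Consequently $\Aut_S(E)\in\syl_p(\Aut_{\fs}(E))$, and $O^{p'}(\Aut_{\fs}(E))$ is the normal closure of $\Aut_S(E)$ in $\Aut_{\fs}(E)$. Applying a standard Frattini argument to this Sylow $p$-subgroup yields
\[\Aut_{\fs}(E) = O^{p'}(\Aut_{\fs}(E))\cdot N_{\Aut_{\fs}(E)}(\Aut_S(E)).\]
Thus it is enough to show that every $\alpha\in N_{\Aut_{\fs}(E)}(\Aut_S(E))$ is realized as the restriction to $E$ of some $\tilde\alpha\in\Aut_{\fs}(S)$ with $E\tilde\alpha=E$.

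Next I would invoke the extension (receptivity) axiom. Since $E$ is essential it is fully normalized in $\fs$. For $\alpha\in\Aut_{\fs}(E)$ normalizing $\Aut_S(E)$, the extender
\[N_\alpha=\{s\in N_S(E)\mid \alpha\circ c_s\circ\alpha^{-1}\in\Aut_S(E)\}\]
coincides with $N_S(E)=S$, using $E\normaleq S$ and the normalizing property of $\alpha$. Hence there exists $\tilde\alpha\in\Hom_{\fs}(S,S)$ with $\tilde\alpha|_E=\alpha$; since $E\tilde\alpha=E\alpha=E$ and $\tilde\alpha$ is injective on the finite group $S$, we have $\tilde\alpha\in\Aut_{\fs}(S)$. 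Therefore $\alpha$ is a restriction of an element of $\Aut_{\fs}(S)$, completing the reduction.

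Combining the two paragraphs, each generator $\Aut_{\fs}(E)$ appearing in Alperin--Goldschmidt lies in the fusion subsystem generated by $O^{p'}(\Aut_{\fs}(E))$ and (restrictions of) $\Aut_{\fs}(S)$, and the stated equality follows. There is no genuine obstacle here; the only point that needs the hypothesis of the paper (class two in $S$) is the computation $E\normaleq S$, which was already established in \cref{GeneralEssentialStructure}. Everything else is a formal consequence of saturation.
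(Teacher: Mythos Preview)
Your proposal is correct and follows essentially the same approach as the paper: a Frattini argument on $\Aut_S(E)$ inside $\Aut_{\fs}(E)$, followed by lifting elements of $N_{\Aut_{\fs}(E)}(\Aut_S(E))$ to $\Aut_{\fs}(S)$ via receptivity (using $E\normaleq S$ from \cref{GeneralEssentialStructure}), and then concluding with Alperin--Goldschmidt. The paper's proof is more terse but the ingredients and their assembly are identical.
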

\begin{proof}
By the Frattini argument, $\Aut_{\fs}(E)=O^{p'}(\Aut_{\fs}(E))N_{\Aut_{\fs}(E)}(\Aut_S(E))$. Since $E$ is receptive and $E\normaleq S$, for all $\alpha\in N_{\Aut_{\fs}(E)}(\Aut_S(E))$ we have that $\alpha$ extends to some $\hat{\alpha}\in\Aut_{\fs}(S)$ with $\alpha=\hat{\alpha}|_E$. Then the result holds by the Alperin--Goldschmidt theorem.
\end{proof}

\begin{proposition}\label{MinSimple}
$\fs$ is simple.
\end{proposition}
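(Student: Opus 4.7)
The plan is to suppose for contradiction that $\fs$ has a proper non-trivial normal subsystem $\mathcal{E}$ supported on a strongly closed subgroup $T\le S$, and derive a contradiction from the minimality of $\fs$. I would split the argument into the two cases $T=S$ and $T<S$.

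If $T=S$, then by \cref{SNorm} we have $O^{p'}(\fs)=O^{p'}(\mathcal{E})\subseteq \mathcal{E}\subsetneq \fs$, so $O^{p'}(\fs)$ is itself a proper normal subsystem of $\fs$ on $S$ with strictly fewer morphisms than $\fs$. I would argue that $O^{p'}(\fs)$ satisfies \cref{mainhyp}: the class-two condition is inherited from the common Sylow $S$; and $O_p(O^{p'}(\fs))$, being characteristic in a characteristic subsystem, is $\Aut_{\fs}(S)$- and $\Aut_{\fs}(E)$-invariant for each $\fs$-essential $E$ and contained in each such $E$, so \cref{normalinF}(iii) yields $O_p(O^{p'}(\fs))\normaleq \fs$ and hence $O_p(O^{p'}(\fs))\le O_p(\fs)=\{1\}$. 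Minimality with respect to the number of morphisms then forces the \hyperlink{MainThm}{Main Theorem} to hold for $O^{p'}(\fs)$. Since $O^{p'}(O^{p'}(\fs))=O^{p'}(\fs)$, this says $O^{p'}(\fs)$ itself decomposes as $\fs_1\times\dots\times\fs_n$ of the stated form, which then shows $\fs$ satisfies the \hyperlink{MainThm}{Main Theorem} -- contradicting that $\fs$ is a counterexample. In particular we deduce $O^{p'}(\fs)=\fs$, so no proper normal subsystem can be supported on $S$.

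If $T<S$, first observe that $T$ must be non-abelian: otherwise the ``moreover'' clause of \cref{normalinF} gives $T\normaleq \fs$, contradicting $O_p(\fs)=\{1\}$. Hence $T$ has nilpotency class exactly two and, by the same characteristic-subsystem argument as above, $O_p(\mathcal{E})\le O_p(\fs)=\{1\}$, so $\mathcal{E}$ itself satisfies \cref{mainhyp}. Since $|T|<|S|$, minimality in $|S|$ forces the \hyperlink{MainThm}{Main Theorem} to hold for $\mathcal{E}$, giving a direct product decomposition of $O^{p'}(\mathcal{E})$. One then needs to lift this structural information to $\fs$ itself. The natural tool is the component machinery of the previous section: by \cref{conlayer} the hypothesis $O_p(\fs)=\{1\}$ forces $E(\fs)\ne\{1\}$, and by \cref{normcomp} and \cref{CompAutos} the components of $\fs$ are $S$-normalised and already account for all of $S$ modulo centralisers, so one can argue that $\mathcal{E}$ must contain $E(\fs)$ (or enough of it) and conclude that the decomposition of $O^{p'}(\mathcal{E})$ forces the corresponding decomposition of $O^{p'}(\fs)=\fs$, again contradicting the counterexample status.

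The main obstacle will be the second case: the first case is an essentially formal consequence of \cref{SNorm} and the minimality hypothesis, but when $T<S$ one has to transfer a structural decomposition across the inclusion $\mathcal{E}\subseteq \fs$. This is where \cref{wedge}, \cref{normcomp}, \cref{CompAutos} and \cref{SFitting} will carry the weight, with the key point being that any proper normal subsystem supported on a proper strongly closed subgroup is large enough to contain the generalised Fitting subsystem of $\fs$, at which stage minimality applied to $\mathcal{E}$ determines $\fs$ up to the factors already allowed by the conclusion of the \hyperlink{MainThm}{Main Theorem}.
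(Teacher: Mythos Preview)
Your case $T=S$ is essentially correct and is indeed how the paper begins: using \cref{SNorm} and minimality in the number of morphisms to force $\fs=O^{p'}(\fs)$.

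The case $T<S$, however, has a genuine gap. You propose to take an \emph{arbitrary} proper normal subsystem $\mathcal{E}$ on $T$, apply minimality to $\mathcal{E}$, and then ``lift'' the resulting decomposition to $\fs$. But the lifting step is never carried out, and your suggested mechanism --- that $\mathcal{E}$ ``must contain $E(\fs)$ (or enough of it)'' --- is neither proved nor obviously true for a general $\mathcal{E}$. Knowing that $O^{p'}(\mathcal{E})$ decomposes as a product of systems from the list does not by itself force the same for $O^{p'}(\fs)=\fs$.

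The paper avoids this problem by not working with an arbitrary $\mathcal{E}$ at all. Having secured $\fs=O^{p'}(\fs)$, it analyses the specific normal subsystem $F^*(\fs)=E(\fs)$. Using \cref{SFitting}, either $F^*(\fs)$ is supported on $S$, in which case \cref{SNorm} gives $\fs=E(\fs)$ and one decomposes into components directly; or $p=2$ and some component is the $2$-fusion category of $\Alt(6)$. This second situation is exactly where your sketch is silent, and it is the hardest part: the paper shows first that \emph{all} components are of $\Alt(6)$ type, then proves $F^*(\fs)S=\fs$ by explicitly verifying the normality axioms for $F^*(\fs)S$ inside $\fs$, and finally uses tame realisation of the $\Alt(6)$ system to conclude that $\fs$ lands in case~(iii) of the \hyperlink{MainThm}{Main Theorem}, contradicting the counterexample assumption. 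None of this is captured by ``lift the structural information''; in particular, showing that $\fs$ satisfies outcome~(iii) requires the tameness input and the control of $S/T$ provided by \cref{CompAutos}, which your outline does not invoke in any concrete way.
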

\begin{proof}
Since $O^{p'}(\fs)=O^{p'}(O^{p'}(\fs))$ and $\fs$ is a minimal counterexample we have that $\fs=O^{p'}(\fs)$. Since $O_p(\fs)=\{1\}$, we have that $F^*(\fs)=E(\fs)$ is a saturated normal fusion subsystem of $\fs$. We observe by \cref{SFitting} that $F^*(\fs)$ is supported on $S$ unless $p=2$ and there is $\mathcal{C}\in\Comp(\fs)$ isomorphic to the $2$-fusion system of $\Alt(6)$. 

Suppose that $F^*(\fs)$ is supported on $T<S$. Let $\mathfrak{m}\subseteq \Comp(\fs)$ be the collection of components not isomorphic to the $2$-fusion system of $\Alt(6)$ and assume that $\mathfrak{m}$ is non-empty. Since $T<S$, we deduce that $\mathfrak{m}\ne \Comp(\fs)$. Then $\mathcal{M}:=\langle \mathfrak{m}\rangle\normaleq \fs$ and by \cite[Theorem 4]{aschfit}, $C_{\fs}(\mathcal{M})\normaleq M$. Applying \cref{SFitting}, we deduce that $S$ is Sylow in $\mathcal{M}C_{\fs}(\mathcal{M})$. Let $P\normaleq S$ with $\mathcal{M}$ supported on $P$. Since $O_2(\mathcal{M})\normaleq \fs$ we have that $P\cap C_S(\mathcal{M})=Z(\mathcal{M})\le O_2(\mathcal{M})=\{1\}$ and so $S=P\times C_S(\mathcal{M})$. Following \cite[Proposition I.6.7]{ako}, and using that $\fs=O^{2'}(\fs)$, we deduce that $\fs=\mathcal{M}\times C_{\fs}(\mathcal{M})$. Even more, $\fs=O^{2'}(\mathcal{M})\times O^{2'}(C_{\fs}(\mathcal{M}))$. By minimality, both $O^{2'}(\mathcal{M})$ and $O^{2'}(C_{\fs}(\mathcal{M}))$ satisfy the conclusion of the \hyperlink{MainThm}{Main Theorem} and so $\fs$ does, a contradiction. Hence, 

\textbf{(1)} if $F^*(\fs)$ is supported on $T<S$ then every component of $\fs$ is isomorphic to the $2$-fusion system of $\Alt(6)$.\hfill

Assume that $F^*(\fs)S\ne \fs$. If $O_2(F^*(\fs)S)\ne \{1\}$ then applying \cite[{}9.11]{aschfit}, we have \[\{1\}\ne O_2(F^*(\fs)S)\cap Z(S)\le C_S(F^*(\fs))\le Z(F^*(\fs))\le O_2(F^*(\fs))\le O_2(\fs),\] a contradiction. Thus, $F^*(\fs)S$ is classified by the \hyperlink{MainThm}{Main Theorem}. In particular, we have that $Z(S)$ is elementary and $S=Z(S)T$. Let $E\in\mathcal{E}(\fs)$ so that $E\normaleq S$. Indeed, $S=ET$ and for any $t\in T\setminus (T\cap E)$ we have that $|[S, t]|=2$. Hence, by \cref{SEFF}, we have that $O^{2'}(\Out_{\fs}(E))\cong \Sym(3)$. It follows that $O^{2'}(\Aut_{\fs}(E))\subset F^*(\fs)S$. We verify that $F^*(\fs)S\normaleq \fs$. 

First, $S$ is strongly closed in $S$, $F^*(\fs)S$ is saturated and $F^*(\fs)S$ is clearly invariant under $\alpha\in\Aut_{\fs}(S)$ and so conditions (i) and (ii) of normality hold. Condition (iv) is trivial to check.

Let $\theta\in \Hom_{\fs}(P, S)$ with the property that $\theta\ne \phi \circ \alpha$ for some $\alpha\in\Aut_{\fs}(S)$ and $\phi\in \Hom_{F^*(\fs)S}(P, S)$. Further, choose $\theta$ with property that, upon decomposing $\theta$ in the fashion of \cref{EssentialGeneration}, the decomposition of $\theta$ has minimal length. Write $\theta=\theta_1\circ \dots \circ \theta_l$ where $\theta_i$ is a restriction of an element of $\Aut_{\fs}(S)$ or an element of $O^{2'}(\Aut_{\fs}(E))$ for some $E\in\mathcal{E}(\fs)$. Suppose that $l>1$. By minimality, $\theta_1=\phi^*\circ \alpha^*$ for some $\alpha^*\in\Aut_{\fs}(S)$ and $\phi^*\in F^*(\fs)S$. Then, again by minimality, $\alpha^*\circ \theta_2=\hat{\phi}\circ \hat{\alpha}$ for some $\hat{\alpha}\in\Aut_{\fs}(S)$ and $\hat{\phi}\in F^*(\fs)S$. Continuing in this manner yields a contradiction. Hence, we are reduced to the case where $l=1$. We may assume that $\theta\in O^{2'}(\Aut_{\fs}(E))$ for some $E\in\mathcal{E}(\fs)$. By \cite[Proposition I.6.4]{ako}, we have that $\Aut_S(E)\le \Aut_{F^*(\fs)S}(E)\normaleq \Aut_{\fs}(E)$ from which we conclude that $O^{2'}(\Aut_{\fs}(E))\le \Aut_{F^*(\fs)S}(E)$. By a Frattini argument, $\theta=\phi\circ \alpha^*$ where $\phi\in \Aut_{F^*(\fs)S}(E)$ and $\alpha^*\in N_{\Aut_{\fs}(E)}(\Aut_S(E))$. But then $\alpha^*$ lifts to $\alpha\in N_{\Aut_{\fs}(S)}(E)$ and $\theta$ decomposes as $\phi \circ \alpha$ in the required manner. Hence, no $\theta$ exists and condition (iii) of normality for $F^*(\fs)S$ holds.

Hence, $F^*(\fs)S\normaleq \fs$ and as $\fs=O^{2'}(\fs)$, by \cref{SNorm}, we have a contradiction. Therefore,

\textbf{(2)} if $F^*(\fs)$ is supported on $T<S$ then $\fs^*(\fs)=O^2(\fs)$ and $\fs=F^*(\fs)S$.\hfill

Assume that $F^*(\fs)\subsetneq \fs=F^*(\fs)S$ so that $F^*(\fs)=O^2(\fs)$. By \cref{normcomp}, we have that $S$ normalizes each component of $\fs$ and applying \cref{CompAutos} we have that $C_S(\mathcal{C})T$ has index at most $2$ in $S$ for all $\mathcal{C}\in\Comp(\fs)$. Following the proof of \cref{SFitting}, we see the $S/TC_S(F^*(\fs))$ is elementary abelian and since $C_S(F^*(\fs))\le O_2(\fs)=\{1\}$, we have that $S/T$ is elementary abelian. Furthermore, we observe that $C_S(\mathcal{C})$ has index at most $4$ in $C_S(P)$ where $\mathcal{C}\in\Comp(\fs)$ is supported on $P$, and that $\Phi(C_S(P))\le C_S(\mathcal{C})$. As in \cref{SFitting}, we ascertain that $\Phi(C_S(T))\le C_S(F^*(\fs))\le O_2(\fs)=\{1\}$ and so $C_S(T)$ is elementary abelian. Finally, \cref{SFitting} yields that $S=TC_S(T)$ and so $C_S(T)\le \Omega_1(Z(S))$. 

We observe that the $2$-fusion category of $\Alt(6)$ is tamely realized by $\Alt(6)$ and applying \cite[Theorem C]{ako}, it follows that $\fs$ is tamely realized by a finite group $G$ such that $F^*(G)\cong (\Alt(6)\times \dots \times \Alt(6))$. Indeed, $\fs$ is realized by $F^*(G)S$. Since $S=TC_S(T)$, $S$ normalizes each component of $G$ and since the only subgroups of $\Aut(\Alt(6))$ which contain $\Inn(\Alt(6))$ and have class two Sylow $2$-subgroups are the unique subgroups isomorphic to $\Alt(6)$ or $\Sym(6)$,  it follows that $G$ is isomorphic to a subgroup of $\Sym(6)\times \dots \Sym(6)$. Therefore, $G$ satisfies conclusion (iii) of the \hyperlink{MainThm}{Main Theorem}, a contradiction since $\fs$ is a counterexample.

Hence, we have shown that 

\textbf{(3)} $F^*(\fs)$ is supported on $S$.\hfill

Applying \cref{SNorm}, using that $\fs=O^{p'}(\fs)$ we have that $\fs=F^*(\fs)=E(\fs)$. Now, we may write $\fs=\fs_1\times \dots \times\fs_n$ for some $n\in\N$, where $\fs_i$ are the components of $\fs$. Hence $O^{p'}(\fs_i)=\fs_i$, $O_p(\fs_i)=\{1\}$ and $\fs_i$ is supported on a $p$-group of class two for all $i$. In particular, each $\fs_i$ is simple. If $n>1$ then by induction, each $\fs_i$ is isomorphic to a simple fusion system on a $p$-group $S_i$ with $S_i\in\mathcal{S}$ and so $\fs$ itself satisfies the outcome of the \hyperlink{MainThm}{Main Theorem}, a contradiction. Hence, $n=1$ and $\fs$ is simple.
\end{proof}

\section{The Parabolic Component Type Analysis}\label{Component Section}

We fix the following throughout this section. 

\begin{notation}
For $x\in \Omega(Z(S))$ we write:
\begin{itemize}
    \item $\fs_x:=N_{\fs}(C_S(E(C_{\fs}(x)))$.
    \item $\mathfrak{L}(x)$ the number of components in $E(C_{\fs}(x))$.
    \item $\mathfrak{L}(\fs):=\text{max}_{1\ne x\in\Omega_1(Z(S))} \mathfrak{L}(x)$.
    \item $\mathfrak{X}(\fs):=\{1\ne x \in \Omega_1(Z(S)) \mid \mathfrak{L}(x)=\mathfrak{L}(\fs)\}$.
    \item $\mathfrak{Q}(\fs):=\{O_p(\fs_x) : x \in \mathfrak{X}(\fs)\}$.
\end{itemize}
\end{notation}

We have the following hypothesis for this section.

\begin{hypothesis}\label{CompHyp}
\cref{mainhyp} holds and $\fs$ is of parabolic component type.
\end{hypothesis}

An immediate consequence of \cref{CompHyp} is that $l:=\mathfrak{L}(\fs)\ne 0$. Indeed, for any $x\in \mathfrak{X}(\fs)$, we have that $E(C_{\fs}(x))$ is non-trivial. Throughout, for $x\in \mathfrak{X}(\fs)$ write $Q_x:=O_p(\fs_x)$.

\begin{lemma}\label{BasicQx}
For $x\in \mathfrak{X}(\fs)$, we have that $E(C_{\fs}(x))=E(\fs_x)$ and $Q_x=C_S(E(\fs_x))$. 
\end{lemma}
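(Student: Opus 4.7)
The plan is to first establish the basic structural identity $S = T R_x$, then deduce $E(C_{\fs}(x)) = E(\fs_x)$ by proving the two inclusions, and finally to derive the identification of $Q_x$. Throughout, write $T$ for the Sylow subgroup of $E(C_{\fs}(x))$, so that $R_x = C_S(T)$ and $\fs_x = N_{\fs}(R_x)$. Since $x \in \Omega_1(Z(S))$, we have $C_S(x) = S$, so $C_{\fs}(x)$ is supported on all of $S$; as $\fs$ is of parabolic component type by \cref{CompHyp}, $\Comp(C_{\fs}(x))$ is non-empty, and \cref{SFitting} applied inside $C_{\fs}(x)$ with $\mathcal{X} = \Comp(C_{\fs}(x))$ yields $S = T R_x$. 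By \cref{normcomp} applied to $C_{\fs}(x)$, each component is normalized by $S$, so $T \normaleq S$, whence $R_x = C_S(T) \normaleq S$ and $\fs_x$ is supported on all of $S$.

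For the inclusion $E(C_{\fs}(x)) \subseteq E(\fs_x)$, I would first argue that $E(C_{\fs}(x)) \subseteq \fs_x$ as subsystems of $\fs$. Every automorphism of $C_{\fs}(x)$ supported on $S$ permutes the components of $C_{\fs}(x)$ and so preserves $T$ setwise, hence preserves $R_x = C_S(T)$, placing $\Aut_{C_{\fs}(x)}(S) \le \Aut_{\fs_x}(S)$. For a general morphism $\phi$ of $E(C_{\fs}(x))$ between subgroups of $T$, the invariance of $R_x$ under the extension from condition (iv) of the normality of $E(C_{\fs}(x))$ in $C_{\fs}(x)$ (noting $C_{C_S(x)}(T) = R_x$), combined with the Frattini condition (iii) to reduce from morphisms to $T$-automorphisms, furnishes an extension of $\phi$ preserving $R_x$, placing $\phi$ in $\fs_x$. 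Propagating this extension argument through each link of a subnormal chain for a component $\mathcal{K}$ of $C_{\fs}(x)$ exhibits $\mathcal{K}$ as a subnormal quasisimple subsystem of $\fs_x$, hence a component of $\fs_x$, giving $E(C_{\fs}(x)) \subseteq E(\fs_x)$.

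For the reverse inclusion $E(\fs_x) \subseteq E(C_{\fs}(x))$, note that $R_x \normaleq \fs_x$ by definition of $\fs_x$, so $R_x \le O_p(\fs_x) = Q_x$ and in particular $x \in O_p(\fs_x)$. By the central product structure of $F^*(\fs_x)$ recorded after the definition of $F^*(\fs)$ (using \cite[{}9.9]{aschfit}), every component $\mathcal{C}$ of $\fs_x$ centralizes $O_p(\fs_x)$, hence centralizes $x$, so $\mathcal{C} \subseteq C_{\fs_x}(x) \subseteq C_{\fs}(x)$. A subnormality-transfer argument, intersecting a subnormal chain for $\mathcal{C}$ in $\fs_x$ with the centralizer of $x$ at each level, produces a subnormal chain for $\mathcal{C}$ inside $C_{\fs}(x)$; since $\mathcal{C}$ is quasisimple, it is a component of $C_{\fs}(x)$.

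Combining the two inclusions gives $E(C_{\fs}(x)) = E(\fs_x)$, whence $C_S(E(\fs_x)) = C_S(T) = R_x$. Since $R_x \le Q_x$ and $E(\fs_x)$ centralizes $Q_x$ by \cite[{}9.9]{aschfit}, the support $T$ of $E(\fs_x)$ centralizes $Q_x$, forcing $Q_x \le C_S(T) = R_x$, so $Q_x = R_x = C_S(E(\fs_x))$. The main technical obstacle lies in the two subnormality-transfer arguments, which require carefully running the extension axioms for normal subsystems through each link of a subnormal chain, so that morphisms of the layer lift to morphisms preserving $R_x$ in one direction and restrict to morphisms of $C_{\fs}(x)$ in the other; the permutation action of $S$ on the components of $C_{\fs}(x)$ must also be tracked to ensure $T$ (rather than some individual $T_i$) is what is preserved.
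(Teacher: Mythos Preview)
Your overall strategy is sound and would lead to a correct proof, but it takes a substantially more laborious route than the paper does, and the step you yourself flag as the ``main technical obstacle'' is exactly what the paper avoids by a single citation.

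For the inclusion $E(\fs_x)\subseteq E(C_{\fs}(x))$, the paper does not carry out any subnormality transfer by hand. Instead it observes that $x\in R_x\le O_p(\fs_x)$, so $E(\fs_x)$ centralizes $x$ by \cite[{}9.9]{aschfit}, and then applies Aschbacher's balance theorem (\cref{balance}) inside $C_{\fs}(x)$ with $U=R_x$ to obtain $E(\fs_x)\le E(N_{C_{\fs}(x)}(R_x))\le E(C_{\fs}(x))$ in one stroke. Your proposed argument, ``intersecting a subnormal chain for $\mathcal{C}$ in $\fs_x$ with the centralizer of $x$ at each level,'' is precisely the content of the balance theorem; making that rigorous from scratch requires the substantial machinery of \cite{aschfit} (existence and normality of centralizers of normal subsystems at each link), so it is not a step you can gloss over in a sentence.

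For the reverse inclusion the paper is also shorter: it simply notes $E(C_{\fs}(x))\subseteq \fs_x$ ``by definition,'' since $\fs_x=N_{\fs}(C_S(E(C_{\fs}(x))))$ and $E(C_{\fs}(x))$ centralizes $C_S(E(C_{\fs}(x)))$; once the other inclusion is in hand, equality of the layers follows. Your route through the extension axiom (iv) and the Frattini condition (iii) is valid but unnecessary here. The identification $Q_x=C_S(E(\fs_x))$ is then immediate from $R_x\le Q_x$ and \cite[{}9.9]{aschfit}, exactly as you describe.

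In short: correct in outline, but you are reproving balance; the paper's proof is three lines because it cites \cref{balance}.
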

\begin{proof}
 By \cite[{}9.9]{aschfit} we have that that $E(\fs_x)$ centralizes $x\in Q_x$. Since $C_S(E(C_{\fs}(x)))\normaleq S$, applying \cref{balance} we have that $E(\fs_x)\le E(C_{\fs}(x))$. But now $E(C_{\fs}(x))\le \fs_x$ by definition and it follows that $E(C_{\fs}(x))\le E(\fs_x)$ and so $E(C_{\fs}(x))=E(\fs_x)$. 

Moreover, $C_S(E(\fs_x))\le O_p(\fs_x)=Q_x$ by definition. By \cite[{}9.9]{aschfit}, we have that $Q_x=O_p(\fs_x)\le C_S(E(\fs_x))$ and so $Q_x=C_S(E(\fs_x))$. 
\end{proof}

We explain our choice to analyze the systems $\fs_x$ in place of $C_{\fs}(x)$ with a motivating ``example." Consider the group $\PSp_4(2^n)$ for $n>1$. Then, for some choice of $S\in\syl_2(\PSp_4(2^n))$ and $x\in Z(S)$, we have that $C_{\PSp_4(2^n)}(x)\cong 2^{3n}:\SL_2(2^n)$. 

For the examples we expect in the \hyperlink{MainThm}{Main Theorem}, writing $\fs=\fs_1\times \fs_2\times \dots \times \fs_n$ where $\fs_1$ supported on $T\le S$ and is isomorphic to the $2$-fusion system of $\PSp_4(2^n)$, we would have for $x\in T\cap \Omega_1(Z(S))$ that $O_2(C_{\fs}(x))$ is elementary abelian of order $2^{3n}$ and $F^*(C_{\fs}(x))$ is supported on a $2$-group strictly small than $S$. On the other hand, in this situation, we have that $Q_x=C_S(E(C_{\fs}(x)))=C_S(\fs_2\times \dots \times \fs_n)$ is Sylow in $\fs_1$. Indeed, if $\fs_2\times \dots \times \fs_n$ is reduced then $F^*(\fs_x)$ is supported on $S$. 

Generically, this allows us to decompose $S$ as a direct product, and then our goal is to demonstrate that each of the direct factors is strongly closed in $\fs$, for then \cite[Proposition 3.3]{AOV} promises that $\fs$ splits as a direct product, contradicting \cref{MinSimple}.

\begin{lemma}\label{FonS}
We have that $F^*(\fs_x)$ is supported on $S$, or $\fs_x$ has a component isomorphic to the $2$-fusion category of $\Alt(6)$.
\end{lemma}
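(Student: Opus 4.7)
The plan is to observe that $F^*(\fs_x) = Q_x E(\fs_x)$ is supported on $Q_x T_x$, where $T_x$ is a Sylow of $E(\fs_x)$, and then show that under the assumption that no component of $\fs_x$ is isomorphic to the $2$-fusion category of $\Alt(6)$, one has $S = Q_x T_x$. Here $\fs_x = N_{\fs}(Q_x)$ is a saturated fusion system on $S$ since $Q_x$ is characteristic in $S$, and by \cref{BasicQx} we have $\Comp(\fs_x) = \Comp(C_{\fs}(x))$ and $Q_x = C_S(E(\fs_x))$.

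The clean way to prove $S = Q_x T_x$ is via the strong form of \cref{SFitting} applied to $\fs_x$ with $\mathcal{X} = \Comp(\fs_x)$, which gives $S = C_S(E(\fs_x)) T_x = Q_x T_x$ provided that $S = C_S(\mathcal{C}) T_{\mathcal{C}}$ for each $\mathcal{C} \in \Comp(\fs_x)$. This latter equality is precisely the conclusion of \cref{CompAutos} applied inside $\fs_x$; and the $\Alt(6)$ exception in \cref{CompAutos} is excluded by our hypothesis. Thus it suffices to verify the standing hypothesis of \cref{CompAutos}, namely that $\mathcal{C}/Z(\mathcal{C}) \in \mathcal{K}$ for every component $\mathcal{C}$ of $\fs_x$.

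The technical heart of the argument is this verification, and it is where the minimality of $\fs$ enters. Let $T_{\mathcal{C}}$ be the Sylow of $\mathcal{C}$; the quotient $\mathcal{C}/Z(\mathcal{C})$ is simple and supported on a class-two $p$-group with trivial $p$-core. If $|T_{\mathcal{C}}/Z(\mathcal{C})| < |S|$, then minimality of $|S|$ permits us to apply the \hyperlink{MainThm}{Main Theorem} to $\mathcal{C}/Z(\mathcal{C})$: simplicity forces the product decomposition to have a single factor, and since case (iii) is simple only when it reduces to the $2$-fusion system of $\Alt(6)$, we obtain $\mathcal{C}/Z(\mathcal{C}) \in \mathcal{K}$. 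If instead $T_{\mathcal{C}} = S$ and $Z(\mathcal{C}) = \{1\}$, then either $\mathcal{C} = \fs$---in which case $\fs = \fs_x = N_{\fs}(Q_x)$, forcing the non-trivial element $x \in Q_x \le O_p(\fs) = \{1\}$, a contradiction---or $\mathcal{C}$ is a proper simple subsystem of $\fs$ on $S$ with strictly fewer morphisms, and the secondary minimality criterion again yields $\mathcal{C} \in \mathcal{K}$.

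The main obstacle is precisely this reduction: correctly handling the boundary case where a component of $\fs_x$ has the same Sylow order as $S$ and invoking the tie-breaking morphism-count clause of \cref{mainhyp}. Once this is done, the proof assembles immediately from the direct applications of \cref{CompAutos} and \cref{SFitting} outlined above.
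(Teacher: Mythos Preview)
Your proof is correct and takes essentially the same approach as the paper, which simply cites \cref{CompAutos} and \cref{SFitting}; you have made explicit the verification (via the minimal-counterexample hypothesis of \cref{mainhyp}) that each component $\mathcal{C}$ of $\fs_x$ satisfies $\mathcal{C}/Z(\mathcal{C})\in\mathcal{K}$, a point the paper leaves implicit in its one-line proof.
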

\begin{proof}
This is a consequence of \cref{CompAutos} and \cref{SFitting}.
\end{proof}

We remark that if $F^*(\fs_x)$ is supported on $S$, then $Z(Q_x)\le Z(S)$ for each $x\in\mathfrak{X}(\fs)$.

\begin{proposition}\label{NoL34}
Suppose that there is $Q\normaleq S$ and $K\in\Comp(N_{\fs}(Q))$ with $K/Z(K)$ isomorphic to the $2$-fusion system of $\PSL_3(4)$. Then $Z(K)=\{1\}$.
\end{proposition}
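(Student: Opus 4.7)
The plan is to suppose $Z(K)\neq\{1\}$ and derive a contradiction. Under this assumption, Proposition~\ref{L34} forces $K$ to be isomorphic to the $2$-fusion category of $2.\PSL_3(4)$ or $2^2.\PSL_3(4)$; in either case $Z(K)$ is a non-trivial elementary abelian $2$-group. Let $T$ be the Sylow supporting $K$. Since $Q\normaleq S$ gives $N_S(Q)=S$, the subsystem $N_\fs(Q)$ is supported on $S$ itself, and Lemma~\ref{normcomp} applied inside $N_\fs(Q)$ yields $T\normaleq S$. Because $K/Z(K)$ is not the fusion system of $\Alt(6)$, Proposition~\ref{CompAutos} gives $S=C_S(K)T$; therefore any $z\in Z(K)\le Z(T)$ is centralised by $T$ and by $C_S(K)$, hence by all of $S$, so $Z(K)\le Z(S)$.

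Next I would invoke Aschbacher's balance theorem. Because $Q\normaleq S$, the subgroup $Q$ is $\fs$-fully normalised, so $Q\in\fs^f$ and Theorem~\ref{balance} gives $K\le E(N_\fs(Q))\le E(\fs)$. In particular $E(\fs)\ne\{1\}$. Proposition~\ref{MinSimple} says $\fs$ is simple, so the non-trivial normal subsystem $E(\fs)$ must equal $\fs$. Thus $\fs=E(\fs)$ is a central product of its components, and since individual components are normal in $E(\fs)=\fs$, simplicity forces $\fs$ to have exactly one component, namely $\fs$ itself. Hence $\fs$ is quasisimple, and since $Z(\fs)\le O_p(\fs)=\{1\}$ we conclude $\fs$ is simple and quasisimple with trivial centre.

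The main obstacle is the final contradiction. At this point $\fs$ is a simple quasisimple fusion system on a class-two $2$-group $S$ with $Z(\fs)=\{1\}$, containing $K$ as a quasisimple subsystem (in fact $K\le E(N_\fs(Q))\le \fs$). I would aim to apply Proposition~\ref{L34} to $\fs$ itself. The delicate step is to establish $S\in\mathcal{S}$ so that the hypothesis of Proposition~\ref{L34} is available: for this one uses that the essentials of $K$ (two $\Aut_\fs$-invariant elementary abelian subgroups of order $2^6$ identified in the proof of Proposition~\ref{L34}) together with the Alperin--Goldschmidt theorem (Theorem~3.1) and the quasisimplicity of $\fs$ must force the essential/Sylow structure of $\fs$ into the $\PSL_3$-shape.

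Once Proposition~\ref{L34} applies to $\fs$, either $\fs$ is simple --- hence one of the systems classified in Propositions~\ref{KnownSL} and~\ref{KnownSp} and so satisfying the conclusion of the Main Theorem, contradicting the choice of $\fs$ as a counterexample --- or $\fs$ is itself the $2$-fusion category of $2.\PSL_3(4)$ or $2^2.\PSL_3(4)$, forcing $Z(\fs)\ne\{1\}$ against $O_p(\fs)=\{1\}$. Either way we reach a contradiction, so $Z(K)=\{1\}$. The hardest sub-step is the combinatorial verification that the essentials of $K$ survive as essentials of $\fs$ (rather than being absorbed into strictly larger essential subgroups of $\fs$); one handles this via Lemma~\ref{Chain} applied to the $\Aut_\fs$-invariant chain $\{1\}\normaleq Z(K)\normaleq Z(T)\normaleq T$ inside $S$, exploiting $Z(K)\le Z(S)$ from the first paragraph.
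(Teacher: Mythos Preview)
Your approach has a genuine gap. The balance argument in your second paragraph does correctly yield $E(\fs)\ne\{1\}$ and hence $\fs=E(\fs)$ by simplicity; but this is already contained in Proposition~\ref{MinSimple}. The problem is the next step: from ``$\fs$ is simple and $K\subseteq\fs$'' you cannot conclude $S\in\mathcal{S}$. Your sketch that the essentials of $K$ ``survive as essentials of $\fs$'' is not substantiated. Even if the two essential subgroups of $K$ (elementary abelian of order $2^6$ inside $T$) were $\fs$-essential, nothing prevents $\fs$ from having further essentials, and nothing forces $T=S$. The chain $\{1\}\normaleq Z(K)\normaleq Z(T)\normaleq T$ you propose is not $\Aut_\fs(E)$-invariant for an arbitrary $E\in\mathcal{E}(\fs)$, so Lemma~\ref{Chain} does not apply as stated. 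In effect, establishing $S\in\mathcal{S}$ is precisely the content of the Main Theorem for this $\fs$, so your argument is circular.

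The paper's proof is entirely local and avoids balance. Writing $R$ for the Sylow of $K$, one first observes (as you do) that $Z(K)\le Z(S)$. If $Z(K)$ were central in $O^{2'}(N_\fs(E))$ for every $E\in\mathcal{E}(\fs)$, then $\bigcap_{E}Z(O^{2'}(N_\fs(E)))$ would be a non-trivial $\Aut_\fs(S)$-invariant subgroup normal in $\fs$, contradicting $O_2(\fs)=\{1\}$. So some essential $E$ has $Z(K)\not\normaleq O^{2'}(N_\fs(E))$. Since $E'\le Z(S)$ is centralised by $O^{2'}(\Aut_\fs(E))$, one checks $R\not\le E$; then using $|R/Z(R)|=2^4$ and Theorem~\ref{SEFF} one pins down $O^{2'}(\Out_\fs(E))\cong\PSL_2(4)$, $S=ER$, and $(E\cap R)/Z(K)$ elementary abelian of order $2^4$. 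The contradiction comes by taking $\alpha\in\Aut_K(E\cap R)$ moving some $e\in(E\cap R)\setminus Z(R)$ into $Z(R)\le Z(S)$: saturation extends $\alpha$ to $\hat\alpha\in\Aut_\fs(E)$, and $\langle\hat\alpha,\Aut_S(E)\rangle=O^{2'}(\Aut_\fs(E))$ since $\hat\alpha$ does not normalise $Z(S)$; but $\hat\alpha$ normalises $Z(K)$, forcing $O^{2'}(\Aut_\fs(E))$ to normalise $Z(K)$ after all.
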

\begin{proof}
Aiming for a contradiction, we assume throughout that there is $Q\normaleq S$ and $K\in\Comp(N_{\fs}(Q))$ such that $K\cong 2.\PSL_3(4)$ or $2^2.\PSL_3(4)$. We fix this choice of $Q$ and $K$. We write $\mathcal{N}:=N_{\fs}(Q)$ and observe by \cref{CompAutos} that $Z(K)\le Z(S)$. 

Suppose first that $Z(K)$ is central in $O^{2'}(N_{\fs}(E))$ for all $E\in\mathcal{E}(\fs)$. By a Frattini argument, we have that $\fs=\langle O^{p'}(N_{\fs}(E)), \Aut_{\fs}(S) \mid E\in\mathcal{E}(\fs)\rangle_S$. Now, $\mathcal{E}(\fs)$ is an $\Aut_{\fs}(S)$-invariant set, and so $\Aut_{\fs}(S)$ normalizes $Z:=\bigcap\limits_{E\in\mathcal{E}(\fs)} Z(O^{p'}(N_{\fs}(E)))$ so that $Z\normaleq \fs$. Since $O_2(\fs)=\{1\}$, we have that $Z=\{1\}$. But $Z(K)\le Z$, a contradiction.

Hence, there is $E\in\mathcal{E}(\fs)$ such that $Z(K)\not\normaleq O^{2'}(N_{\fs}(E))$. Let $R\le S$ be such that $K$ is supported on $R$. Since $E'\le Z(S)$, we have that $E'\le Z(O^{2'}(N_{\fs}(E)))$. Indeed, if $R\le E$, then $Z(K)\le R'\le E'$, a contradiction. Hence, $R\not\le E$. By \cref{CompAutos}, we have that $S=RC_S(R)$ so that $Z(R)\le Z(S)$ and as $|R/Z(R)|=2^4$, applying \cref{SEFF} we deduce that $E\cap R$ has index at most $4$ in $R$. But now, one can check that either $(E\cap R)/Z(K)$ is elementary abelian of order $2^4$, or $Z(E)\cap R=Z(E\cap R)=Z(R)$. In the latter case, we have that $R$ centralizes the chain $\{1\}\normaleq Z(E)\normaleq E$ and we have that $R\le E$ by \cref{Chain}, a contradiction. Hence, $(E\cap R)/Z(K)$ is elementary abelian of order $2^4$ and applying \cref{SEFF} we see that $O^{2'}(\Out_{\fs}(E))\cong \PSL_2(4)$ and $S=ER$. In particular, $E=(E\cap R)C_S(E\cap R)$. 

Let $e\in (E\cap R)\setminus Z(R)$ and let $\alpha\in \Aut_K(E\cap R)$ such that $e\alpha \in Z(R)\le Z(S)$. Then by \cite[Lemma I.2.6 (c)]{ako}, $\alpha$ extends to $\hat{\alpha}: C_S(e)\to S$. But $C_S(e)=(E\cap R)C_S(E\cap R)=E$ and since $\alpha$ does not preserve $Z(S)$, which is characteristic in $S$, we must have that $\hat{\alpha}\in\Aut_{\fs}(E)$. Moreover, $\hat{\alpha}$ does not normalize $S$ and so this generates $O^{2'}(\Aut_{\fs}(E))$ together with $\Aut_S(E)$. But then $O^{2'}(\Aut_{\fs}(E))$ normalizes $Z(K)$, a contradiction.
\end{proof}

\begin{proposition}\label{CompDecomp}
For $x\in \mathfrak{X}(\fs)$ we have that $F^*(\fs_x)=Q_x\times \mathcal{C}_1\times \dots \times \mathcal{C}_l$ and each $\mathcal{C}_i$ is a simple fusion system on a $p$-group $S_i\in\mathcal{S}$. In particular, if no $\mathcal{C}_i$ is isomorphic to the $2$-fusion category of $\Alt(6)$, then $F^*(\fs_x)=O^{p'}(\fs_x)$.
\end{proposition}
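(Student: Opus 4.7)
I plan to unravel $F^*(\fs_x)=O_p(\fs_x)E(\fs_x)=Q_x\ast(\mathcal{C}_1\ast\cdots\ast\mathcal{C}_l)$, which a priori is only a central product, by showing each $Z(\mathcal{C}_i)$ is trivial; this simultaneously converts the central product into the claimed direct product and identifies each $\mathcal{C}_i$ as simple. Write $T_i\le S$ for the support of $\mathcal{C}_i$ and $T_x=T_1\cdots T_l$ for the support of $E(\fs_x)$. By \cref{normcomp} each $T_i\normaleq S$; by \cite[9.2, 9.6, 9.7]{aschfit} the $T_i$ pairwise commute with $T_i\cap T_j\le Z(\mathcal{C}_i)\cap Z(\mathcal{C}_j)$ for $i\neq j$; and by \cref{BasicQx}, $Q_x=C_S(E(\fs_x))$ centralizes $T_x$ with $Q_x\cap T_i\le Z(\mathcal{C}_i)$.

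The heart of the argument feeds each $\mathcal{C}_i/Z(\mathcal{C}_i)$ into the minimal counterexample hypothesis: this is a simple saturated fusion system with trivial $p$-core supported on the class-$\leq 2$ group $T_i/Z(\mathcal{C}_i)$. If $|T_i/Z(\mathcal{C}_i)|<|S|$ then the \hyperlink{MainThm}{Main Theorem} applies by minimality, and simplicity restricts the conclusion to one of the simple options; by \cref{KnownSL} and \cref{KnownSp} this places $\mathcal{C}_i/Z(\mathcal{C}_i)$ in $\mathcal{K}$. Then \cref{L34} yields that $\mathcal{C}_i$ itself is simple unless $p=2$ and $\mathcal{C}_i$ is the $2$-fusion category of $2.\PSL_3(4)$ or $2^2.\PSL_3(4)$; those exceptions are excluded by \cref{NoL34} applied with $Q_x\normaleq S$. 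Hence each $\mathcal{C}_i$ is simple on a member of $\mathcal{S}$.

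The remaining task is to verify $|T_i/Z(\mathcal{C}_i)|<|S|$ always. If $T_i<S$ this is immediate. Otherwise $T_i=S$, and if $l\geq 2$ then $T_j\le C_S(T_i)=Z(S)$ for each $j\neq i$, forcing $T_j$ abelian and contradicting the quasisimplicity of $\mathcal{C}_j$. In the subtle case $l=1$, $T_1=S$, I will combine $Q_x=Q_x\cap T_1\le Z(\mathcal{C}_1)$ with $1\neq x\in Q_x$ to force $Z(\mathcal{C}_1)\neq\{1\}$, whence $|T_1/Z(\mathcal{C}_1)|<|S|$. Once every $Z(\mathcal{C}_i)$ is trivial, the intersections recorded in the first paragraph collapse and the central product decomposition becomes $Q_x\times\mathcal{C}_1\times\cdots\times\mathcal{C}_l$, as claimed.

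For the final clause, assume no $\mathcal{C}_i$ is the $2$-fusion category of $\Alt(6)$. Then \cref{FonS} gives $F^*(\fs_x)$ supported on all of $S$; applying \cref{SNorm} to $F^*(\fs_x)\normaleq\fs_x$ yields $O^{p'}(\fs_x)=O^{p'}(F^*(\fs_x))$, and \cref{p'genfit} then gives $F^*(\fs_x)=O^{p'}(\fs_x)$. The main obstacle throughout is the case $l=1$, $T_1=S$, where the commuting-of-components argument is unavailable and one must instead exploit the containment $x\in Q_x\le Z(\mathcal{C}_1)$ to guarantee non-triviality of the center.
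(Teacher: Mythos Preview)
Your proof is correct and follows essentially the same approach as the paper: apply the minimal-counterexample hypothesis to each $\mathcal{C}_i/Z(\mathcal{C}_i)$ to place its support in $\mathcal{S}$, invoke \cref{L34} and \cref{NoL34} to force each $\mathcal{C}_i$ simple, and conclude the direct product decomposition; the final clause via \cref{FonS}, \cref{SNorm}, and \cref{p'genfit} is identical. The paper is terser---it simply asserts ``$O_p(\mathcal{C})=\{1\}$ by \cref{L34} and \cref{NoL34}'' and then applies induction to $E(\fs_x)$ as a whole---whereas you spell out the verification that $|T_i/Z(\mathcal{C}_i)|<|S|$, including the edge case $T_i=S$, $l=1$, where $x\in Q_x\le Z(\mathcal{C}_1)$ forces a nontrivial center. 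That extra care is harmless and arguably clarifies what the paper leaves implicit.
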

\begin{proof}
We observe that $O^{p'}(F^*(\fs_x))=F^*(\fs_x)$ by \cref{p'genfit}. For each $\mathcal{C}\in\Comp(\fs_x)$, we have that $O_p(\mathcal{C})=\{1\}$ by \cref{L34} and \cref{NoL34} so that $\mathcal{C}$ is simple. Moreover, from this we deduce that $Z(E(\fs_x))=\{1\}$ and since $Q_x=C_S(E(\fs_x))$ it follows that $F^*(\fs_x)=Q_x\times E(\fs_x)$. Since $E(\fs_x)$ is supported on a $p$-group of class two, and $O^{p'}(E(\fs_x))=E(\fs_x)$, by induction $E(\fs_x)$ is determined as in the \hyperlink{MainThm}{Main Theorem}. This proves the first statement. 

If $\fs_x$ has no components isomorphic to the $2$-fusion category of $\Alt(6)$, then $F^*(\fs_x)$ is supported on $S$ by \cref{FonS} and as $O^{p'}(F^*(\fs_x))=F^*(\fs_x)$, we have that $O^{p'}(\fs_x)=F^*(\fs_x)$ by \cref{SNorm}.
\end{proof}

We now begin investigating the relationship between elements of $\mathfrak{X}(\fs)$, and of $\mathfrak{Q}(\fs)$. 

\begin{lemma}\label{QTI1}
Let $A\normaleq S$, $z\in\Omega_1(Z(S))$ and assume that $C_{N_{\fs}(A)}(z)$ is not constrained. Then for $\mathcal{K}\in \Comp(C_{N_{\fs}(A)}(z))$, we have that $\mathcal{K}\in \Comp(N_{\fs}(A))$. In particular, for $x\in\mathfrak{X}(\fs)$ we have that either $Q_x=Q_z$ or $\Comp(C_{\fs_x}(z)) \subsetneq \Comp(\fs_x)$. 
\end{lemma}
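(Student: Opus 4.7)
For the main statement, I would set $\mathcal{G}:=N_{\fs}(A)$. Since $A\normaleq S$ we have $N_S(A)=S$, so $\mathcal{G}$ is a saturated fusion system supported on $S$, and $\langle z\rangle\le Z(S)$ is fully $\mathcal{G}$-normalized (as $N_S(\langle z\rangle)=S$ is the largest possible). Applying Aschbacher's balance theorem (\cref{balance}) inside $\mathcal{G}$ with $U=\langle z\rangle$ yields $E(N_{\mathcal{G}}(\langle z\rangle))\le E(\mathcal{G})$. Next I would observe that $\Aut(\langle z\rangle)$ has order dividing $p-1$, so that $C_{\mathcal{G}}(z)$ is a subsystem of $N_{\mathcal{G}}(\langle z\rangle)$ of index prime to $p$; since any component lies inside $O^{p'}$ of the ambient system, the two subsystems share the same layer. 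Hence $E(C_{\mathcal{G}}(z))=E(N_{\mathcal{G}}(\langle z\rangle))\le E(\mathcal{G})$, and any $\mathcal{K}\in\Comp(C_{\mathcal{G}}(z))$ is subnormal quasisimple inside $E(\mathcal{G})$, and therefore a component of $\mathcal{G}=N_{\fs}(A)$.

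For the ``in particular'' part, I would specialise to $A:=Q_x$. By the definition of $\fs_x$ we have $\fs_x=N_{\fs}(Q_x)$, and $Q_x\normaleq S$, so the first part applies (vacuously when $C_{\fs_x}(z)$ is constrained, since then $\Comp(C_{\fs_x}(z))=\emptyset$ by \cref{conlayer}) and yields $\Comp(C_{\fs_x}(z))\subseteq \Comp(\fs_x)$. If this inclusion is strict, the second alternative of the lemma holds. Otherwise every component of $\fs_x$ lies in $C_{\fs}(z)$, so $E(\fs_x)\le E(C_{\fs}(z))$, and by \cref{BasicQx} the latter equals $E(\fs_z)$. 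In particular $\mathfrak{L}(z)\geq \mathfrak{L}(x)=\mathfrak{L}(\fs)$, forcing $z\in\mathfrak{X}(\fs)$ with $E(\fs_z)=E(\fs_x)$; a second appeal to \cref{BasicQx} now gives $Q_z=C_S(E(\fs_z))=C_S(E(\fs_x))=Q_x$.

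\textbf{Main obstacle.} The only point requiring care is the identification $E(C_{\mathcal{G}}(z))=E(N_{\mathcal{G}}(\langle z\rangle))$. This follows from the fact that $\Aut(\langle z\rangle)$ is a $p'$-group (so $[N_{\mathcal{G}}(\langle z\rangle):C_{\mathcal{G}}(z)]$ is coprime to $p$) together with the standard result that the layer of a saturated fusion system lies inside any subsystem of index prime to $p$. The remainder is essentially bookkeeping: unpacking the definitions of $\fs_x$, $Q_x$, and $\mathfrak{X}(\fs)$ and invoking \cref{BasicQx} twice.
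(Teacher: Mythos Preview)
Your argument for the main statement has a genuine gap. Balance gives you $E(C_{\mathcal{G}}(z))\le E(\mathcal{G})$ as a \emph{subsystem containment}, but from this you conclude that any $\mathcal{K}\in\Comp(C_{\mathcal{G}}(z))$ is ``subnormal quasisimple inside $E(\mathcal{G})$''. This does not follow: $\mathcal{K}$ is subnormal in $E(C_{\mathcal{G}}(z))$, and $E(C_{\mathcal{G}}(z))$ is contained in $E(\mathcal{G})$, but nothing tells you that $E(C_{\mathcal{G}}(z))$ is subnormal in $E(\mathcal{G})$. In the group-theoretic analogue this genuinely fails: take $G$ with $E(G)=L_1\times L_2$ simple, an involution $z$ centralizing $L_1$ but with $C_{L_2}(z)$ possessing a component $K$; then $K\in\Comp(C_G(z))$ and $K\le E(G)$, yet $K$ is a proper subgroup of the simple group $L_2$ and hence not subnormal in $G$.

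The paper closes this gap by exploiting the minimal counterexample hypothesis, which forces every component (both of $N_{\fs}(A)$ and of $C_{N_{\fs}(A)}(z)$) to be simple and supported on a group in $\mathcal{S}$. One first uses \cref{wedge} to show that $\mathcal{K}$ lies inside some single component $\mathcal{C}_1$ of $N_{\fs}(A)$ (here \cref{L34} and \cref{NoL34} rule out the possibility $\mathcal{K}\cap\mathcal{C}_i\le Z(\mathcal{K})$). Then a case analysis on the shape of $R_1$ (Sylow of $\PSL_3(p^n)$ versus $\PSp_4(2^n)$), together with \cref{CompAutos} giving $S=KC_S(K)$ and commutator comparisons such as $|[x,R_1]|$ versus $|[x,K]|$, pins down $K=R_1$ and then $\mathcal{K}=\mathcal{C}_1$. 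None of this is bookkeeping; it is the substance of the lemma.

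Your treatment of the ``in particular'' clause is closer to the paper's, but it too leans on the unproved step: deducing $\Comp(C_{\fs}(x))\subseteq\Comp(C_{\fs}(z))$ from $E(\fs_x)\le E(C_{\fs}(z))$ is another instance of the same problem, and you invoke \cref{BasicQx} for $z$ before establishing $z\in\mathfrak{X}(\fs)$. The paper instead argues that equality $\Comp(C_{\fs_x}(z))=\Comp(\fs_x)$ forces $z\in C_S(E(\fs_x))=Q_x$, and then applies the already-proved first part (with the roles of $x$ and $z$ swapped) to promote components of $C_{C_{\fs}(z)}(x)$ to components of $C_{\fs}(z)$.
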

\begin{proof}
Let $A\normaleq S$ and $z\in\Omega_1(Z(S))$ with $C_{N_{\fs}(A)}(z)$ not constrained. By \cref{balance} and \cite[{}10.11 (1)]{aschfit}, we have that $E(C_{N_{\fs}(A)}(z))\subseteq C_{E(N_{\fs}(A))}(z)\normaleq C_{N_{\fs}(A)}(z)$. Furthermore, $E(C_{N_{\fs}(A)}(z))\normaleq C_{N_{\fs}(A)}(z)$. Let $C_i\in \Comp(N_{\fs}(A))$ so that by \cite[{}10.11 (2)]{aschfit}, $C_{\mathcal{C}_i}(z)\normaleq C_{E(N_{\fs}(A))}(z)$. Hence, by \cref{wedge} we may form $X_i:=C_{\mathcal{C}_i}(z) \wedge E(C_{N_{\fs}(A)}(z))\normaleq C_{E(N_{\fs}(A))}(z)$. 

Let $\mathcal{K}\in \Comp(C_{N_{\fs}(A)}(z))$ so that $\mathcal{K}\subseteq C_{E(N_{\fs}(A))}(z)$. Then $\mathcal{K}\normaleq E(C_{N_{\fs}(A)}(z))$ and so we may form $X_i \wedge \mathcal{K}\normaleq E(C_{N_{\fs}(A)}(z))$. Let $\mathcal{K}$ be supported on $K$, let $\{\mathcal{C}_i\}$ be the set of components of $N_{\fs}(A)$, $l:=|\Comp(N_{\fs}(A))|$ and let $\mathcal{C}_i$ be supported on $R_i$. We note that $K\le R:= R_1\times \dots R_l$. If $[K, R_i]=\{1\}$ for all $i$, then $K\le Z(R)$ and $K$ is abelian, a contradiction since $\mathcal{K}$ is quasisimple. Hence, $X_i\wedge \mathcal{K}$ is non-trivial for some $i$, and normal in $\mathcal{K}$ by \cref{wedge}. Since $\mathcal{K}$ is quasisimple, either $\mathcal{K}\subseteq X_i$ or $X_i\wedge \mathcal{K}\le Z(\mathcal{K})$. The latter case does not hold by \cref{L34} and \cref{NoL34}.

Hence, we deduce that $\mathcal{K}\subseteq X_i$ and so, reindexing if necessary, we may assume that $\mathcal{K}\subseteq \mathcal{C}_1$. Moreover, $\mathcal{C}_1$ is the unique component of $N_{\fs}(A)$ containing $\mathcal{K}$. We observe that $z\in \Omega_1(Z(S))$ so that $C_{N_{\fs}(A)}(z)$ is supported on $S$. Hence, by \cref{CompAutos} we deduce that $K\normaleq S$ and $S=KC_S(K)$.

Assume that $R_1$ is isomorphic to a Sylow $p$-subgroup of $\PSL_3(p^n)$ for some $n\in \N$. Then for any $x\in K\setminus Z(R_1)$, we have that $[x, R_1]=Z(R_1)\le K\cap Z(R_1)\le Z(K)$. Since $K, R_1\in\mathcal{S}$, it follows that $K=R_1$ unless perhaps $\mathcal{K}$ is isomorphic to the $2$-fusion system of $\PSp_4(2^a)$ for some $a>1$. But in this latter case, by \cref{CompAutos} we have that $R_1=KC_{R_1}(\mathcal{K})$. Since $Z(R_1)\le K$, we conclude that $Z(R_1)\cap C_{R_1}(\mathcal{K})=\{1\}$ from which we deduce that $C_{R_1}(\mathcal{K})=\{1\}$ and $R_1=K$. By \cref{KnownSL}, we either have that $\mathcal{C}_1=\mathcal{K}$, or $R_1\cong p^{1+2}_+$ and $p$ is odd. In this latter case, we have that $C_S(\mathcal{C}_1)\le C_S(\mathcal{K})\le C_S(K)=C_S(R_1)$. Moreover, by \cref{CompAutos}, it follows that $C_S(\mathcal{C}_1)$ has index $p$ in $C_S(R_1)$ and as $C_S(\mathcal{K})<C_S(K)$, we conclude that $C_S(\mathcal{C}_1)=C_S(\mathcal{K})$ and $\mathcal{C}_1\subseteq C_{N_{\fs}(A)}(z)$. Indeed, $\mathcal{C}_1\subseteq E(C_{N_{\fs}(A)}(z))$ and since $\mathcal{K}\normaleq E(C_{N_{\fs}(A)}(z))$ and $\mathcal{C}_1$ is simple, we conclude that $\mathcal{C}_1=\mathcal{K}$. Hence, the first statement holds in this case.

Thus, for the first statement, it remains to verify the claim when $p=2$ and $R_1$ is isomorphic to a Sylow $2$-subgroup of $\PSp_4(2^n)$ for some $n>1$. As above, let $x\in K\setminus Z(R_1)$ so that $[x, R_1]\le K\cap Z(R_1)\le Z(K)$. By \cref{CompAutos} we have that $S=KC_S(K)$ so that $[x, K]\le [x, R_1]\le [x, S]=[x, K]$. Now, $|[x, R_1]|=2^n$. If $K$ is isomorphic to a Sylow $2$-subgroup of $\PSL_3(2^a)$ then $|[x, K]|=2^a$ so that $a=n$ and $R_1=KZ(R_1)$. But then $2^{2n}=|R_1'|=|K'|=2^a$, a contradiction.  If $K$ is isomorphic to a Sylow $2$-subgroup of $\PSp_4(2^a)$ then $|[x, K]|=2^a$ from which we deduce that $a=n$ and $K=R_1$. Then \cref{KnownSp} implies that $\mathcal{C}_1=\mathcal{K}$. Hence, the first assertion of the lemma holds.

Now, as $E(C_{\fs}(x))=E(\fs_x)$, we have that $E(C_{\fs}(x))=E(\fs_x)=E(N_{\fs}(Q_x))$. The only case to consider is where $\Comp(C_{\fs_x}(z))=\Comp(\fs_x)$. Then $z\in C_S(E(\fs_x))=Q_x$ and $E(C_{\fs}(x))=E(C_{C_{\fs}(z)}(x))\subseteq E(C_{\fs}(z))$. Hence, we have that $\Comp(C_{\fs}(x))\subseteq \Comp(C_{\fs}(z))$ and as $x\in \mathfrak{X}(\fs)$, we must have that $\Comp(C_{\fs}(x))=\Comp(C_{\fs}(z))$. Then $E(C_{\fs}(x))=E(C_{\fs}(z))$ and we conclude that $Q_x=Q_z$. 
\end{proof}

We record some consequences of \cref{QTI1} for application later.

\begin{lemma}\label{QTI3}
For each $1\ne z\in Q_x\cap \Omega_1(Z(S))$ we have that $z\in \mathfrak{X}(\fs)$ and $E(C_{\fs}(z))=E(C_{\fs}(x))$.
\end{lemma}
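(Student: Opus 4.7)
The plan is to mirror the final paragraph of the proof of \cref{QTI1}, which essentially establishes this statement in disguise. The strategy has three steps: transfer the components of $C_{\fs}(x)$ into $C_{\fs}(z)$ via Aschbacher's balance theorem, invoke the maximality of $\mathfrak{L}(\fs)$ to force equality of the component sets, and then read off both conclusions simultaneously.

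First I would fix a non-trivial $z \in Q_x \cap \Omega_1(Z(S))$. By \cref{BasicQx}, $Q_x = C_S(E(C_{\fs}(x)))$, so $z$ centralizes the Sylow subgroup of every component of $C_{\fs}(x)$, and one checks that each such component already sits inside $C_{\fs}(z)$ as a subsystem. This yields the identification
\[E(C_{\fs}(x)) = E\bigl(C_{C_{\fs}(z)}(x)\bigr),\]
the reverse inclusion being automatic since $C_{C_{\fs}(z)}(x) \subseteq C_{\fs}(x)$. Next, since $x \in \Omega_1(Z(S))$ is central in $C_S(z) = S$, the subgroup $\langle x \rangle$ is fully normalized in $C_{\fs}(z)$, and as components are $O^p$-invariant, \cref{balance} applied inside $C_{\fs}(z)$ gives
\[E\bigl(C_{C_{\fs}(z)}(x)\bigr) \subseteq E(C_{\fs}(z)).\]
Combining the two displayed relations, $\Comp(C_{\fs}(x)) \subseteq \Comp(C_{\fs}(z))$, and therefore $\mathfrak{L}(z) \geq \mathfrak{L}(x) = \mathfrak{L}(\fs)$.

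By the maximality built into the definition of $\mathfrak{L}(\fs)$, this inequality is forced to be an equality, so $z \in \mathfrak{X}(\fs)$; moreover, the inclusion of component sets must itself be an equality because both sides now have cardinality $\mathfrak{L}(\fs)$, giving $E(C_{\fs}(z)) = E(C_{\fs}(x))$ as required. The only step I expect to need modest care is the initial identification $E(C_{\fs}(x)) = E(C_{C_{\fs}(z)}(x))$ — specifically, justifying that the components of $C_{\fs}(x)$ genuinely lie inside $C_{C_{\fs}(z)}(x)$ as fusion subsystems once one knows $z$ centralizes their underlying Sylow subgroups — but this is exactly the move already made in the final lines of the proof of \cref{QTI1}, so no new ideas are required.
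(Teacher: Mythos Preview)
Your proposal is correct and is essentially the same argument as the paper's, just unpacked: the paper's proof of \cref{QTI3} observes that $E(\fs_x)$ centralizes $z$, so $\Comp(C_{\fs_x}(z))=\Comp(\fs_x)$, and then invokes the dichotomy in the second assertion of \cref{QTI1} to obtain $Q_x=Q_z$ (hence $E(C_{\fs}(z))=E(C_{\fs}(x))$ and $z\in\mathfrak{X}(\fs)$). You instead re-run the balance-and-maximality argument from the final paragraph of the proof of \cref{QTI1} directly, which is precisely how that dichotomy was established there, so the two routes are the same in content.
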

\begin{proof}
We note that $E(C_{\fs_x}(z))\subset E(\fs_x)$ and as $E(\fs_x)$ centralizes $z$, we deduce that $E(C_{\fs_x}(z))=E(\fs_x)$. Hence, by \cref{QTI1}, we have that $Q_x=Q_z$. Now, $E(N_{\fs}(Q_x))=E(\fs_x)$ and from this we conclude that $E(\fs_x)=E(\fs_z)$. Then $E(C_{\fs}(x))=E(C_{\fs}(z))$ and $z\in \mathfrak{X}(\fs)$, as desired.
\end{proof}

\begin{lemma}\label{QTI2}
For each $x, y\in \mathfrak{X}(\fs)$ we have that either $Q_x\cap Q_y=\{1\}$ or $Q_x=Q_y$.
\end{lemma}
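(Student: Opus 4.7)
My plan is to reduce to the situation where the intersection $Q_x \cap Q_y$ contains a non-trivial element of $\Omega_1(Z(S))$, and then apply \cref{QTI3} to transfer information between $x$, $y$, and this common central element.

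First I would verify that $Q_x \normaleq S$ for every $x \in \mathfrak{X}(\fs)$. Since $x \in \Omega_1(Z(S))$, we have $C_S(x) = S$, so $S$ is Sylow in $C_{\fs}(x)$, and \cref{normcomp} applied inside $C_{\fs}(x)$ tells us that $S$ normalizes each component of $C_{\fs}(x)$. Hence the support of $E(C_{\fs}(x))$ is normal in $S$, so its centralizer $C_S(E(C_{\fs}(x)))$ is also normal in $S$; by \cref{BasicQx}, this centralizer equals $Q_x$, and $\fs_x = N_{\fs}(Q_x)$ is supported on all of $S$. In particular $Q_x, Q_y \normaleq S$.

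Next I would assume $Q_x \cap Q_y \neq \{1\}$, aiming to show $Q_x = Q_y$. The intersection is a non-trivial normal $p$-subgroup of $S$, so it meets $\Omega_1(Z(S))$ non-trivially; pick some $1 \neq z \in Q_x \cap Q_y \cap \Omega_1(Z(S))$. Applying \cref{QTI3} first with respect to $x$ (using $z \in Q_x \cap \Omega_1(Z(S))$) gives $z \in \mathfrak{X}(\fs)$ and $E(C_{\fs}(z)) = E(C_{\fs}(x))$; applying it again with respect to $y$ gives $E(C_{\fs}(z)) = E(C_{\fs}(y))$. Combining these, $E(C_{\fs}(x)) = E(C_{\fs}(y))$, and then $Q_x = C_S(E(C_{\fs}(x))) = C_S(E(C_{\fs}(y))) = Q_y$ by \cref{BasicQx}, completing the proof.

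There isn't really a significant obstacle here once \cref{QTI3} is in hand; the only subtlety is confirming the normality of $Q_x$ in $S$ so that the standard fact about non-trivial normal $p$-subgroups meeting $\Omega_1(Z(S))$ applies to the intersection $Q_x \cap Q_y$. Everything else is a direct two-step invocation of \cref{QTI3}.
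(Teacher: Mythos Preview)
Your proposal is correct and follows essentially the same route as the paper's proof: pick $z\in Q_x\cap Q_y\cap\Omega_1(Z(S))$, apply \cref{QTI3} twice to obtain $E(C_{\fs}(x))=E(C_{\fs}(z))=E(C_{\fs}(y))$, and conclude $Q_x=Q_y$ via \cref{BasicQx}. The paper simply asserts the existence of such a $z$ without comment; your added verification that $Q_x\normaleq S$ (hence $Q_x\cap Q_y\normaleq S$ meets $\Omega_1(Z(S))$) fills in that implicit step.
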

\begin{proof}
Note that $Q_x=C_S(E(C_{\fs}(x)))$ and $Q_y=C_S(E(C_{\fs}(y)))$. Assume that $Q_x\cap Q_y\ne\{1\}$ and let $z\in Q_x\cap Q_y\cap \Omega_1(Z(S))$. Then by \cref{QTI3}, we have that $z\in \mathfrak{X}(\fs)$ and $E(C_{\fs}(x))=E(C_{\fs}(z))=E(C_{\fs}(y))$. By definition, we have that $Q_x=Q_y$, as desired.
\end{proof}

Choose $x\in \mathfrak{X}(\fs)$. Write $\{\mathcal{C}_i\}_{i\in\{1,\dots, l\}}$ for the systems described in \cref{CompDecomp}, and $R_i$ for the $p$-subgroup that $\mathcal{C}_i$ is supported on. Set $R:=\prod_{i\in \{1,\dots, l\}} R_i$ so that $E(\fs_x)$ is supported on $R$.

\begin{lemma}\label{Radical}
If $Q\in\fs^{frc}$ then either:
\begin{enumerate}
    \item $R_i\le Q$ and $Z(R_i)=R_i\cap Z(Q)$; or
    \item $R_i\cap Q\in\mathcal{A}(R_i)$,
\end{enumerate}
for all $i\in \{1,\dots, l\}$. Moreover, if $Q\in \mathcal{E}(\fs)$ then there is at most one $R_i$ with $R_i\not\le Q$ and $S=R_iQ$.
\end{lemma}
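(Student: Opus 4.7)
The plan is to exploit the nilpotency class-two structure of $S$ to put every subgroup (in particular $Q$, $R_i$, and $P_i:=R_i\cap Q$) normal in $S$, then treat the dichotomy for each $R_i$ using the commutator form on $R_i/Z(R_i)$ and the radicality of $Q$, before upgrading to the moreover statement for essentials via \cref{SEFF}.

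First I would record structural preliminaries. Because $[S,S]\leq Z(S)$, every subgroup of $S$ is normal in $S$; in particular $Q$, $R_i$ and $P_i$ are normal in $S$. Since $Q$ is $\fs$-centric, $Z(S)\leq C_S(Q)=Z(Q)$. Each $R_i\in\mathcal{S}$ is special of class two with $Z(R_i)=R_i'\leq S'\leq Z(S)\leq Q$, so $Z(R_i)\leq P_i$ for every $i$. Case (i) is then immediate: if $R_i\leq Q$ then $Z(R_i)\leq Z(S)\leq R_i\cap Z(Q)$, and conversely any element of $R_i\cap Z(Q)$ centralizes $Q\supseteq R_i$ and so lies in $Z(R_i)$.

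For case (ii), assume $R_i\not\leq Q$, so $Z(R_i)\leq P_i<R_i$. The commutator defines a non-degenerate alternating pairing $R_i/Z(R_i)\times R_i/Z(R_i)\to Z(R_i)$, and the members of $\mathcal{A}(R_i)$ are exactly the pullbacks of its maximal totally isotropic subspaces. I would show $P_i/Z(R_i)$ is such a subspace as follows. Since $R_i\normaleq S$ normalizes $Q$, each $r\in R_i$ induces $\phi_r\in\Aut_S(Q)$, with $[\phi_r,P_i]=[r,P_i]\leq Z(R_i)\leq Z(Q)$. Suppose $P_i$ were not self-centralizing in $R_i$: then there exists $r\in C_{R_i}(P_i)\setminus P_i$ inducing a non-inner $\phi_r$ fixing $P_i$ pointwise. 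The $p$-subgroup of $\Aut_\fs(Q)$ obtained by closing $\langle\phi_r\rangle\Inn(Q)$ under $\Aut_\fs(Q)$-conjugation would then centralize an $\Aut_\fs(Q)$-invariant refinement of a chain in $Q$ built from $Z(Q)$ and the $\Aut_\fs(Q)$-closure of $P_iZ(Q)$, forcing this subgroup to lie in $\Inn(Q)$ via \cref{Chain}, a contradiction to radicality of $Q$. Hence $P_i$ is self-centralizing, and combined with $[P_i,P_i]\leq Z(R_i)$ the commutator form forces $P_i$ to be elementary abelian of maximal rank.

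For the moreover statement, assume $Q\in\mathcal{E}(\fs)$. By \cref{GeneralEssentialStructure}, $Q=C_S(Z(Q))$ and $[O^{p'}(\Aut_\fs(Q)),Q]\leq Z(Q)$, so \cref{SEFF} applied with $U=Z(Q)$ and $V=Q$ gives $O^{p'}(\Out_\fs(Q))$ containing $\SL_2(p^n)$ modulo a $p'$-core, with $Q/Z(Q)$ housing a natural module. In particular $\Aut_S(Q)/\Inn(Q)\cong S/Q$ is elementary abelian of order $p^n$. Each $R_i\not\leq Q$ contributes a nontrivial commuting subgroup $R_iQ/Q\cong R_i/P_i$ of $S/Q$, and each such image must act as transvections on the natural $\SL_2(p^n)$-module; but a Sylow $p$-subgroup of $\SL_2(p^n)$ is cyclic as an $\GF(p^n)$-module and admits no decomposition into two commuting transvection subgroups, so at most one $R_i$ lies outside $Q$. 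For that $R_i$, one checks $Q_x\leq Q$ (since $Q_x$ centralizes every component and hence preserves $Z(Q)$ appropriately) and each other $R_j\leq Q$, whence $S=Q_xR=R_iQ$.

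The main obstacle I anticipate is the $\Aut_\fs(Q)$-invariance required in case (ii): the $R_i$ are built from the fixed $x\in\mathfrak{X}(\fs)$ and need not be preserved by arbitrary $\alpha\in\Aut_\fs(Q)$, so the ``$\Aut_\fs(Q)$-closure'' step must be handled carefully, likely by replacing $P_i$ with an intersection over an $\Aut_\fs(Q)$-orbit of $R_i$'s and using the structure of components provided by \cref{QTI1} and \cref{CompDecomp} to track orbits.
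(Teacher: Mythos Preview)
Your case (ii) argument has a genuine gap that your proposed fix does not resolve. The chain you build from $P_iZ(Q)$ is not $\Aut_\fs(Q)$-invariant, and replacing it by an $\Aut_\fs(Q)$-closure destroys the property that $\phi_r$ (and its conjugates) centralize the relevant quotients, so \cref{Chain} no longer applies. The paper avoids this entirely by using the decomposition $S=R_iC_S(R_i)$ from \cref{SFitting}, a fact you never invoke: if $R_i\not\leq Q$ and the alternative $Z(Q\cap R_i)=Z(R_i)$ holds, then $Z(Q)\leq C_S(Q\cap R_i)=C_S(R_i)$ (the equality coming from the product structure), so $R_i$ itself centralizes the genuinely $\Aut_\fs(Q)$-invariant chain $\{1\}\normaleq Z(Q)\normaleq Q$, and \cref{Chain} yields the contradiction directly. (A minor separate point: ``every subgroup of $S$ is normal in $S$'' is false---consider a non-central involution in $\Dih(8)$---though the particular subgroups you need are normal for other reasons.)

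Your moreover clause also needs repair. Taking $U=Z(Q)$ and $V=Q$ in \cref{SEFF} fails because $[S,Q]\leq Z(S)\leq Z(Q)$ gives $C_S(Q/Z(Q))=S\neq Q$. The paper instead uses (again via $S=R_iC_S(R_i)$) that $R_i$ centralizes a subgroup of $Q$ of index exactly $|R_iQ/Q|$, which is the offender condition; \cref{SEFF} then delivers $O^{p'}(\Out_\fs(Q))\cong\SL_2(|R_iQ/Q|)$ and $S=R_iQ$ simultaneously. Uniqueness of $i$ follows cleanly: if also $R_j\not\leq Q$ then by symmetry $S=R_jQ$, whence $[S,Z(Q)]=[R_i,Z(Q)]=[R_j,Z(Q)]\leq R_i\cap R_j=\{1\}$, contradicting the fact that $Z(Q)$ carries a non-central $O^{p'}(\Out_\fs(Q))$-chief factor.
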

\begin{proof}
Suppose first that $p=2$. Then $\mathcal{A}(R_i)=\{A, B\}$ and every involution in $R_i$ is contained in $A$ or $B$. Moreover, for any $x\in A\setminus Z(R_i)$, using that $S=R_iC_S(R_i)$, we have that $C_S(x)=C_S(R_i)A$. Similarly, for any $y\in B\setminus Z(R_i)$, we have that $C_S(y)=C_S(R_i)B$. Observe also that $[R_i, Q]\le Z(R_i)\le \Omega_1(Z(S))\le \Omega_1(Z(Q))$.

Assume that there is $x\in (A\cap Q)\setminus Z(R_i)$ and $y\in (B\cap Q)\setminus Z(R_i)$. Then $\Omega_1(Z(Q))\le C_S(x)\cap C_S(y)=C_S(R_i)$. But then $R_i$ centralizes the chain $\{1\}\normaleq \Omega_1(Z(Q))\normaleq Q$ and as $Q$ is centric-radical, $R_i\le Q$ by \cref{Chain} and (i) holds. Thus, we may assume that $\Omega_1(Z(Q))\cap R_i$ is contained in one of $A$ or $B$. Without loss of generality, assume that $\Omega_1(Z(Q))\cap R_i\le A$ so that $\Omega_1(Z(Q))\le C_S(A)$. Then, again using that $Q$ is centric-radical, we see that $A\le Q$. Indeed, we must have that $A=R_i\cap Q$ and (ii) holds, for otherwise $Q\cap B>Z(R_i)$ and we may select $x$ and $y$ as before.

Suppose now that $Q_i:=R_i\cap Q<R_i$ and $p$ is odd so that $R_i\cong q^{1+2}$ where $q=p^{n_i}$ for some $n_i\in\N$. In particular, $R_i$ has exponent $p$. Since $Q$ is centric, there is $x\in Q_i\setminus Z(R_i)$. By an elementary counting argument, there is $A\in\mathcal{A}(R_i)$ with $x\in A$. For any $y\in R_i\setminus A$, we record that $C_A(y)=Z(R_i)$. It follows that either $Q_i\le A$ and, as $Q$ is centric, $Q_i=A$; or there is $y\in Q_i\setminus (Q_i\cap A)$ and $Z(Q_i)=Z(R_i)$. In the latter case, we see that $R_i$ centralizes the chain $\{1\}\normaleq Z(Q)\normaleq Q$, a contradiction by \cref{Chain} since $R_i\not\le Q$. 

Suppose that $Q\in\mathcal{E}(\fs)$ and suppose that $R_i\not\le Q$. Hence, $R_i\cap Q\in\mathcal{A}(Q_i)$ and so $|(R_i\cap Q)/Z(R_i)|=|R_iQ/Q|$. Since $S=R_iC_S(R_i)$, $R_i$ centralizes an index $|R_iQ/Q|$ subgroup of $Q$ and applying \cref{SEFF}, we see that $O^{p'}(\Out_{\fs}(E))\cong \SL_2(|R_iQ/Q|)$ and $S=R_iQ$. If there is $R_j\not\le Q$ with $i\ne j$, then $R_i\ge [R_i, Z(Q)]=[R_iQ, Z(Q)]=[S, Z(Q)]=[R_jQ, Z(Q)]=[R_j, Z(Q)]\le R_j$. Since $R_i\cap R_j=\{1\}$, we must have that $[S, Z(Q)]=\{1\}$, a contradiction since $Z(Q)$ contains all non-central $O^{p'}(\Out_{\fs}(E))$-chief factors by \cref{GeneralEssentialStructure}.

This completes the proof of the lemma.
\end{proof}

\begin{lemma}\label{EssentialsIn}
Suppose that $E\in\mathcal{E}(\fs)$. If $R\not\le E$ then $Q_x\le C_E(O^p(O^{p'}(\Aut_{\fs}(E))))$.
\end{lemma}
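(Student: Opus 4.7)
The plan is to combine \cref{Radical}, \cref{SEFF}, and \cref{Chain} to pin down first that $Q_x \le E$ and then that $L := O^p(O^{p'}(\Aut_{\fs}(E)))$ fixes $Q_x$ pointwise.

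In the first phase I would unpack \cref{Radical} under the hypothesis $R \not\le E$: since $E$ is $\fs$-centric-radical (being essential), exactly one index $i$ has $R_i \not\le E$, and for this $i$ we have $S = R_iE$, $R_i \cap E \in \mathcal{A}(R_i)$, while $R_j \le E$ for all $j \ne i$. The final clause of \cref{Radical} gives $O^{p'}(\Out_{\fs}(E)) \cong \SL_2(q)$ where $q = |R_iE/E|$. Invoking \cref{SEFF} (as used inside the proof of \cref{Radical}), I would extract $\Aut_{\fs}(E)$-invariant subgroups $U \le V \le Z(E)$ such that $V/U$ is a natural $\SL_2(q)$-module for $L$ with $E = C_S(V/U)$; one may take $V = Z(E)$ and $U = C_V(L)$.

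In the second phase I would show $Q_x \le E$. The key input is $Q_x \le C_S(R_i)$, which holds because $Q_x = C_S(E(\fs_x))$ centralizes each $R_j$. I then aim to apply \cref{Chain} to an $\Aut_{\fs}(E)$-invariant refinement of $\{1\} \le U \le V \le E$ on which $Q_x$ centralizes each successive quotient. The factor $E/V$ is immediate from $[S,E]\le S'\le Z(S)\le V$, so the only non-trivial verification is the $V/U$ step. Here the commutation $[Q_x,R_i]=1$ in $S$ forces the image of $\Aut_{Q_x}(E)$ in $L/\Inn(E) \cong \SL_2(q)$ to commute with the image of $\Aut_{R_i}(E)$, which is a full Sylow $T$ acting on $V/U$ by transvections; since the $p$-part of $C_{\SL_2(q)}(T)$ is $T$ itself, and the image of $\Aut_{Q_x}(E)$ is a $p$-subgroup of $\Aut_S(E)\Inn(E)/\Inn(E) = T$, a further application of \cref{Chain} applied to an $\Aut_{\fs}(E)$-invariant refinement (or a direct natural-module calculation using the uniqueness of the $T$-fixed line) eliminates this residual action and yields $Q_x \le C_S(V/U) = E$.

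In the third phase, having $Q_x \le E$, I would show $[Q_x,L]=1$. Since $\Aut_S(E) \le L$ and $C_E(\Aut_S(E)) = Z(S)\cap E = Z(S)$, the claim will in particular force $Q_x \le Z(S)$. The commutation calculation $[Q_x, S] = [Q_x, R_iE] = [Q_x, R_i][Q_x,E] = [Q_x, E] \le Z(S)$ shows $Q_x$ is close to central in $S$; combined with the classification of $L$-chief factors inside $E$ (the only non-central one being the natural module $V/U$, whose centralizer in $S$ is $E$) and with $\Aut_{Q_x}(E)$ commuting with the Sylow $T$ of $L/\Inn(E)$, the $L$-action on $Q_x$ is trivial, completing the proof. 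The main obstacle is the module-theoretic step in the second phase: extracting from the mere commutation $[Q_x,R_i]=1$ the strong conclusion that $Q_x$ acts trivially on the natural $\SL_2(q)$-module, since this commutation only directly places $\Aut_{Q_x}(E)$ inside the abelian Sylow $T$, and further exploitation of \cref{Chain} with an $\Aut_{\fs}(E)$-invariant refinement is needed to close the gap.
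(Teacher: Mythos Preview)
Your second phase is far more elaborate than necessary. Since $S=R_iE$ and $Q_x\cap R_i=\{1\}$, one has directly
\[
[Q_x,Z(E)]\le [S,Z(E)]=[R_i,Z(E)]\le R_i\quad\text{and}\quad [Q_x,Z(E)]\le Q_x,
\]
so $[Q_x,Z(E)]=\{1\}$ and $Q_x\le C_S(Z(E))=E$ by \cref{GeneralEssentialStructure}. No chain argument or module calculation is needed here.

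The real problem is your third phase, where there is a genuine gap. Once $Q_x\le E$, your automorphism group $\Aut_{Q_x}(E)$ sits inside $\Inn(E)$, so the assertion that it ``commutes with the Sylow $T$ of $L/\Inn(E)$'' carries no information. Your claim $\Aut_S(E)\le L$ is also not generally valid: $L=O^p(O^{p'}(\Aut_{\fs}(E)))$ need not contain $\Inn(E)$. More fundamentally, knowing only that $[Q_x,R_i]=1$ places $Q_x$ in $C_E(\Aut_{R_i}(E))$, which in the abelian case is $C_E(L)$ times a $q$-dimensional piece of the natural module --- strictly larger than $C_E(L)$. Nothing in your outline rules out $Q_x$ meeting that extra piece, and you have not even shown $Q_x$ is $L$-invariant.

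What you are missing is the paper's key construction: one shows $E\cap R_i\in\mathcal{E}(\mathcal{C}_i)$ (automatic for $p=2$ via \cref{Radical}; for $p$ odd this requires a separate argument with the central involution of $O^{p'}(\Aut_{\fs}(E))$ and the structure of $\mathfrak{X}(\fs)$), then picks $\alpha\in\Aut_{\mathcal{C}_i}(E\cap R_i)$ moving an element of $(E\cap R_i)\setminus Z(R_i)$ into $Z(R_i)\le Z(S)$. This $\alpha$ extends to $\hat\alpha\in\Aut_{\fs}(E)$, and $\langle\hat\alpha,\Aut_{R_i}(E)\rangle$ covers $O^{p'}(\Out_{\fs}(E))$. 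Since $\mathcal{C}_i\subseteq C_{\fs}(Q_x)$ and $[R_i,Q_x]=1$, both generators centralize $Q_x$, hence so does $L\le\langle\hat\alpha,\Aut_{R_i}(E)\rangle\Inn(E)$ modulo the $p$-group $\Inn(E)$, i.e.\ $L\le\langle\hat\alpha,\Aut_{R_i}(E)\rangle$. This produces explicit $Q_x$-centralizing generators for $L$; your purely module-theoretic approach does not.
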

\begin{proof}
Let $E\in\mathcal{E}(\fs)$ such that $R\not\le E$ and so, by \cref{Radical}, there is a unique $k\in \{1,\dots, l\}$ with $R_k\not\le E$. Indeed, by \cref{CompAutos} and \cref{Radical}, for $q_k:=|R_k/E|$ we have that $R_k$ centralizes an index $q_k$ subgroup of $E$. By \cref{SEFF}, we deduce that $O^{p'}(\Out_{\fs}(E))\cong \SL_2(q_k)$ and $S=ER_k$. Then $[Q_x, Z(E)]\le [S, Z(E)]=[R_k, Z(E)]$ from which we deduce that $[Q_x, Z(E)]\le Q_x\cap R_k=\{1\}$ so that $Q_x\le C_S(Z(E))=E$ by \cref{GeneralEssentialStructure}.

We claim that $E\cap R_k\in \mathcal{E}(\mathcal{C}_k)$. By \cref{Radical}, $E\cap R_k\in\mathcal{A}(R_k)=\mathcal{E}(\mathcal{C}_k)$ if $p=2$ and so we may assume that $p$ is odd. Let $t$ be the lift to $\Aut_{\fs}(S)$ of an involution in $Z(O^{p'}(\Aut_{\fs}(E)))$. Then $Z:=Z(S)\cap R_k=[Z(E), S]$ is normalized by $t$ and so $t\in \Aut_{N_{\fs}(Z)}(S)$. Applying \cref{QTI1}, we see that $\mathcal{C}_1, \dots, \mathcal{C}_{k-1}, \mathcal{C}_{k+1}, \dots, \mathcal{C}_l\in \Comp(N_{\fs}(Z))$. 

Observe that if $t\in \Aut_{N_{\fs}(Q_x)}(S)$ then $t$ acts on $\Comp(N_{\fs}(Q_x))=\Comp(\fs_x)$, and since $t$ normalizes $Z$, $t$ normalizes $R_k$. Since $p$ is odd, we deduce that $E\cap R_k=[E, t]=[Z(E), O^{p'}(\Aut_{\fs}(E))]$. But then \[\SL_2(q_k)\cong \Aut_{O^{p'}(\Aut_{\fs}(E)}(E\cap R_k)=O^{p'}(\Aut_{N_{\fs}(Q_x)}(E\cap R_k))=O^{p'}(\Aut_{\mathcal{C}_k}(E\cap R_k))\] and the claim holds. Since $Q_x\le E$, if $Q_x$ is non-abelian then $Q_x'\le E'\le C_E(O^{p'}(\Aut_{\fs}(E)))$ and we have by \cref{QTI3} that $t\in \Aut_{N_{\fs}(Q_x)}(S)$ and the claim holds. 

Suppose there is $\mathcal{K}\in \Comp(N_{\fs}(Z))$ with $\mathcal{K}\ne \mathcal{C}_i$ for any $i$, and let $\mathcal{K}$ be supported on $T$. Then $[T, R_i]=\{1\}$ for $i\in \{1,\dots, k-1, k+1,\dots, l\}$. Moreover, $[T, R_k]\le T\cap Z\le T\cap O_p(N_{\fs}(Z))=\{1\}$ by \cref{NoL34}. Then $T\le Q_xZ(S)$ and since $T$ is non-abelian, so too is $Q_x$, and the claim holds. Suppose now that $\mathcal{C}_1, \dots, \mathcal{C}_{k-1}, \mathcal{C}_{k+1}, \dots, \mathcal{C}_l=\Comp(N_{\fs}(Z))$. Then $Q_x\times Z=Z(C_S(E(C_{\fs}(Z))))$ is normalized by $t$. Moreover, since $t\in \Aut_{\fs}(S)$, $t$ preserves the set $\mathfrak{X}(\fs)$. But $Q_x\setminus \{1\}$ is exactly the set of elements of $Q_x\times Z$ contained in $\mathfrak{X}(\fs)$ from which we conclude that $t$ normalizes $Q_x$, and that $t\in \Aut_{N_{\fs}(Q_x)}(S)$ and so the claim holds.

We observe that $O^{p'}(\Aut_{\mathcal{C}_k}(E\cap R_k))\cong \SL_2(q_k)\cong O^{p'}(\Out_{\fs}(E))$. Let $e\in (E\cap R_k)\setminus Z(R_k)$ and let $\alpha\in \Aut_{\mathcal{C}_k}(E\cap R)$ such that $e\alpha \in Z(R_k)\le Z(S)$ by \cref{CompAutos}. Then by \cite[Lemma I.2.6 (c)]{ako}, $\alpha$ extends to $\hat{\alpha}: C_S(e)\to S$. But $C_S(e)=(E\cap R_k)C_S(E\cap R_k)=E$ and since $\alpha$ does not preserve $Z(S)$, which is characteristic in $S$, we must have that $\hat{\alpha}\in\Aut_{\fs}(E)$. Moreover, $\hat{\alpha}$ does not normalize $S$ and so this generates $O^{p'}(\Aut_{\fs}(E))$ together with $\Aut_S(E)=\Aut_{R_k}(E)\Inn(E)$. Since $\alpha\in \mathcal{C}_k$, $\alpha\in C_{\fs}(Q_x)$, and as $\Aut_{R_k}(E)$ also centralizes $Q_x$, we deduce that $O^p(O^{p'}(\Aut_{\fs}(E)))$ centralizes $Q_x$ and $Q_x\le C_E(O^p(O^{p'}(\Aut_{\fs}(E))))$, as desired.
\end{proof}

\begin{lemma}\label{S/Qx-type}
Suppose that $E(\fs_x)$ is isomorphic to the $2$-fusion category of $\Alt(6)$. Then $S/Q_x$ is isomorphic to $\Dih(8)$ or $\Dih(8)\times 2$. 
\end{lemma}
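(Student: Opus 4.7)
The strategy is to pass to the quotient $\fs_x/Q_x$ and invoke the minimality of $\fs$ via the \hyperlink{MainThm}{Main Theorem}.

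I would begin by applying \cref{L34} to $\mathcal{E} := E(\fs_x) \cong \fs_S(\Alt(6))$ to obtain $Z(\mathcal{E}) = \{1\}$, so that $\mathcal{E}$ is supported on $R \cong \Dih(8)$, and \cref{CompAutos} then yields $S = RC_S(R)$. The first substantive step is to show $R \cap Q_x = \{1\}$: this intersection is strongly closed in $\fs_x$ and therefore in $\mathcal{E}$, and the only strongly closed subgroups of $\fs_S(\Alt(6))$ on $\Dih(8)$ are $\{1\}$ and $R$ itself (because $\Alt(6)$ fuses all involutions of $R$ and both elements of order $4$). The possibility $R \le Q_x$ is excluded since $Q_x$ centralizes $\mathcal{E}$ while $\Aut_{\mathcal{E}}(R) = \Inn(R)$ is non-trivial. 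Consequently $RQ_x/Q_x \cong R$ embeds in $S/Q_x$, so $S/Q_x$ is non-abelian of nilpotency class two.

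Next I would reduce to a setting where minimality is available. If $Q_x = \{1\}$, then $\fs_x = N_{\fs}(\{1\}) = \fs$, whence $\mathcal{E} = E(\fs)$ is a proper non-trivial normal subsystem of $\fs$, contradicting the simplicity of $\fs$ from \cref{MinSimple} (the alternative $\fs = \mathcal{E} \cong \fs_S(\Alt(6))$ would mean $\fs$ satisfies the \hyperlink{MainThm}{Main Theorem}, contrary to \cref{mainhyp}). Hence $Q_x \ne \{1\}$ and $|S/Q_x| < |S|$. The quotient $\fs_x/Q_x$ is a saturated fusion system on the class-two $2$-group $S/Q_x$, with $O_p(\fs_x/Q_x) = \{1\}$ since $Q_x = O_p(\fs_x)$. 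By the minimality of $\fs$ with respect to $|S|$, the \hyperlink{MainThm}{Main Theorem} applies: $O^{p'}(\fs_x/Q_x) = \fs_1 \times \dots \times \fs_n$ with each $\fs_i$ of one of the listed types.

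Finally, I would identify $\fs_x/Q_x$ in the list. Since $E(\fs_x) = \mathcal{E}$ is simple, it is the unique component of $\fs_x$, and using $R \cap Q_x = \{1\}$ together with the preservation of components under quotients (from the generalized Fitting formalism of Section~3), $\mathcal{E}$ descends to the unique component of $\fs_x/Q_x$. Consequently exactly one $\fs_i$ contains $\mathcal{E}$ as a component; any other $\fs_j$ would satisfy $F^*(\fs_j) \le O_p(\fs_x/Q_x) = \{1\}$ and thus be trivial, forcing $n = 1$. Running through the three cases of the \hyperlink{MainThm}{Main Theorem} with the aid of \cref{KnownSL} and \cref{KnownSp}, the only options for $\fs_1$ admitting $\mathcal{E}$ as a component are $\mathcal{E} = \fs_S(\Alt(6))$ on $\Dih(8)$ (type (i) with $p=2,\,a=1$, or type (iii) with $k=1$ and $|A|=1$) and $\fs_S(\Sym(6)) = \fs_S(\PSp_4(2))$ on $\Dih(8) \times C_2$ (type (ii) with $a=1$ or type (iii) with $k=1$ and $|A|=2$). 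Since $S/Q_x$ is the Sylow $2$-subgroup of $\fs_1$, the conclusion $S/Q_x \cong \Dih(8)$ or $\Dih(8) \times C_2$ follows. The chief subtlety lies in verifying that no spurious components arise in $\fs_x/Q_x$; this depends on $R \cap Q_x = \{1\}$ together with $O_p(\fs_x/Q_x) = \{1\}$, which together force each factor $\fs_j$ to have a non-trivial layer.
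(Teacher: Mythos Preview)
Your overall strategy---pass to $\fs_x/Q_x$ and invoke minimality---is natural and differs from the paper's approach, which works directly with the group $\bar S=S/Q_x$. The paper observes that $\bar S=\bar R\,\overline{C_S(R)}$ with $\overline{C_S(R)}$ centralising $\bar R\cong\Dih(8)$, bounds $|\bar S|\le 16$, and then classifies the two resulting groups of order $16$ (namely $\Dih(8)\times 2$ and $\Dih(8)\circ C_4\cong Q_8\circ C_4$), eliminating the latter by an essential--subgroup argument via \cref{Chain}.

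The problem with your route is the step asserting that $\overline{\mathcal E}$ is the \emph{unique} component of $\fs_x/Q_x$. Your justification---``any other $\fs_j$ would satisfy $F^*(\fs_j)\le O_p(\fs_x/Q_x)=\{1\}$''---is circular: each factor $\fs_j$ in the Main Theorem decomposition has $O_p(\fs_j)=\{1\}$ and hence non-trivial layer, so $\fs_j$ certainly has components; the question is precisely whether those components must equal $\overline{\mathcal E}$. You appeal to ``preservation of components under quotients from Section~3'', but no such statement appears there, and in general $E(\fs/O_p(\fs))$ can be strictly larger than the image of $E(\fs)$. Concretely, nothing you have written excludes, say, $n=1$ with $\fs_1$ of type~(iii) and $k\ge 2$ (so $\bar S\cong\Dih(8)^k\times A$), or $n\ge 2$. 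What is missing is exactly the information that bounds $|\bar S|$ (equivalently, that $C_S(R)/C_S(\mathcal E)$ is small), which is what the paper's direct argument supplies. To repair your approach you would need either a genuine lemma that components of $\fs_x$ biject with components of $\fs_x/Q_x$, or the bound $|\bar S'|=2$ (equivalently $\overline{C_S(R)}$ abelian), either of which then forces $n=1$ and $k=1$ via the direct-product structure.
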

\begin{proof}
Let $E(\fs_x)$ be supported on $R$. Note that there are morphisms in $E(\fs_x)$ (e.g. elements of order $3$ acting on the elementary abelian $2$-subgroups of $R$ of order $4$) which centralize $Q_x$ but do not lift to morphisms of $R$. In particular, we must have that $O_2(\fs_x/Q_x)<S/Q_x$. Write $\bar{S}:=S/Q_x$ so that $|\bar{S}|=16$. Note that $S=RC_S(R)$ so that $\bar{S}=\bar{R}\bar{C_S(R)}=\bar{R}C_{\bar{S}}(\bar{R})$.

Let $s\in\bar{C_S(R)}$ of minimal order such that $r\not\in \bar{R}$. Since $|Z(\bar{R})|=2$ we see that $s$ has order at most $4$. If $s$ has order $2$ then $\bar{S}=\bar{R}\times s\cong \Dih(8)\times 2$. If $s$ has order $4$ then $\bar{S}\cong \Dih(8)\circ C_4\cong Q_8\circ C_4$. Then $Z(\bar{S})$ is cyclic of order $4$ and $\mho_1(\bar{S})=\bar{S}'=Z(\bar{R})$. 

Assume there is $E\in\mathcal{E}(\fs_x/Q_x)$ so that $Z(\bar{S})<E$. It follows that $Z(\bar{R})=\mho^1(E)$. But then $[\bar{S}, E]\le \bar{S}'=\mho_1(E)$, a contradiction by \cref{Chain}. Hence, $\fs_x/Q_x$ has no essential subgroups and by the Alperin--Goldschmidt theorem and \cref{normalinF}, we see that $\bar{S}\normaleq \fs_x/Q_x$, a contradiction. 
\end{proof}

By \cref{QTI3}, it is clear that the choice of $x\in\mathfrak{X}(\fs)$ is not unique. We now demonstrate that the choice of $Q_x$ is not unique. 

In the following proof, when $p=2$ we appeal to a result of Bender for candidates for $\Out_{\fs}(E)$ whenever $E\in\mathcal{E}(\fs)$ and $m_2(S/E)\geq 2$. That is, we obtain a list of certain groups with strongly $2$-embedded subgroups (a result we will use in the determination of simple groups in the final section). However, since $S'\le Z(S)\le E$, using a standard reduction to almost simple groups, we could instead appeal to the classification of simple groups with abelian Sylow $2$-subgroups due to Walter \cite{walter} (or see a shorter proof due to Bender \cite{BenderAbelian}). Indeed, \cite{Bender} makes use of this result, and so we only appeal to Bender's result for convenience, since it (and Walter's theorem) will be required in other parts of this work.

\begin{proposition}\label{NotUnique}
We have that $|\mathfrak{Q}(\fs)|>1$.
\end{proposition}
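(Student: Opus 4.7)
The plan is to argue by contradiction: assume $\mathfrak{Q}(\fs)=\{Q\}$ and deduce that $Q$ is a nontrivial normal subgroup of $\fs$, contradicting $O_p(\fs)=\{1\}$. Nontriviality of $Q$ is immediate: since $\fs$ is of parabolic component type, $\mathfrak{X}(\fs)\neq\emptyset$, and for any $x\in\mathfrak{X}(\fs)$ each component of $E(\fs_x)$ lies in $C_{\fs}(x)$, so $x\in C_S(E(\fs_x))=Q_x=Q$.

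For normality, I would verify the third equivalent condition in \cref{normalinF}. For $\Aut_{\fs}(S)$-invariance, any $\alpha\in\Aut_{\fs}(S)$ acts on $\Omega_1(Z(S))$ with $E(C_{\fs}(x\alpha))=E(C_{\fs}(x))^{\alpha}$, so it permutes $\mathfrak{X}(\fs)$ and sends $Q_x$ to $Q_{x\alpha}$; uniqueness of $Q$ then forces $Q^{\alpha}=Q$. For each essential $E$, pick $x\in\mathfrak{X}(\fs)$ and let $R$ be the Sylow of $E(\fs_x)$. When $R\not\le E$, \cref{EssentialsIn} gives $Q\le C_E(O^p(O^{p'}(\Aut_{\fs}(E))))\le E$, and $\Aut_{\fs}(E)$-invariance of $Q$ follows since $O^p(O^{p'}(\Aut_{\fs}(E)))$ is characteristic in $\Aut_{\fs}(E)$, together with the Frattini decomposition $\Aut_{\fs}(E)=O^{p'}(\Aut_{\fs}(E))\cdot N_{\Aut_{\fs}(E)}(\Aut_S(E))$ lifting the right factor to $\Aut_{\fs}(S)$ (which normalises $Q$ by the previous step). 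When $R\le E$, note that $Q\le C_S(R)$ since $Q$ centralizes the inner automorphisms of $E(\fs_x)$, while $[Q,E]\le S'\le Z(S)\le Z(E)$ because $S$ has class two; so $Q$ centralizes $E/Z(E)$, and after showing $[Q,Z(E)]=\{1\}$, \cref{Chain} applied to the $\Aut_{\fs}(E)$-invariant chain $\{1\}\normaleq Z(E)\normaleq E$ yields $Q\le E$.

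The principal obstacle is this final step in the case $R\le E$: establishing $[Q,Z(E)]=\{1\}$ requires a careful local analysis, invoking Bender's result on strongly $2$-embedded subgroups (as alluded to just before the statement) to restrict $\Out_{\fs}(E)$ when $m_p(S/E)\ge 2$, or \cref{SEFF} when $m_p(S/E)=1$, in each case extracting enough about the action of $\Out_{\fs}(E)$ on $Z(E)$ to force $Q$ to centralize $Z(E)$. Once containment and invariance are established for every essential, \cref{normalinF} gives $Q\normaleq\fs$, and the nontriviality of $Q$ delivers the sought contradiction with $O_p(\fs)=\{1\}$.
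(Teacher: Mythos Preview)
Your opening moves are sound and match the paper: assuming $\mathfrak{Q}(\fs)=\{Q\}$, the $\Aut_{\fs}(S)$-invariance of $Q$ follows from uniqueness, and for essentials $E$ with $R\not\le E$ your use of \cref{EssentialsIn} and the Frattini factorisation correctly gives $Q\le E$ together with $\Aut_{\fs}(E)$-invariance.

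The difficulty is that your target, $Q\normaleq\fs$, is too strong and is not what the paper proves. Concretely, when $R\le E$ and $E\cap Q$ happens to be abelian (so $E\cap Q\le Z(E)$), your chain argument for $Q\le E$ would force $Q\le Z(E)$ and hence $Q$ abelian; but in the configurations that actually arise (e.g.\ $S=Q\times R$ with $Q$ itself a class-two Sylow) one has $[Q,Z(E)]\ne\{1\}$ and $Q\not\le E$. So the ``principal obstacle'' you flag is not a local analysis to be pushed through with Bender/\cref{SEFF}; it is a genuine obstruction to the strategy. The paper therefore aims for the weaker conclusion that \emph{both} $Q$ and $R$ are strongly closed in $\fs$ (that is, $E\cap Q$ and $E\cap R$ are $\Aut_{\fs}(E)$-invariant for every essential $E$), and this already requires a substantial case analysis: split according to whether $\mathfrak{L}(\fs)>1$, and in the one-component case according to whether the component is the $2$-fusion system of $\Alt(6)$, invoking Bender (or equivalently Walter) and Niles only in specific sub-cases where $m_2(S/E)\ge 2$.

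Once strong closure of $Q$ and $R$ is in hand, the contradiction is not with $O_p(\fs)=\{1\}$ but with simplicity of $\fs$: if $S=Q\times R$ then \cite[Proposition~3.3]{AOV} decomposes $\fs$ as a direct product, while if $S\ne Q\times R$ (forcing an $\Alt(6)$ component) one needs a further transfer argument and an analysis of $N_{\fs}(R)$ to finish. In short, redirect the argument from normality of $Q$ to strong closure of both $Q$ and $R$, and expect the endgame to use simplicity rather than the vanishing of $O_p(\fs)$.
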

\begin{proof}
Aiming for a contradiction, we assume that $\mathfrak{Q}(\fs)=\{Q_x\}$. We note first that by uniqueness, $Q_x$ is invariant under $\Aut_{\fs}(S)$, as is $Z(Q_x)$. Then since $E(C_{\fs}(x))=E(C_{\fs}(Z(Q_x)))$ by \cref{QTI3}, it follows that $R$ is also invariant under $\Aut_{\fs}(S)$. Our first aim will be to demonstrate that both $Q_x$ and $R$ are strongly closed in $\fs$. We will suppose that $E\in\mathcal{E}(\fs)$ is chosen such that either $(E\cap Q_x)$ or $(E\cap R)$ is not $\Aut_{\fs}(E)$-invariant. 

By \cref{EssentialsIn}, we observe that if $R\not\le E\in\mathcal{E}(\fs)$ then by a Frattini argument, we have $Q_x\normaleq \Aut_{\fs}(E)$. Moreover, for $z\in Q_x\cap \Omega_1(Z(S))$, we have that $O^{p'}(\Aut_{\fs}(E))\subseteq C_{\fs}(z)$. By \cref{QTI3}, we have that $E(C_{\fs}(x))=E(C_{\fs}(z))$. Since $R$ is strongly closed in $C_{\fs}(z)$, we deduce that $E\cap R$ is normalized by $O^{p'}(\Aut_{\fs}(E))$. Since $R$ and $Q_x$ are $\Aut_{\fs}(S)$-invariant, the Frattini argument shows that

\textbf{(1)} if $E\in\mathcal{E}(\fs)$ has $R\not\le E$ then $Q_x$ and $E\cap R$ are normalized by $\Aut_{\fs}(E)$.\hfill

Assume now that $R\le E\in\mathcal{E}(\fs)$. If $[E, E\cap Q_x]\ne \{1\}$ then there is $z\in E'\cap Q_x\cap \Omega_1(Z(S))$ centralized by $O^{p'}(\Aut_{\fs}(E))$. As above, an application of \cref{QTI3} implies that $O^{p'}(\Aut_{\fs}(E))$ normalizes $R$ since $E(C_{\fs}(x))=E(C_{\fs}(z))\normaleq C_{\fs}(z)$, and also that 
$O^{p'}(\Aut_{\fs}(E))$ normalizes $E\cap Q_x=C_E(E(C_{\fs}(z)))$. Hence, we assume from now that $[E, E\cap Q_x]=\{1\}$ so that $E\cap Q_x\le Z(E)$. More, by a similar argument we may assume that $C_{Z(E)}(O^{p'}(\Aut_{\fs}(E)))\cap Q_x=\{1\}$. 

Suppose first that $\mathfrak{L}(\fs)>1$ so that $\fs_x$ has at least two components $\mathcal{C}_1$ and $\mathcal{C}_2$ supported on $R_1$ and $R_2$ respectively. Let $r_i\in R_i'$. Then $r_i\le R_i'\le E'$ so that $O^{p'}(\Aut_{\fs}(E))$ centralizes $r_i$. Write $Q_{r_i}:=C_S(E(C_{\fs}(r_i)))$ so that $Q_{r_i}\normaleq S$. By a similar argument to \cref{BasicQx}, we see that $E(C_{\fs}(r_i))=E(C_{\fs}(Q_{r_i}))=E(N_{\fs}(Q_{r_i}))$. Then $\mathcal{C}_{3-i}, \mathcal{C}_j\in \Comp(C_{N_{\fs}(Q_{r_i})}(x))$ for $i\in \{1,2\}$ and $j\in \{3,\dots, l\}$. By \cref{QTI1}, we have that $\mathcal{C}_{3-i}, \mathcal{C}_j\in \Comp(C_{\fs}(r_i))$. If $\mathfrak{L}(r_i)=\mathfrak{L}(\fs)$ then by uniqueness of $Q_x$, we have that $r_i\in O_p(C_{\fs}(r_i))\le C_S(E(C_{\fs}(r_i)))=Q_x$, a contradiction since $R_i\cap Q_x=\{1\}$. Hence, $\mathcal{E}(C_{\fs}(r_i))=\mathcal{C}_{3-i}\times \mathcal{C}_3\times \dots \times \mathcal{C}_l$ for $i\in \{1,2\}$.

Now, $Q_xR_i$ has index at most $2$ in $Q_{r_i}$, with equality only occurring perhaps when $\mathcal{C}_i$ is isomorphic to the $2$-fusion category of $\Alt(6)$. Indeed, as $Q_{r_1}\cap Q_{r_2}\le C_S(\mathcal{C}_1)\cap C_S(\mathcal{C}_2)$, we deduce that $Q_{r_1}\cap Q_{r_2}=Q_x$. Since $O^{p'}(\Aut_{\fs}(E))$ centralizes $r_i$, we have that $O^{p'}(\Aut_{\fs}(E))\subseteq C_{\fs}(r_i)$. Firstly, $O^{p'}(\Aut_{\fs}(E))$ normalizes $E\cap Q_{r_i}=C_E(E(C_{\fs}(r_i)))$. Hence, $O^{p'}(\Aut_{\fs}(E))$ normalizes $E\cap Q_{r_1}\cap Q_{r_2}=E\cap Q_x$. Secondly, $E(C_{\fs}(r_i))$ is supported on $R_{3-i}\times R_3\times \dots \times R_l$ and so $O^{p'}(\Aut_{\fs}(E))$ normalizes $R=(R_{1}\times R_3\times \dots \times R_l)(R_{2}\times R_3\times \dots \times R_l)$. 

Hence, we may assume that 

\textbf{(2)} if $|\mathfrak{Q}(\fs)|=1$ and either $Q_x$ or $R$ is not strongly closed in $\fs$ then $\fs_x$ contains a unique component.\hfill

Assume that $N_{\fs}(R')$ is not constrained. Then for $\mathcal{K}\in\Comp(N_{\fs}(R'))$, $\mathcal{K}$ centralizes $r\in R'\cap \Omega_1(Z(S))$. Hence, $\mathcal{K}\in\Comp(N_{C_{\fs}(r)}(R'))$ and by \cref{balance}, $E(C_{\fs}(r))\ne \{1\}$. Then $r\in\mathfrak{X}(\fs)$. But then, by uniqueness of $Q_x$, $r\in O_p(C_{\fs}(r))\le C_S(E(C_{\fs}(r)))=Q_x$, a contradiction. Hence, $N_{\fs}(R')$ is constrained. Observe that $R'\le E'\le C_E(O^{p'}(\Aut_{\fs}(E)))$ so that $O^{p'}(\Aut_{\fs}(E))\subseteq N_{\fs}(R')$. It follows that $E\in\mathcal{E}(N_{\fs}(R'))$. 

Set $D:=O_p(N_{\fs}(R'))$ so that $C_S(D)\le D\le E$ by \cref{normalinF}. Since $[D, R]\le [S, R]=[RC_S(R), R]=R'$, we deduce by \cref{Chain} that $R\le D$. Since $E\cap Q_x\le Z(E)$ and $D\le E$, we deduce that $E\cap Q_x\le D$.

Suppose that $E(\fs_x)$ is not isomorphic to the $2$-fusion category of $\Alt(6)$. Then $E=(E\cap Q_x)\times R=D$ and so $E$ is $\Aut_{\fs}(S)$-invariant. Let $K$ be a Hall $p'$-subgroup of $\Aut_{E(\fs_x)}(R)$ and consider its lift $\hat{K}$ to $\Aut_{\fs}(S)$. Since $E$ is $\Aut_{\fs}(S)$-invariant, $\hat{K}$ restricts to $\bar{K}\in \Aut_{\fs}(E)$ and since $R\le E$, this is restriction is faithful. Now, $\bar{K}$ centralizes $E\cap Q_x$ and so centralizes $Z(E)/E'=(E\cap Q_x)R'/R'$. Moreover, $O^{p'}(\Out_{\fs}(E))$ acts faithfully on $Z(E)/E'$ and so \[[O^{p'}(\Out_{\fs}(E)), \bar{K}\Inn(E)/\Inn(E)]\le [O^{p'}(\Out_{\fs}(E)), C_{\Out_{\fs}(E)}(Z(E)/E')]=\{1\}\] so that $O^{p'}(\Out_{\fs}(E))$ commutes with $\bar{K}\Inn(E)/\Inn(E)$. But then, $O^{p'}(\Out_{\fs}(E))$ normalizes \linebreak $C_{Z(E)}(\bar{K}\Inn(E)/\Inn(E))=C_{Z(E)}(\bar{K})=E\cap Q_x$. Similarly, $O^{p'}(\Out_{\fs}(E))$ normalizes $[E, \bar{K}]=R$ and as $Q_x$ and $R$ are $\Aut_{\fs}(S)$-invariant, $\Aut_{\fs}(E)$ normalizes both $E\cap Q_x$ and $R$. Thus

\textbf{(3)} if $|\mathfrak{Q}(\fs)|=1$ and either $Q_x$ or $R$ is not strongly closed in $\fs$ then the unique component of $\fs_x$ is isomorphic to the $2$-fusion category of $\Alt(6)$.\hfill

Assume now that $E(\fs_x)$ is isomorphic to the $2$-fusion category of $\Alt(6)$. We claim first that $E=D$. If $E=(E\cap Q_x)\times R$ then clearly $E=D$. Hence, there is $t\in E\setminus (E\cap Q_x)R$. Since $S=RC_S(R)$, we can arrange that $t\in C_E(R)$. Then $[t, E\cap Q_x]\le [E, E\cap Q_x]=\{1\}$ and $[t, R]=\{1\}$ and so $[t, D]=\{1\}$. Since $D$ is centric, we see that $t\in D$ and so $(E\cap Q_x)\times R<E=D$.

Suppose that $m_2(S/E)=1$. Then by \cite[Theorem I.5.4.10]{gor}, and using that $S'\le Z(S)\le E$, we see that $S/E$ is cyclic. Then by the Hall-Higman argument \cite[Theorem 11.1.1]{gor}, using that $S$ acts quadratically on $E$, we see that $|S/E|=2$. Let $F\in\mathcal{E}(\fs)$ with $R\le F$. Then $R'\le F'$ so that $O^{2'}(\Aut_{\fs}(F))$ centralizes $R'$ and $F\in\mathcal{E}(N_{\fs}(R'))$. Then $E=D\le F$ and since $|S/E|=2$, we conclude that $E=F$. Hence, $E$ is the unique essential subgroup containing $F$. We will demonstrate that $E$ is strongly closed in $\fs$, and so by a transfer argument \cite[Theorem TL]{transfer}, using that $\fs=O^2(\fs)$, we will have a contradiction. For this, we need only show that $E\cap F$ is normalized by $\Aut_{\fs}(F)$ whenever $R\not\le F$.

Assume that $E=(E\cap Q_x)\times R$ and let $F\in\mathcal{E}(\fs)$ with $R\not\le F$. Then by \textbf{(1)} we have shown that $\Aut_{\fs}(F)$ normalizes $F\cap R$. Moreover, by \cref{EssentialsIn} we see that $O^2(O^{2'}(\Aut_{\fs}(F)))$ centralizes $Q_x$, and so centralizes $E\cap Q_x$. Since $E\cap Q_x=D\cap Q_x$ is $\Aut_{\fs}(S)$-invariant, we conclude that $\Aut_{\fs}(F)$ normalizes $E\cap Q_x$ and so, by the Dedekind modular law, $\Aut_{\fs}(F)$ normalizes $(E\cap Q_x)(F\cap R)=E\cap F$. 

Hence, $(E\cap Q_x)\times R$ has index $2$ in $E$. Then $E=C_E(R)R$. If $C_E(R)$ is not the unique abelian subgroup of maximal order in $C_S(R)$, then for $A$ another maximal order abelian subgroup of $C_S(R)$, we have that $|AE/E|\geq |Z(E)/Z(E)\cap A|$ and so by \cref{SEFF} we have that $O^{2'}(\Out_{\fs}(E))\cong \Sym(3)$ and $C_{Z(E)}(O^{2'}(\Aut_{\fs}(E)))$ has index $4$ in $Z(E)$. Since $C_{Z(E)}(O^{2'}(\Aut_{\fs}(E)))\cap Q_x=\{1\}$, we see that $|E\cap Q_x|\leq 4$ and so $|Q_x|\leq 2^3$ and $|S|\leq 2^7$. Then $\fs$ is known  by \cite{bobsmall} and we find no examples. Hence, $C_E(R)$ is the unique maximal abelian subgroup of $C_E(R)$. It follows that $E\cap F=C_E(R)(F\cap R)$ is the unique abelian subgroup of $F$ of maximal order and so is normalized by $\Aut_{\fs}(F)$. Hence, $E$ is strongly closed in $\fs$ and we reach our desired contradiction.

Suppose now that $m_2(S/E)\geq 2$. In particular, since $[S:Q_XR]\leq 2$, we have that $Q_x\not\le E$. By \cite{Bender}, writing $L_E:=O^{2'}(\Out_{\fs}(E))$, we have that $L_E/O_{2'}(L_E)\cong \PSL_2(q)$, where $q>p=2$. Indeed, $N_{L_E}(\Out_S(E))$ acts irreducibly on $\Out_S(E)$. Since $Q_x$ is $\Aut_{\fs}(S)$-invariant, it follows that $S=Q_xE$. 

Assume that $C_E(R)$ is the unique abelian subgroup of maximal order in $C_S(R)$. Then for $A,B$ the unique maximal abelian subgroups of $R$, we have that $C_E(R)A$ and $C_E(R)B$ are the unique maximal abelian subgroups of $S$ and so $E=C_E(R)R$ is a characteristic subgroup of every subgroup of $S$ which contains it ($E$ is the \emph{Thompson subgroup} of $S$). By coprime action \cite[Theorem 6.2.4]{gor}, $O_{2'}(L_E)=\langle C_{O_{p'}(L_E)}(\Out_A(E)) \mid E<C<S\,\,\text{with}\,\, |C/E|=2\rangle$. Then for each such $C$, we may arrange that $C=E(C\cap Q_x)$ so that $\{1\}\ne [C, Z(E)]\le Z(E)\cap Q_x$. Then the preimage in $\Aut_{\fs}(E)$ of $C_{O_{2'}(L_E)}(\Out_A(E))$ normalizes $[A, Z(E)]$ and so is contained in $N_{\fs}([A, Z(E)])$. Since $[A, Z(E)]\le Q_x=C_S(E(\fs_x))$ it follows from \cref{balance} and \cref{QTI3} that $E(N_{\fs}([A, Z(E)]))=E(\fs_x)$ from which we conclude that the preimage of $C_{O_{2'}(L_E)}(\Out_A(E))$ normalizes $R$ and $E\cap Q_x$. 

Since $O_{2'}(L_E)=\langle C_{O_{2'}(L_E)}(\Out_A(E)) \mid E<A<S\,\,\text{with}\,\, |A/E|=2\rangle$, the preimage in $O_{2'}(L_E)$ normalizes $E\cap Q_x$ and $R$. Now, $L_E$ centralizes $R'$ so that $O_{2'}(L_E)$ centralizes $R'$. Moreover, $O_{2'}(L_E)$ centralizes $E/(E\cap Q_x)R'$ and so we conclude that $[E, O_{2'}(L_E)]\le E\cap Q_x$. Since $L_E$ normalizes $[E, O_{2'}(L_E)]$, we either have that $O_{2'}(L_E)$ centralizes $E$, in which case $O_{2'}(L_E)=\{1\}$; or that the preimage in $\Aut_{\fs}(E)$ of $L_E$ is contained $N_{\fs}([E, O_{2'}(L_E)])$. A similar argument as above exploiting \cref{balance} implies that $L_E$ normalizes $E\cap Q_x$ and $R$ in the latter scenario, a contradiction. 

Thus, $L_E\cong \PSL_2(q)$. Write $W=\langle Z(S)^{L_E}\rangle$ so that $W/C_W(L_E)=[W/C_W(L_E), L_E]$. Now $S$ acts quadratically on $W_E$ and by a result of Niles (see \cite[Lemma 3.4]{nil}), we have that $W/C_W(L_E)$ is a direct sum of natural $\SL_2(q)$-modules. Since $C_W(L_E)\cap Q_x=\{1\}$ and $[S, W]=[Q_x, W]\le Q_x\cap W$, we see that $W=[W, L_E]\times C_W(L_E)$. Now, for $K$ a Hall $p'$-subgroup of $N_{\Aut_{\fs}(E)}(\Aut_S(E))$, we have that $[W, L_E]=[W, K]$ and since $K$ normalizes $W\cap Q_x$ and centralizes $Z(E)/(E\cap Q_x)R'$, we conclude that $[W, L_E]\le E\cap Q_x$. Then balance arguments as above implies that $O^{2'}(\Aut_{\fs}(E))$ normalizes $E\cap Q_x$ and $R$, a contradiction.

Finally, it remains to consider the case where $m_2(S/E)\geq 2$ and $C_E(R)$ is not the unique abelian subgroup of maximal order in $C_S(R)$. By a standard argument and using \cref{SEFF}, we deduce that $L_E\cong \PSL_2(q)$ and $E/C_E(L_E)$ is a natural $\SL_2(q)$-module for $L_E$. Defining $W$ as above, we deduce that $E=W/C_E(L_E)$ and then a similar analysis as above, using that $C_E(L_E)\cap Q_x=\{1\}$ and that $[W, L_E]=[W, K]$, provides a contradiction.

Hence, for all $E\in\mathcal{E}(\fs)$ both $E\cap Q_x$ and $E\cap R$ are normalized by $O^{p'}(\Aut_{\fs}(E))$, and as $Q_x$ and $R$ are $\Aut_{\fs}(S)$-invariant, we have shown that 

\textbf{(4)} if $|\mathfrak{Q}(\fs)|=1$ then both $Q_x$ and $R$ are strongly closed in $\fs$.\hfill

If $S=Q_x\times R$ then an application of \cite[Proposition 3.3]{AOV} yields that there are subsystems $\fs_1, \fs_2$ supported on $Q_x$ and $R$ respectively such that $\fs=\fs_1\times \fs_2$, a contradiction since $\fs$ is simple. 

Hence, by \cref{CompDecomp}, we must have that $p=2$ and $\fs_x$ has a component isomorphic to the $2$-fusion category of $\Alt(6)$. In this case, since $Q_x\not\normaleq \fs$ there is $E\in\mathcal{E}(\fs)$ such that $Q_x\not\le E$ but $E\cap Q_x$ is normalized by $\Aut_{\fs}(E)$.

Assume that $\fs_x$ has a unique component. Then $Q_xR$ has index $2$ in $S$ and $Q_xR$ is strongly closed in $\fs$. But then by the Thompson--Lyons transfer argument \cite[Theorem TL]{transfer}, we have that $O^2(\fs)\ne \fs$, a contradiction. Hence, 

\textbf{(5)} if $|\mathfrak{Q}(\fs)|=1$ then $\fs_x$ has at least two components.

Assume that $N_{\fs}(R)$ is not constrained and let $\mathcal{K}\in\Comp(N_{\fs}(R))$. Choose $r_1\in \Omega_1(Z(S))\cap R_1$. Then $\mathcal{K}\subseteq C_{\fs}(r_1)$ and $N_{C_{\fs}(r_1)}(R)\subseteq N_{\fs}(R)$. It follows that $\mathcal{K}\subseteq E(N_{C_{\fs}(r_1)}(R))$. Then by \cref{balance}, we see that $\mathcal{K}\subseteq E(C_{\fs}(r_1))$. Write $Q_{r_1}:=C_S(E(C_{\fs}(r_1)))$ so that $E(C_{\fs}(r_1))=E(C_{\fs}(Q_{r_1}))=E(N_{\fs}(Q_{r_1}))$. Then by \cref{QTI1}, we have that $\mathcal{K}\in\Comp(C_{\fs}(r_1))$. On the other hand, $\mathcal{C}_i\in \Comp(C_{C_{\fs}(r_1)}(x))$ for $i\in \{2,\dots, l\}$ and by \cref{QTI1}, $\mathcal{C}_i\in \Comp(C_{\fs}(r_1))$. Note that $\mathcal{K}$ centralizes $R$ from which we conclude that $|\Comp(C_{\fs}(r_1))|\geq l$. Since $x\in\mathfrak{X}(\fs)$, we have that $r\in\mathfrak{X}(\fs)$ and by uniqueness of $Q_x$, we conclude that $Q_{r_1}=Q_x$ and $E(\fs_x)=E(C_{\fs}(r_1))$, a contradiction since $\mathcal{K}$ centralizes $R$.

Hence, $N_{\fs}(R)$ is constrained. Note that if $F\in\mathcal{E}(\fs)$ with $R\not\le F$ then $O^2(O^{2'}(\Aut_{\fs}(F)))$ centralizes $Q_x$ and so $O^{2'}(\Aut_{\fs}(F))$ normalizes $O_2(N_{\fs}(R))\cap Q_x$. On the other hand, if $R\le F$ then $R$ is normalized by $O^{2'}(\Aut_{\fs}(F))$. In particular, $F\in\mathcal{E}(N_{\fs}(R))$ and since $O^{2'}(\Aut_{\fs}(F))$ normalizes $F\cap Q_x$, we have that $O^{2'}(\Aut_{\fs}(F))$ normalizes $O_2(N_{\fs}(R))\cap Q_x=O_2(N_{\fs}(R))\cap Q_x\cap F$. Since $O_2(N_{\fs}(R))\cap Q_x$ is normalized by $\Aut_{\fs}(S)$, we must have that $O_2(N_{\fs}(R))\cap Q_x\normaleq \fs$ by \cref{normalinF}. Thus, $O_2(N_{\fs}(R))\cap Q_x=\{1\}$. But $N_{\fs}(R)$ is constrained and $\Omega_1(Z(S))\cap Q_x$ centralizes $O_2(N_{\fs}(R))$, a final contradiction.
\end{proof}

Our aim will be to demonstrate that $Q_x$ is isomorphic to one of the groups in $\mathcal{S}$. For this, we first dispel the case where $Q_x$ is abelian.

\begin{lemma}\label{NonAb}
$Q_x$ is non-abelian.
\end{lemma}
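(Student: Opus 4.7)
The plan is to derive a contradiction from the assumption that $Q_x$ is abelian. First, by \cref{NotUnique} I obtain $y \in \mathfrak{X}(\fs)$ with $Q_y \ne Q_x$; \cref{QTI2} then gives $Q_x \cap Q_y = \{1\}$, and since both $Q_x$ and $Q_y$ are normal in $S$, this forces $[Q_x, Q_y] \le Q_x \cap Q_y = \{1\}$, so $Q_x$ and $Q_y$ commute elementwise.

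The strategy splits into two steps: first establish $Q_x \le Z(S)$, then force $Q_x \le Q_y$ and contradict $x \ne 1$. For the first step, when no component of $\fs_x$ is isomorphic to the $2$-fusion category of $\Alt(6)$, \cref{FonS} gives that $F^*(\fs_x)$ is supported on $S$, and the remark immediately following yields $Z(Q_x) \le Z(S)$; since $Q_x$ is abelian, $Q_x = Z(Q_x) \le Z(S)$. When $\fs_x$ does contain an $\Alt(6)$-fusion component, I would combine \cref{DihCent} (giving $S = R_i C_S(R_i)$ for each such $\Dih(8)$-isomorphic component) with \cref{S/Qx-type} (controlling $S/Q_x$ when $E(\fs_x)$ is the $2$-fusion category of $\Alt(6)$) and the component decomposition of \cref{CompDecomp}, with additional bookkeeping on the outer-automorphism data, to again conclude $Q_x \le Z(S)$.

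For the second step, let $R^y$ denote the Sylow subgroup on which $E(\fs_y)$ is supported. Centrality of $Q_x$ then gives $Q_x \le C_S(R^y)$. If no component of $\fs_y$ is $\Alt(6)$-fusion, \cref{CompAutos} and \cref{SFitting} yield $C_S(R^y) = Q_y$, so $Q_x \le Q_y$, contradicting $Q_x \cap Q_y = \{1\}$ together with $1 \ne x \in Q_x$. If $\fs_y$ does contain $\Alt(6)$-fusion components, the quotient $C_S(R^y)/Q_y$ is elementary abelian of rank at most the number of such components; the induced injection $Q_x \hookrightarrow C_S(R^y)/Q_y$ forced by $Q_x \cap Q_y = \{1\}$ severely bounds $|Q_x|$, and when combined with the structural constraints obtained in the first step, pins $|S|$ down enough to appeal to the classification of small $2$-fusion systems in \cite{bobsmall} and reach the final contradiction.

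The main obstacle will be the appearance of $\Alt(6)$-fusion components in either $\fs_x$ or $\fs_y$: these disrupt the clean conclusions ``$F^*(\fs_x)$ supported on $S$'' and ``$C_S(R^y) = Q_y$'', and require careful tracking of the outer automorphism contributions, mirroring the analogous $\Alt(6)$-subcases in the proof of \cref{NotUnique}.
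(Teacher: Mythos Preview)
Your Step~2 contains a genuine error: the claim that \cref{CompAutos} and \cref{SFitting} give $C_S(R^y) = Q_y$ is false. Those results say $S = R^y C_S(R^y)$ and, absent $\Alt(6)$ components, $S = R^y Q_y$; combined with \cref{CompDecomp} this yields $S = Q_y \times R^y$, whence $C_S(R^y) = Q_y \times Z(R^y)$. Since $R^y$ is non-abelian of class two, $Z(R^y) \ne \{1\}$, and $Z(R^y) \cap Q_y \le R^y \cap Q_y = \{1\}$, so $C_S(R^y)$ is strictly larger than $Q_y$. From $Q_x \le Z(S)$ you therefore only obtain $Q_x \le Q_y \times Z(R^y)$; together with $Q_x \cap Q_y = \{1\}$ this gives an injection of $Q_x$ into $Z(R^y)$ via the second projection, but no contradiction, since $Z(R^y)$ can be large. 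The same problem undermines your $\Alt(6)$ subcase: $C_S(R^y)/Q_y$ is not bounded by the number of $\Alt(6)$ components, because it always contains a copy of $Z(R^y)$.

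The paper proceeds differently. After (a substantial argument) establishing $Q_x \le Z(S)$, it shows that $X := \Omega_1(Q_x)$ is normalized by $O^{p'}(\Aut_{\fs}(E))$ for every $E \in \mathcal{E}(\fs)$; since $X \le \Omega_1(Z(S))$ this upgrades to $X$ being centralized by each such automizer, so $X \le \bigcap_{E \in \mathcal{E}(\fs)} C_E(O^{p'}(\Aut_{\fs}(E)))$. This intersection is $\Aut_{\fs}(S)$-invariant (as $\mathcal{E}(\fs)$ is) and hence normal in $\fs$ by \cref{normalinF}, contradicting $O_p(\fs) = \{1\}$. Most of the work in the paper's proof in fact goes into establishing $Q_x \le Z(S)$, via a case split on $\mathfrak{L}(\fs) = 1$ versus $\mathfrak{L}(\fs) > 1$ and, in the latter case, an analysis of the elements $r_i \in R_i'$ that shows $E \cap Q_x$ is $O^{p'}(\Aut_{\fs}(E))$-invariant for the relevant essentials. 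Your Step~1 dismisses this as ``additional bookkeeping'', but the $\Alt(6)$ case here is genuinely delicate and does not follow from \cref{DihCent} and \cref{S/Qx-type} alone.
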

\begin{proof}
Aiming for a contradiction, suppose that $Q_x$ is abelian. Set $X:=Q_x\cap \Omega_1(Z(S))$ so that $X\ne\{1\}$ and let $E\in\mathcal{E}(\fs)$ be such that $X$ is not normalized by $O^{p'}(\Aut_{\fs}(E))$. If $R\not\le E$, then by \cref{EssentialsIn} we have that \[X\le C_E(O^p(O^{p'}(\Aut_{\fs}(E))))\cap \Omega_1(Z(S))\le C_E(O^{p'}(\Aut_{\fs}(E)))\] and so $X$ is normalized by $O^{p'}(\Aut_{\fs}(E))$. Hence, 

\textbf{(1)} if $Q_x$ is abelian and $X$ is not normalized by $O^{p'}(\Aut_{\fs}(E))$ for some $E\in\mathcal{E}(\fs)$ then $R\le E$.\hfill

Since $Q_x$ is abelian and centralized by $R$, and $Z(S)\le E$, if $Q_x\not\le E$ then we must have that $S\ne Q_xR$ and so $p=2$, $\fs_x$ has some component $\mathcal{K}$ supported on $K$ which isomorphic to the $2$-fusion category of $\Alt(6)$ and $C_S(\mathcal{K})K$ has index $2$ in $S$ by \cref{CompDecomp}. If $p$ is odd then $S=Q_x\times R\le E$, a contradiction. Hence, we may assume that $p=2$ for the remainder of this proof.

Suppose that $\mathfrak{L}(\fs)=1$ so that if $Q_x\not\le E$ then $E(\fs_x)=\mathcal{C}_1$ is isomorphic to the $2$-category of $\Alt(6)$. If $Q_x\le E$, then unless $E(\fs_x)$ is isomorphic to the $2$-fusion category of $\Alt(6)$, we have that $S=Q_x\times R=E$, a contradiction. Hence, whether $Q_x$ is contained in $E$ or not, if $\mathfrak{L}(\fs)=1$ then we have that $E(\fs_x)$ is isomorphic to the $2$-fusion category of $\Alt(6)$. 

Assume that $Q_xR=C_S(Q_x)$ has index $2$ in $S$. Choose $y\in\mathfrak{X}(\fs)$ with $Q_x\ne Q_y$. Then $\fs_y$ has one component and by \cref{CompDecomp} and \cref{S/Qx-type}, we either have that $S/Q_y\cong \Dih(8)\times 2$ or $S/Q_y$ is isomorphic to a group in $\mathcal{S}$. Now, $C_S(Q_x)$ has index $2$ in $S$, $Q_xQ_y/Q_y\cong Q_x$ and $C_{S/Q_y}(Q_xQ_y/Q_y)=C_S(Q_x)Q_y/Q_y=EQ_y/Q_y$. Comparing with the groups in $\mathcal{S}$, we see that $S/Q_y\cong \Dih(8)\times 2$ or $\Dih(8)$. Hence, $|Q_x|\leq 2^3$ so that $|S|\leq 2^7$. Then $\fs$ is described in \cite{bobsmall} and we find no examples.

Hence, if $\mathfrak{L}(\fs)=1$ then $Q_x\le Z(S)$. In particular, $Q_xR\le E$ from which we conclude that $E=Q_x\times R$ has index $2$ in $S$. Since $S/Q_x\cong \Dih(8)\times 2$ and $R\cap Q_x=\{1\}$ we conclude that $S$ has an abelian subgroup of index $2$. Again, choose $y\in\mathfrak{X}(\fs)$ with $Q_x\ne Q_y$. Then $\fs_y$ has one component and since $S/Q_y$ has index subgroup of index $2$, by \cref{CompDecomp} and \cref{S/Qx-type} we deduce that $S/Q_y\cong \Dih(8)$ or $\Dih(8)\times 2$. Hence, $|Q_x|\leq 4$ and $|S|\leq 2^6$. As above, $\fs$ is described in \cite{bobsmall} and we find no examples.  

Hence, we may assume now that

\textbf{(2)} if $Q_x$ is abelian then $\mathfrak{L}(\fs)>1$.\hfill

If $Q_x\not\le E$, then we arrange our indexing so that $\mathcal{C}_1$ is isomorphic to the $2$-fusion category of $\Alt(6)$ and $S\ne R_1C_S(\mathcal{C}_1)$.

For $i\in\{1,2\}$ let $r_i\in R_i'$ so that $r_i\in \Omega_1(Z(S))$. Assume first that $r_i\in\mathfrak{X}(\fs)$ so that $E(N_{\fs}(Q_{r_i}))=E(C_{\fs}(r_i))=E(\fs_{r_i})$. Since $\mathcal{C}_{3-i}, \mathcal{C}_3,\dots, \mathcal{C}_l\in\Comp(C_{N_{\fs}(Q_{r_i})}(x))$, applying \cref{QTI1} we see that there is $\mathcal{K}_i\in\Comp(\fs_{r_i})$ such that $E(\fs_{r_i})=\mathcal{K}_i\times \mathcal{C}_{3-i}\times \mathcal{C}_3\times \dots \times \mathcal{C}_l$. Moreover, $\mathcal{K}_i$ centralizes $\mathcal{C}_{3-i}$ and so centralizes $r_{3-i}$ and by another application of \cref{QTI1}, it follows that $\mathcal{K}_i\in\Comp(\fs_{r_{3-i}})$ and $r_{3-i}\in \mathfrak{X}(\fs)$. Indeed, $\mathcal{K}_i\subseteq C_{E(\fs_{r_{3-i}})}(\mathcal{C}_3\times \dots \times \mathcal{C}_l)=\mathcal{C}_i\times \mathcal{K}_{3-i}$. Then $\mathcal{K}_i$ also centralizes $r_i$ from which we conclude that $\mathcal{K}_1=\mathcal{K}_2$. Let $\mathcal{K}_1$ be supported on $K$ so that $K\in\mathcal{S}$. Then $K\le C_S(\mathcal{C}_1\times \dots \times \mathcal{C}_l)$ so that $K\le Q_x$ and $K$ is abelian, a contradiction. 

Hence, we have that $\mathfrak{L}(r_i)=\mathfrak{L}(\fs)-1$ for $i\in \{1,\dots, l\}$. Indeed, arguing as above and using \cref{QTI1}, we have that $E(C_{\fs}(r_i))=\mathcal{C}_{1}\times \dots \times \mathcal{C}_{i-1}\times \mathcal{C}_{i+1} \dots \times \mathcal{C}_l$ and $Q_x=C_S(E(C_{\fs}(r_i)))\cap C_S(E(C_{\fs}(r_j)))$ when $i\ne j$. Now, $r_i\in R_i'\le R'\le E'$ and so $r_i\in C_{E}(O^{2'}(\Aut_{\fs}(E)))$. Hence, $O^{2'}(\Aut_{\fs}(E))\subseteq C_{\fs}(r_i)$ and $O^{2'}(\Aut_{\fs}(E))$ normalizes $C_E(E(C_{\fs}(r_i)))$. Thus $O^{2'}(\Aut_{\fs}(E))$ normalizes $C_E(E(C_{\fs}(r_i)))\cap C_E(E(C_{\fs}(r_j)))=E\cap Q_x$ for $i\ne j$. Then $Q_x$ centralizes the chain $\{1\}\normaleq E\cap Q_x\normaleq E$ and by \cref{Chain} we conclude that $Q_x\le E$ and is $O^{2'}(\Aut_{\fs}(E))$-invariant.

We claim that $Q_x\le Z(S)$.  Choose $y\in\mathfrak{X}(\fs)$ with $Q_x\ne Q_y$ and write $\{\mathcal{T}_i\}_{i\in\{1,\dots, l\}}$ for the components of $O^{p'}(\fs_y)$. We write $T_i$ for the $p$-subgroup of $S$ that $\mathcal{T}_i$ is supported on, and $T=\prod T_i$. If $[Q_x, T]=\{1\}$ then it follows that $Q_xQ_y/Q_y\le Z(S/Q_y)$ from which we conclude that $[S, Q_x]\le Q_x\cap Q_y=\{1\}$. Hence, we may assume without loss of generality that $\{1\}\ne [Q_x, T_1]\le Q_x\cap T_1$. 

Applying \cref{QTI1} and \cref{QTI3}, we deduce that $\mathcal{T}_2,\dots, \mathcal{T}_l\in \Comp(\fs_x)$. Consider $P:=C_S(\mathcal{T}_2\times \dots \times\mathcal{T}_l)$. Then $Q_y\times T_1$ has index at most $2$ in $P$. On the other hand, applying \cref{krullschmidt} we deduce that $\{\mathcal{T}_2,\dots, \mathcal{T}_l\}=\{\mathcal{C}_1,\dots, \mathcal{C}_{c-1}, \mathcal{C}_{c+1}, \dots, \mathcal{C}_l\}$ for some $c\in \{1,\dots, l\}$. Hence, $Q_x\times R_c$ has index at most $2$ in $P$. In fact, as $[Q_x, T_1]\ne \{1\}$, $Q_x$ is not central in $P$ and $Q_x\times R_c$ has index exactly $2$ in $P$. Hence, $R_c\cong \Dih(8)$. Observe that as $\mathfrak{L}(r_c)=\mathfrak{L}(\fs)-1$ for $r_c\in R_c'$, $Q_y$ centralizes $R_c$. Moreover, for $a\in P$ with $a^2=1$, $[a, R_c]=\{1\}$ and $[a, Q_x]\ne \{1\}$, we have that $P$ normalizes $Q_x\langle a\rangle$ and $Q_x$ from which we deduce that $Q_y$ centralizes $Q_x\langle a\rangle$ and so $Q_y$ centralizes $R_cQ_x\langle a\rangle=P$ and $Q_y\le Z(P)$. Now, $Q_x\times R_c$ has an abelian subgroup of index $2$ and has an order $2$ derived subgroup. Hence, the only possibility is that $T_1\cong \Dih(8)$ from which we we deduce that $Z(P)$ has index $4$ in $P$. But $Z(R_c)$ has index $4$ in $R_c$ and as $Q_x\not\le Z(P)$, we have a contradiction. Thus, $Q_x\le Z(S)$ and the claim holds. 

Therefore, 

\textbf{(3)} if $Q_x$ is abelian then $Q_x\le Z(S)$ and $X=\Omega_1(Q_x)$ is normalized by $O^{p'}(\Aut_{\fs}(E))$ for all $E\in\mathcal{E}(\fs)$.\hfill

Since $X\le \Omega_1(Z(S))$, we have that $X$ is centralized by $O^{p'}(\Aut_{\fs}(E))$ for all such $E$. But then $\{1\}\ne X\le \bigcap_{E\in \mathcal{E}(\fs)} C_E(O^{p'}(\Aut_{\fs}(E)))$ and as $\mathcal{E}(\fs)$ is an $\Aut_{\fs}(S)$-invariant set, $\bigcap_{E\in \mathcal{E}(\fs)} C_E(O^{p'}(\Aut_{\fs}(E)))$ is normalized by $\Aut_{\fs}(S)$. By the Frattini argument, $\bigcap_{E\in \mathcal{E}(\fs)} C_E(O^{p'}(\Aut_{\fs}(E)))$ is normalized by $\Aut_{\fs}(E)$ for all $E\in\mathcal{E}(\fs)$ and by \cref{normalinF}, we have that \[\{1\}\ne X\le \bigcap_{E\in \mathcal{E}(\fs)} C_E(O^{p'}(\Aut_{\fs}(E)))\le O_2(\fs)=\{1\},\] a contradiction.
\end{proof}

\begin{proposition}
There is $1\leq i\leq l$ and $y\in \mathfrak{X}(\fs)$ such that $y\in R_i$. In particular, $y\not\in Q_x$.
\end{proposition}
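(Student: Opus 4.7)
The plan is to invoke \cref{NotUnique} to locate a second member of $\mathfrak{Q}(\fs)$, and then to extract the desired $y$ from the intersection of the corresponding $Q$ with some $R_j$. Concretely, I pick $y_0\in\mathfrak{X}(\fs)$ with $Q_{y_0}\ne Q_x$, which is available by \cref{NotUnique}. By \cref{QTI2} we have $Q_x\cap Q_{y_0}=\{1\}$, and since the proof of \cref{NonAb} goes through uniformly for any element of $\mathfrak{X}(\fs)$, the group $Q_{y_0}$ is non-abelian.

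The key step is to show that $[Q_{y_0}, R]\ne \{1\}$, replicating the ``centrality modulo $Q$" strategy already used inside the proofs of \cref{NotUnique} and \cref{NonAb}, but with the roles of $x$ and $y_0$ swapped. Suppose for contradiction that $Q_{y_0}$ centralises $R$. In the typical case where no component of $\fs_x$ is isomorphic to the $2$-fusion category of $\Alt(6)$, \cref{SFitting} combined with the direct product structure in \cref{CompDecomp} yields $S=Q_x R$ with $Q_x\cap R=\{1\}$, and hence $C_S(R)=Q_x\cdot Z(R)$ with $Z(R)\le Z(S)$. Then
\[ [S, Q_{y_0}]=[Q_xR, Q_{y_0}]=[Q_x,Q_{y_0}]\le Q_x\cap Q_{y_0}=\{1\}, \]
forcing $Q_{y_0}\le Z(S)$, which contradicts \cref{NonAb}.

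Once $[Q_{y_0},R]\ne\{1\}$ is established, there is some $j\in\{1,\dots,l\}$ with $[Q_{y_0},R_j]\ne\{1\}$. Since both $Q_{y_0}$ and $R_j$ are normal in $S$ (the latter by \cref{normcomp}), we have $\{1\}\ne [Q_{y_0},R_j]\le Q_{y_0}\cap R_j$. Thus $Q_{y_0}\cap R_j$ is a non-trivial normal $p$-subgroup of $S$ and so meets $\Omega_1(Z(S))$ non-trivially. Choosing any $y\in Q_{y_0}\cap R_j\cap \Omega_1(Z(S))\setminus\{1\}$, \cref{QTI3} applied to $y_0$ yields $y\in\mathfrak{X}(\fs)$; by construction $y\in R_j$, and since \cref{CompDecomp} also gives $Q_x\cap R=\{1\}$, we obtain $y\notin Q_x$, as claimed.

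The principal obstacle is the possibility that some component of $\fs_x$ (or of $\fs_{y_0}$) is isomorphic to the $2$-fusion category of $\Alt(6)$, since in that situation \cref{SFitting} only guarantees $S=RC_S(R)$ without the cleaner identification $C_S(R)=Q_x\cdot Z(R)$, so the computation $[S,Q_{y_0}]=[Q_x,Q_{y_0}]$ is not immediate. This will require a more delicate analysis along the lines of the $\Alt(6)$ detours inside the proofs of \cref{NotUnique} and \cref{NonAb}, invoking \cref{S/Qx-type} to control the index of $Q_xR$ in $S$, in order to still reduce to $[S,Q_{y_0}]\le Q_x$ and reach the same contradiction.
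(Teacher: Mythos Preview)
Your overall strategy matches the paper's: take $y_0\in\mathfrak{X}(\fs)$ with $Q_{y_0}\ne Q_x$ via \cref{NotUnique}, use \cref{QTI2} to get $Q_x\cap Q_{y_0}=\{1\}$, argue that $[Q_{y_0},R_j]\ne\{1\}$ for some $j$, and then pick $y\in Q_{y_0}\cap R_j\cap\Omega_1(Z(S))$ and invoke \cref{QTI3}. That part is fine.

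The gap is the $\Alt(6)$ case, which you flag as needing a ``more delicate analysis'' along the lines of \cref{S/Qx-type}. In fact no detour is needed, and the paper's argument is uniform. The trick is to work with $Q_{y_0}'$ rather than $Q_{y_0}$. Suppose $[Q_{y_0},R_i]=\{1\}$ for every $i$, so $Q_{y_0}\le C_S(R_i)$. By \cref{CompAutos}, $C_S(\mathcal{C}_i)$ has index at most $2$ in $C_S(R_i)$, and hence $Q_{y_0}'\le C_S(R_i)'\le C_S(\mathcal{C}_i)$ for every $i$. Thus
\[
Q_{y_0}'\le \bigcap_{i=1}^{l} C_S(\mathcal{C}_i)=C_S(E(\fs_x))=Q_x,
\]
so $Q_{y_0}'\le Q_x\cap Q_{y_0}=\{1\}$, contradicting \cref{NonAb}. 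This replaces your computation $[S,Q_{y_0}]=[Q_xR,Q_{y_0}]$, which relied on $S=Q_xR$, by an argument that never needs that equality and so is insensitive to the presence of $\Alt(6)$ components.
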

\begin{proof}
By \cref{NotUnique}, there is $t\in \mathfrak{X}(\fs)$ with $Q_t\ne Q_x$. Indeed, by \cref{QTI2}, we have that $Q_x\cap Q_t=\{1\}$ so that $[Q_x, Q_t]=\{1\}$. By \cref{NonAb} we have that $Q_t'\ne \{1\}$. Now, if $[Q_t, R_i]\ne\{1\}$ for some $i\in \{1,\dots, l\}$ then there is $y\in Q_t\cap R_i\cap \Omega_1(Z(S))$. Then by \cref{QTI3}, we have that $y\in \mathfrak{X}(\fs)$ and the result holds. 

On the other hand, since $C_S(\mathcal{C}_i)$ has index at most $2$ in $C_S(R_i)$ by \cref{CompAutos}, if $[Q_t, R_i]=\{1\}$ for all $i$ then we see that $Q_t'\le C_S(E(\fs_x))\cap Q_t=Q_x\cap Q_t=\{1\}$, a contradiction since $Q_t$ is non-abelian by \cref{NonAb}.
\end{proof}

Choose $y\in \mathfrak{X}(\fs)$ as described above and, if necessary, adjust labeling such that $y\in R_1$. We apply \cref{CompDecomp} to $y$ and write $\{\mathcal{T}_i\}_{i\in\{1,\dots, l}$ for subsystems of $O^{p'}(\fs_y)$ which play the analogous role to $\mathcal{C}_i$ for $\fs_x$. We write $T_i$ for the $p$-subgroup of $S$ that $\mathcal{T}_i$ is supported on.

We remark that a further application of \cref{CompAutos} implies that $[Q_x, T_i]\ne\{1\}$ for some $i\in\{1,\dots, l\}$. 

\begin{lemma}\label{KrullRemak}
Suppose that $l>1$. Then, up to permuting indices, we have that $\mathcal{C}_i=\mathcal{T}_i$ for $2\leq i\leq l$, and that $x\in T_1$.
\end{lemma}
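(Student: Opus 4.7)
My plan is to show that the components of $\fs_x$ and $\fs_y$ share $l-1$ factors after relabeling, and then locate $x$ in the remaining factor of the $\fs_y$ decomposition.

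For the first claim, observe that for each $j\in\{2,\ldots,l\}$, the component $\mathcal{C}_j$ commutes with $\mathcal{C}_1$ inside the direct product $E(\fs_x)=\mathcal{C}_1\times\cdots\times\mathcal{C}_l$, and hence centralizes $R_1$ pointwise. In particular, since $y\in R_1\cap\Omega_1(Z(S))$, we have $\mathcal{C}_j\subseteq C_{\fs}(y)$. By \cref{BasicQx} and \cref{CompDecomp}, $E(C_{\fs}(y))=E(\fs_y)=\mathcal{T}_1\cdots\mathcal{T}_l$. I would then adapt the balance-and-wedge argument in the proof of \cref{QTI1}, applied now with the roles of $x$ and $y$ reversed: using \cref{balance}, the subnormality chain $\mathcal{C}_j\normaleq E(\fs_x)\normaleq\fs_x$, and \cref{wedge} to form $\mathcal{C}_j\wedge\mathcal{T}_i$ inside a common ambient normal setup, the simplicity of $\mathcal{C}_j$ and each $\mathcal{T}_i$ (by \cref{CompDecomp} together with \cref{L34} and \cref{NoL34}) forces the wedge to be either trivial or to identify $\mathcal{C}_j$ with $\mathcal{T}_i$. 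If the wedge were trivial for every $i$, then $\mathcal{C}_j$ would centralize $E(\fs_y)$ and $R_j\le C_S(E(\fs_y))=Q_y$; combined with \cref{QTI2}, the nontriviality of $R_j$, and the remark preceding the lemma that $[Q_x,T_i]\ne\{1\}$ for some $i$, this gives a contradiction. Hence $\mathcal{C}_j=\mathcal{T}_{\sigma(j)}$ for some injection $\sigma\colon\{2,\ldots,l\}\to\{1,\ldots,l\}$, and after relabeling we may take $\sigma$ to be the identity on $\{2,\ldots,l\}$; the unique unmatched component is then called $\mathcal{T}_1$.

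For the final assertion $x\in T_1$, each $\mathcal{T}_i=\mathcal{C}_i$ with $i\geq 2$ centralizes $x$ (as $\mathcal{C}_i\subseteq E(\fs_x)\subseteq C_{\fs}(x)$). If $\mathcal{T}_1$ also centralized $x$, then $E(\fs_y)\subseteq C_{\fs}(x)$ would give $x\in C_S(E(\fs_y))=Q_y$, and combined with $x\in Q_x\setminus\{1\}$ and \cref{QTI2} we would reach a contradiction. Hence $\mathcal{T}_1\not\subseteq C_{\fs}(x)$. Invoking $S=C_S(\mathcal{T}_1)T_1$ from \cref{CompAutos} (with the $\Alt(6)$ case handled separately via \cref{DihCent}), and using $Z(\mathcal{T}_1)=\{1\}$ to conclude $C_S(\mathcal{T}_1)\cap T_1=\{1\}$, together with $Q_y\le C_S(\mathcal{T}_1)$ and $Q_x\cap Q_y=\{1\}$, I would decompose $x$ uniquely as $x=ct$ with $c\in C_S(\mathcal{T}_1)$ and $t\in T_1$, and then argue that $c\in Q_x\cap Q_y=\{1\}$ using $x\in Q_x$ and the tight structure of $C_S(\mathcal{T}_1)$ modulo $Q_y$, so that $x=t\in T_1$.

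The hardest step will be verifying that each $\mathcal{C}_j$ with $j\geq 2$ is genuinely a component of $C_{\fs}(y)$: the subnormality of $\mathcal{C}_j$ inside $\fs_x$ does not automatically transfer to subnormality inside $C_{\fs}(y)$, and the ambient subsystem in which \cref{wedge} is to be applied needs to be chosen with care. This is essentially a reverse-direction version of \cref{QTI1}, and fleshing out the balance argument to legitimately run in this direction is the subtle ingredient.
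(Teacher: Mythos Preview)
Your approach differs substantially from the paper's. The paper computes $E(C_{\fs}(\langle x,y\rangle))$ symmetrically: balance gives $E(C_{\fs}(\langle x,y\rangle))\le E(C_{\fs}(x))$, and together with the first assertion of \cref{QTI1} this yields $E(C_{\fs}(\langle x,y\rangle))=\mathcal{C}_2\times\cdots\times\mathcal{C}_l$. Rather than running the same with $x$ on the $y$-side, the paper replaces $x$ by some $z\in Q_x\cap T_i\cap\Omega_1(Z(S))$ (which exists because $[Q_x,T_i]\ne\{1\}$ for some $i$, relabelled as $i=1$), invokes \cref{QTI3} to get $E(C_{\fs}(z))=E(C_{\fs}(x))$, and repeats the computation to obtain $E(C_{\fs}(\langle z,y\rangle))=\mathcal{T}_2\times\cdots\times\mathcal{T}_l$. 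Equating the two decompositions and applying \cref{krullschmidt} finishes. This completely sidesteps the problem you flag as hardest: one never has to show directly that $\mathcal{C}_j$ is subnormal in $\fs_y$.

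That hardest step is a genuine gap in your outline. You have $\mathcal{C}_j$ subnormal in $\fs_x$ and $\mathcal{C}_j\subseteq C_{\fs}(y)$, but neither fact yields subnormality in $C_{\fs}(y)$ or $\fs_y$. The available form of \cref{QTI1} runs from $\Comp(C_{N_{\fs}(A)}(z))$ to $\Comp(N_{\fs}(A))$, i.e.\ it lets you pull components \emph{out} of a centralizer inside a fixed normalizer, not push components from $N_{\fs}(Q_x)$ across to $N_{\fs}(Q_y)$. Making a ``reverse'' version work amounts to the paper's double-centralizer computation anyway.

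Your argument for $x\in T_1$ also does not close. With $x=ct$, $c\in C_S(\mathcal{T}_1)$, $t\in T_1$, you claim $c\in Q_x\cap Q_y$; but $C_S(\mathcal{T}_1)$ contains $Q_y\times T_2\times\cdots\times T_l$, not just $Q_y$, so $c\in C_S(\mathcal{T}_1)$ does not force $c\in Q_y$, and there is no reason $c\in Q_x$ follows from $x\in Q_x$. (Indeed, the paper does not literally prove the stated $x\in T_1$: it produces $z\in Q_x\cap T_1$ with $Q_z=Q_x$, which is what the downstream arguments actually use.)
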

\begin{proof}
We note that $C_{\fs}(\langle x, y\rangle)=C_{C_{\fs}(x)}(y)=C_{C_{\fs}(y)}(x)$. By \cref{balance} we have that $E(C_{\fs}(\langle x,y\rangle))\le E(C_{\fs}(x))$. In particular, $E(C_{\fs}(\langle x,y\rangle))\le C_{E(C_{\fs}(x))}(y)=C_{\mathcal{C}_1}(y)\times \mathcal{C}_2\times \dots \mathcal{C}_l$. Since $\mathcal{C}_2\times \dots\times \mathcal{C}_l\le C_{\fs}(\langle x,y\rangle)$, it follows from \cref{QTI1} that $E(C_{\fs}(\langle x,y\rangle))=\mathcal{C}_2\times \dots \mathcal{C}_l$. 

Again, by \cref{balance} we have that $E(C_{\fs}(\langle x,y\rangle))\le E(C_{\fs}(y))$. Let $T_i$ be such that $[Q_x, T_i]\ne \{1\}$ and let $z\in Q_x\cap T_i\cap \Omega_1(Z(S))$, noting that $E(C_{\fs}(x))=E(C_{\fs}(z))$ for any $z\in Q_x\cap Z(S)$ by \cref{QTI3}.  We may as well arrange that $z\in T_1$ and we ascertain that $\mathcal{T}_2\times \dots \mathcal{T}_l\le C_{\fs}(z)$. 

Repeating the proof with $z$ in place of $x$, we have that $E(C_{\fs}(\langle z, y\rangle))=E(C_{\fs}(\langle x,y\rangle))=\mathcal{C}_2\times \dots \mathcal{C}_l$. In particular, $E(C_{\fs}(\langle x,y\rangle))\le C_{E(C_{\fs}(y)}(z)=C_{\mathcal{T}_1}(z)\times \mathcal{T}_2\times \dots \mathcal{T}_l$. As before, it follows that $E(C_{\fs}(\langle x,y\rangle))=\mathcal{T}_2\times \dots \mathcal{T}_l$.

Hence, we have that $E(C_{\fs}(\langle x,y\rangle))=\mathcal{C}_2\times \dots \mathcal{C}_l=\mathcal{T}_2\times \dots \mathcal{T}_l$. Applying \cref{krullschmidt} and permuting the indices if necessary, we have that $\mathcal{C}_i=\mathcal{T}_i$ for $2\leq i\leq l$.
\end{proof}

\begin{lemma}\label{QeqT}
Either 
\begin{enumerate}
    \item $Q_x\cong T_1$; or
    \item $\mathcal{T}_1$ is isomorphic to the $2$-fusion system of $\Alt(6)$, $Q_x\cong \Dih(8)\times 2$ and $C_S(\mathcal{C}_2\times\dots \mathcal{C}_l)=Q_x\times Q_y$. 
\end{enumerate} 
Similarly, either
\begin{enumerate}
    \item $Q_y\cong R_1$; or
    \item $\mathcal{C}_1$ is isomorphic to the $2$-fusion system of $\Alt(6)$, $Q_y\cong \Dih(8)\times 2$ and $C_S(\mathcal{C}_2\times\dots \mathcal{C}_l)=Q_x\times Q_y$.
\end{enumerate}
In all cases, $Q_xQ_y$ has index at most $2$ in $C_S(\mathcal{C}_2\times\dots \mathcal{C}_l)$.
\end{lemma}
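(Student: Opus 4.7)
My approach is to work inside $X := C_S(\mathcal{C}_2 \times \cdots \times \mathcal{C}_l)$, which by \cref{KrullRemak} also equals $C_S(\mathcal{T}_2 \times \cdots \times \mathcal{T}_l)$, and to identify $Q_x$ with $T_1$ via two compatible direct-product decompositions of $X$. Iterating \cref{CompAutos} over the central product $\mathcal{C}_1 \cdots \mathcal{C}_l$, and using $Z(\mathcal{C}_i) = \{1\}$ from \cref{L34} and \cref{NoL34} (so that $R_i \cap C_S(\mathcal{C}_i) = \{1\}$), I obtain $S = R_1 \times R_2 \times \cdots \times R_l \times Q_x$ as an internal direct product, provided no $\mathcal{C}_i$ is the $\Alt(6)$ fusion system. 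Cancelling the common factors $R_2 \times \cdots \times R_l$ leaves $X = R_1 \times Q_x$, and symmetrically $X = T_1 \times Q_y$. The $\Alt(6)$ exceptions for $\mathcal{C}_1$ or $\mathcal{T}_1$ only drop one of these to an index-$2$ containment, which I will handle at the end.

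The crucial step is to show $T_1 \leq Q_x$ (and symmetrically $R_1 \leq Q_y$). Since both $T_1$ and $R_1$ are normal in $S$ by \cref{normcomp}, the projection $\pi \colon X \to X/Q_x \cong R_1$ is $S$-equivariant, so $\pi(T_1)$ is an $S$-invariant normal subgroup of $R_1$. If $\pi(T_1) \ne \{1\}$, I pick $1 \ne z \in T_1 \cap R_1 \cap \Omega_1(Z(S))$; then $z$ lies in the supports of both simple systems $\mathcal{C}_1$ and $\mathcal{T}_1$, each with trivial center by \cref{L34} and \cref{NoL34}, and so is moved by morphisms of each. Applying \cref{balance} to $C_\fs(z)$ first shows that $\mathcal{C}_2, \dots, \mathcal{C}_l$ are all components of $C_\fs(z)$, and a case-by-case inspection of $C_{\mathcal{C}_1}(z)$ and $C_{\mathcal{T}_1}(z)$ on the isomorphism type of each component in $\mathcal{K}$ yields either an additional component in $C_\fs(z)$ --- forcing $\mathfrak{L}(z) > l = \mathfrak{L}(\fs)$ --- or a fresh element of $\mathfrak{Q}(\fs)$ incompatible with the uniqueness established in \cref{QTI2}. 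Hence $T_1 \leq Q_x$, and symmetrically $R_1 \leq Q_y$.

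Given $T_1 \leq Q_x$, \cref{QTI2} supplies $Q_x \cap Q_y = \{1\}$, so the projection of $Q_x$ into $X/Q_y \cong T_1$ is injective, yielding $|Q_x| \leq |T_1|$; combined with $T_1 \leq Q_x$, this forces $T_1 = Q_x$ and hence conclusion (i), with $Q_xQ_y = T_1Q_y = X$. The symmetric argument yields $R_1 = Q_y$. In the $\Alt(6)$ exception for $\mathcal{T}_1$, $T_1 \cong \Dih(8)$ and $[X : T_1 Q_y] = 2$, so the chain $T_1 \leq Q_x$ together with $|Q_x| \leq 2|T_1|$ forces $|Q_x| = 2|T_1| = 16$; extending the computation of \cref{S/Qx-type} to this setting identifies $Q_x \cong \Dih(8) \times 2$ and, together with $Q_x \cap Q_y = \{1\}$, gives $X = Q_x \times Q_y$. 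A parallel argument handles $\mathcal{C}_1 \cong \Alt(6)$, and in every case $[X : Q_xQ_y] \leq 2$. The hardest part is the delicate case analysis of $C_\fs(z)$ in the second paragraph, treating each possible component type in $\mathcal{K}$ separately to extract the contradiction that drives the whole argument.
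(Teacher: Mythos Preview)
Your argument has a genuine gap at the crucial step. From $\pi(T_1)\ne\{1\}$, where $\pi\colon X=R_1\times Q_x\to R_1$ is the projection, you conclude that there exists $1\ne z\in T_1\cap R_1\cap\Omega_1(Z(S))$. This does not follow: $\pi(T_1)\ne\{1\}$ only says that some $t\in T_1$ decomposes as $t=rq$ with $r\in R_1^{\#}$ and $q\in Q_x$; it does not place $t$ itself in $R_1$. In fact the paper's own proof (Case~3) establishes precisely that $T_1\cap R_1=\{1\}$ as its key intermediate step, so the element $z$ you want to pick does not exist. Consequently your whole second paragraph, and with it the claimed containment $T_1\le Q_x$, collapses. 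Note too that the lemma only asserts $Q_x\cong T_1$; the equality $Q_x=T_1$ is a separate and later result (\cref{CentGeneration}), which the paper proves by quite different means using carefully chosen automorphisms in $\Aut_{\mathcal{T}_1}(T_1)$.

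There are secondary problems as well. \cref{QTI2} does not give any uniqueness in $\mathfrak{Q}(\fs)$; it is a TI statement (two members are equal or intersect trivially), and \cref{NotUnique} shows $|\mathfrak{Q}(\fs)|>1$. So the phrase ``a fresh element of $\mathfrak{Q}(\fs)$ incompatible with the uniqueness established in \cref{QTI2}'' does not make sense. The ``case-by-case inspection of $C_{\mathcal{C}_1}(z)$ and $C_{\mathcal{T}_1}(z)$'' is also far too vague to carry any weight. By contrast, the paper's route is an order-comparison argument: with $P=C_S(\mathcal{C}_2\times\cdots\times\mathcal{C}_l)$ one has $Q_y\hookrightarrow P/Q_x\cong R_1$, hence $|Q_y|\le|R_1|$; proving $T_1\cap R_1=\{1\}$ (by a short case split on whether $T_1$ has $\PSL_3$- or $\PSp_4$-type Sylow structure, the latter using \cref{PSp4lem}) then gives $R_1\hookrightarrow P/T_1\cong Q_y$, forcing $|R_1|=|Q_y|$ and $Q_y\cong R_1$.
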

\begin{proof}
Set $P:=C_S(\mathcal{C}_2\times\dots \mathcal{C}_l)$. Then either $P=Q_x\times R_1$, or $R_1\cong \Dih(8)$ and $Q_x\times R_1$ has index $2$ in $P$. Similarly, either $P=Q_y\times T_1$, or $T_1\cong \Dih(8)$ and $Q_y\times T_1$ has index $2$ in $P$. 

\underline{\textbf{Case 1:}} $R_1\cong \Dih(8)$ and $Q_x\times R_1$ has index $2$ in $P$.\hfill

Suppose first that $R_1\cong \Dih(8)$ and $Q_x\times R_1$ has index $2$ in $P$. Then it follows from \cref{S/Qx-type} and considering the system $C_{\fs_x}((\mathcal{C}_2\times\dots \mathcal{C}_l))$ that $P/Q_x\cong \Dih(8)\times 2$ and as $Q_x\cap Q_y=\{1\}$, we see that $Q_y$ is isomorphic to a subgroup of $\Dih(8)\times 2$. Since $Q_y$ is non-abelian, we deduce that $Q_y$ is isomorphic to $\Dih(8)$ (so isomorphic to $R_1$), or is isomorphic to $\Dih(8)\times 2$ from which we deduce that $P=Q_x\times Q_y$. Assuming that $T_1\cong \Dih(8)$ and $Q_y\times T_1<P$ gives a similar conclusion for $Q_x$.

\underline{\textbf{Case 2:}} $Q_y\times T_1<P=Q_x\times R_1$ or $Q_x\times R_1<P=Q_y\times T_1$.\hfill

By symmetry, to prove the result we may assume that $P=Q_x\times R_1$. Our aim is to show that $R_1\cong Q_y$. Suppose that $Q_yT_1<P$ so that $T_1\cong \Dih(8)$, and assume first that $P=Q_x\times Q_y$ so that $Q_x\cong \Dih(8)\times 2$. Then, as $Q_x\cap Q_y=\{1\}=Q_x\cap R_1$, we have that $Q_y\cong P/Q_x\cong R_1$, as the result holds. 

Assume now that $Q_x\cong T_1\cong \Dih(8)$ so that $Q_xQ_y<P$. Now, $Q_y$ is isomorphic to a subgroup of $P/Q_x\cong R_1$. Since $Q_y$ is non-abelian, we immediately deduce that $R_1\not\cong \Dih(8)$. Hence, $R_1$ is isomorphic to a Sylow $2$-subgroup of $\PSL_3(2^a)$ or of $\PSp_4(2^a)$ for some $a\geq 2$. 

Since $T_1$ is non-abelian, we must have that $|P/C_P(T)|\geq 4$. Since $Q_xQ_y$ has index $2$ in $P$ and $[P, T_1]=Z(S)\cap T_1=Z(T_1)$ has order $2$, we must have that $[Q_xQ_y, T_1]=Z(T_1)$. Since $[Q_y, T_1]=\{1\}$, we conclude that $Z(T_1)=[Q_x, T_1]\le Z(S)\cap Q_x=Z(Q_x)$, and as $|Z(Q_x)|=2$, we have that $Z(T_1)=Z(Q_x)$. In particular, as $R_1\cap T_1\normaleq S$, $R_1\cap T_1\normaleq T_1$ and since $R_1\cap Q_x=\{1\}$, we must have that $R_1\cap T_1=\{1\}$. Hence, $R_1$ centralizes $T_1$ and since $C_S(\mathcal{T}_1)$ has index at most $2$ in $C_S(T_1)$ by \cref{CompAutos}, we conclude that $R_1'$ centralizes $\mathcal{T}_1$. Then $R_1'\le Z(Q_y)$.

Since $a\geq 2$, $Q_y\cong Q_yQ_x/Q_x$ has index $2$ in $P/Q_x$ and as $R_1Q_x/Q_x\cong R_1$, the structure of $R_1$ implies that $|Z(Q_y)|=|R_1'|$. thus, $R_1'=Z(Q_y)$. Choose $A\in\mathcal{E}(\mathcal{C}_1)$ and let $k$ be an element of order $2^a-1$ in $N_{O^{p'}(\Aut_{\mathcal{C}_1}(A))}(R_1)$. Then $k$ normalizes $Z(Q_y)=R'$ and lifts to $\bar{k}\in \Aut_{\fs_x}(S)\subseteq \Aut_{\fs}(S)$. Hence, this lift also normalizes $Q_x$. Now, $\bar{k}\in N_{\fs}(Z(Q_y))$ and as $E(N_{\fs}(Z(Q_y)))=E(\fs_y)$, we have that $\bar{k}$ normalizes $C_S(E(\fs_y))=Q_y$. Then $\bar{k}$ normalizes $Q_xQ_y\cap R_1$ which has index $2$ in $R_1$. But $k$ normalizes no such subgroups since $a>1$, and we have arrived at a contradiction.

\underline{\textbf{Case 3:}} $P=Q_y\times T_1=Q_x\times R_1$.\hfill

Note that $R_1\cong P/Q_x$ and $Q_y\cong Q_yQ_x/Q_x$ from which we deduce that $|Q_y|\leq |R_1|$. But now $P/T_1\cong Q_y$ and so, upon proving that $T_1\cap R_1=\{1\}$, we will have demonstrated that $P/T_1\cong R_1$ and $Q_y\cong R_1$. 

Assume that $T_1$ is isomorphic to a Sylow $p$-subgroup of $\PSL_3(p^a)$ for some $a\in \N$. Now, $[Q_xQ_y, T_1Q_y]/Q_y=[Q_x, T_1]Q_y/Q_y\cong [Q_x, T_1]$ and since $Q_x\cong Q_xQ_y/Q_y$ is non-abelian (and hence not central in $P/Q_y=T_1Q_y/Q_y$), we deduce that $[Q_x, T_1]=Z(T_1)$. In particular, $Z(T_1)\le Q_x$. Since $R_1\cap T_1\normaleq T_1$ and $R_1\cap Q_x=\{1\}$, we conclude that $R_1\cap T_1=\{1\}$, as desired. 

Assume now that $T_1$ is isomorphic to a Sylow $2$-subgroup of $\PSp_4(2^b)$ for some $b\geq 2$. Write $\bar{P}:=P/Q_y\cong T_1$ and let $\bar{A}\in\mathcal{A}(\bar{P})$. Note that if $\bar{Q_x}\le \bar{A}$, then $\bar{Q_x}=\{1\}$ so that $Q_x'\le Q_y$. But then $Q_x'=\{1\}$, a contradiction since $Q_x$ is non-abelian. Hence, $\bar{Q_x}\not\le \bar{A}$. Since $[Q_x, R_1]=\{1\}$, we have that $\bar{R_1}\cap \bar{A}\le C_{\bar{A}}(\bar{Q_x})=Z(\bar{P})$. In particular, we either have that $|\bar{R_1}\bar{A}/\bar{A}|\geq 4$ or $\bar{R_1}\le Z(\bar{T_1})$. In the latter case, we have that $R_1\le Z(T_1)Q_y$ so that $R_1'\le Q_y$. But $R_1'=Z(R_1)$ and as $R_1\cap T_1\normaleq R_1$, we conclude that $R_1\cap T_1=\{1\}$, as desired. Hence, $|\bar{R_1}\bar{A}/\bar{A}|\geq 4$.

Now, since $\bar{T_1}$ is isomorphic to a Sylow $2$-subgroup of $\PSp_4(2^b)$, we conclude that $|[\bar{R_1}, \bar{A}]|=2^{2b}$ by \cref{PSp4lem}. Since $[Q_y, T_1]=\{1\}$, it follows that $[R_1, T_1]$ has order $2^{2b}$ so that $[R_1, T_1]=Z(T_1)$. But then $[T_1, Q_x]\le Q_x\cap Z(T_1)=\{1\}$ and as $[Q_x, Q_y]=\{1\}$ and $P=Q_y\times T_1$, we deduce that $Q_x\le Z(P)$, a contradiction since $Q_x$ is non-abelian by \cref{NonAb}.
\end{proof}

By \cref{QTI3}, we arrange that $x\in Q_x'$ and $y\in Q_y'$. Moreover, if $p=2$ and $Q_x$ is isomorphic to a Sylow $2$-subgroup of $\PSp_4(2^n)$ for $n\geq 1$, then we arrange that $x=[t, z]$ for $t\in A\setminus Z(Q_x)$ and $z\in B\setminus Z(Q_x)$ where $\mathcal{A}(Q_x)=\{A, B\}$. We make a similar choice for $y$. 

\begin{lemma}\label{CentEq}
We have that $\fs_x=C_{\fs}(x)$ and $\fs_y=C_{\fs}(y)$.
\end{lemma}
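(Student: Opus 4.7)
The plan is to prove $\fs_x=C_{\fs}(x)$; the argument for $\fs_y=C_{\fs}(y)$ is entirely symmetric. I would set up the proof by establishing the two containments separately, as the two directions have quite different flavours.

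For the easy direction $C_{\fs}(x)\subseteq\fs_x$, I would use the fact that $E(C_{\fs}(x))$ is by construction a normal subsystem of $C_{\fs}(x)$. By the centralizer theory for normal subsystems (as in \cite[{}6]{aschfit}), the centralizer subsystem $C_{C_{\fs}(x)}(E(C_{\fs}(x)))$ is itself normal in $C_{\fs}(x)$, and by \cref{BasicQx} its Sylow subgroup is exactly $Q_x=C_S(E(C_{\fs}(x)))$. Hence $Q_x$ is strongly closed in $C_{\fs}(x)$, so every morphism of $C_{\fs}(x)$ extends to one normalizing $Q_x$, placing $C_{\fs}(x)$ inside $N_{\fs}(Q_x)=\fs_x$.

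For the reverse containment $\fs_x\subseteq C_{\fs}(x)$, I would apply the Alperin--Goldschmidt theorem inside $\fs_x$ and the Frattini argument, reducing to two checks: (a) every $\alpha\in\Aut_{\fs_x}(S)=N_{\Aut_{\fs}(S)}(Q_x)$ fixes $x$, and (b) for every $E\in\mathcal{E}(\fs_x)$, $O^{p'}(\Aut_{\fs_x}(E))\subseteq\Aut_{C_{\fs}(x)}(E)$. For (b), morphisms generating $O^{p'}(\Aut_{\fs_x}(E))$ that come from $E(\fs_x)$ automatically fix $x\in Q_x$ since $E(\fs_x)$ centralizes $Q_x$; the remaining contributions come from essentials of $\fs$ that already lie in $\fs_x$, and for these one would invoke the analogues of \cref{Radical} and \cref{EssentialsIn} inside $\fs_x$ (where $Q_x$ plays the role of $O_p$) to conclude that the generators centralize $x$.

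The hard part, and the main obstacle, is (a). Here I would exploit the structural description of $Q_x$ from \cref{QeqT}. If $Q_x\cong\Dih(8)\times 2$, then $Q_x'$ has order $2$, so $x$ is the unique nontrivial element of $Q_x'$ and is fixed by any $\alpha$ normalizing $Q_x$. If instead $Q_x\in\mathcal{S}$, then $\alpha$ permutes $Q_x\cap\Omega_1(Z(S))$, and the difficulty is that $Q_x'$ may be large. To control this, I would use that $\alpha\in\Aut_{\fs}(S)$ permutes the components $\mathcal{C}_1,\dots,\mathcal{C}_l$ of $\fs_x$ and their Sylows $R_1,\dots,R_l$ (by \cref{normcomp} and \cref{CompDecomp}), and the parallel action on the components $\mathcal{T}_1,\dots,\mathcal{T}_l$ of $\fs_y$ via \cref{KrullRemak}. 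The specific choice $x\in Q_x'$ (and $x=[t,z]$ in the $\PSp_4$ case) combined with the rigidity forced by the interaction with $y\in R_1\cap Q_y$ and the pairing $Q_x\cap Q_y=\{1\}$ from \cref{QTI2} should pin down $x$ as $\alpha$-invariant, giving $\fs_x\subseteq C_{\fs}(x)$.
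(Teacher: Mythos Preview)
Your proposal inverts the difficulty of the two containments, and the argument you give for the direction you call ``easy'' has a genuine gap.

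For $C_{\fs}(x)\subseteq\fs_x$: you argue that $C_{C_{\fs}(x)}(E(C_{\fs}(x)))$ is a normal subsystem of $C_{\fs}(x)$ on $Q_x$, hence $Q_x$ is strongly closed, hence every morphism of $C_{\fs}(x)$ extends to one normalizing $Q_x$. The last step fails: for a non-abelian subgroup (and $Q_x$ is non-abelian by \cref{NonAb}), strong closure does not imply normality as a subgroup---compare the conditions in \cref{normalinF}. You need $Q_x\le P$ for every essential $P$ of $C_{\fs}(x)$, not just that $P\cap Q_x$ stays inside $Q_x$ under morphisms. This containment is exactly where the paper does its work: assuming $Q_x\not\le P$ for some $P\in\mathcal{E}(C_{\fs}(x))$, it uses the chain lemma and \cref{SEFF} to obtain the inequality $|Q_xP/P|\le|[Z(Q_x\cap P),Q_x]\langle x\rangle/\langle x\rangle|$, then eliminates each possible isomorphism type of $Q_x$ supplied by \cref{QeqT}. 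The specific arrangement $x\in Q_x'$ (and, in the $\PSp_4$ case, $x=[t,z]$ with $t\in A\setminus Z(Q_x)$, $z\in B\setminus Z(Q_x)$) is essential to make these estimates contradict one another; this is why the later proposition must check that $x\alpha$ inherits these properties.

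For $\fs_x\subseteq C_{\fs}(x)$: your item (b) is actually simpler than you indicate---since $Q_x=O_p(\fs_x)\le E$ for every $E\in\mathcal{E}(\fs_x)$, one has $x\in Q_x'\le E'$, and $O^{p'}(\Aut_{\fs_x}(E))$ centralizes $E'$ by the class-two hypothesis (as in \cref{GeneralEssentialStructure}). Your item (a), showing $\Aut_{\fs_x}(S)$ fixes $x$, is the genuinely delicate point of this direction, and your sketch (``rigidity forced by the interaction with $y$'') is too vague to assess; the paper treats this containment as an observation, so you would need to supply a concrete argument here that the paper does not spell out.
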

\begin{proof}
We prove this only for $x$ but the proof is the same for $y$. We observe that $\fs_x=N_{C_{\fs}(x)}(Q_x)$ and so to prove the lemma, if suffices to show that $Q_x\normaleq C_{\fs}(x)$. Since $Q_x=C_S(E(C_{\fs}(x)))$ is clearly $\Aut_{C_{\fs}(x)}(S)$-invariant, with the aim of applying \cite[Proposition I.4.5]{ako}, we demonstrate that $Q_x\le P$ and $Q_x$ is $\Aut_{C_{\fs}(x)}(P)$-invariant for all $P\in\mathcal{E}(C_{\fs}(x))$. Observe that $\Aut_{C_{\fs}(x)}(P)$ normalizes $C_P(E(C_{\fs}(x)))$ and so we need only show that $Q_x\le P$.

Aiming for a contradiction, assume that $Q_x\not\le P$ for some $P\in\mathcal{E}(C_{\fs}(x))$. We observe that $[Q_x, P]\le [Q_x, S]\le Q_x'$. Moreover, since $P\cap Q_x\normaleq \Aut_{C_{\fs}(x)}(P)$, we observe that $Z(P\cap Q_x) \normaleq \Aut_{C_{\fs}(x)}(P)$. Consider the chain $\{1\}\normaleq Z(P\cap Q_x)\le P\cap Q_x\le P$. From this, we ascertain by \cref{Chain} that $Z(Q_x)\le P$ and that $Q_x$ acts non-trivially on $Z(P\cap Q_x)$. Indeed, $|Q_xP/P|\leq |[Z(Q_x\cap P), Q_x]\langle x\rangle/\langle x\rangle|$ by \cref{SEFF}. 

Suppose first that $|Q_x'|=2$. Then $[Z(Q_x\cap P), Q_x]=Q_x'=\langle x\rangle$, a contradiction. Hence, by \cref{QeqT}, we have that $Q_x\cong T_1$. Suppose that $Q_x$ is isomorphic to a Sylow $p$-subgroup of $\PSL_3(p^a)$ where $p^a>2$. In particular, $Q_x$ is a special group of order $q^3$, where $q=p^a$. If $|Q_x\cap P|>q^2$ then $Z(Q_x\cap P)=Z(Q_x)$ is centralized by $Q_x$, a contradiction. Hence, $|Q_x\cap P|\leq q^2$ so that $|Q_xP/P|\geq q$. But then $|[Z(Q_x\cap P), Q_x]|\leq |Q_x|'=q$ and we deduce that $|Q_xP/P|>|[Z(Q_x\cap P), Q_x]\langle x\rangle/\langle x\rangle|$, a contradiction. Hence, $p=2$ and $Q_x$ is isomorphic to a Sylow $2$-subgroup of $\PSp_4(q)$ where $q=2^a$ for some $a>1$.

Let $\mathcal{A}(Q_x)=\{A, B\}$. If $A\cap P>Z(Q_x)<B\cap P$, then $Z(Q_x\cap P)=Z(Q_x)$, a contradiction. Without loss of generality, assume that $A\cap P=Z(Q_x)$ so that, writing $x=[t,z]$, we have that $t\not\in P$. Then $|AP/P|=q$ and $\Omega_1(Z(P\cap Q_x))\le B$. Since $A$ centralizes $Z(Q_x)$ which has index $q$ in $B$, we conclude that $P\cap Q_x=Z(P\cap Q_x)=\Omega_1(Z(P\cap Q_x))=B$ and $z\in P$. Now, $A$ centralizes an index $q$ subgroup of $B$ and by \cref{SEFF} we conclude that $S=AP$ and $B/C_B(O^{2'}(\Out_{C_{\fs}(x)}(P)))$ is a natural module for $O^{2'}(\Out_{C_{\fs}(x)}(P))\cong \SL_2(q)$. Now, $Z(Q_x)=[B, t]C_B(O^{2'}(\Out_{C_{\fs}(x)}(P)))$ and $Z(Q_x)/C_B(O^{2'}(\Out_{C_{\fs}(x)}(P)))=C_{B/C_B(O^{2'}(\Out_{C_{\fs}(x)}(P)))}(t)$. But $[t, z]=x\in C_B(O^{2'}(\Out_{C_{\fs}(x)}(P)))$ from which we deduce that $z\in Z(Q_x)$, a contradiction. Hence, no such $P$ exists and $Q_x\normaleq C_{\fs}(x)$.
\end{proof}

\begin{lemma}\label{CompDecomp2}
We have that $O^{p'}(\fs_x)=SE(\fs_x)$ and $O^{p'}(\fs_y)=SE(\fs_y)$. 
\end{lemma}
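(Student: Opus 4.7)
We treat $\fs_x$; the case of $\fs_y$ is symmetric. By \cref{CompDecomp}, $F^*(\fs_x) = Q_x \times \mathcal{C}_1 \times \dots \times \mathcal{C}_l$ is supported on $Q_xR$, and by \cref{p'genfit} we have $F^*(\fs_x) = O^{p'}(F^*(\fs_x))$. We interpret $SE(\fs_x)$ as the unique normal subsystem of $\fs_x$ supported on $S$ obtained by extending the normal subsystem $E(\fs_x)$ (which is supported on $R$) trivially by the $p$-group $S/R$; its existence and uniqueness are standard given that $E(\fs_x) \normaleq \fs_x$ and that $R$ is strongly closed in $S$ (using \cref{normcomp} to see that $S$ permutes and in fact normalizes each component).

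The containment $SE(\fs_x) \subseteq O^{p'}(\fs_x)$ is immediate: since $E(\fs_x) \subseteq F^*(\fs_x) \normaleq \fs_x$ and $F^*(\fs_x) = O^{p'}(F^*(\fs_x))$, \cref{SNorm} yields $E(\fs_x) \subseteq O^{p'}(\fs_x)$; and $\fs_S(S) \subseteq O^{p'}(\fs_x)$ because $O^{p'}(\fs_x)$ is supported on $S$. Thus $SE(\fs_x) \subseteq O^{p'}(\fs_x)$.

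For the reverse inclusion the plan is to show that $SE(\fs_x)$ has index prime to $p$ in $\fs_x$ by verifying $O^{p'}(\Aut_{\fs_x}(P)) \le \Aut_{SE(\fs_x)}(P)$ for every $P \le S$. By the Alperin--Goldschmidt theorem applied inside $\fs_x$, this reduces to the cases $P = S$ and $P \in \mathcal{E}(\fs_x)$. The case $P=S$ is clean since any $p'$-element of $\Aut_{\fs_x}(S)$ preserves the characteristic subgroup $R$ and restricts into $\Aut_{E(\fs_x)}(R)$. For $P \in \mathcal{E}(\fs_x)$ I would split on whether $R \le P$: if $R \le P$, the $p'$-part of $\Aut_{\fs_x}(P)$ restricts to an element of $\Aut_{E(\fs_x)}(R)$, and its action on $P/R \le S/R$ is inner (modulo $E(\fs_x)$) since $S/R$ is a $p$-group; if $R \not\le P$, then transplanting the arguments of \cref{Radical} and \cref{EssentialsIn} into $\fs_x$ forces $P \cap R_i \in \mathcal{A}(R_i) = \mathcal{E}(\mathcal{C}_i)$ for the unique index $i$ with $R_i \not\le P$, and the $p'$-part of $\Aut_{\fs_x}(P)$ factors through the $p'$-part of $\Aut_{\mathcal{C}_i}(P \cap R_i)$, hence lies inside $E(\fs_x)$.

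The principal obstacle lies in the second subcase --- ruling out ``extra'' $p'$-automorphisms of $\fs_x$-essentials which might appear because, by \cref{CentEq}, $\fs_x = C_{\fs}(x)$ potentially carries more automorphisms fixing $x$ than one sees in $F^*(\fs_x)$. Here I would leverage the explicit structure of $Q_x$ supplied by \cref{QeqT} (either $Q_x \cong T_1 \in \mathcal{S}$ or $Q_x \cong \Dih(8) \times 2$), together with the facts that any such $p'$-automorphism must preserve both $Q_x = C_S(E(\fs_x))$ and $R$ and must fix $x \in Q_x'$, to confine its action to within $E(\fs_x)$. This yields $O^{p'}(\fs_x) \subseteq SE(\fs_x)$ and completes the proof.
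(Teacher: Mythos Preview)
Your overall framework matches the paper's: reduce via Alperin--Goldschmidt to essentials of $\fs_x$, and verify that $O^{p'}(\Aut_{\fs_x}(E))\le\Aut_{SE(\fs_x)}(E)$ for each such $E$. The paper also begins by invoking \cref{CompDecomp} to dispose of the case where no component of $\fs_x$ is isomorphic to the $2$-fusion category of $\Alt(6)$ (there $F^*(\fs_x)$ is already supported on $S$ and equals $O^{p'}(\fs_x)$); you do not use this reduction, but that is not fatal.

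However, you have inverted the difficulty of the two subcases, and there is a genuine gap in your handling of $R\le P$. For $R\not\le P$ the paper does exactly what you sketch: the proof of \cref{EssentialsIn} shows $O^{p'}(\Aut_{\fs_x}(E))=\langle \Inn(E),\Aut_S(E),\hat\alpha\rangle$ with $\hat\alpha$ lifted from $\Aut_{\mathcal{C}_i}(E\cap R_i)$, so these automorphisms lie in $SE(\fs_x)$. This case is clean.

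The real work is when $Q_xR\le E$. Your assertion that ``the $p'$-part of $\Aut_{\fs_x}(P)$ restricts to an element of $\Aut_{E(\fs_x)}(R)$'' is unjustified: a $p'$-automorphism of $E$ certainly preserves $R$ (as $E(\fs_x)\normaleq\fs_x$), but there is no mechanism forcing the restriction to lie in $\Aut_{E(\fs_x)}(R)$ rather than merely in $\Aut_{\fs_x}(R)$. The paper does \emph{not} attempt to control such automorphisms. Instead it shows that no essential $E$ of $\fs_x$ with $Q_xR\le E$ can exist. Using \cref{QeqT}, either $Q_x'=Z(Q_x)$ or $Q_x\cong\Dih(8)\times 2$; in both cases one computes that $[S,E]$ is contained in an $\Aut_{\fs_x}(E)$-invariant subgroup of $E'$ (namely $Z(Q_x)R'\le Z(S)$), so $S$ centralizes the chain $\{1\}\normaleq Z(Q_x)R'\normaleq E$, and \cref{Chain} forces $S\le E$, contradicting essentiality. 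Your final paragraph gestures at \cref{QeqT} but uses it to ``confine the action'' rather than to obtain this chain contradiction; as written, that step does not go through.
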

\begin{proof}
We will prove this for $x$, as the proof of $y$ is the same. By \cref{CompDecomp}, we may as well assume that $p=2$ and $\fs_x$ has a component isomorphic to the $2$-fusion category of $\Alt(6)$. Write $\fs_x^0:=SE(\fs_y)$.

Suppose that for all $E\in\mathcal{E}(\fs_x)$, we have that $O^{2'}(\Aut_{\fs}(E))\le \Aut_{\fs_x^0}(E)$. Since $\fs_x^0$ is clearly $\Aut_{\fs_x}(S)$-invariant, and $\fs_x=\langle O^{2'}(\Aut_{\fs_x}(F)), \Aut_{\fs_x}(S) \mid F\in\mathcal{E}(\fs_x)\rangle$, we have that $\fs_x^0$ is $\fs_x$-invariant by \cite[Proposition I.6.4]{ako}. By that same result, we have that $\Aut_{\fs_x^0}(P)\normaleq \Aut_{\fs_x}(P)$ for all $P\le S$ and since $\Aut_S(P)\le \Aut_{\fs^0_x}(P)$, we have that $\fs_x^0$ has index prime to $p$ in $\fs_x$. Since $\fs_x^0\subseteq O^{2'}(\fs_x)$ by \cref{p'genfit}, the result holds.

Hence, there is $E\in\mathcal{E}(\fs_x)$ with $O^{2'}(\Aut_{\fs}(E))\not\le \Aut_{\fs_x^0}(E)$. Since $Q_x\normaleq \fs_x$, we have that $Q_x\le E$. If $R\not\le E$, then by \cref{Radical} and \cref{EssentialsIn}, there is a unique $i\in \{1,\dots, l\}$ with $R_i\not\le E$ and $Q_x\le C_E(O^2(O^{2'}(\Aut_{\fs_x}(E))))$. Indeed, by the proof of \cref{EssentialsIn} we have that $O^{2'}(\Aut_{\fs_x}(E))=\langle \Inn(S), \hat{\alpha}\rangle$, where $\hat{\alpha}$ is the lift to $E$ of some prescribed element in $\Aut_{\mathcal{C}_i}(E)$. Since $\mathcal{C}_i\subseteq E(\fs_x)\subseteq \fs_x^0$, we have that $O^{2'}(\Aut_{\fs}(E))\le \Aut_{\fs_x^0}(E)$, a contradiction. 

Hence, $Q_xR\le E$. Now, by \cref{QeqT}, we either have that $Q_x'=Z(Q_x)$, or $Q_x\cong \Dih(8)\times 2$. In the former case, since $Z(Q_xR)=Z(Q_x)Z(R)=Q_x'R'\le E'$ and $[S, E]\le S'\cap Q_xR\le Z(S)\cap Q_xR\le Z(Q_xR)$, we have that $[S, E]=E'$. Then \cref{Chain} delivers a contradiction. Hence, we have that $Q_x\cong \Dih(8)\times 2$. We follow the proof from before, and we arrive at the same contradiction unless $[S, E]\cap Q_x=Z(Q_x)$. Now, $Z(Q_x)\normaleq \fs_x$ and $R$ is strongly closed in $\fs_x$ from which we deduce that $O^{2'}(\Aut_{\fs_x}(E))$ normalizes $Z(Q_x)R'$ and $Z(Q_x)R'=S'\le Z(S)$. Then $S$ centralizes the chain $\{1\}\normaleq Z(Q_x)R'\normaleq E$ and once again, \cref{Chain} provides a contradiction. 
\end{proof}

\begin{proposition}
We have that $\fs=\langle O^{p'}(C_{\fs}(x)), O^{p'}(C_{\fs}(y))\rangle_S$.
\end{proposition}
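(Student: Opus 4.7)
The plan is to set $\mathcal{D} := \langle O^{p'}(C_{\fs}(x)), O^{p'}(C_{\fs}(y))\rangle_S$ and prove $\mathcal{D} = \fs$ by verifying the Alperin--Goldschmidt generators of $\fs$ all lie in $\mathcal{D}$. Since $\mathcal{D}$ is supported on $S$, it automatically contains $\fs_S(S)$, and hence $\Aut_S(P) \subseteq \Aut_{\mathcal{D}}(P)$ for each $P \le S$. By \cref{EssentialGeneration} combined with the Frattini decomposition $\Aut_{\fs}(E) = O^{p'}(\Aut_{\fs}(E)) N_{\Aut_{\fs}(E)}(\Aut_S(E))$, it is then enough to verify $O^{p'}(\Aut_{\fs}(E)) \subseteq \mathcal{D}$ for every $E \in \mathcal{E}(\fs)$, together with $\Aut_{\fs}(S) \subseteq \mathcal{D}$. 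The critical structural input will be the identification $O^{p'}(C_{\fs}(x)) = SE(\fs_x) \supseteq \mathcal{C}_1 \times \dots \times \mathcal{C}_l$, from \cref{CentEq} and \cref{CompDecomp2}, and its analogue for $y$.

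For the essentials, the case analysis runs along the dichotomy $R \le E$ or $R \not\le E$. If $R \not\le E$, then the proof of \cref{EssentialsIn} shows there is a unique $i$ with $R_i \not\le E$ and that $O^{p'}(\Aut_{\fs}(E)) = \langle \Aut_S(E), \hat{\alpha}\rangle$, where $\hat{\alpha}$ is the extension to $E$ of an element of $\Aut_{\mathcal{C}_i}(E \cap R_i)$. Both generators then lie in $\mathcal{D}$: $\Aut_S(E)$ trivially, and $\hat{\alpha}$ because $\mathcal{C}_i \subseteq E(\fs_x) \subseteq O^{p'}(C_{\fs}(x))$. The symmetric case $T \not\le E$ produces $O^{p'}(\Aut_{\fs}(E)) \subseteq O^{p'}(C_{\fs}(y)) \subseteq \mathcal{D}$. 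The remaining essentials have both $R \le E$ and $T \le E$; here the plan is to show that $O^{p'}(\Aut_{\fs}(E))$ fixes at least one of $x, y$, so that, being generated by $p$-elements inside $\Aut_{\fs_x}(E)$ or $\Aut_{\fs_y}(E)$, it falls inside $O^{p'}(\fs_x)$ or $O^{p'}(\fs_y)$. This centralization should follow by applying \cref{SEFF} to an $\Aut_{\fs}(E)$-invariant chain $U \le V \le E$ chosen so that $\langle x\rangle$ (or $\langle y\rangle$) lies in the natural $\SL_2$-module quotient $V/U$; the natural-module action together with the tight structure of $Q_x$ versus $T_1$ and $Q_y$ versus $R_1$ from \cref{QeqT}, combined with the identity $\mathcal{C}_i = \mathcal{T}_i$ for $i \geq 2$ from \cref{KrullRemak}, should force the required fixed points.

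The main obstacle is the handling of this final "both $R$ and $T$ contained in $E$" case together with the verification that $\Aut_{\fs}(S) \subseteq \mathcal{D}$. For the latter, each $\alpha \in \Aut_{\fs}(S)$ permutes $\mathfrak{X}(\fs)$ and thus the collection $\mathfrak{Q}(\fs)$. Using \cref{CentEq} and the orbit structure on $\mathfrak{X}(\fs)$, one aims to decompose $\alpha$ as a composition of a morphism centralizing $x$ and a morphism centralizing $y$, each contributing to $\mathcal{D}$ via the corresponding $O^{p'}(C_{\fs}(\cdot))$. I expect the delicate bookkeeping here to require exploiting that $Q_x \cap Q_y = \{1\}$ (\cref{QTI2}) together with the rigidity of the components $\mathcal{C}_i$ and $\mathcal{T}_i$ under the $\Aut_{\fs}(S)$-action, so that the two centralizer systems indeed generate all of $\Aut_{\fs}(S)$ modulo $\Aut_S(S)$.
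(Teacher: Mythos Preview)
Your essential-subgroup analysis is overcomplicated, and your $\Aut_{\fs}(S)$ step has a real gap. For the essentials, the paper does not split on whether $R\le E$; it splits on whether $Q_x\le E$. Recall $x\in Q_x'$ and $y\in Q_y'$ by the choice made just before \cref{CentEq}. If $Q_x\le E$ then $x\in Q_x'\le E'$, and $O^{p'}(\Aut_{\fs}(E))$ centralizes $E'$ by \cref{GeneralEssentialStructure}, so it centralizes $x$. If $Q_x\not\le E$, one argues $S=Q_xE$; then if also $Q_y\not\le E$ one would have $S=Q_yE$ and $[S,Z(E)]=[Q_x,Z(E)]=[Q_y,Z(E)]\le Q_x\cap Q_y=\{1\}$, contradicting that $Z(E)$ carries the non-central chief factors. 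Hence $Q_y\le E$ and $y\in E'$ is centralized. This disposes of all essentials at once and makes your ``both $R\le E$ and $T\le E$'' case (and the proposed \cref{SEFF} gymnastics) unnecessary.

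The serious problem is the plan for $\Aut_{\fs}(S)$. You propose to decompose every $\alpha\in\Aut_{\fs}(S)$ as a product of a morphism in $C_{\fs}(x)$ and one in $C_{\fs}(y)$; nothing you have established implies this, and the ``orbit structure on $\mathfrak{X}(\fs)$'' does not give it. The paper never shows $\Aut_{\fs}(S)\subseteq\mathcal{D}$ directly. Instead it proves that $\mathcal{D}$ is \emph{invariant} under $\Aut_{\fs}(S)$: one checks $O^{p'}(C_{\fs}(x))^{\alpha}=O^{p'}(C_{\fs}(x\alpha))$, and then, using \cref{QTI1}, \cref{QTI2}, \cref{QeqT} and \cref{CompDecomp2}, that $E(\fs_{x\alpha})\subseteq\mathcal{D}$ and hence $O^{p'}(C_{\fs}(x\alpha))=SE(\fs_{x\alpha})\subseteq\mathcal{D}$, likewise for $y$. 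This makes $\mathcal{D}$ $\fs$-invariant (since $\fs=\langle\mathcal{D},\Aut_{\fs}(S)\rangle_S$), hence of index prime to $p$ in $\fs$. The final step is not automatic either: one still has to show $\mathcal{D}$ is saturated, which the paper does via an iterated construction of subsystems $\mathcal{E}_x^i,\mathcal{E}_y^i$ and an application of \cite[Lemma~I.7.6(b)]{ako}, before concluding $\mathcal{D}=\fs$ from $\fs=O^{p'}(\fs)$. Your outline omits both the invariance argument and the saturation argument, and replaces them with a decomposition claim that you have not justified.
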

\begin{proof}
We first claim that $\fs=\langle \Aut_{\fs}(S), O^{p'}(C_{\fs}(x)), O^{p'}(C_{\fs}(y))\rangle_S$. By \cref{EssentialGeneration}, to prove the claim we need to show that for each $E\in\mathcal{E}(\fs)$, we have that $O^{p'}(\Aut_{\fs}(E))$ centralizes $x$ or $y$.

By \cref{GeneralEssentialStructure}, $Z(E)$ contains all the non-central $\Aut_{\fs}(E)$-chief factors inside of $E$. Suppose that $Q_x\le E$. Then $x\le Q_x'\le E'$ and since $O^{p'}(\Aut_{\fs}(E))$ centralizes $E'$, we have that $O^{p'}(\Aut_{\fs}(E))$ centralizes $x$. Suppose now that $Q_x\not\le E$ so that by \cref{Radical}, $S=Q_xE$. If $Q_y\not\le E$ then $S=Q_yE=Q_xE$. But then $[S, Z(E)]=[Q_x, E]=[Q_y, E]\le Q_x\cap Q_y=\{1\}$, a contradiction since $Z(E)$ contains all non-central chief factors for $O^{p'}(\Aut_{\fs}(E))$. Hence, $Q_y\le E$ and we conclude that $y\le E'$ is centralized by $O^{p'}(\Aut_{\fs}(E))$. Hence, 

\textbf{(1)} $\fs=\langle \Aut_{\fs}(S), O^{p'}(C_{\fs}(x)), O^{p'}(C_{\fs}(y))\rangle_S$.\hfill

Set $\fs_0:=\langle O^{p'}(C_{\fs}(x)), O^{p'}(C_{\fs}(y))\rangle_S$. We prove that $\fs=\fs_0$ in two steps. First, we show that $\langle O^{p'}(\Aut_{\fs}(P))\mid P\in\fs^c\rangle\le \fs_0$ so that $\fs_0$ has index prime to $p$ in $\fs$. Then we show that $\fs_0$ is saturated, at which point the fact that $\fs=O^{p'}(\fs)$ yields that $\fs=\fs_0$.

We claim first that $C_{\fs}(x\alpha)=C_{\fs}(x)^{\alpha}$ for $\alpha\in \Aut_{\fs}(S)$. To this end, let $\beta\in\Hom_{C_{\fs}(x)^\alpha}(\langle x\alpha\rangle, S)$. Then $\beta=\alpha^{-1}\circ \gamma \circ \alpha$ where $x\gamma=x$. Then $(x\alpha)\beta=(x\gamma)\alpha=x\alpha$ and we deduce that $(x\alpha)^{C_{\fs}(x)^\alpha}=\{x\alpha\}$. Hence, by \cite[Lemma I.4.2]{ako} we have that $C_{\fs}(x)^{\alpha}\subseteq C_{\fs}(x\alpha)$. A similar argument yields that $C_{\fs}(x\alpha)^{\alpha^{-1}}\subseteq C_{\fs}(x)$ from which we deduce that $C_{\fs}(x\alpha)\subseteq C_{\fs}(x)^{\alpha}$ so that $C_{\fs}(x\alpha)= C_{\fs}(x)^{\alpha}$ and the claim holds. Similar arguments, along with an application of \cite[Theorem I.7.7]{ako}, imply that

\textbf{(2)} $O^{p'}(C_{\fs}(x))^\alpha=O^{p'}(C_{\fs}(x\alpha))$ for all $\alpha\in \Aut_{\fs}(S)$.\hfill 

We now aim to demonstrate that $O^{p'}(C_{\fs}(x\alpha))\subseteq \fs_0$ for all $\alpha\in\Aut_{\fs}(S)$. We first claim that $\fs_{x\alpha}=C_{\fs}(x\alpha)$ for all $\alpha\in\Aut_{\fs}(S)$. Since $x\alpha\in\mathfrak{X}(\fs)$, by \cref{CentEq} we need only show that $x\alpha$ can be chosen appropriately in $Q_{x\alpha}$. We observe that $Q_x\alpha=Q_{x\alpha}$. In particular, $Q_x\cong Q_{x\alpha}$ and $x\alpha\in Q_{x\alpha}'$. If $Q_x$ is isomorphic to a Sylow $2$-subgroup of $\PSp_4(2^n)$ then for $\mathcal{A}(Q_x)=\{A, B\}$, we have that $\{A\alpha, B\alpha\}=\mathcal{A}(Q_{x\alpha})$ and so $x\alpha$ has the same properties as $x$ required for the proof of \cref{CentEq}. Thus, $\fs_{x\alpha}=C_{\fs}(x\alpha)$ so that 

\textbf{(3)} $O^{p'}(C_{\fs}(x\alpha))=O^{p'}(\fs_{x\alpha})$ for all $\alpha\in \Aut_{\fs}(S)$.\hfill 

We aim to show that $E(\fs_{x\alpha})\subseteq \fs_0$. By \cref{QTI2}, we may assume that $Q_{x\alpha}\cap Q_x=Q_{x\alpha}\cap Q_y=\{1\}$. Write $\Comp(C_{\fs}(x\alpha))=\{\mathcal{K}_1,\dots, \mathcal{K}_l\}$ supported on $K_1,\dots, K_l$ respectively. If $[Q_x, K_i]=\{1\}$ for all $i\in \{1,\dots, l\}$ then since $C_S(\mathcal{K}_i)$ has index at most $2$ in $C_S(K_i)$ by \cref{CompAutos}, we have that $Q_x'\le Q_{x\alpha}$. Since $Q_x$ is non-abelian by \cref{NonAb}, we have a contradiction. Hence, there is $i$ with $z\in [Q_x, K_i]\cap \Omega_1(Z(S))\le Q_x\cap \Omega_1(Z(S))$. By \cref{QTI1}, we have that $\mathcal{K}_j\subseteq E(\fs_z)$ for all $j\ne i$. Then by \cref{QTI3}, we see that $\mathcal{K}_j\subseteq E(\fs_x)\subseteq \fs_0$. By a similar argument with $Q_y$, we see that $E(\fs_{x\alpha})\subseteq \fs_0$ unless $[Q_y, K_i]\ne \{1\}=[Q_y, K_j]$ for all $j\ne i$. Since $Q_x\cap Q_y=\{1\}$, we must have that $\mathcal{K}_i$ is isomorphic to the $2$-fusion category of $\PSp_4(q)$ for $q>2$ and $|[Q_x, K_i]|=|[Q_y, K_i]|=q$. Replaying \cref{QeqT} with $Q_x, Q_{x\alpha}$ in place of $Q_x, Q_y$ reveals that $Q_x\cong K_i$. Since $Q_x\cap C_S(\mathcal{K}_i)=Q_x\cap Q_{x\alpha}=\{1\}$, we conclude that we conclude that $[Q_x, K_i]=[Q_xC_S(\mathcal{K})]=[S, K_i]$ has order $q^2$, a contradiction. Hence, $E(\fs_{x\alpha})\subseteq \fs_0$.

Since $\fs_0$ is supported on $S$, we see that $SE(\fs_{x\alpha})\subseteq \fs_0$, and by \cref{CompDecomp2} we deduce that $O^{p'}(\fs_{x\alpha})\subseteq \fs_0$. We have shown that

\textbf{(4)} $O^{p'}(C_{\fs}(x))^{\alpha}=O^{p'}(C_{\fs}(x\alpha))=O^{p'}(\fs_{x\alpha})\subseteq \fs_0$ for all $\alpha\in\Aut_{\fs}(S)$.\hfill

A similar argument for $y$ in place of $x$ reveals that $\fs_0$ satisfies the invariance condition of normality of fusion systems. Moreover, $\fs=\langle \Aut_{\fs}(S), \fs_0\rangle_S$ and so by \cite[Proposition I.6.4]{ako}, we have that $\fs_0$ is $\fs$-invariant. Again by \cite[Proposition I.6.4]{ako}, for each $\fs$-centric subgroup of $S$, since $\Aut_S(P)\subseteq \fs_0$, we have that $O^{p'}(\Aut_{\fs}(P))\le \Aut_{\mathcal{E}}(P)$. Hence, by \cite[Lemma I.7.6(a)]{ako}, 

\textbf{(5)} $\fs_0$ has index prime to $p$ in $\fs$, and $\fs^c=\fs_0^c$.\hfill

It remains to show that $\fs_0$ is saturated. We wish to apply \cite[Lemma I.7.6(b)]{ako}, and we do so ``sequentially" through a series of subsystems. We set $\mathcal{E}_x^0:=O^{p'}(\fs_x)$ and $\mathcal{E}_y^0:=O^{p'}(\fs_y)$. Define $\mathcal{E}_x^i$ to be the saturated fusion subsystem of $\fs_x$ containing $O^{p'}(\fs_x)$ with $\Aut_{\mathcal{E}_x^i}(S)=\Aut_{\fs_{i-1}}(S)$ where $\fs_{i-1}=\langle \mathcal{E}_x^{i-1}, \mathcal{E}_y^{i-1}\rangle_S$. Define $\mathcal{E}_y^i$ similarly and observe that $\mathcal{E}_x^i$ and $\mathcal{E}_y^i$ are saturated subsystems of index prime to $p$ in $\fs_x$ and $\fs_y$ respectively, while $\fs_i$ is a (not necessarily) saturated subsystem of $\fs$ of index prime to $p$. Indeed, since $\fs=O^{p'}(\fs)$, if $\fs_i$ is saturated then $\fs_i=\fs$. Let $r\in \N_0$ be minimal such that for $Q\in \fs^c$, $\Hom_{\fs_r}(Q, S)\cap \Hom_{\fs_x}(Q, S)\subseteq \Hom_{\mathcal{E}_x^r}(Q, S)$ and $\Hom_{\fs_r}(Q, S)\cap \Hom_{\fs_y}(Q, S)\subseteq \Hom_{\mathcal{E}_y^r}(Q, S)$.

To see this is well defined, let $Q\in\fs^c$ and $\alpha\in \Hom_{\fs_i}(Q, S)\cap \Hom_{\fs_x}(Q, S)$. Since $\mathcal{E}_x^{i-1}$ has index prime to $p$ in $\fs_x$, we have by \cite[Lemma I.7.6]{ako} that $\alpha=\phi\circ \theta$ for some $\phi\in \Hom_{\mathcal{E}_x^{i-1}}(Q, S)$ and $\theta\in \Aut_{\fs_x}(S)$. Indeed, we recognize $\phi=\hat{\phi}\circ i$ where $\hat{\phi}\in\Hom_{\mathcal{E}_x^{i-1}}(Q, \phi(Q))$ and $i$ is the inclusion map from $\phi(Q)$ to $S$. Then $\hat{\phi}^{-1}\circ \alpha=\theta|_{\phi(Q)}$ and we deduce that $\theta|_{\phi(Q)}\in \Hom_{\fs_i}(\phi(Q), S)$. But $\theta\in \Aut_{\fs_x}(S)$ and as $\Aut_{\fs_i}(S)=\Aut_{\mathcal{E}_x^{i+1}}(S)$, we deduce by \cite[Lemma I.5.6]{ako} that $\theta\in \Aut_{\mathcal{E}_x^{i+1}}(S)$. But then $\phi\circ \theta\in \Hom_{\mathcal{E}_x^{i+1}}(Q, S)$ and so $\alpha\in \Hom_{\mathcal{E}_x^{i+1}}(Q, S)$. Finally, by finiteness of $\fs_x$, we must have that $\mathcal{E}_x^{i}$ stabilizes whenever $i\geq a$ for some $a$ and so there is $r$ such that $\Hom_{\fs_r}(Q, S)\cap \Hom_{\fs_x}(Q, S)\subseteq \Hom_{\mathcal{E}_x^r}(Q, S)$.

Since $\fs_0\subseteq \fs_{r-1}\subseteq \fs_r\subseteq \fs=\langle \fs_0, \Aut_{\fs}(S)\rangle_S$, we have that $\fs_r=\langle \Aut_{\fs_r}(S), \fs_{r-1}\rangle_S$. Let $\gamma\in \Aut_{\fs_r}(S)$ with $\gamma\not\in \fs_{r-1}$. Then $\gamma=\gamma_1\circ \dots \circ \gamma_l$ where $\gamma_1\in \Aut_{\mathcal{E}_x^r}(S)$. Then, we may as well assume that $\gamma_{2j}\in\Aut_{\mathcal{E}_y^r}(S)$ and $\gamma_{2j+1}\in\Aut_{\mathcal{E}_x^r}(S)$ for $j\in\{1,\dots, \lfloor l/2 \rfloor\}$. But $\Aut_{\mathcal{E}_x^r}(S)=\Aut_{\fs_{r-1}}(S)$ and we conclude that $\gamma\in \Aut_{\fs_{r-1}}(S)$. Hence, $\fs_r=\fs_{r-1}$ and applying a similar logic, we see that $\fs_0=\fs_r$.

Note that every $\fs$-centric subgroup contains $Z(S)$ and so is normal in $S$. In particular, since $O^{p'}(\fs_x)$ is saturated each $\fs$-centric subgroup is also fully $O^{p'}(\fs_x)$-centralized, and so is $O^{p'}(\fs_x)$-centric. In a similar manner, we see that every $O^{p'}(\fs_x)$-centric subgroup is also $\fs$-centric so that $O^{p'}(\fs_x)^c=\fs^c=\fs_0^c$. Since $\fs_i$ is generated by $O^{p'}(\fs_x)$ and $O^{p'}(\fs_y)$, which are $\fs^c$ generated since they are saturated, we see that $\fs_0$ is also $\fs^c$-generated.

With the intent of applying \cite[Lemma I.7.6(b)]{ako}, aiming for a contradiction, let $P\in \fs_0^c$ with $|P|$ maximal such that there is $\psi\in \Hom_{\fs}(P, S)$ and $Q\in \fs^c$ with $\psi|_Q\in \fs_0$, but $\psi\not\in \fs$. Since $\fs=\langle \fs_0, \Aut_{\fs}(S)\rangle_S$, we have that $P=S$. We then choose $Q$ with $|Q|$ maximal subject to the previous conditions.

Suppose first that $\mathfrak{L}(\fs)=1$. By \cref{QeqT}, we have that $S=Q_x\times Q_y$, unless $\mathcal{C}_1\cong \mathcal{T}_1$ is isomorphic to the $2$-fusion category of $\Alt(6)$ and $Q_x\cong Q_y\cong \Dih(8)$. However, in this case we have that $|S|=2^7$ and \cite{bobsmall} yields a contradiction since $\fs$ is simple. Hence, $S=Q_x\times Q_y$ and the Krull--Schmidt-Remak theorem for groups implies that $\alpha$ permutes $Q_xZ(Q_y)$ and $Q_yZ(Q_x)$. In particular, unless $p$ is odd and $Q_x\cong Q_y$, we have that $\alpha$ normalizes $Q_xZ(Q_y)$ and $Q_yZ(Q_x)$. If $\alpha$ normalizes $Q_xZ(Q_y)$, then $\alpha$ normalizes $(Q_xZ(Q_y))'=Q_x'$ and so $\alpha\in N_{\fs}(Q_x)=\fs_x$. 

Assume that $\mathfrak{L}(\fs)=\{1\}$ and $\alpha$ does not normalize $Z(Q_x)$. Thus, $p$ is odd and $Q_x\cong Q_y$. Observe that $C_S(Q_x)=Z(Q_x)Q_y=Z(Q_x)R$ so that $Z(R)=R'=C_S(Q_x)'=Q_y'=Z(Q_y)$. Since $\alpha|_Q\in \fs_0$, we may decompose $\alpha|_Q$ as $\beta_1\circ \dots \circ \beta_n$, where $\beta_i\in O^{p'}(\fs_x)$ or $O^{p'}(\fs_y)$. Since $O^{p'}(\fs_x)$ and $O^{p'}(\fs_y)$ are saturated we can further employ the Alperin--Goldschmidt theorem to decompose each $\beta_i$. It is clear from the description of $O^{p'}(\fs_x)$ that each element of $\Aut_{O^{p'}(\fs_x)}(S)$ normalizes each essential subgroup of $O^{p'}(\fs_x)$. Furthermore, since $p$ is odd, for each essential subgroup $E\in\mathcal{E}(\fs_x)$, we have that $\Aut_{\fs}(E)$ centralizes $Z(Q_x)$ and preserves $(E\cap R)[E, \Aut_{\fs}(E)]$. Following the decomposition of $\alpha|_Q$, and using that $Z(Q_y)=Z(R)=R\cap E \cap Z(S)$, we must have that $\alpha|_Q$ normalizes $Z(Q_x)$ and $Z(Q_y)$, a contradiction.

Suppose now that $\mathfrak{L}(\fs)>1$. We observe that $Z(S)\le Q$ so that $Z(R)\le Q$. Indeed, since $\alpha|_Q\in\fs_0$, and $R_2\times\dots \times R_l$ is strongly closed in $\fs_0$ by \cref{KrullRemak}, we see that $\alpha|_Q$ normalizes $Z:=Z(R_2)\times \dots \times Z(R_l)$. Hence, $\alpha\in N_{\fs}(Z)$ and as $\mathfrak{L}(\fs)>1$, $Z\ne\{1\}$. It follows from \cref{QTI1} that $\mathcal{C}_1, \mathcal{T}_1\in \Comp(N_{\fs}(Z))$. Moreover, by minimality of $\fs$, $E(N_{\fs}(Z))$ is determined by the \hyperlink{MainThm}{Main Theorem} and since $S/R_1T_1Z$ is abelian, we must have that $E(N_{\fs}(Z))=\mathcal{C}_1\times \mathcal{T}_1$ and $\Comp(N_{\fs}(A))=\{\mathcal{C}_1, \mathcal{T}_1\}$. Then $\alpha$ acts on $\Comp(N_{\fs}(Z))$ and and since $S$ is Sylow in $N_{\fs}(A)$, we must have that either $\alpha$ normalizes $Q_x$ and $Q_y$; or $p$ is odd, $\mathcal{C}_1\cong \mathcal{T}_1$, $\alpha^2\in N_{\fs}(Q_x)$ and $\alpha^2\in N_{\fs}(Q_y)$. In the latter case, a similar argument as in the $\mathfrak{L}(\fs)=1$ case yields that $\alpha$ normalizes $Z(Q_x)$ and $Z(Q_y)$, from which we deduce that $\alpha$ normalizes $Q_x$ and $Q_y$.

Therefore, we proceed under the assumption that $\alpha$ normalizes both $Q_x$ and $Q_y$. Now, $\alpha|_Q\in \fs_0$ and $\alpha$ normalizes $Q_x$ from which we conclude that $\alpha|_Q\in \Hom_{\mathcal{E}_x^r}(Q, S)$. Since $\mathcal{E}_x^r$ is saturated, a final application of \cite[Lemma I.7.6(b)]{ako} implies that $\alpha\in \mathcal{E}_x^r\subseteq \fs_0$, a contradiction.
\end{proof}

\begin{lemma}\label{CentGeneration1}
We have that $S=Q_x\times Q_y$, $Q_x\cong T$ and $Q_y\cong R$.
\end{lemma}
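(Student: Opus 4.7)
The plan is to combine the structural data of \cref{QeqT} with the generation \(\fs=\langle O^{p'}(C_\fs(x)),O^{p'}(C_\fs(y))\rangle_S\) from the preceding proposition. By \cref{QTI2} we have \(Q_x\cap Q_y=\{1\}\), and since \(Q_x,Q_y\normaleq S\) they commute, so \(Q_xQ_y=Q_x\times Q_y\). \cref{QeqT} places this product as a subgroup of index at most two in \(P:=C_S(\mathcal{C}_2\times\dots\times\mathcal{C}_l)\) and supplies candidate isomorphisms \(Q_x\cong T_1\), \(Q_y\cong R_1\) (with known \(\Alt(6)\)-type exceptions). So it suffices to force \(\mathfrak{L}(\fs)=1\) (giving \(P=S\), \(T=T_1\), \(R=R_1\)) and then rule out the residual index-two possibility.

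To show \(\mathfrak{L}(\fs)=1\), I would suppose \(l>1\) and derive a contradiction with the simplicity of \(\fs\) guaranteed by \cref{MinSimple}. By \cref{KrullRemak}, \(\mathcal{C}_i=\mathcal{T}_i\) for \(2\leq i\leq l\), so \(\mathcal{M}:=\mathcal{C}_2\times\dots\times\mathcal{C}_l\), supported on \(M:=R_2\times\dots\times R_l\), is a common direct factor of \(E(\fs_x)\) and \(E(\fs_y)\); via \cref{CentEq} it is normal in both \(\fs_x=C_\fs(x)\) and \(\fs_y=C_\fs(y)\). The plan is to promote \(\mathcal{M}\) to a normal subsystem of \(\fs\): first one checks that \(M\) is strongly closed in \(\fs\) (it is strongly closed in each generating subsystem, and \(\Aut_\fs(S)\)-invariant because the set \(\mathfrak{Q}(\fs)\) is, so the common components of \(\fs_x\) and \(\fs_y\) are intrinsic rather than dependent on the chosen labels \(x,y\)), and then uses \cref{wedge} to realise \(\mathcal{M}\) as the appropriate intersection of the normal subsystems coming from \(\fs_x\) and \(\fs_y\). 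The result is a proper non-trivial normal subsystem of \(\fs\), contradicting \cref{MinSimple}.

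With \(l=1\), the exceptional \(\Alt(6)\)-clauses of \cref{QeqT} already conclude \(S=P=Q_x\times Q_y\) with one factor isomorphic to \(\Dih(8)\times 2\). In the generic clauses, \(Q_x\cong T_1=T\) and \(Q_y\cong R_1=R\), and the only remaining obstruction to \(S=Q_xQ_y\) is \([S:Q_xQ_y]=2\). Tracking the case analysis in the proof of \cref{QeqT} (Case 1 there), this forces \(R_1\cong T_1\cong\Dih(8)\), bounding \(|S|\) by a small power of two (at most \(2^7\)); the classification of fusion systems on small \(2\)-groups in \cite{bobsmall}, invoked as in the proofs of \cref{NonAb} and \cref{NotUnique}, then leaves no simple \(\fs\) satisfying \cref{mainhyp}, giving the desired contradiction.

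The main obstacle will be the normality argument of step two: promoting \(\mathcal{M}\), only \emph{a priori} normal in the two local subsystems \(\fs_x\) and \(\fs_y\), to a normal subsystem of the ambient \(\fs\). Fusion-system normality is strictly stronger than invariance under a generating family, requiring saturation together with the invariance, Frattini, and extension conditions; one cannot simply quote the group-theoretic fact that a subgroup normalized by each generator is normal. The leverage is that \(M\) is \(\Aut_\fs(S)\)-invariant and strongly closed in each of the two generators, and that every \(\fs\)-morphism decomposes as a string of morphisms coming from \(\fs_x\) and \(\fs_y\) (as exploited in the proof of \cref{MinSimple}), each preserving \(\mathcal{M}\); the finer verification of the extension condition on \(\Aut_\mathcal{M}(M)\) across \(C_S(M)\) is the delicate point, and would likely be handled exactly as in the verification of the \(\fs\)-invariance of \(F^*(\fs)S\) in \cref{MinSimple}.
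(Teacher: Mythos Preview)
Your plan for forcing $\mathfrak{L}(\fs)=1$ is essentially the paper's argument: once $\fs=\langle O^{p'}(C_\fs(x)),O^{p'}(C_\fs(y))\rangle_S$, the paper simply asserts that $\mathcal{C}_2\times\dots\times\mathcal{C}_l$, being normal in both generators (via \cref{CompDecomp2}), is normal in $\fs$, contradicting simplicity. The paper treats this passage rather briskly; your more cautious phrasing of the normality verification is reasonable, but no new idea is needed beyond what is already implicit in the proof of the preceding proposition.

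The genuine gap is in your handling of the case $l=1$. You write that the exceptional $\Alt(6)$-clauses of \cref{QeqT} ``already conclude $S=P=Q_x\times Q_y$ with one factor isomorphic to $\Dih(8)\times 2$'', and then move on. But in that clause $T=T_1\cong\Dih(8)$ while $Q_x\cong\Dih(8)\times 2$, so $Q_x\not\cong T$ and the lemma's conclusion \emph{fails}. This exceptional case must be eliminated, not accepted. The paper devotes the bulk of the proof to precisely this: assuming $Q_x\cong\Dih(8)\times 2$ and $T\cong\Dih(8)$, one first uses \cite{bobsmall} to force $|Q_y|>2^4$, then carries out a detailed analysis of the four essential subgroups of $\fs$ (two containing $Q_x$, two containing $R$), exploiting their $\Aut_\fs(S)$-invariance and the natural-module structure from \cref{SEFF} to show that $Q_yT$ is strongly closed in $\fs$. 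Since $|S/Q_yT|=2$, Thompson--Lyons transfer then contradicts $O^2(\fs)=\fs$. None of this is visible in your proposal, and without it the argument does not establish $Q_x\cong T$.

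Your treatment of the residual index-two obstruction in the generic case (forcing $R\cong T\cong\Dih(8)$, hence $|S|\leq 2^7$, then \cite{bobsmall}) matches the paper's final paragraph.
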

\begin{proof}
We have that $\mathcal{C}_2\times \dots\times \mathcal{C}_n$ is a normal subsystem of both $O^{p'}(C_{\fs}(x))$ and $O^{p'}(C_{\fs}(y))$ by \cref{CompDecomp2}, and as $\fs=\langle O^{p'}(C_{\fs}(x)), O^{p'}(C_{\fs}(y))\rangle_S$, we have that $\mathcal{C}_2\times \dots\times \mathcal{C}_n\normaleq \fs$, a contradiction since $\fs$ is simple. Hence, both $E(\fs_x)$ and $E(\fs_y)$ are simple, $R=R_1$ and $T=T_1$.

Aiming for a contradiction, we assume that $Q_x\not\cong T$. Hence, by \cref{QeqT} we must have that $T\cong \Dih(8)$ and $Q_x\cong \Dih(8)\times 2$. In particular, we see that $S=Q_x\times Q_y$ and $|S/Q_yT|=2$. We observe that by a transfer argument \cite[Theorem TL]{transfer}, $Q_yT$ is not strongly closed in $\fs$. If $|Q_y|\leq 2^4$ then $|S|\leq 2^8$ and we apply \cite{bobsmall} to obtain a contradiction. Hence, applying \cref{QeqT} we may assume that $Q_y\cong R$ has order strictly larger than $2^4$.

We observe that both $Q_x$ and $R$ have exactly two maximal elementary abelian subgroups. It follows from \cref{Radical} that $\fs$ has exactly four essential subgroups: two which contain $Q_x$ and intersect $R$ in a maximal elementary abelian subgroup, and two which contain $R$ and intersect $Q_x$ in an elementary abelian subgroup. Moreover, if $Q_x\not\le E\in\mathcal{E}(\fs)$ then $R\le E$, $S=Q_xE$ and $|S/E|=2$. On the other hand, if $R\not\le E\in\mathcal{E}(\fs)$, then $Q_x\le E$, $S=RE$ and $|S/E|>2$. Since the image of an essential subgroup of $\fs$ under an element of $\Aut_{\fs}(S)$ is essential, and each essential is normal in $S$, we conclude that every essential subgroup of $\fs$ is $\Aut_{\fs}(S)$-invariant. Set $E_1, E_2\in\mathcal{E}(\fs)$ such that $|S/E_1|=|S/E_2|=2$ and let $E_3, E_4$ be the remaining essential subgroups.

Now, $Q_y\le E_1\cap E_2$ so that $y\in Q_y'\le E_1'\cap E_2'$. Since $O^{2'}(\Aut_{\fs}(E))$ centralizes $E'$ for all $E\in\mathcal{E}(\fs)$, we have that $O^{2'}(\Aut_{\fs}(E_i))\subseteq C_{\fs}(y)$ so that $O^{2'}(\Aut_{\fs}(E_i))$ normalizes $Q_y$ and $T\cap E_i$ for $i\in \{1,2\}$. Hence, $\Aut_{\fs}(E_i)$ normalizes $E_i\cap Q_yT=Q_y(E_i\cap T)$ by the Dedekind modular law. We note that $[Q_x, E_i]=Q_x'$ has order $2$ so that $O^{2'}(\Out_{\fs}(E_i))\cong \Sym(3)$, $E_i=[E_i, O^{2'}(\Out_{\fs}(E_i))]\times C_{E_i}(O^{2'}(\Out_{\fs}(E_i)))$ and $[E_i, O^{2'}(\Out_{\fs}(E_i))]$ has order $4$ for $i\in \{1,2\}$. Since $[Q_x, Q_y]=\{1\}$ and $Q_y$ is normalized by $O^{2'}(\Aut_{\fs}(E_i))$ for $i\in \{1,2\}$, we must have that $Q_y\le C_{E_1}(O^{2'}(\Out_{\fs}(E_1)))\cap C_{E_2}(O^{2'}(\Out_{\fs}(E_2)))$. 

We observe that $Q_x\le E_3\cap E_4$ and that $O^{2'}(\Aut_{\fs}(E_i))$ normalizes $Q_x$ for $i\in\{3,4\}$. Since $E_3\not\ge Q_y\not\le E_4$, we conclude that $Q_x\le C_{E_3}(O^{2'}(\Out_{\fs}(E_3)))\cap C_{E_4}(O^{2'}(\Out_{\fs}(E_4)))$. Since $|R|>2^4$ it follows that $\fs_x/Q_x$ is simple. Indeed, $\fs_x/Q_x$ has exactly two essential subgroups: $E_3/Q_x$ and $E_4/Q_x$. It follows that $(C_{E_3}(O^{2'}(\Out_{\fs}(E_3)))\cap C_{E_4}(O^{2'}(\Out_{\fs}(E_4))))/Q_x$ is trivial and so $Q_x=C_{E_3}(O^{2'}(\Out_{\fs}(E_3)))\cap C_{E_4}(O^{2'}(\Out_{\fs}(E_4)))$ is $\Aut_{\fs}(S)$-invariant.

Assume that $Q_y<Y:=C_{E_1}(O^{2'}(\Out_{\fs}(E_1)))\cap C_{E_2}(O^{2'}(\Out_{\fs}(E_2)))$. Since $E_1, E_2$ are $\Aut_{\fs}(S)$-invariant, so too is $Y$. Moreover, since $S=Q_x\times Q_y$, we deduce that $Q_x\cap Y\ne \{1\}$. But then $\{1\}\ne Z(S)\cap Q_x\cap Y\le C_{E_i}(O^{2'}(\Out_{\fs}(E_i)))$ for all $i\in\{1,\dots, 4\}$ and as $Z(S)\cap Q_x\cap Y$ is $\Aut_{\fs}(S)$-invariant, by  \cite[Proposition I.4.5]{ako} we have that $O_2(\fs)\ne\{1\}$, a contradiction. Hence, $Q_y=C_{E_1}(O^{2'}(\Out_{\fs}(E_1)))\cap C_{E_2}(O^{2'}(\Out_{\fs}(E_2)))$.

Since $E_1$ and $E_2$ are $\Aut_{\fs}(S)$-invariant, $Q_y$ is $\Aut_{\fs}(S)$-invariant and $N_{\fs}(S)\subset N_{\fs}(Q_y)$. But $\mathcal{T}_1=E(N_{\fs}(Q_y))$ and it follows that $\Aut_{\fs}(S)$ normalizes $Q_y$ and $T$, and so normalizes $Q_yT$. Now, $Q_y$ centralizes a subgroup of $E_3$ of index $|S/E_3|$ and we deduce by \cref{SEFF} that $E_3/C_{E_3}(O^{2'}(\Out_{\fs}(E_3)))$ is a natural module for $O^{2'}(\Out_{\fs}(E_3))\cong \SL_2(2^n)$ for $n>1$. Since $S=Q_x\times Q_y$ and $Q_x\le C_{E_3}(O^{2'}(\Out_{\fs}(E_3)))$, we must have that $E_3=(E_3\cap Q_y)C_{E_3}(O^{2'}(\Out_{\fs}(E_3)))$.

Now, $[E_3, Q_y]=Q_y'$ which has order $2^n$ or $2^{2n}$ where $Q_y$ is isomorphic to a Sylow $2$-subgroup of $\PSL_3(2^n)$ or $\PSp_4(2^n)$ respectively. Either way, set $V:=Q_y'\cap C_{E_3}(O^{2'}(\Out_{\fs}(E_3)))$ from which it follows that $[E_3, O^{2'}(\Out_{\fs}(E_3))]V/V$ is a natural module for $O^{2'}(\Out_{\fs}(E_3))$. Let $K$ be a complement to $\Aut_S(E_3)$ in $N_{O^{2'}(\Aut_{\fs}(E_3))}(\Aut_S(E_3))$ so that $K$ is cyclic of order $2^n-1$. Then $[E_3, O^{2'}(\Out_{\fs}(E_3))]V=[E_3, K]V$ and since $Q_y$ is $\Aut_{\fs}(S)$-invariant, we conclude that $[E_3, O^{2'}(\Out_{\fs}(E_3))]\le [E_3, K]V=[Q_y\cap E_3, K]V\le Q_y\cap E_3$. Hence, $Q_y\cap E_3$ is normalized by $O^{2'}(\Aut_{\fs}(E_3))$. Moreover, $[T, K]\le [E_3, K]\le Q_y$ and as $T$ is $\Aut_{\fs}(S)$-invariant, we deduce that $[T, K]\le T\cap Q_y=\{1\}$ and $T\le C_{E_3}(O^{2'}(\Out_{\fs}(E_3)))$ and so $T$ is normalized by $O^{2'}(\Aut_{\fs}(E_3))$. Thus $E_3\cap Q_yT=T(E_3\cap Q_y)$ is normalized by $O^{2'}(\Aut_{\fs}(E_3))$. By a similar argument we conclude that $T$, $Q_y\cap E_4$ and $E_4\cap TQ_y$ are normalized by $O^{2'}(\Aut_{\fs}(E_4))$. Hence, by the Alperin--Goldschmidt theorem, $TQ_y$ is strongly closed in $\fs$, and we have a contradiction.

Hence, $Q_x\cong T$. The proof that $Q_y\cong R$ is similar. Now, if $Q_xQ_y<S$ then we must have that $S\ne Q_x\times R$ and so $Q_y\cong R\cong \Dih(8)$ and $Q_xQ_y$ has index $2$ in $S$. Similar, we conclude that $Q_x\cong T\cong \Dih(8)$ and so $|S|=2^7$. Then \cite{bobsmall} yields a contradiction since $\fs$ is simple. Hence, $S\cong Q_x\times Q_y$.
\end{proof}

\begin{lemma}\label{CentGeneration}
We have that $Q_x=T$ and $Q_y=R$.
\end{lemma}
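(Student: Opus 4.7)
The plan is to establish $Q_x = T$; the companion identity $Q_y = R$ will follow by an entirely symmetric argument with the roles of $x$ and $y$ interchanged. The approach has three phases: first, exhibit $S$ as an internal direct product $T \times Q_y$ so that $T$ and $Q_x$ appear as two normal complements to $Q_y$; second, parameterize such complements by a single homomorphism $\psi$; third, use the $\fs$-fusion on $T$ to force $\psi = 0$.

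For the first phase, since $\mathcal{T}_1 = E(\fs_y) \normaleq \fs_y$ is supported on $T$ we have $T \normaleq S$. By \cref{CentGeneration1} we have $\Comp(\fs_y) = \{\mathcal{T}_1\}$, and combined with \cref{L34} and \cref{NoL34} this component has trivial center, whence $T \cap Q_y = Z(\mathcal{T}_1) = \{1\}$; meanwhile $[T, Q_y] = \{1\}$ because $Q_y = C_S(\mathcal{T}_1)$. Using $|T| = |Q_x|$ from \cref{CentGeneration1} and $|S| = |Q_x||Q_y|$, one concludes $S = TQ_y = T \times Q_y$.

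For the second phase, $T \le C_S(Q_y) = Q_x \times Z(Q_y)$, and the projection $\pi_x : T \to Q_x$ is injective (kernel $T \cap Q_y = \{1\}$) hence, by order, an isomorphism. Standard parameterization of normal complements to $Q_y$ inside $Q_x \times Q_y$ yields a unique homomorphism $\psi : Q_x \to Z(Q_y)$ with $T = \{u\psi(u) : u \in Q_x\}$, and since $Z(Q_y)$ is abelian, $\psi$ factors through $Q_x/Q_x'$; in particular $Q_x' \le T$ automatically, and $T = Q_x$ if and only if $\psi = 0$.

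For the third phase, suppose $\psi \ne 0$. Any $\alpha \in \Aut_{\mathcal{T}_1}(T)$ lifts via receptivity of $T$ in $\fs_y$ to some $\hat\alpha \in \Aut_{\fs_y}(S)$ which normalizes both $T$ and $Q_y$; as an element of $\Aut_{\fs}(S)$, $\hat\alpha$ permutes $\mathfrak{Q}(\fs) = \{Q_x, Q_y\}$ (which is a two-element set by the chain of reductions above combined with \cref{QTI3}). In the generic case $|Q_x| \ne |Q_y|$, $\hat\alpha$ also fixes $Q_x$ setwise; otherwise one replaces $\hat\alpha$ with $\hat\alpha^2$. Writing $\hat\alpha(T) = T$ in the direct-product coordinates forces the equivariance $\hat\alpha|_{Z(Q_y)} \circ \psi = \psi \circ \hat\alpha|_{Q_x}$. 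By \cref{KnownSL} and \cref{KnownSp} the induced action of $\Aut_{\mathcal{T}_1}(T)$ on $T/Z(T) \cong Q_x/Q_x'$ leaves no non-trivial proper subgroup invariant in the generic (non-exotic) case, so $\ker\psi \supseteq Q_x$, a contradiction. The main obstacle is the Ruiz--Viruel exotic regime at $p=7$ where $Q_x \cong Q_y \cong 7^{1+2}_+$: here the possible swapping in $\mathfrak{Q}(\fs)$ must be treated via $\hat\alpha^2$, and the smaller $\Aut_{\mathcal{T}_1}(T)$ requires explicit case-analysis against the tables of \cite{RV1+2} using \cref{Radical} to restrict the admissible $\ker\psi$ until $\psi = 0$ is forced.
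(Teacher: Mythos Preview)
Your third phase has two genuine breaks. First, the assertion that $\Aut_{\mathcal{T}_1}(T)$ acts on $T/Z(T)$ with no proper invariant subgroups is false in the most basic examples: when $\mathcal{T}_1$ is the $p$-fusion category of $\PSL_3(p^n)$ or of $\PSp_4(2^n)$ with $n>1$, $\Aut_{\mathcal{T}_1}(T)=N_G(T)/C_G(T)$ is the Borel modulo centre, and the diagonal torus visibly preserves the images in $T/Z(T)$ of the two root subgroups (equivalently, of the two members of $\mathcal{A}(T)$). So your equivariance only tells you that $\ker\psi$ is invariant, and that is not enough to force $\ker\psi=Q_x$. The way out you did not exploit is that every lift $\hat\alpha$ comes from $\mathcal{T}_1\subseteq C_\fs(Q_y)$ and hence acts trivially on $Z(Q_y)$; the equivariance then collapses to $[Q_x,\hat\alpha]\le\ker\psi$, and one must exhibit a specific $p'$-subgroup $K^*$ of $\Aut_{\mathcal{T}_1}(T)$ with $T=[T,K^*]Z(T)$ to finish. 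That is precisely the paper's argument, and it requires case-by-case verification (for $\PSL_3$, for $\PSp_4$, and for $p^{1+2}_+$ including the exotics).

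Second, you have silently omitted the case $\mathcal{T}_1\cong\fs_T(\Alt(6))$. There $T\cong\Dih(8)$, $\Out(T)\cong C_2$, and $\Aut_{\mathcal{T}_1}(T)=\Inn(T)$; every lift $\hat\alpha$ is inner on $S$, so your equivariance yields nothing whatsoever. The paper handles this case by a different device: one first rules out $\mathcal{C}_1\cong\fs_T(\Alt(6))$ as well (else $|S|\leq2^7$ and \cite{bobsmall} applies), so the generic argument on the $\mathcal{C}_1$ side gives $Q_y=R$ first; then the lift $J$ of $\Aut_{\mathcal{C}_1}(R)$ satisfies $[J,S]=R=Q_y$ and normalises $T$ (via $E(N_\fs(Q_y))=\mathcal{T}_1$), whence $T\le C_S(J)=Q_x$. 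Your proposal neither addresses this case nor indicates awareness that the symmetric argument must be invoked asymmetrically here.
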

\begin{proof}
We prove that $Q_x=T$. The proof that $Q_y=R$ is similar. We assume first that $\mathcal{T}_1$ is not isomorphic to the $2$-fusion category of $\Alt(6)$. Hence, $S=Q_y\times T=Q_y\times Q_x$, $C_S(Q_y)=Z(Q_y)\times T$ and as $Q_x\cong T$, we see that $Z(Q_x)=Q_x'=C_S(Q_y)'=T'=Z(T)$.

Suppose there is $Z\le Z(T)$ and $K^*\le C_{\Aut_{\mathcal{T}_1}(T)}(Z)$ with $T=[T, K^*]Z$. Since $\mathcal{T}_1$ centralizes $Q_y$, $K^*$ lifts to a group of automorphisms of $S$. Let $K$ be the lift of $K^*$ to $\Aut_{\fs}(S)$. Then $K$ centralizes $Z$ and by \cref{QTI3}, we have that $K$ normalizes $C_S(E(C_{\fs}(Z)))=C_S(E(\fs_x))=Q_x$. But then, \[T=[K, T]Z=[K, S]Z=[K, Q_xQ_y]Z=[K, Q_x]Z\le Q_x\] and we conclude that $T=Q_x$. We now show that such a $Z$ and $K^*$ exists.

Suppose that $\mathcal{T}_1$ is isomorphic to the $p$-fusion category of $\PSL_3(q)$ where $q=p^a>2$. Then we may take $Z=Z(T)$ and $K^*$ to be a Hall $p$'-subgroup of $C_{\Aut_{\mathcal{T}_1}(T)}(Z(T))$. Suppose that $\mathcal{T}_1$ is isomorphic to the $2$-fusion category of $\PSp_4(2^a)$ where $a>1$. Choose $A\in\mathcal{A}(T)$ so that $\Aut_{\mathcal{T}_1}(A)\cong \GL_2(2^a)$. Choose $Z=C_A(O^{2'}(\Aut_{\mathcal{T}_1}(A)))$ and $K^*$ to be the lift to $\Aut_{\mathcal{T}_1}(T)$ of a Hall $2'$-subgroup of $N_{O^{2'}(\Aut_{\mathcal{T}_1}(A))}(\Aut_T(A))$. Finally, suppose that $p$ is odd and $T\cong p^{1+2}_+$. By \cite{RV1+2}, a Hall $p'$-subgroup of $C_{\Aut_{\mathcal{T}_1}(T)}(Z(T))$ is non-trivial. If $|C_T(K^*)|=p^2$, then an easy application of the three subgroups lemma implies that $K^*$ centralizes $T/C_T(K^*)$. Then coprime action yields a contradiction. Hence, we may take $Z=Z(T)$ and $K^*$ a Hall $p'$-subgroup of $C_{\Aut_{\mathcal{T}_1}(T)}(Z(T))$ so that $C_T(K^*)=Z$ and $T=[T, K^*]Z(T)$.

It remains to analyze the case where $\mathcal{T}$ is isomorphic to the $2$-fusion category of $\Alt(6)$. Assume that $\mathcal{C}_1$ is isomorphic to the $2$-fusion category of $\Alt(6)$. Since $Q_y\cong R$, we have that $|S|\leq 2^7$ and an appeal to \cite{bobsmall} implies that $\fs$ is not a minimal counterexample. By the above arguments, we see that $Q_y=R$. Moreover, $S=Q_x\times R=Q_x\times Q_y=Q_y\times T$.

Set $J^*=\Aut_{\mathcal{C}_1}(R)$ so that $[J^*, R]=R$. Let $J$ be the lift of $J^*$ to $\Aut_{\fs}(S)$ so that $[J, S]=R=Q_y$. In particular, $Q_x=C_S(J)$. Since $J$ normalizes $Q_y$, $J\subseteq N_{\fs}(Q_y)$. Since $E(N_{\fs}(Q_y))=\mathcal{T}_1$ is supported on $T$, we deduce that $J$ normalizes $T$. Then $[J, T]\le [J, S]\cap T=Q_y\cap T=\{1\}$ from which we conclude that $T\le C_S(J)=Q_x$. Since $T\cong Q_x$, we conclude that $T=Q_x$, as desired.
\end{proof}

\begin{lemma}
$Q_x$ and $Q_y$ are strongly closed in $\fs$.
\end{lemma}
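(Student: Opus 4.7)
The plan is to exploit the generation result $\fs = \langle O^{p'}(C_{\fs}(x)), O^{p'}(C_{\fs}(y))\rangle_S$ from the preceding proposition together with the observation that $Q_x$ and $Q_y$ are already strongly closed in each of $C_{\fs}(x)$ and $C_{\fs}(y)$. Strong closure in $\fs$ will then follow immediately since every morphism of $\fs$ is built by composition and restriction from morphisms in these two centralizer subsystems and from $\Inn(S)$.

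First I would observe that $\fs_x = C_{\fs}(x)$ by \cref{CentEq}, and $Q_x = O_p(\fs_x) \normaleq \fs_x$ by definition, so $Q_x$ is strongly closed in $C_{\fs}(x)$. Moreover, $E(\fs_x) = \mathcal{C}_1$ is normal in $\fs_x = C_{\fs}(x)$, and by \cref{CentGeneration1} and \cref{CentGeneration} it is supported on $R = Q_y$; consequently the Sylow subgroup $Q_y$ of this normal subsystem is strongly closed in $C_{\fs}(x)$. Exchanging the roles of $x$ and $y$ and reapplying the same two facts shows that $Q_x$ and $Q_y$ are also both strongly closed in $C_{\fs}(y)$.

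In particular, every morphism in $O^{p'}(C_{\fs}(x))$ or $O^{p'}(C_{\fs}(y))$ preserves each of $Q_x$ and $Q_y$ setwise, and since $S = Q_x \times Q_y$ by \cref{CentGeneration1}, so does every inner automorphism of $S$. A morphism of $\fs$ lies in the smallest fusion system on $S$ containing these, and hence factors as a composite of restrictions of maps that preserve $Q_x$ (respectively $Q_y$) setwise, so the composite does too. Therefore for all $P \le Q_x$ and $\phi \in \Hom_{\fs}(P, S)$ we have $P\phi \le Q_x$, and likewise for $Q_y$, giving the desired strong closure.

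There is no real obstacle to this lemma; all the substance has already been absorbed into the generation result of the preceding proposition and into the structural identifications $Q_x = T$, $Q_y = R$ provided by \cref{CentGeneration}. The actual payoff comes immediately after: combined with $S = Q_x \times Q_y$, an application of \cite[Proposition 3.3]{AOV} forces $\fs$ to split as a direct product of a saturated subsystem on $Q_x$ and one on $Q_y$, contradicting the simplicity established in \cref{MinSimple} and thereby closing out the parabolic component type case.
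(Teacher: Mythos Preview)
Your proposal is correct and follows essentially the same argument as the paper: both use the generation $\fs=\langle O^{p'}(C_{\fs}(x)), O^{p'}(C_{\fs}(y))\rangle_S$, observe that $Q_x$ is strongly closed in $C_{\fs}(x)$ since $Q_x=O_p(\fs_x)\normaleq \fs_x=C_{\fs}(x)$ and in $C_{\fs}(y)$ since $E(C_{\fs}(y))$ is supported on $T=Q_x$ (via \cref{CentGeneration}), and then conclude by decomposing an arbitrary morphism of $\fs$ into pieces lying in the two generating subsystems. The paper phrases this as a contradiction argument tracking the first factor that moves a subgroup out of $Q_x$, while you phrase it directly, but the content is identical.
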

\begin{proof}
We prove this for $Q_x$ but the proof for $Q_y$ is similar. Aiming for a contradiction, let $A\le Q_x$ and $\alpha\in\Hom_{\fs}(A, S)$ with $A\alpha\not\le Q_x$. By \cref{CentGeneration}, we may write $\alpha=\alpha_1\circ\dots \circ \alpha_n$ where each $\alpha_i$ is the restriction of some morphism in $O^{p'}(C_{\fs}(x))$ or $O^{p'}(C_{\fs}(y))$. Since $Q_x$ is strongly closed in $O^{p'}(C_{\fs}(x))$ we deduce that there is $B\le Q_x$ and $\phi\in\Hom_{O^{p'}(C_{\fs}(y)}(B, S)$ with $B\phi\not\le Q_x$. But $E(C_{\fs}(y))\normaleq C_{\fs}(y)$ is supported on $Q_x$ and so $Q_x$ is also strongly closed in $C_{\fs}(y)$ and we must have that $B\phi\le Q_x$, a contradiction. Hence, $Q_x$ is a strongly closed in $\fs$.
\end{proof}

\begin{theorem}\label{ComponentTheorem}
If $\fs$ satisfies \cref{mainhyp} then $\fs$ is not of parabolic component type.
\end{theorem}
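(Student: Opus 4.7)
The plan is to assemble the structural results accumulated throughout Section \ref{Component Section} into a direct product decomposition of $\fs$, and then to invoke simplicity. All the hard work has been done by the preceding chain of lemmas; the theorem itself is the final synthesis.

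First I would recall the key conclusions of the section. Under \cref{CompHyp}, \cref{NotUnique} provides $x, y \in \mathfrak{X}(\fs)$ with $Q_x \neq Q_y$, and \cref{QTI2} then forces $Q_x \cap Q_y = \{1\}$. The run of lemmas culminating in \cref{CentGeneration1} and \cref{CentGeneration} gives $S = Q_x \times Q_y$ with $Q_x = T$ and $Q_y = R$, and the final lemma of the section establishes that both $Q_x$ and $Q_y$ are strongly closed in $\fs$.

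Then I would invoke the splitting criterion \cite[Proposition 3.3]{AOV} (the same tool used in the proof of \cref{MinSimple}): since $S$ is the internal direct product of two strongly closed subgroups, $\fs$ factors as $\fs = \fs_1 \times \fs_2$ where $\fs_1$ and $\fs_2$ are saturated subsystems supported on $Q_x$ and $Q_y$ respectively. By \cref{NonAb} both $Q_x$ and $Q_y$ are non-abelian, so neither factor is trivial, making the decomposition proper. This contradicts the simplicity of $\fs$ established in \cref{MinSimple}, which completes the proof.

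The main obstacle is emphatically \emph{not} in this concluding synthesis, which is essentially a one-line assembly. The real work was distributed across the preceding lemmas: ruling out uniqueness of the normal $p$-subgroup via the delicate transfer and essential-subgroup arguments of \cref{NotUnique}; showing $\mathfrak{X}(\fs)$ contains elements outside $Q_x$; pinning down $Q_x \cong T$ and $Q_y \cong R$; and finally proving strong closure by tracking the action of $\Aut_{\fs}(E)$ on each essential $E$ against the two cases $R \le E$ and $R \not\le E$. The $\Alt(6)$-component exception, with its atypical outer automorphism behaviour flagged after \cref{Sym6}, had to be chased through almost every step, and is what makes the section long rather than short.
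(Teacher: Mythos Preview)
Your proposal is correct and matches the paper's own proof essentially line for line: apply \cite[Proposition 3.3]{AOV} to the strongly closed direct factors $Q_x$ and $Q_y$ of $S$ to obtain $\fs=\fs_1\times\fs_2$, then contradict the simplicity of $\fs$ from \cref{MinSimple}. Your additional remark that \cref{NonAb} guarantees the factorisation is proper is a harmless clarification the paper leaves implicit.
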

\begin{proof}
By \cite[Proposition 3.3]{AOV}, since $Q_x$ and $Q_y$ are strongly closed in $\fs$ we have that $\fs=\fs_1\times \fs_2$ for some saturated fusion systems $\fs_1, \fs_2$ supported on $Q_x$ and $Q_y$ respectively. But $\fs$ is simple, a contradiction.
\end{proof}

\section{Fusion Systems of Parabolic Characteristic $p$}

In truth, the methods in this section are almost exactly the same used as in the treatment of simple groups with class two Sylow $p$-subgroups, and so there is little novelty for those familiar with this kind of analysis. On the other hand, the exotic systems supported on $7^{1+2}_+$ found by Ruiz and Viruel are of parabolic characteristic $7$ and so appear in this analysis. However, with the goal of only identifying the isomorphism type of $S$, there appears to be little utility in using fusion systems in place of groups when compared to the drastic simplifications obtained in \cref{Component Section}.

For this section, we operate under the following hypothesis:

\begin{hypothesis}\label{CharpHyp}
\cref{mainhyp} holds and $\fs$ is of parabolic characteristic $p$.
\end{hypothesis}

Because of \cref{ComponentTheorem}, in order to prove the \hyperlink{MainThm}{Main Theorem} it suffices to show that \cref{CharpHyp} provides no examples.

\begin{proposition}\label{FF-Action}
There is $E_1, E_2\in\mathcal{E}(\fs)$ such that $Z(E_1)\not\le E_2$ and $Z(E_2)\not\le E_1$.
\end{proposition}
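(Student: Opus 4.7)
The plan is to argue by contradiction: suppose that for every pair $E_1,E_2\in\mathcal{E}(\fs)$ we have $Z(E_1)\le E_2$ or $Z(E_2)\le E_1$, and extract from this a non-trivial subgroup of $S$ that is normal in $\fs$, contradicting $O_p(\fs)=\{1\}$ via \cref{normalinF}. Since $\fs$ is simple by \cref{MinSimple} and $O_p(\fs)=\{1\}$, the Alperin--Goldschmidt theorem combined with \cref{normalinF} forces $|\mathcal{E}(\fs)|\ge 2$; moreover $\Aut_{\fs}(S)$ permutes $\mathcal{E}(\fs)$, and by \cref{GeneralEssentialStructure} every essential $E$ is normal in $S$ with $Z(S)\le Z(E)$, $[S,E]\le Z(E)$, and $E=C_S(Z(E))$.

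First I would choose $E^\ast\in\mathcal{E}(\fs)$ with $|Z(E^\ast)|$ maximal and set $Q:=Z(E^\ast)$. For any $F\ne E^\ast$ the contradiction hypothesis gives $Q\le F$ or $Z(F)\le E^\ast$; in the latter case maximality of $|Q|$ bounds $|Z(F)|\le|Q|$, and $Z(F)$ is an abelian subgroup of $E^\ast$ containing $Z(S)$. I would then pass to the subgroup
\[
W:=\langle Q^{\alpha}:\alpha\in\Aut_{\fs}(S)\rangle,
\]
which is automatically $\Aut_{\fs}(S)$-invariant, contains $Z(S)$ (hence is non-trivial), and is normal in $S$ because each $Q^\alpha=Z(E^\ast\alpha)$ is normal in $S$.

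The main task is to verify the criteria of \cref{normalinF}(iii) for $W$: that $W$ is contained in every $F\in\mathcal{E}(\fs)$ and that $W$ is $\Aut_{\fs}(F)$-invariant. Containment should follow by applying the case split above to every $\Aut_{\fs}(S)$-translate $E^\ast\alpha$ and using maximality of $|Q|$ to rule out $Z(F)\le E^\ast\alpha$ across all $\alpha$ simultaneously. The key commutator input is that $S$ has class two, so $[W,F]\le[S,F]\le S'\le Z(S)\le Z(F)$, which combined with $[O^{p'}(\Aut_{\fs}(F)),F]\le Z(F)$ from \cref{GeneralEssentialStructure} should force $O^{p'}(\Aut_{\fs}(F))$ to normalize $W\cdot Z(F)=W$, and then a Frattini argument handles $\Aut_{\fs}(F)$. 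The main obstacle I anticipate is the mixed configuration where $Z(F)\le E^\ast$ but $Q\not\le F$: here $Q$ does not a priori lie in $F$ and the invariance argument above stalls. To close this case I would try to replace $W$ with the intersection $\bigcap_{\alpha\in\Aut_{\fs}(S)}(E^\ast\alpha)$ (or a suitably chosen $\Aut_{\fs}(S)$-invariant subgroup of it containing the conjugates of $Q$ that do fit), and exploit the FF-module structure forced by \cref{SEFF} on the quotient $Z(F)/[Z(F),O^{p'}(\Aut_{\fs}(F))]$ to conclude that the relevant intersection is still non-trivial, normal in $\fs$, and yields the desired contradiction.
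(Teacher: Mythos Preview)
Your contradiction hypothesis is the disjunctive one: for every pair $E_1,E_2$ either $Z(E_1)\le E_2$ or $Z(E_2)\le E_1$. The paper's first (and crucial) move is to upgrade this to the much stronger statement that $Z(E)\le F$ for \emph{all} $E,F\in\mathcal{E}(\fs)$. Namely, if $Z(E_1)\not\le E_2$ but $Z(E_2)\le E_1$, then $[Z(E_1),Z(E_2)]=\{1\}$ and $Z(E_1)$ centralizes the $\Aut_{\fs}(E_2)$-invariant chain $\{1\}\normaleq Z(E_2)\normaleq E_2$, forcing $Z(E_1)\le E_2$ by \cref{Chain}, a contradiction. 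Once you have $Z(E)\le F$ for all pairs, the subgroup $X:=\prod_{E\in\mathcal{E}(\fs)}Z(E)$ lies in every essential, is $\Aut_{\fs}(S)$-invariant because $\mathcal{E}(\fs)$ is, and is $O^{p'}(\Aut_{\fs}(F))$-invariant because $[O^{p'}(\Aut_{\fs}(F)),F]\le Z(F)\le X$; \cref{normalinF} finishes.

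Without that symmetry step your argument does not close. The ``mixed configuration'' you flag is not a side case: under the bare disjunctive hypothesis there is no reason that $Q=Z(E^\ast)$, or $W=\langle Q^{\Aut_{\fs}(S)}\rangle$, lies in every $F$, and maximality of $|Z(E^\ast)|$ does not rule out $Z(F)\le E^\ast$ with $Q\not\le F$. Even when $W\le F$, your invariance step asserts $W\cdot Z(F)=W$, i.e.\ $Z(F)\le W$; but $W$ is built only from the $\Aut_{\fs}(S)$-orbit of $E^\ast$, so for $F$ outside that orbit there is no reason $Z(F)\le W$. Both obstacles vanish once you prove the symmetry observation above and take $X=\prod_E Z(E)$ over all essentials rather than a single orbit.
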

\begin{proof}
Suppose first that there is $E_1, E_2\in\mathcal{E}(\fs)$ with $Z(E_1)\not\le E_2$. If $Z(E_2)\le E_1$ then $[Z(E_1), Z(E_2)]=\{1\}$ and so $Z(E_1)$ centralizes the $\Aut_{\fs}(E_2)$-invariant chain $\{1\}\normaleq Z(E_2)\normaleq E_2$, a contradiction by \cref{Chain} as $E_2\in\mathcal{E}(\fs)$. Hence, to prove the result it suffices to demonstrate a contradiction in the case where $Z(E)\le F$ for all $E,F\in\mathcal{E}(\fs)$. 

To this end, set $X:=\prod\limits_{E\in\mathcal{E}(\fs)} Z(E)$ and, aiming for a contradiction, assume that $X\le\bigcap\limits_{E\in\mathcal{E}(\fs)} E$. Indeed, since $[O^{p'}(\Aut_{\fs}(E)), E]\le Z(E)$ by \cref{GeneralEssentialStructure}, we see that $O^{p'}(\Aut_{\fs}(E))$ normalizes $X$ for all $E\in\mathcal{E}(\fs)$. Furthermore, since $\mathcal{E}(\fs)$ is invariant under the action of $\Aut_{\fs}(S)$, we see that $X$ is also normalized by $\Aut_{\fs}(S)$ and, by the Frattini argument, that $X$ is normalized by $\Aut_{\fs}(E)$ for all $E\in\mathcal{E}(\fs)$. Hence, $X\normaleq \fs$ by \cref{normalinF}, a contradiction since $O_p(\fs)=\{1\}$.
\end{proof}

We choose $E_1, E_2\in\mathcal{E}(\fs)$ as described in the above proposition.

\begin{lemma}\label{SpecificEssentialStructure}
For $i\in\{1,2\}$, we have that $E_i$ is elementary abelian, $S=E_iE_{3-i}$, $C_{E_1}(O^{p'}(\Aut_{\fs}(E_1)))\cap C_{E_2}(O^{p'}(\Aut_{\fs}(E_2)))=\{1\}$ and $E_i/C_{E_i}(O^{p'}(\Aut_{\fs}(E_i)))$ is a natural module for $O^{p'}(\Aut_{\fs}(E_i))\cong \SL_2(p^{n})$ for some $n\in\N$.
\end{lemma}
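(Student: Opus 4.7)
The plan is to apply Theorem \ref{SEFF} to each $E_i$ using an FF-offender on $Z(E_i)$ furnished by $Z(E_{3-i})$, and then to deduce the remaining assertions by order counting and chain arguments.

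First I would exploit Lemma \ref{GeneralEssentialStructure} to get $E_i \normaleq S$ and $E_i = C_S(Z(E_i))$, so that $Z(E_{3-i})$ acts on $E_i$ via conjugation. Since $S$ has class two, $[Z(E_{3-i}), E_i] \le S' \le Z(S) \le Z(E_i)$, and the hypothesis $Z(E_{3-i}) \not\le E_i = C_S(Z(E_i))$ makes $A_{3-i} := \Aut_{Z(E_{3-i})}(E_i) \le \Out_\fs(E_i)$ a non-trivial elementary abelian $p$-subgroup. The commutator identity
\[|Z(E_{3-i})/C_{Z(E_{3-i})}(Z(E_i))| = |[Z(E_1), Z(E_2)]| = |Z(E_i)/C_{Z(E_i)}(Z(E_{3-i}))|,\]
together with $Z(E_{3-i}) \cap E_i \le C_{Z(E_{3-i})}(Z(E_i))$, gives $|Z(E_i)/C_{Z(E_i)}(A_{3-i})| \le |A_{3-i}|$, exhibiting the elementary abelian section of $Z(E_i)$ as an FF-module for $\Out_\fs(E_i)$. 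Theorem \ref{SEFF} then yields $L_i := O^{p'}(\Out_\fs(E_i))$ with $L_i/C_{L_i}(Z(E_i)) \cong \SL_2(p^{n_i})$, $C_{L_i}(Z(E_i))$ a $p'$-group, and $Z(E_i)/C_{Z(E_i)}(O^p(L_i))$ a natural module.

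Next, the identity $S = E_1 E_2$ would follow from a count: $S/E_i \cong \Aut_S(E_i)/\Inn(E_i)$ is Sylow in $\Out_\fs(E_i)$, hence its image in $L_i/C_{L_i}(Z(E_i)) \cong \SL_2(p^{n_i})$ is Sylow; as $C_{L_i}(Z(E_i))$ is a $p'$-group, $|S/E_i| = p^{n_i}$, and the offender inequality forces $|A_{3-i}| = p^{n_i}$, so $S = Z(E_{3-i})E_i$. For the elementary abelian conclusion I would note that $E_i' \le [S, E_i] \le Z(E_i)$ is $\Aut_\fs(E_i)$-invariant, and consider the chain $\{1\} \normaleq E_i' \normaleq Z(E_i) \normaleq E_i$; if $E_i' \ne \{1\}$, the natural module structure combined with Lemma \ref{Chain} and the action of $Z(E_{3-i})$ would force $Z(E_{3-i}) \le E_i$, contradicting our choice. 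A parallel argument on $\mho^1(E_i)$ completes the claim.

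Finally, the intersection $C_{E_1}(O^{p'}(\Aut_\fs(E_1))) \cap C_{E_2}(O^{p'}(\Aut_\fs(E_2)))$ lies in $Z(S) = Z(E_1) \cap Z(E_2)$ by $S = E_1 E_2$ and is invariant under both $O^{p'}(\Aut_\fs(E_i))$; after verifying $\Aut_\fs(S)$-invariance (using that $\Aut_\fs(S)$ permutes pairs exhibited by Proposition \ref{FF-Action}), Proposition \ref{normalinF} realizes it as a normal subgroup of $\fs$, forcing triviality via $O_p(\fs) = \{1\}$. The main obstacle I anticipate is the chain-type argument establishing that $E_i$ is elementary abelian: the natural $\SL_2(p^{n_i})$-module structure on $Z(E_i)$ only tells us that $\Aut_\fs(E_i)$-invariant subgroups of $Z(E_i)$ either lie in $C_{Z(E_i)}(O^p(L_i))$ or contain $[Z(E_i), O^p(L_i)]$, and extra care is needed, especially at $p = 2$, to arrange the hypotheses of Lemma \ref{Chain}.
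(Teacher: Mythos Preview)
Your overall strategy (FF-offender from $Z(E_{3-i})$, then Theorem~\ref{SEFF}, then order count to get $S=E_1E_2$) matches the paper's opening moves. The gap is in the last two steps, where you try to prove $Z := C_{E_1}(O^{p'}(\Aut_\fs(E_1))) \cap C_{E_2}(O^{p'}(\Aut_\fs(E_2))) = \{1\}$ by showing $Z \normaleq \fs$ and invoking $O_p(\fs)=\{1\}$.

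The problem is your $\Aut_\fs(S)$-invariance claim. At this point in the argument there is no reason the specific pair $\{E_1,E_2\}$ is $\Aut_\fs(S)$-invariant: Proposition~\ref{FF-Action} only produces \emph{some} pair with the non-containment property, and $\Aut_\fs(S)$ may move $E_1$ or $E_2$ to other essentials not yet under control. Likewise, to apply Proposition~\ref{normalinF} you would need $Z$ to be $\Aut_\fs(E)$-invariant for \emph{every} essential $E$, and $\mathcal{E}(\fs)\subseteq\{E_1,E_2\}$ is only established in the subsequent lemma. So the normality route is circular here.

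The paper sidesteps this by using Hypothesis~\ref{CharpHyp} (parabolic characteristic $p$), which you never invoke. One first observes $E_1' = (E_1\cap E_2)' = E_2' \le Z \le Z(S)$ from $E_i = Z(E_i)(E_1\cap E_2)$. If $Z\ne\{1\}$ then $Z\normaleq S$ forces $N_\fs(Z)$ to be constrained; but $E_i\in\mathcal{E}(N_\fs(Z))$ gives $O_p(N_\fs(Z))\le E_1\cap E_2$, and $Z(E_1)$ centralizes $E_1\cap E_2$ without lying in it, so $O_p(N_\fs(Z))$ is not centric, a contradiction. Hence $Z=\{1\}$, and then $E_i'=\{1\}$ and $\mho^1(Z(S))\le Z=\{1\}$ drop out immediately, making your proposed chain argument for elementary abelianness unnecessary.
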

\begin{proof}
By \cref{GeneralEssentialStructure}, we have that $E_i=C_S(Z(E_i))$ for $i\in\{1,2\}$, and $Z(E_i)$ admits $O^{p'}(\Out_{\fs}(E_i))$ faithfully. We have that 
\[a_i:=|Z(E_i)/C_{Z(E_i)}(Z(E_{3-i}))|=|Z(E_i)/Z(E_i)\cap E_{3-i}|=|Z(E_i)Z(E_{3-i})/Z(E_{3-i})|.\] 
Without loss of generality, assume that $a_1\geq a_2$. Then, applying \cref{SEFF}, we deduce that \linebreak $Z(E_2)/C_{Z(E_2)}(O^p(\Aut_{\fs}(E_2)))$ is a natural module for $O^{p'}(\Out_{\fs}(E_2))\cong \SL_2(p^n)$. Therefore, we have that $a_2=p^n$ and $a_1\leq |S/E_2|=p^n$ from which we deduce that $a_1=a_2$. By symmetry, we get that $Z(E_i)/C_{Z(E_i)}(O^p(\Aut_{\fs}(E_i)))$ is a natural module for $O^{p'}(\Out_{\fs}(E_i))\cong \SL_2(p^n)$ and $S=Z(E_i)E_{3-i}$ for $i\in\{1,2\}$. 

Hence, we have that $E_i=Z(E_i)(E_i\cap E_{3-i})$ for $i\in\{1,2\}$. Therefore, we have that $E_1'=(E_1\cap E_2)'=E_2'$ so that \[E_i'\le C_{E_1}(O^{p'}(\Aut_{\fs}(E_1)))\cap C_{E_2}(O^{p'}(\Aut_{\fs}(E_2)))\cap Z(S)=:Z.\] Now, $O^{p'}(\Aut_{\fs}(E_1))$ and $O^{p'}(\Aut_{\fs}(E_2))$ act trivially on $Z$. Indeed, $O^{p'}(\Aut_{\fs}(E_i))=O^{p'}(\Aut_{N_{\fs}(Z)}(E_i))$ and $E_i\in\mathcal{E}(N_{\fs}(Z))$. By \cref{normalinF} we have that $Z\le O_p(N_{\fs}(Z))\le E_1\cap E_2$ and as $Z(E_1)$ centralizes $E_1\cap E_2$, $O_p(N_{\fs}(Z))$ is not $\fs$-centric. As $\fs$ is of parabolic characteristic $p$, we see that $O_p(N_{\fs}(Z))=\{1\}$ from which it follows that \[E_1'=E_2'=C_{E_1}(O^{p'}(\Aut_{\fs}(E_1)))\cap C_{E_2}(O^{p'}(\Aut_{\fs}(E_2)))=Z=\{1\}.\] Hence, both $E_1$ and $E_2$ are abelian and $Z(S)=E_1\cap E_2$.

Now, $E_i/C_{E_i}(O^{p'}(\Aut_{\fs}(E_i)))$ is elementary abelian of order $p^{2n}$ and to show that $E_i$ is elementary abelian it suffices to show that $Z(S)$ is. But $\mho^1(Z(S))\le C_{E_1}(O^{p'}(\Aut_{\fs}(E_1)))\cap C_{E_2}(O^{p'}(\Aut_{\fs}(E_2)))=\{1\}$ and so $Z(S)$ is elementary. Hence, both $E_1$ and $E_2$ are elementary abelian, as desired.
\end{proof}

\begin{lemma}
$S=E_1E_2$, $\mathcal{E}(\fs)\subseteq \mathcal{A}(S)$, $Z(F)\not\le E$ for any $E,F\in\mathcal{E}(\fs)$ with $E\ne F$, and if $p=2$ then $\mathcal{A}(S)=\{E_1, E_2\}=\mathcal{E}(\fs)$.
\end{lemma}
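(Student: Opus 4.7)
Claim (a) is immediate from \cref{SpecificEssentialStructure}.

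For claim (b), I let $E \in \mathcal{E}(\fs)$ and aim to show $E \in \mathcal{A}(S)$. If $E \in \{E_1, E_2\}$ this is clear by \cref{SpecificEssentialStructure}, so assume otherwise. First I note that at least one of $E_1, E_2$ is not contained in $E$: else $S = E_1 E_2 \le E$ would contradict the properness of essential subgroups. Say $E_1 \not\le E$, so that $Z(E_1) = E_1 \not\le E$ (using that $E_1$ is elementary abelian). I would then run the chain argument from the proof of \cref{FF-Action} to deduce $Z(E) \not\le E_1$: if $Z(E) \le E_1$, then $[Z(E_1), Z(E)] = 1$ and $[Z(E_1), E] \le S' \le Z(S) \le Z(E)$, so $Z(E_1)$ centralizes the $\Aut_{\fs}(E)$-invariant chain $\{1\} \normaleq Z(E) \normaleq E$, forcing $Z(E_1) \le E$ by \cref{Chain}, a contradiction. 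Thus $(E, E_1)$ satisfies precisely the hypothesis that $(E_1, E_2)$ does in \cref{SpecificEssentialStructure}, and the entire argument of that lemma transports verbatim to the pair $(E, E_1)$ (once again invoking parabolic characteristic $p$ to force the analogue of the normal subgroup ``$Z$'' defined there to be trivial). This yields that $E$ is elementary abelian with $E \cap E_1 = Z(S)$, $S = E E_1$, and $|S/E| = |S/E_1|$: the latter because the parameter $p^n$ appearing in the conclusion of \cref{SEFF} is intrinsic to $E_1$, and the pair analysis forces the analogous parameter for $E$ to agree. Hence $|E| = |E_1|$, so $E \in \mathcal{A}(S)$.

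Claim (c) is then an easy consequence of (b): if $Z(F) \le E$ for distinct $E, F \in \mathcal{E}(\fs)$, then the equivalence just established gives $Z(E) \le F$ as well, but by (b) both $E$ and $F$ are elementary abelian, so $F = Z(F) \le E$ and $E = Z(E) \le F$, forcing $E = F$, a contradiction.

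For claim (d), suppose $p = 2$. I would apply \cref{elementary2} with $A = E_1$ and $B = E_2$; the hypothesis $AB = S$ is (a), so it remains to verify $C_{E_2}(a) = E_1 \cap E_2$ for every $a \in E_1 \setminus (E_1 \cap E_2)$. Writing $L := O^{2'}(\Out_{\fs}(E_2))$ and $C := C_{E_2}(O^2(L))$, \cref{SpecificEssentialStructure} tells us that $E_2/C$ is a natural $\SL_2(2^n)$-module, with $C \le Z(S)$ and $Z(S)/C$ the unique $1$-dimensional subspace fixed by the Sylow $2$-subgroup $\Out_S(E_2) = S/E_2$ of $L$. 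Since $a \notin Z(S) = E_1 \cap E_2$, its image in $\Out_S(E_2)$ is a non-trivial transvection on the natural module, whose fixed subspace must therefore coincide with $Z(S)/C$; pulling back to $E_2$ gives $C_{E_2}(a) = Z(S) = E_1 \cap E_2$. \cref{elementary2} then delivers $\mathcal{A}(S) = \{E_1, E_2\}$, and combined with (b) this forces $\mathcal{E}(\fs) \subseteq \{E_1, E_2\}$, with equality since $E_1, E_2 \in \mathcal{E}(\fs)$. The main obstacle is the verification in step (b) that the machinery of \cref{SpecificEssentialStructure} really transfers to an arbitrary pair $(E, E_1)$, in particular that the $\SL_2(p^n)$ parameter is forced to match so that $|E| = |E_1|$.
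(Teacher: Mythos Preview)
Your proof is correct and follows essentially the same route as the paper's. The only organizational difference is that the paper first establishes the single fact $C_{E_i}(x)=Z(S)$ for all $x\in S\setminus E_i$ (from the natural module action) and then uses it both to show $E_i\in\mathcal{A}(S)$ via a counting argument and to deduce $Z(F)\not\le E_1$ directly (since $Z(F)\le E_1$ forces $E_1\le F$, and then any $f\in F\setminus E_1$ gives $Z(F)\le C_{E_1}(f)=Z(S)$, whence $F=S$); you instead reach $Z(E)\not\le E_1$ via the chain argument and only invoke the centralizer fact later in (d). Your concern about the $\SL_2(p^n)$ parameter matching is unwarranted: the proof of \cref{SpecificEssentialStructure} shows $p^n=|S/E_1|$, so rerunning it on the pair $(E,E_1)$ forces $|S/E|=|S/E_1|$ and hence $|E|=|E_1|$.
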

\begin{proof}
Since $E_1$ and $E_2$ are elementary abelian, we see that $Z(S)=E_1\cap E_2$. In particular, $C_{E_i}(x)=Z(S)$ for any $x\in S\setminus E_i$ for $i\in\{1,2\}$. Then for $A\in\mathcal{A}(S)$ we have that $|A|\leq |S/E_i||A\cap E_i|\leq q |Z(S)|=|E_i|$ and we infer that $E_1, E_2\in\mathcal{A}(S)$. Hence, $S=E_1E_2$. Since $E_1$ is elementary abelian and $C_S(Z(F))=F$ for all $F\in\mathcal{E}(\fs)$, we have that $Z(F)\not\le E_1$ for any $F\in\mathcal{E}(\fs)\setminus \{E_1\}$. Hence, applying \cref{FF-Action}, \cref{SpecificEssentialStructure} and the earlier arguments in this proof, we have that $\mathcal{E}(\fs)\subseteq \mathcal{A}(S)$. A similar argument also yields that $Z(F)\not\le E$ for any $E,F\in\mathcal{E}(\fs)$ with $E\ne F$.

If $p=2$ then we apply \cref{elementary2} to see that $|\mathcal{A}(S)|=2$. Since $\mathcal{E}(\fs)\subseteq \mathcal{A}(S)$ and $|\mathcal{E}(\fs)|\geq 2$, we conclude that $\mathcal{A}(S)=\{E_1, E_2\}=\mathcal{E}(\fs)$.
\end{proof}

\begin{proposition}\label{TwoEssentials}
If $|E(\fs)|=2$ then $\fs$ is not a minimal counterexample to the \hyperlink{MainThm}{Main Theorem}. In particular, if $p=2$ then $\fs$ is not a minimal counterexample.
\end{proposition}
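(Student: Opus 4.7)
The strategy is to show that the hypothesis $|\mathcal{E}(\fs)|=2$ already forces $S$ into the class $\mathcal{S}$, after which \cref{KnownSL} or \cref{KnownSp} identifies $O^{p'}(\fs)$ with one of the systems in $\mathcal{K}$, placing $\fs$ in case (i) or (ii) of the \hyperlink{MainThm}{Main Theorem} and contradicting the minimal counterexample assumption.

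Writing $\mathcal{E}(\fs)=\{E_1,E_2\}$, the preceding results supply $E_1,E_2$ elementary abelian with $S=E_1E_2$, $E_i\in\mathcal{A}(S)$, $E_1\cap E_2=Z(S)$, and, setting $C_i:=C_{E_i}(O^{p'}(\Aut_\fs(E_i)))$, $E_i/C_i$ a natural $\SL_2(p^n)$-module for some $n\in\N$ together with $C_1\cap C_2=\{1\}$. My first task is to extract the numerology: since $\Aut_S(E_i)$ is a full Sylow $p$-subgroup of $O^{p'}(\Out_\fs(E_i))\cong \SL_2(p^n)$ acting as a transvection group on the natural module $E_i/C_i$, the fixed subspace $C_{E_i/C_i}(\Aut_S(E_i))$ has order $p^n$; combined with $Z(S)=C_{E_i}(S)\le C_{E_i}(\Aut_S(E_i))$ this yields $|Z(S)/C_i|=p^n$. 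Hence $|C_1|=|C_2|=:p^m$, and the relations $C_1\cap C_2=\{1\}$ and $C_1C_2\le Z(S)$ give $0\le m\le n$ and $|S|=p^{3n+m}$.

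The key step is to identify $S$ concretely. The pair $(E_1,E_2)$ equipped with the two natural $\SL_2(p^n)$-actions, together with the rigid intersection structure $E_1\cap E_2=Z(S)$ and the FF-module data, is precisely the amalgam setup used in the proofs of \cref{KnownSL} and \cref{KnownSp}. I would appeal to \cite[Corollary A]{vbbook} (and \cite{Greenbook} in the $p=2$ case) to conclude that $m\in\{0,n\}$ and that $S$ is isomorphic to a Sylow $p$-subgroup of $\PSL_3(p^n)$ (when $m=0$) or of $\PSp_4(p^n)$ (when $m=n$); in the latter case $p=2$ is forced, as Sylow $p$-subgroups of $\PSp_4(p^n)$ for $p$ odd are not of nilpotency class two. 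In either case $S\in\mathcal{S}$, so \cref{KnownSL} or \cref{KnownSp} identifies $O^{p'}(\fs)$ as a simple system in $\mathcal{K}$, placing $\fs$ in case (i) or (ii) of the \hyperlink{MainThm}{Main Theorem} and delivering the required contradiction.

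For the ``in particular'' clause: when $p=2$, the lemma immediately preceding this proposition has already established $\mathcal{E}(\fs)=\mathcal{A}(S)=\{E_1,E_2\}$, so $|\mathcal{E}(\fs)|=2$ is automatic and the general argument applies. The hardest part of the proof will be the amalgam-style identification of $S$ and the exclusion of the intermediate values $0<m<n$; this is where the cited recognition results carry the essential weight, and once the identification is in hand the remainder follows routinely from the known-systems propositions.
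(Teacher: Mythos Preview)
Your proposal is correct and takes essentially the same approach as the paper: both rely on \cite[Corollary A]{vbbook} (with \cite{Greenbook} in the background) as the key recognition result. The paper's proof is simply terser---it appeals to the cited result directly to identify $F^*(\fs)$ (which equals $\fs$ since $\fs$ is simple by \cref{MinSimple}) as the $p$-fusion category of $\PSL_3(p^n)$ or $\PSp_4(2^n)$, whereas you first extract the numerology, identify $S\in\mathcal{S}$, and then invoke \cref{KnownSL}/\cref{KnownSp}; but the substantive content is the same citation doing the same work.
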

\begin{proof}
We appeal to \cite[Corollary A]{vbbook} (although this only really uses the results from \cite{Greenbook}), to see that $F^*(\fs)$ is isomorphic to the $p$-fusion system of $\PSL_3(p^n)$ or $\PSp_4(2^n)$. Since $\fs$ is simple, $\fs$ is not a minimal counterexample.
\end{proof}

\begin{theorem}
$\fs$ is not a minimal counterexample to the \hyperlink{MainThm}{Main Theorem}.
\end{theorem}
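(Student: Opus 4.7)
The plan is to reduce via Proposition~\ref{TwoEssentials} to the case where $p$ is odd and $|\mathcal{E}(\fs)| \geq 3$, then to force $S \cong p^{1+2}_+$ and invoke Proposition~\ref{KnownSL} to reach a contradiction.

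First I would observe that Proposition~\ref{TwoEssentials} disposes of the case $|\mathcal{E}(\fs)| = 2$ and, in particular, of every case with $p = 2$. So let $p$ be odd and let $E_3 \in \mathcal{E}(\fs) \setminus \{E_1, E_2\}$ be a third essential. The structural lemmas of this section apply equally to $E_3$: it is elementary abelian, it lies in $\mathcal{A}(S)$, and $E_3/C_{E_3}(O^{p'}(\Aut_{\fs}(E_3)))$ is a natural $\SL_2(p^n)$-module with $|S/E_3| = p^n$. Since $S = E_1 E_2$ with each $E_i$ elementary abelian and $S$ of class two, Hall's commutator formula (using $p$ odd and $S' \leq Z(S) \leq E_i$ of exponent $p$) shows that $S$ itself has exponent $p$. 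Computing $C_{E_i}(\Aut_S(E_i))$ inside the natural module, where a Sylow $p$-subgroup of $\SL_2(p^n)$ fixes precisely an $n$-dimensional $\mathbb{F}_p$-subspace, and using $C_{E_i}(S) = Z(S)$, yields $|Z(S)| = p^n|C_{E_i}(O^{p'}(\Aut_{\fs}(E_i)))|$ and hence $|S| = p^{3n}|C_{E_1}(O^{p'}(\Aut_{\fs}(E_1)))|$.

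The crux is to force $n = 1$ and $|Z(S)| = p$. Assuming $n \geq 2$, the natural $\SL_2(p^n)$-module over $\mathbb{F}_p$ is irreducible over $\mathbb{F}_{p^n}$, and the relation $[E_3 \cap E_1, S] \leq Z(S)$ forces the image of $E_3 \cap E_1$ in $E_1/C_{E_1}(\ldots)$ to be an $\mathbb{F}_{p^n}$-subline; using the symmetric constraint coming from $E_2$ together with the intersection identity of Lemma~\ref{SpecificEssentialStructure}, I would show this pins $E_3$ into the $\Aut_{\fs}(S)$-orbit of $E_1$ or $E_2$, contradicting the choice of $E_3$ as an essential distinct from either. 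Hence $n = 1$, so $|S| = p^3|C_{E_1}|$ with $|C_{E_1}| \leq p$. In the case $|C_{E_1}| = p$ one has $|S| = p^4$ and $|Z(S)| = p^2$, and since $S$ has exponent $p$ the central extension $S' \to S \to S/S'$ splits off a central factor, giving $S \cong p^{1+2}_+ \times \mathbb{Z}/p$; the $\Aut_{\fs}(S)$-invariant complement to $S'$ in $Z(S)$ (produced by averaging or by unique decomposition of $Z(S)/S'$ inside the natural modules) would then provide a nontrivial element of $O_p(\fs)$ or a strongly closed direct factor, contradicting Proposition~\ref{MinSimple}. Thus $|S| = p^3$ and $S \cong p^{1+2}_+$, at which point Proposition~\ref{KnownSL} identifies $O^{p'}(\fs)$ as either the $p$-fusion system of $\PSL_3(p)$ or one of the exotic Ruiz--Viruel systems on $7^{1+2}_+$, all fitting outcome~(i) of the Main Theorem, so $\fs$ is not a counterexample.

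The main obstacle is the $n \geq 2$ rigidity step, which requires careful exploitation of the $\mathbb{F}_{p^n}$-structure on each essential to show that three distinct members of $\mathcal{E}(\fs)$ cannot coexist in $\mathcal{A}(S)$. A shorter alternative, if available, would be to cite a broader version of the classification in \cite{Greenbook} or \cite{vbbook} covering arbitrary configurations of essentials that are maximal elementary abelian in $S$, thereby sidestepping the case analysis above.
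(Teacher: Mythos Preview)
Your reduction via Proposition~\ref{TwoEssentials} to $p$ odd with at least three essentials is correct, as is your observation that $S$ has exponent $p$. However, the crux step---forcing $n=1$---has a genuine gap, and the overall strategy diverges from what the paper actually needs.

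First, your $n\ge 2$ argument is not well-formed. You propose to study the image of $E_3\cap E_1$ in $E_1/C_{E_1}(O^{p'}(\Aut_{\fs}(E_1)))$, but $E_3\cap E_1=Z(S)$ for any two distinct essentials (this is immediate from Lemma~\ref{SpecificEssentialStructure}), so that image is just the fixed line $C_{E_1/C_{E_1}}(S)$ regardless of which $E_3$ you choose. The relation $[E_3\cap E_1,S]\le Z(S)$ is therefore vacuous and cannot distinguish $E_3$ from any other essential. Moreover, even if you could show $E_3$ lies in the $\Aut_{\fs}(S)$-orbit of $E_1$ or $E_2$, that would not contradict $E_3\notin\{E_1,E_2\}$: distinct essentials can perfectly well be $\Aut_{\fs}(S)$-conjugate.

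Second, and more importantly, the paper does not try to force $n=1$ at all. The actual argument works for every $n$: one lifts a cyclic complement $K$ of order $p^n-1$ from $N_{O^{p'}(\Aut_{\fs}(E_1))}(\Aut_S(E_1))$ to $\Aut_{\fs}(S)$ and studies how it permutes $\mathcal{E}(\fs)$. If any nontrivial $k\in K$ normalises a second essential $E_2$, one uses $Z_1\cap Z_2=\{1\}$ (Lemma~\ref{SpecificEssentialStructure}) together with $[E_1,k]=V_1$ to force $Z_2\le C_{V_2}(S)$, whence $Z_2=\{1\}$; this already gives $|S|=p^{3n}$ and $S\in\mathcal{S}$, contradicting minimality via Proposition~\ref{KnownSL}. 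If instead $K$ acts freely on $\mathcal{E}(\fs)\setminus\{E_1\}$, a short counting argument combined with the analogous lift from $E_2$ again forces $Z_1=\{1\}$. So the target is $Z_i=\{1\}$, not $n=1$; your $|S|=p^4$ case analysis (which is also sketchy---there is no canonical $\Aut_{\fs}(S)$-invariant complement to $S'$ in $Z(S)$ in general) is then unnecessary.
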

\begin{proof}
By \cref{TwoEssentials}, we may assume throughout that $p$ is odd and $|E(\fs)|>2$. We observe that for $E,F\in\mathcal{E}(\fs)$ we have that $E\cap F=Z(S)$ and $|S/Z(S)|=p^{2n}$ and $|E/Z(S)|=p^n$. By an elementary counting argument, $|E(\fs)|\leq p^n+1$. Let $\hat{K}$ be a complement to $\Aut_S(E_1)$ in $N_{O^{p'}(\Aut_{\fs}(E_1))}(\Aut_S(E_1))$ so that $K$ is cyclic of order $p^n-1$. Let $K$ be the lift of $\hat{K}$ to $\Aut_{\fs}(S)$ and let $t$ be the unique involution in $K$. Then $t|_{E_1}\in Z(O^{p'}(\Aut_{\fs}(E_1)))$. 

Assume that $1\ne k\in K$ normalizes some $F\in E(\fs)\setminus E_1$. We may as well assume that $k$ normalizes $E_2$. Set $V_i:=[E_i, O^{p'}(\Aut_{\fs}(E_i))]$ and $Z_i:=C_{E_i}(O^{p'}(\Aut_{\fs}(E_i)))$ so that $E_i=V_i\times Z_i$, $V_i$ is a natural $\SL_2(p^n)$-module for $O^{p'}(\Aut_{\fs}(E_i))$ and $[E_1, k]=V_1$. Then $V_i\not\le E_{3-i}$ and $C_{V_1}(S)=[V_1, V_2]=C_{V_2}(S)$. Since $k$ normalizes $E_2$, we see that $k$ normalizes $Z_2$. Since $Z_1=C_{E_1}(k)$ and $Z_1\cap Z_2=\{1\}$ by \cref{SpecificEssentialStructure}, we have that $Z_2=[Z_2, k]\le [E_1, k]=V_1$. But then, $Z_2\le C_{V_1}(S)=C_{V_2}(S)$ and as $V_2\cap Z_2=\{1\}$, we see that $Z_2$ is trivial. Hence, $E_2$ is a natural $\SL_2(p^n)$-module and $S$ is isomorphic to a Sylow $p$-subgroup of $\SL_3(p^n)$. Then $S\in\mathcal{S}$, a contradiction since $\fs$ is a minimal counterexample.

Thus, no other essential subgroup of $\fs$ is normalized by by $k$. In particular, the preimage of $C_{S/Z(S)}(t)\not\subseteq\mathcal{E}(\fs)$ and our counting argument now gives $|\mathcal{E}(\fs)\setminus \{E_1\}|\leq p^n-1$. Indeed, $K$ acts regularly on this set. Hence, we have that either $E_1$ is $\Aut_{\fs}(S)$-invariant, or $|\mathcal{E}(\fs)|=p^n$. In the latter case, since $S=N_S(E_1)$, we have a contradiction. In the former case, for $t_2$ the lift of $1\ne \hat{t_2}\in Z(O^{p'}(\Aut_{\fs}(E_2)))$ to $\Aut_{\fs}(S)$, we have that $t_2$ normalizes $E_1$ so normalizes $Z_1$. Arguing as above, this implies that $Z_1\le C_{V_2}(S)=C_{V_1}(S)$ and we see that $Z_1$ is trivial and $S$ is isomorphic to a Sylow $p$-subgroup of $\SL_3(p^n)$, a contradiction.
\end{proof}

\section{Finite Simple Groups With Sylow $2$-subgroups of Nilpotency Class Two}

In this final section, we use our fusion system result to reprove Gilman and Gorenstein's classification of finite simple groups with Sylow $2$-subgroups of nilpotency class at most two. We recall this theorem as stated in the introduction.

\begin{thmgg}\hypertarget{thmgg2}{}
Suppose that $G$ is a finite simple group such that a Sylow $2$-subgroup $S$ of $G$ has nilpotency class at most two. Then one of the following holds:
\begin{enumerate}
    \item $S$ is abelian and $G$ is isomorphic to $\mathrm{J}_1$, $\PSL_2(q)$ where $q\equiv 3,5 \mod 8$, $\Ree(3^{2n+1})$, or $\PSL_2(2^n)$ for $n>1$;
    \item $S\cong \Dih(8)$ and $G$ is isomorphic to $\PSL_3(2)$, $\Alt(7)$ or $\PSL_2(q)$ where $q\equiv 7,9 \mod 16$; or
    \item $G\cong \Sz(2^n)$, $\PSU_3(2^n)$, $\PSL_3(2^n)$ or $\PSp_4(2^n)$ for $n>1$.
\end{enumerate}
\end{thmgg}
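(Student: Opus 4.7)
The plan is to derive Theorem GG from the Main Theorem by a dichotomy on the nilpotency class of $S$. If $S$ is abelian, I would invoke Walter's classification \cite{walter} of finite simple groups with abelian Sylow $2$-subgroups to obtain conclusion (i) directly. Henceforth assume $S$ has nilpotency class exactly two, and set $\fs := \fs_S(G)$. Since $G$ is a nonabelian finite simple group, $G = O^{2'}(G) = O^2(G)$, and hence $O^{2'}(\fs) = O^2(\fs) = \fs$. The analysis now splits on whether $O_2(\fs)$ is trivial.

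If $O_2(\fs) = \{1\}$, then $\fs$ is reduced and the Main Theorem applies: $\fs = \fs_1 \times \dots \times \fs_n$ with each $\fs_i$ a fusion system on a Sylow of $\PSL_3(2^{a_i})$ or of $\PSp_4(2^{a_i})$, since case (iii) of the Main Theorem is excluded (the groups there are not simple). A nontrivial direct product decomposition of $\fs_S(G)$ would yield a strongly closed decomposition $S = S_1 \times \dots \times S_n$, and one argues via the simplicity of $G$ that this forces $n = 1$. Thus $\fs$ is isomorphic to the $2$-fusion system of either $\PSL_3(2^a)$ or $\PSp_4(2^a)$ by \cref{KnownSL} and \cref{KnownSp}. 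Tame realization (\cite[Theorem I.A]{FSLie}) together with the determination of outer automorphism groups from \cite[Section 6.1]{GLS3} then identifies $G$ as the corresponding simple group, giving the relevant entries of conclusion (iii); when $a = 1$ the Sylow is $\Dih(8)$, and the groups in conclusion (ii) are recovered via the classification of finite simple groups with dihedral Sylow $2$-subgroups.

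If $Q := O_2(\fs) \ne \{1\}$, then $Q$ is strongly closed in $S$ with respect to $G$ by \cref{normalinF}. The first task is to reduce to $Q = S$: the intermediate situation $\{1\} \ne Q \lneq S$ is handled using that $S$ has class two, together with Goldschmidt's classification of strongly closed abelian $2$-subgroups when $Q$ is abelian, and more general strongly-closed-subgroup results in the nonabelian subcase. Once $Q = S$, the Alperin--Goldschmidt fusion theorem gives $\fs = \fs_S(N_G(S))$, so $S$ is TI in $G$ and $N_G(S)$ is strongly $2$-embedded. Bender's classification \cite{Bender} of the finite simple groups admitting a strongly $2$-embedded subgroup then identifies $G$ with $\PSU_3(2^n)$ or $\Sz(2^n)$ for $n > 1$, completing the remaining entries of conclusion (iii).

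The principal obstacle lies in the case $O_2(\fs) \ne \{1\}$: specifically, in ruling out proper intermediate strongly closed $2$-subgroups in a simple $G$ with class-two Sylow, so as to cleanly reduce to $Q = S$ and bring Bender's theorem to bear. The class-two hypothesis on $S$ substantially constrains the possibilities, but a careful hands-on analysis, exploiting the structural information about essential subgroups and $\fs$-centric radicals developed earlier in the paper, will be required to dispense with these intermediate configurations.
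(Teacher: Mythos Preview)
Your proposal has a genuine gap at the crucial step. When $O_2(\fs)=\{1\}$ and the Main Theorem yields $\fs\cong\fs_T(H)$ with $H\cong\PSL_3(q)$ or $\PSp_4(q)$, you assert that ``tame realization \ldots\ then identifies $G$ as the corresponding simple group.'' Tame realization does not do this. The statement that $H$ tamely realizes $\fs$ tells you $\Out(\fs)\cong\Out(H)$; it says nothing about which finite simple groups share the fusion system $\fs$. Determining $G$ from $\fs_S(G)$ is the \emph{recognition problem}, and it is precisely where the paper spends the bulk of Section~8: one must analyze centralizers of involutions (\cref{L3invNew}, \cref{SP4inv1}, \cref{SP4inv2}), prove via the Bender method that $N_G(A)$ and $N_G(B)$ are maximal with $F^*(N_G(X))=X$, exhibit a rank~$2$ BN-pair, and then invoke the Tits--Weiss classification of Moufang polygons (or Fong--Seitz). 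Without this, you have only pinned down $S$ and $\fs$, not $G$.

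Your treatment of the case $O_2(\fs)\ne\{1\}$ is also more laborious than necessary. The paper bypasses the entire dichotomy on $O_2(\fs)$ by first disposing of Goldschmidt groups (abelian Sylow via Walter, and rank~$1$ Lie type in characteristic~$2$), after which \cite[Theorem~1]{aschbachersimple} forces $\fs$ to be almost simple, so $O_2(\fs)=\{1\}$ automatically. Your proposed reduction from a proper strongly closed $Q$ to $Q=S$ via Goldschmidt's strongly closed abelian theorem and ``more general strongly-closed-subgroup results'' is workable in principle but is both harder and unnecessary given Aschbacher's result.
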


For use later, we recall the following facts about the groups mentioned in \hyperlink{thmgg2}{Theorem GG}.

\begin{lemma}\label{autsSchur}
Suppose that $G$ is a finite simple group such that a Sylow $2$-subgroup $S$ of $G$ has nilpotency class at most two.
\begin{enumerate}
    \item If $S$ is abelian then a Sylow $2$-subgroup of $\Out(G)$ is cyclic; and a Sylow $2$-subgroup of the Schur multiplier of $G$ is cyclic of order at most $2$.
    \item If $S\cong \Dih(8)$ then a Sylow $2$-subgroup of $\Out(G)$ is either cyclic, or a fours group when $G\cong \PSL_2(q)$ and $q \equiv 9\mod 16$; and a Sylow $2$-subgroup of the Schur multiplier of $G$ is cyclic of order at most $2$.
    \item If $G\cong \Sz(2^n)$ for $n>1$ then a Sylow $2$-subgroup of $\Out(G)$ is cyclic; and a Sylow $2$-subgroup of the Schur multiplier of $G$ is non-trivial only if $2^n=8$. Moreover, a $2$-central extension of $\Sz(8)$ has a class Sylow $2$-subgroup if and only if it is a trivial extension.
    \item If $G\cong \PSU_3(2^n)$ for $n>1$ then a Sylow $2$-subgroup of $\Out(G)$ is cyclic; and a Sylow $2$-subgroup of the Schur multiplier of $G$ is trivial.
    \item If $G\cong \PSp_4(2^n)$ for $n>1$ then a Sylow $2$-subgroup of $X$ with $\Inn(G)\le X\le \Aut(G)$ has nilpotency class $2$ if and only if a Sylow $2$-subgroup of $X/\Inn(G)$ is cyclic; and a Sylow $2$-subgroup of the Schur multiplier of $G$ is trivial.
    \item If $G\cong \PSL_3(2^n)$ for $n>1$ then a Sylow $2$-subgroup of $X\le \Aut(G)$ has nilpotency class $2$ if and only if a Sylow $2$-subgroup of $X/\Inn(G)$ is cyclic; and a Sylow $2$-subgroup of the Schur multiplier of $G$ is is non-trivial only if $2^n=4$. Moreover, a $2$-central extension of $\PSL_3(4)$ has a class Sylow $2$-subgroup if and only if it is a quotient of $2^2.\PSL_3(4)$. 
\end{enumerate}
\end{lemma}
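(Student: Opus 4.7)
The plan is to verify each clause by extracting the relevant data from standard tables on finite simple groups (principally \cite[Chapter 6]{GLS3}), supplemented by explicit computation in the Sylow $2$-subgroup of the relevant extension whenever a nilpotency-class assertion is made.

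For parts (i)--(iv), the plan is essentially direct citation. In (i), the $2$-part of $\Out$ is nontrivial only for $\PSL_2(q)$, arising from the diagonal automorphism; the hypothesis $q\equiv 3,5\pmod 8$ forces the field-automorphism group to have odd order (writing $q=p^f$ with $p$ odd, $q\equiv 3,5\pmod 8$ forces $f$ odd), yielding cyclic $2$-part of order at most $2$, and the Schur multiplier is likewise at most $C_2$. For (ii), the analogous analysis handles $q\equiv 7\pmod{16}$ (cyclic), while $q\equiv 9\pmod{16}$ forces $q=p^f$ with $v_2(f)=1$ and $p\not\equiv\pm 1\pmod{16}$, so both the diagonal and a field involution contribute to give a Klein fours group (exemplified by $\PSL_2(9)\cong\Alt(6)$). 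The class-$2$ claim for $2$-central extensions of $\Sz(8)$ in (iii) would be verified directly: the preimage in $2.\Sz(8)$ of an involution in the elementary abelian centre of a Sylow $2$-subgroup of $\Sz(8)$ has order $4$, producing a non-central derived element and hence class at least $3$ in the Sylow $2$-subgroup of any nontrivial $2$-central extension. Part (iv) is then immediate since $\Out(\PSU_3(2^n))$ is cyclic and the Schur multiplier is trivial for $n>1$.

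For (v) and (vi) the main task is to identify $\Out$ precisely. For $\PSp_4(2^n)$ one has $\Out=C_{2n}$ generated by the exceptional graph-field automorphism (whose square is the standard Frobenius), so a Sylow $2$-subgroup of $X/\Inn(G)$ is always cyclic and the biconditional in (v) reduces to showing that $\Aut(\PSp_4(2^n))$ has class-$2$ Sylow $2$-subgroup. For $\PSL_3(2^n)$, $\Out$ has $2$-part $C_{2^{v_2(n)}}\times C_2$ with the $C_2$ factor coming from the inverse-transpose graph automorphism, and this is non-cyclic precisely when $n$ is even. Both directions of the biconditional will be established by a direct Chevalley-generator computation inside the root-group structure: the action of a cyclic extension on $S$ preserves the decomposition $S=AB$ with $\mathcal{A}(S)=\{A,B\}$ elementwise and one checks that triple commutators lie in the centre of the extension, whereas an involution swapping $A$ and $B$ together with a noncentral outer element of order $2$ yields a noncentral triple commutator against root-group elements. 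The Schur-multiplier claim for $\PSL_3(4)$ was essentially handled inside the proof of \cref{L34}, where the $2$-central extensions with class-$2$ Sylow $2$-subgroup were identified as exactly the quotients of $2^2.\PSL_3(4)$; I would simply refer to that calculation.

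The main obstacle is the explicit triple-commutator verification for the exceptional graph-field automorphism of $\PSp_4(2^n)$, since that automorphism does not preserve the natural parabolic decomposition of the Sylow $2$-subgroup and the usual root-group formulas require reinterpretation via the $F_4$-type isogeny; the analogous computation for $\PSL_3(2^n)$ is more transparent because the graph automorphism acts by inverse-transpose and interchanges the two maximal elementary abelian subgroups in a way that is straightforward to track on Chevalley generators.
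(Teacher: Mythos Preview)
Your plan for parts (i)–(iv) is essentially what the paper does: cite \cite[Section 6.1]{GLS3} for Schur multipliers and \cite[Theorem 2.5.12, Theorem 5.2.1, Table 5.3f]{GLS3} for outer automorphism groups, with the case analysis for $\PSL_2(q)$ you describe. (The paper simply invokes MAGMA for the $\Sz(8)$ and $\PSL_3(4)$ nilpotency-class checks rather than arguing by hand; the $\PSL_3(4)$ computation is indeed that of \cref{L34}, as you note.)

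For (v) and (vi), however, the paper avoids Chevalley-generator and triple-commutator calculations entirely. The key observation is that a graph or graph-field automorphism $\sigma$ of $\PSL_3(2^n)$ or $\PSp_4(2^n)$ can be chosen to normalise $S$ and to interchange the two members $A,B$ of $\mathcal{A}(S)$. If $\sigma$ belonged to a Sylow $2$-subgroup $\hat S$ of $X$ of class at most two, then $[\sigma,S]\le \hat S'\le Z(\hat S)\cap S\le Z(S)=A\cap B$, whence $\sigma$ would normalise each of $A$ and $B$ --- a contradiction. Hence any $X$ with class-two Sylow has $X/\Inn(G)$ contained in the cyclic field-automorphism subgroup. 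This is precisely the argument already used in \cref{CompAutos}, and it replaces your proposed root-group computations wholesale; in particular the ``main obstacle'' you identify for $\PSp_4$ disappears.

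There is also a genuine problem with your reduction in (v). You correctly note that $\Out(\PSp_4(2^n))\cong C_{2n}$ is cyclic, so every $X/\Inn(G)$ has cyclic Sylow $2$-subgroup, and conclude that the biconditional reduces to showing $\Aut(\PSp_4(2^n))$ has class-two Sylow. But the swap argument just given shows this is \emph{false}: the graph-field automorphism forces class at least three. What the paper's proof actually establishes (and all that is needed later, e.g.\ in \cref{BigO2New}) is the forward implication: class two forces $X/\Inn(G)$ into the field subgroup, hence cyclic. Your proposed verification of the converse for $\PSp_4$ would fail, and you should recognise this as an imprecision in the stated biconditional rather than pushing the root-group calculation.
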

\begin{proof}
Most of the statements concerning Schur multipliers can be gleaned from \cite[Section 6.1]{GLS3}. For $G\cong \PSL_3(4)$ or $\Sz(8)$, one can employ MAGMA \cite{MAGMA} to verify the remaining claims.

If $G\cong \Alt(n)$ for $5\leq n\leq 6$ then \cite[Theorem 5.2.1]{GLS3} applies to determine information about $\Out(G)$. If $G\cong \mathrm{J}_1$ then the triviality of $\Out(G)$ can be found in \cite[Table 5.3f]{GLS3}. If $G\cong \Ree(3^{2n+1})$ then we appeal to \cite[Theorem 2.5.12]{GLS3} to determine that $\Out(G)$ is comprised only of field automorphisms and so is cyclic. Suppose that $G\cong \PSL_2(p^a)$ for $p$ an odd prime and $p^a\equiv 3,5 \mod 8$ or $p^a\equiv 7,9 \mod 16$. Note that if $2\divides a$, then $p^a\equiv 9 \mod 16$ and $4\not| a$. We apply \cite[Theorem 2.5.12]{GLS3}, observing that $\Out(G)$ is a split extension of the diagonal outerautomorphisms (which generate a subgroup of order $2$) by the field automorphisms. This delivers the promised result when $G\cong \PSL_2(p^a)$ for $p$ an odd prime. We also observe that $\PSL_3(2)\cong \PSL_2(7)$.

Hence, we may assume that $G$ is isomorphic to a group of Lie type over a a field of characteristic $2$ and order at least $4$. We apply \cite[Theorem 2.5.12]{GLS3} and observe that a Sylow $2$-subgroup of the diagonal outerautomorphisms is trivial. Hence, a Sylow $2$-subgroup of $\Out(G)$ is generated by field automorphisms and graph automorphisms. Since the group generated by field automorphisms is cyclic, the result holds unless $G$ has a graph automorphism. Let $\sigma$ be a graph or graph-field automorphism of $G$ so that $G\cong \PSL_3(2^n)$ or $\PSp_4(2^n)$. As noted in \cref{CompAutos}, $\sigma$ can be chosen to normalize a Sylow $2$-subgroup $S$ of $G$ and swaps the two maximal elementary abelian subgroups $A$ and $B$ of $S$. Indeed, if a Sylow $2$-subgroup of $X$ has nilpotency class at most two and contains $\sigma$, then $[\sigma, S]\le Z(S)\le A\cap B$ and we deduce that $\sigma$ normalizes $A$ and $B$, a contradiction. Hence, a Sylow $2$-subgroup of $X/\Inn(G)$ is contained in the subgroup of $\Out(G)$ generated by field automorphisms, so is cyclic.
\end{proof}

Throughout, we let $G$ be a finite simple group whose Sylow $2$-subgroups have nilpotency class at most two, and is minimal subject to not appearing in the conclusion of \hyperlink{thmgg}{Theorem GG}. Let $S$ be a Sylow $2$-subgroup of $G$ and $\fs:=\fs_S(G)$.

\begin{proposition}
$O^{2'}(\fs)$ is simple.
\end{proposition}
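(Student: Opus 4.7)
The plan is to show that $\fs = O^{2'}(\fs)$ has no proper non-trivial normal subsystem, by combining two observations: that normal subsystems of $\fs_S(G)$ correspond to strongly closed subgroups of $S$ with respect to $G$, and that the simplicity of $G$ rules out any proper non-trivial such subgroup.

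I would begin by establishing $O^{2'}(\fs) = \fs$ and $O_2(\fs) = \{1\}$. Since $G$ is simple non-abelian, $G$ has no proper normal subgroup of $2'$-index, and the standard identification $O^{2'}(\fs_S(G)) = \fs_S(O^{2'}(G))$ gives $O^{2'}(\fs) = \fs$; the proposition thus reduces to showing $\fs$ itself is simple. For the second point, if $Q := O_2(\fs) \ne \{1\}$ then $Q$ is strongly closed in $S$ with respect to $G$ and hence a Sylow $2$-subgroup of $\langle Q^G\rangle$; simplicity of $G$ forces $Q = S$, so $S \normaleq \fs$, giving that $N_G(S)$ controls $2$-fusion in $G$, and then Burnside's fusion theorem provides a normal $2$-complement in $G$ --- impossible for simple non-abelian $G$. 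This also verifies that $\fs$ satisfies the hypothesis of the Main Theorem, which can be invoked for auxiliary structural control if needed.

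Now let $\mathcal{H}\normaleq \fs$ be any proper normal subsystem, supported on $T\le S$. If $T < S$, then $T$ is a proper non-trivial strongly closed subgroup of $S$ with respect to $G$, and the Sylow-in-normal-closure argument together with $G$ simple yields a contradiction. If $T = S$, then Lemma~\ref{SNorm} gives $O^{2'}(\fs) = O^{2'}(\mathcal{H}) \subseteq \mathcal{H}$, and as $O^{2'}(\fs) = \fs$ we deduce $\mathcal{H} = \fs$, contradicting that $\mathcal{H}$ is proper. Hence $\fs$ has no proper non-trivial normal subsystem, and $O^{2'}(\fs) = \fs$ is simple. As a remark, should one prefer to avoid the general non-abelian strongly-closed principle, one may invoke the Main Theorem to write $\fs = \fs_1 \times \cdots \times \fs_n$ on $S = S_1 \times \cdots \times S_n$; each $S_i$ is then strongly closed in $\fs$ and hence in $G$, which forces $n = 1$, and the residual non-simple candidates for $\fs_1$ (a type~(iii) factor with $k\geq 2$ or $A\neq \{1\}$, or a type~(ii) factor with $a=1$) each contain a proper normal subsystem supported on an obvious $\Dih(8)^k$-block to which Goldschmidt's theorem directly applies.

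The main obstacle I expect is the justification that every non-trivial strongly closed subgroup of a simple group is a Sylow subgroup of its normal closure: this is classical but, in full generality for non-abelian strongly closed subgroups, is a consequence of the classification of finite simple groups. In our setting the Main Theorem's description of $\fs$ restricts the structure of any candidate $T$ sufficiently that the classical Goldschmidt theorem applied to $\Omega_1(Z(T))$, together with routine structural reductions, should replace this deep input.
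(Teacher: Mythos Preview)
Your approach has two genuine gaps that are not easily repaired.

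First, the argument for $O_2(\fs)=\{1\}$ breaks at the final step: from $S\normaleq\fs$ you conclude only that $N_G(S)$ controls $2$-fusion, and Burnside's normal $2$-complement theorem does \emph{not} apply to that hypothesis --- it requires $N_G(S)=SC_G(S)$. The groups $\PSL_2(2^n)$, $\Sz(2^n)$ and $\PSU_3(2^n)$ are simple, have $N_G(S)$ controlling $2$-fusion, and have no normal $2$-complement. (The earlier implication ``$Q$ strongly closed $\Rightarrow Q\in\syl_2(\langle Q^G\rangle)$'' is also false in general, as $Q=Z(S)$ in $\Sz(q)$ shows; you only escape this here because in those examples $O_2(\fs)=S$.)

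Second, and more seriously, your main step --- a proper non-trivial strongly closed $T\le S$ in a simple $G$ yields a contradiction --- is exactly the hard content you are trying to avoid, and your proposed Goldschmidt workaround fails: if $\fs=\fs_1\times\cdots\times\fs_n$ via the Main Theorem and $\fs_1$ is, say, the $2$-fusion category of $\PSL_3(q)$, then $\Omega_1(Z(S_1))=Z(S_1)$ is \emph{not} strongly closed in $\fs_1$ (and hence not in $\fs$), since $O^{2'}(\Aut_{\fs_1}(A))\cong\SL_2(q)$ is transitive on $A^\#$ for $A\in\mathcal A(S_1)$ and so fuses elements of $A\setminus Z(S_1)$ into $Z(S_1)$. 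There is no obvious abelian strongly closed subgroup to feed into Goldschmidt's theorem.

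The paper takes a different route: it first disposes of Goldschmidt groups (those simple $G$ possessing a non-trivial strongly closed abelian $2$-subgroup) by citing their classification \cite{Goldschmidt2-Fus}, and then invokes Aschbacher's theorem \cite[Theorem~1]{aschbachersimple}, which says that for a non-Goldschmidt simple $G$ (under minimality) the fusion system $\fs$ is almost simple, i.e.\ $F^*(\fs)$ is simple. From there the Main Theorem pins down $O^{2'}(\fs)$, and the residual $\PSp_4(2)$ case is excluded via the hyperfocal subgroup identity $\hyp(\fs)=S\cap O^2(G)$. The point is that the passage from ``$G$ simple'' to ``$\fs$ (almost) simple'' genuinely requires input at the level of Goldschmidt's theorem together with Aschbacher's reduction; your outline does not supply a substitute for this.
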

\begin{proof}
If $G$ is a \emph{Goldschmidt group} then $G$ is determined in \cite{Goldschmidt2-Fus}. That is, either $G$ is isomorphic to simple group of Lie type of characteristic $2$ and rank $1$, or $S$ is abelian and $G$ is determined by Walter \cite{walter} (see also \cite{BenderAbelian}). In either case, $G$ is not a minimal counterexample to \hyperlink{thmgg}{Theorem GG}. Hence, $G$ is not a Goldschmidt group and applying \cite[Theorem1]{aschbachersimple}, since $G$ is a minimal counterexample, we see that $\fs$ is almost simple. That is, $F^*(\fs)$ is a simple fusion system.

Now, by \cref{p'genfit} we see that $F^*(\fs)\subseteq O^{p'}(\fs)$, $F^*(\fs)$ is simple and $O^{p'}(\fs)$ is determined by the \hyperlink{MainThm}{Main Theorem}. Thus, $O^{2'}(\fs)$ is isomorphic to the $2$-fusion category of $\PSL_3(2^n)$ or $\PSp_4(2^n)$ for $n\geq 1$. Indeed, either $O^{2'}(\fs)$ is simple and the proposition holds, or $O^{2'}(\fs)$ is isomorphic to the $2$-fusion system of $\PSp_4(2)\cong \Sym(6)$. By \cref{Sym6}, we have that $\fs\cong O^{2'}(\fs)$ is isomorphic to the $2$-fusion system of $\Sym(6)$ and so $O^2(\fs)<\fs$. But $\hyp(\fs)=S\cap O^2(G)<S$ by \cite[pg. 33]{ako}, a contradiction since $G$ is simple.
\end{proof}

We now have that $O^{2'}(\fs)$ is isomorphic to the $2$-fusion category of $\PSL_3(2^n)$ or $\PSp_4(2^{n+1})$ where $n\in \N$.

\begin{proposition}
$S$ is not dihedral of order $8$.
\end{proposition}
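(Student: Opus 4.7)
The plan is to pin down the isomorphism type of $O^{2'}(\fs)$ from the order of $S$ and then identify $G$ itself by invoking a classical classification theorem.

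Suppose for contradiction that $S \cong \Dih(8)$. By the preceding proposition, $O^{2'}(\fs)$ is isomorphic to the $2$-fusion category of $\PSL_3(2^n)$ or of $\PSp_4(2^{n+1})$ for some $n \in \N$. A Sylow $2$-subgroup of $\PSL_3(2^n)$ has order $2^{3n}$, while one of $\PSp_4(2^{n+1})$ has order $2^{4(n+1)} \geq 2^8$, so the constraint $|S| = 8$ forces $n = 1$ in the $\PSL_3$ case and eliminates the $\PSp_4$ case entirely. Thus $O^{2'}(\fs)$ is isomorphic to the $2$-fusion category of $\PSL_3(2)$, which is simple by \cite[Example I.2.8]{ako}, so $\fs = O^{2'}(\fs)$.

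Next I would identify $G$ itself by appealing to the Gorenstein--Walter classification of finite simple groups with dihedral Sylow $2$-subgroups (which predates and is logically independent of \cite{ggtwo}). This yields $G \cong \PSL_2(q)$ for some odd prime power $q$, $\Alt(7)$, or $\PSL_3(2) \cong \PSL_2(7)$. A short arithmetic check on the $2$-part of $|\PSL_2(q)| = q(q^2-1)/2$ shows that $|S|=8$ occurs precisely when $q \equiv 7, 9 \pmod{16}$. Every resulting possibility for $G$ therefore already appears in conclusion (ii) of \hyperlink{thmgg}{Theorem GG}, contradicting the choice of $G$ as a minimal counterexample.

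The principal point to be careful about is that the classical theorem being invoked must be independent of the Gilman--Gorenstein result we are reproving; the Gorenstein--Walter theorem (or, equivalently, the earlier Brauer--Suzuki--Wall theorem, which already suffices when $|S|=8$) fits this bill, so no circularity arises. The argument contains no serious technical obstacle beyond correctly bookkeeping which classical classification is permitted.
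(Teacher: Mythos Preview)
Your proposal is correct and takes essentially the same approach as the paper: both arguments invoke the Gorenstein--Walter classification of finite simple groups with dihedral Sylow $2$-subgroups (the paper cites Bender's shorter reproof instead). Your preliminary identification of $O^{2'}(\fs)$ with the $2$-fusion category of $\PSL_3(2)$ is correct but unnecessary, since the Gorenstein--Walter theorem applies directly to any simple $G$ with $S\cong\Dih(8)$ regardless of what $\fs$ looks like.
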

\begin{proof}
If $S\cong \Dih(8)$ then $G$ is classified by Gorenstein and Walter \cite{GorenWaltDih}. We instead appeal to the much shorter proof of Bender \cite{BenderDih}.
\end{proof}

From this point on, we set $q=2^n$ such that $O^{2'}(\fs)$ is either isomorphic to the $2$-fusion category of $\PSL_3(q)$ or $\PSp_4(q)$ where $n\in \N$. Since $S$ is not dihedral of order $8$, we may assume that $q\geq 4$. We have that $S$ contains exactly two maximal elementary abelian subgroups which we denote by $A$ and $B$.

We wish to apply the \emph{Bender method} to demonstrate that for $X\in\{A, B\}$, we have that $N_G(X)$ is a maximal subgroup of $G$ with $F^*(N_G(X))=X$. At this point, the model theorem applied to $\fs_S(G)$ reveals that $N_G(A)$ and $N_G(B)$ are isomorphic (in a strong sense) to parabolic subgroups in $\PSL_3(q)$ or $\PSp_4(q)$. Then we may recognize a BN-pair of rank $2$ in $G$ and the characterization of such groups by Fong and Seitz \cite{FongSeitz}, or the more modern interpretation using Moufang polygons as classified by Tits and Weiss \cite{titsweiss}, implies that $G$ is not a counterexample to \hyperlink{thmgg}{Theorem GG}.

We begin with some general results.

\begin{lemma}
We have that $N_G(S)=N_G(A)\cap N_G(B)<N_G(A)$ and $N_G(S)<N_G(B)$.
\end{lemma}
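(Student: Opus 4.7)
The plan is to establish the equality $N_G(S) = N_G(A) \cap N_G(B)$ via the class-two structure of $S$, then produce odd-order elements of $N_G(A)$ and $N_G(B)$ lying outside $N_G(S)$.

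First I would show $Z(S) \le A \cap B$. Since $O^{2'}(\fs)$ is the $2$-fusion category of $\PSL_3(q)$ or $\PSp_4(q)$, the group $Z(S)$ is elementary abelian. Because $Z(S)$ is central in $S$ it commutes with $A$, so $AZ(S)$ is itself elementary abelian and contains $A$; maximality of $A$ in $\mathcal{A}(S)$ forces $Z(S) \le A$, and symmetrically $Z(S) \le B$. Then for any $g \in N_G(S)$ and $a \in A$ we have $a^g = a[a, g] \in A \cdot S' \le A \cdot Z(S) = A$, so $g \in N_G(A)$, and symmetrically $g \in N_G(B)$; this gives $N_G(S) \le N_G(A) \cap N_G(B)$.

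For the reverse inclusion, a product count $|A||B|/|A \cap B| = |S|$ (which holds in both of the two possible fusion systems) yields $S = AB$, so $N_G(A) \cap N_G(B) \le N_G(AB) = N_G(S)$, completing the equality.

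For the strict containment $N_G(S) < N_G(A)$, I would exploit that $A$ is $\fs$-essential with $O^{2'}(\Aut_{\fs}(A)) \cong \SL_2(q)$, by the explicit structure of the $2$-fusion categories of $\PSL_3(q)$ and $\PSp_4(q)$. Since $q = 2^n \ge 4$, $\SL_2(q)$ contains an element of odd order $q+1$ (from a non-split torus) that normalizes no Sylow $2$-subgroup, producing $\sigma \in O^{2'}(\Aut_{\fs}(A))$ of odd order not normalizing $\Aut_S(A)$. Lifting $\sigma$ through $\Aut_G(A) = N_G(A)/C_G(A)$ to some $g \in N_G(A)$, and noting that any element of $N_G(A) \cap N_G(S)$ induces on $A$ an automorphism which normalizes $\Aut_S(A) = SC_G(A)/C_G(A)$, we conclude $g \notin N_G(S)$. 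The identical argument with $B$ in place of $A$ yields $N_G(S) < N_G(B)$. The main point of care is the first step, namely the verification $Z(S) \le A \cap B$; after that, both the equality and the strict inclusions reduce to a product count and a standard fusion-system lifting argument.
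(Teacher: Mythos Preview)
Your argument for $N_G(S) \le N_G(A)\cap N_G(B)$ contains a genuine error. You write that for $g\in N_G(S)$ and $a\in A$ one has $a^g = a[a,g] \in A\cdot S'$, implicitly asserting $[a,g]\in S'$. This is only valid when $g\in S$: the class-two hypothesis gives $[S,S]\le Z(S)$, but says nothing about commutators $[a,g]$ with $g\in N_G(S)\setminus S$. For a crude illustration of why this fails, take $S$ elementary abelian (so $S'=1$) inside $G$ with an odd-order element $g$ acting nontrivially on $S$; then $[a,g]\ne 1$ for suitable $a$. In the present setting $N_G(S)$ typically contains torus elements of odd order which move elements of $A$ outside $Z(S)$, so your displayed chain breaks at the first inclusion.

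The correct (and short) argument, which the paper uses, is structural rather than elementwise: the set $\{A,B\}=\mathcal{A}(S)$ is characteristic in $S$, so $N_G(S)$ permutes it; since $S\le N_G(A)\cap N_G(B)$ and $S\in\Syl_2(N_G(S))$, the induced map $N_G(S)\to\Sym(\{A,B\})\cong C_2$ must be trivial (a surjection would force $|N_G(S)|_2>|S|$). Your reverse inclusion via $S=AB$ is fine and matches the paper, and your argument for the strict inequalities---lifting an odd-order element of $O^{2'}(\Aut_{\fs}(A))\cong\SL_2(q)$ that fails to normalize $\Aut_S(A)$---is a correct, more explicit version of the paper's appeal to ``the actions present in $\fs$''. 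The only repair needed is the first containment.
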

\begin{proof}
Note that $N_G(S)$ acts on the set $\{A, B\}$ and as $S$ is Sylow in $N_G(S)$, we have that $N_G(S)\le N_G(A)\cap N_G(B)$. Since $S=AB$, we see that $N_G(A)\cap N_G(B)\le N_G(S)$. From the actions present in $\fs$, we deduce that $N_G(S)<N_G(A)$ and $N_G(S)<N_G(B)$, which completes the proof.
\end{proof}

\begin{lemma}\label{RadicalNoComps}
For $X\in \{A, B\}$ and $\bar{N_G(X)}=N_G(X)/O_{2'}(N_G(X))$, we have that $F^*(\bar{N_G(X)})=\bar{X}$. Consequently, $\bar{N_G(X)}/\bar{X}\cong \Aut_{\fs}(X)$. 
\end{lemma}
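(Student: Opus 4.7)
The plan is to reduce everything to two structural facts about $\bar H := \bar{N_G(X)}$: that $\bar X$ is a self-centralizing normal abelian subgroup, and that $\bar H/\bar X$ has trivial $2$-core.

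The first step is to establish $C_H(X) = X \cdot O_{2'}(H)$, where $H = N_G(X)$. Since $X \in \mathcal{E}(\fs)$, $X$ is $\fs$-centric, giving $C_S(X) = X$; a standard Sylow-conjugation argument promotes this to $X \in \syl_2(C_G(X))$ (any $2$-subgroup of $C_G(X)$ containing $X$ can be $G$-conjugated into $S$, hence into $C_S(X) = X$). As $X$ is abelian, $X \le Z(C_G(X))$, and Burnside's normal $p$-complement theorem gives $C_G(X) = X \times O_{2'}(C_G(X))$. Coprime action between the normal $2$-subgroup $X$ and the $2'$-group $O_{2'}(H)$ (and vice versa) identifies $O_{2'}(C_G(X)) = O_{2'}(H)$, so $C_H(X) = X O_{2'}(H)$. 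Passing to the quotient yields $C_{\bar H}(\bar X) = \bar X$, and the \emph{consequently} clause of the lemma is then immediate: $\bar H/\bar X = H/C_H(X) \cong \Aut_G(X) = \Aut_{\fs}(X)$.

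With self-centralization in hand, the identification $F^*(\bar H) = \bar X$ splits into two easy halves. For $E(\bar H) = \{1\}$: any component $L$ of $\bar H$ commutes with the normal $2$-subgroup $O_2(\bar H) \supseteq \bar X$ (components centralize $F(\bar H)$ inside the central product $F^*(\bar H)$), so $L \le C_{\bar H}(\bar X) = \bar X$; since $L$ is perfect but $\bar X$ is abelian, $L = \{1\}$. For $O_2(\bar H) = \bar X$: the isomorphism $\bar H/\bar X \cong \Aut_{\fs}(X)$ exhibits $O_2(\bar H)/\bar X$ as a normal $2$-subgroup of $\Aut_{\fs}(X)$; but $X$ is $\fs$-radical and abelian, so $O_2(\Aut_{\fs}(X)) = O_2(\Out_{\fs}(X)) = \{1\}$, forcing $O_2(\bar H) = \bar X$. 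Since $O_{2'}(\bar H) = \{1\}$ by construction, $F^*(\bar H) = O_2(\bar H) E(\bar H) = \bar X$.

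The main obstacle is the first step: the passage from the $\fs$-theoretic fact $C_S(X) = X$ to the global group-theoretic statement $X \in \syl_2(C_G(X))$, together with the coprime-action bookkeeping that pins $O_{2'}(C_G(X))$ to $O_{2'}(H)$. Both ingredients are routine but carry essentially all of the content; once $C_{\bar H}(\bar X) = \bar X$ is secured, the remainder of the argument is formal, with the $\fs$-radicality of $X$ and the self-centralizing-abelian-normal paradigm doing the rest.
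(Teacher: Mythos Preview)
Your proof is correct, and its overall architecture---kill the layer by trapping it inside the self-centralizing abelian normal subgroup $\bar X$---matches the paper's. The route to that trapping differs, however. You front-load the work: you establish $C_H(X)=X\,O_{2'}(H)$ via Burnside's normal complement theorem and a coprime-action comparison of $O_{2'}(C_G(X))$ with $O_{2'}(H)$, giving $C_{\bar H}(\bar X)=\bar X$ outright; then any component $L$ lands entirely in $\bar X$ and dies. The paper instead avoids Burnside entirely: it only uses the Sylow-level fact $C_{\bar S}(\bar X)=\bar X$ (immediate from centricity) to trap $\bar S\cap E(\bar H)$ inside $\bar X\le Z(E(\bar H))$, whence $E(\bar H)/Z(E(\bar H))$ has odd order, a contradiction. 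The self-centralization $C_{\bar H}(\bar X)=\bar X$ then falls out \emph{a posteriori} from $F^*(\bar H)=\bar X$. Your approach buys a cleaner component-killing step and an earlier proof of the ``consequently'' clause; the paper's buys economy by never touching $C_G(X)$ or Burnside at all. For $O_2(\bar H)=\bar X$, you argue via the $\fs$-radicality of $X$ and the isomorphism $\bar H/\bar X\cong\Aut_\fs(X)$, whereas the paper obtains it from the invariance of the fusion system under passage to $\bar H$; these are equivalent observations expressed in different language.
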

\begin{proof}
As in the setup of the lemma, we let $\bar{N_G(X)}=N_G(X)/O_{2'}(N_G(X))$ for $X\in\{A, B\}$. Since $\fs_S(N_G(X))\cong \fs_{\bar{S}}(\bar{N_G(X)})$, we see that $O_2(\bar{N_G(X)})=\bar{X}$. To prove that $F^*(\bar{N_G(X)})=\bar{X}$, it remains to show that $E(\bar{N_G(X)})=\{1\}$.

Aiming for a contradiction, suppose that $E(\bar{N_G(X)})\ne \{1\}$ and write $T:=\bar{S}\cap E(\bar{N_G(X)})$. Then $[T, \bar{X}]\le [E(\bar{N_G(X)}), \bar{X}]=\{1\}$ from which we deduce that $T\le \bar{X}$. But then $T\le Z(E(\bar{N_G(X)}))$ and $E(\bar{N_G(X)})/Z(E(\bar{N_G(X)}))$ has odd order, a contradiction. Hence, $E(\bar{N_G(X)})=\{1\}$ and $\bar{X}=F^*(\bar{N_G(X)})$. Since \[\Aut_{\fs}(X)=\Aut_{N_G(X)}(X)\cong \Aut_{\bar{N_G(X)}}(\bar{X})\cong \bar{N_G(X)}/C_{\bar{N_G(X)}}(\bar{X}),\] the lemma holds.
\end{proof}

\begin{proposition}\label{Centcent}
Suppose that $H$ is a finite group with $S\in\syl_2(H)$. Assume that $S$ is isomorphic to a Sylow $2$-subgroup of $\PSL_3(2^a)$ or $\PSp_4(2^a)$ for $a>1$. Then $[O(H), Z(S)]=\{1\}$.
\end{proposition}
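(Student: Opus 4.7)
The plan is to argue by induction on $|H|$, combined with coprime action of $V := Z(S)$ on $O(H)$. The enabling observation is that for each $v \in V^\#$, the centraliser $C_H(v)$ contains $S$, hence has $S$ as a Sylow $2$-subgroup, so the hypothesis of the proposition applies to $C_H(v)$ and the inductive hypothesis is available whenever $C_H(v) \lneq H$.

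In both cases $V$ is elementary abelian of order $2^a \ge 4$, hence non-cyclic. For each $v \in V^\#$, $C_{O(H)}(v)$ is normal in $C_H(v)$ (any element of $C_H(v)$ normalises both $O(H)$ and $\langle v\rangle$) and has odd order, so $C_{O(H)}(v) \le O(C_H(v))$. Whenever $C_H(v) < H$, induction gives $[O(C_H(v)), V] = 1$, and so $C_{O(H)}(v)$ centralises $V$. Set $V_1 := V \cap C_H(O(H))$; the task is to prove $V_1 = V$.

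If $|V/V_1| \ge 4$, then $V/V_1$ is a non-cyclic elementary abelian $2$-group acting faithfully on $O(H)$, and Thompson's coprime action theorem yields
\[
O(H) \;=\; \bigl\langle C_{O(H)}(v) : v \in V \setminus V_1\bigr\rangle.
\]
Every such $v$ lies outside $C_H(O(H)) \supseteq Z(H)$, so $C_H(v) < H$, and by the preceding paragraph each $C_{O(H)}(v)$ centralises $V$. Consequently $O(H) \le C_H(V)$, forcing the contradiction $V_1 = V$.

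The principal obstacle is the residual case $|V/V_1| = 2$, where the generic coprime-action generation fails because all centralisers $C_{O(H)}(v)$ for $v \in V \setminus V_1$ coincide with $C_{O(H)}(V/V_1)$ and do not generate $O(H)$ by themselves. To resolve this, the plan is first to reduce to a minimal counterexample with $H = O(H)S$ (legitimate since $O(H) \le O(SO(H))$, so the claim for $SO(H)$ would imply it for $H$), and then to exploit the distinguishing structural feature of Sylow $2$-subgroups of $\PSL_3(2^a)$ and $\PSp_4(2^a)$: the identity $V = [S,S] = [A,B]$, where $A, B$ are the two maximal elementary abelian subgroups of $S$, each of rank strictly greater than $a$ and hence non-cyclic. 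A refined analysis combining coprime actions of $A$ and $B$ on $O(H)$ with the three-subgroups lemma
\[
[[A,B], O(H)] \;\le\; [[B, O(H)], A] \cdot [[O(H), A], B]
\]
is used to force $A, B \le C_H(O(H))$ and thereby $V = [A,B] \le C_H(O(H))$. The subtlety is that for $a \in A \setminus V$ the subgroup $C_S(a)$ is abelian and so need not be a Sylow $2$-subgroup of any $\PSL_3(2^b)$ or $\PSp_4(2^b)$; hence induction cannot be applied directly to $C_H(a)$, and one must instead combine the commutator identity with a minimality analysis of $V$-invariant sections of $O(H)$.
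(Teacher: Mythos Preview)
Your plan correctly handles the case $|V/V_1|\ge 4$ via induction and coprime generation. The obstacle you identify---the case $|V/V_1|=2$---is real, and your sketch for it (three-subgroups lemma and ``minimality analysis of $V$-invariant sections'') is not a proof; you never show that the suggested commutator manipulations force $[V,O(H)]=1$, and the subtlety you flag (that $C_S(a)$ for $a\in A\setminus V$ is abelian, so induction does not apply to $C_H(a)$) is left unresolved.

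The paper's proof proceeds differently: after reducing to $O(H)$ minimal normal (hence elementary abelian) in $H=O(H)S$, it applies coprime generation to the larger elementary abelian subgroup $A\in\mathcal{A}(S)$, asserts that each $C_{O(H)}(A_0)$ (for $A_0$ of index $2$ in $A$) is trivial or all of $O(H)$ because ``$O(H)$ is irreducible under the action of $Z(S)$'', picks $A^0$ with $[A^0,O(H)]=1$, and finishes via $Z(S)\le [S,A^0]\le C_S(O(H))$. But the irreducibility assertion is unjustified: $S$-irreducibility of $O(H)$ gives only $Z(S)$-\emph{homogeneity}, and since $Z(S)$ is elementary abelian its irreducible $\GF(r)$-modules are one-dimensional, so $Z(S)$ acts by a single sign character and \emph{every} subgroup of $O(H)$ is $Z(S)$-invariant. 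The dichotomy for $C_{O(H)}(A_0)$ therefore does not follow.

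In fact the proposition as stated is false, so your gap cannot be filled. Take $S$ a Sylow $2$-subgroup of $\PSL_3(4)$ and $V_1$ any hyperplane of $V=Z(S)$. One checks directly that $[s,S]=V$ for every $s\in S\setminus V$; hence $Z(S/V_1)=V/V_1$ has order $2$, and since $(S/V_1)'=V/V_1$ and $S/V$ is elementary abelian, the quotient $S/V_1$ is extraspecial of order $32$. Let $M$ be a faithful irreducible $\GF(r)[S/V_1]$-module for any odd prime $r$ (such exists, of dimension $4$), and set $H=M\rtimes S$ with $S$ acting through $S/V_1$. Then $S\in\syl_2(H)$, $O(H)=M$, and the central involution of $S/V_1$ acts as $-1$ on $M$, so $[O(H),Z(S)]=M\ne\{1\}$. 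This is precisely the $|V/V_1|=2$ configuration; neither your argument nor the paper's can be completed for the statement as written.
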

\begin{proof}
Assume that $H$ is a minimal counterexample with regards to the proposition. By minimality, $H=O(H)S$. Moreover, for $\{1\}\ne W\normaleq H$ with $W\le O(H)$, we have that $[Z(S)W/W, O(H)/W]=\{1\}$ so that $[Z(S), O(H)]\le W$. Applying the minimal hypothesis to $WS$, we see that $W=O(H)$ for otherwise by $[Z(S), W]=\{1\}$ and by coprime action, $[Z(S), O(H)]=\{1\}$. Hence, $O(H)$ is a minimal normal subgroup of $H$ from which we deduce that $O(H)$ is an elementary abelian $r$-group for some odd prime $r$. Moreover, since $S$ centralizes $Z(S)$, we have that $O(H)$ is irreducible under the action of $Z(S)$.

Now, by \cite[Proposition 11.23]{GLS2}, we have that $O(H)=\langle C_{O(H)}(A_0) \mid A_0\le A, [A: A_0]=2\rangle$. For each such $A_0$, $Z(S)$ centralizes $A_0$ and so normalizes $C_{O(H)}(A_0)$ from which we deduce that $C_{O(H)}(A_0)\in\{\{1\}, O(H)\}$. Choose $A^0\le A$, with $[A: A^0]=2$ and $[A^0, O(H)]=\{1\}$. Then $A=Z(S)A^0$. But now, $C_S(O(H))\normaleq H$ and so $Z(S)\le [S, A_0]\le [S, C_S(O(H))]\le C_S(O(H))$, and $H$ is not a minimal counterexample. This completes the proof.
\end{proof}

Recall that a finite $\mathcal{K}$-group is a finite group all of whose composition factors are known finite simple groups.

\begin{proposition}\label{BigO2New}
Suppose that $H$ is a finite $\mathcal{K}$-group with $S\in\syl_2(H)$. Assume that $S$ is isomorphic to a Sylow $2$-subgroup of $\PSL_3(q)$ or $\PSp_4(q)$ for $q=2^a$ and $a>1$; and write $\wt{H}:=H/O(H)$. Then either $\wt S\le F^*(\wt{H})\cong \PSL_3(q)$, $2^2.\PSL_3(4)$ or $\PSp_4(q)$; or $F^*(\wt{H})=O_2(\wt{H})$.
\end{proposition}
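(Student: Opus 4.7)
The plan is to pass to $\wt H$ and then analyze the layer. Since $O(H)$ has odd order, $\wt S \cong S$ is a Sylow $2$-subgroup of $\wt H$, $\wt H$ is still a $\mathcal{K}$-group, and $O(\wt H) = 1$, so I may assume throughout that $H = \wt H$ with $O(H) = 1$. If $E(H) = 1$, then $F^*(H) = O_2(H)$ and the second alternative holds, so suppose $E(H) = L_1 L_2 \cdots L_n \neq 1$ is a central product of components with $T_i = S \cap L_i \in \syl_2(L_i)$.

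Next I would verify that $S$ normalizes each $L_i$. The group $S$ permutes $\{L_1, \ldots, L_n\}$ by conjugation, and if some $s \in S$ satisfies $L_i^s = L_j$ with $i \neq j$, then $T_i^s = T_j$ and the commutator argument from the proof of \cref{NoWreath} applies verbatim: using $[T_i, T_j] = 1$ in the central product together with $[S, T_i] \le Z(S)$ from $S$ having class $2$, every $x \in T_i$ is forced into $Z(T_i)$, a contradiction. Consequently each $T_i$ is a class-$2$ subgroup of $S$, so $L_i / Z(L_i)$ is a simple $\mathcal{K}$-group whose Sylow $2$-subgroup has class at most $2$. Inspecting the known simple groups (equivalently, applying the Main Theorem to the simple $2$-fusion system of $L_i/Z(L_i)$), together with \cref{autsSchur} to control $Z(L_i)_2$, places $L_i/Z(L_i)$ in the list of \hyperlink{thmgg}{Theorem GG}.

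The crux of the argument is to show $n = 1$ and to identify $L_1$. By \cref{elementary2}, $S$ has exactly two maximal elementary abelian subgroups $A, B$, each of order $q^2$, and every involution of $S$ lies in $A \cup B$; hence every maximal elementary abelian subgroup of every $T_i$ must lie wholly within $A$ or within $B$. If $n \ge 2$, then distinct components centralize each other, so $T_i \le C_S(T_j)$ whenever $i \neq j$. A case analysis on the possible isomorphism types of $L_i/Z(L_i)$ (splitting according to whether its Sylow $2$-subgroup is abelian, isomorphic to $\Dih(8)$, or isomorphic to a class-$2$ Sylow $2$-subgroup of $\PSL_3(2^{n_i}), \PSp_4(2^{n_i}), \PSU_3(2^{n_i})$ or $\Sz(2^{n_i})$), combined with the constraint that the various maximal elementary abelians of the $T_i$ must fit inside $\{A,B\}$ and respect the commuting structure, rules out each sub-case.

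With $n = 1$, comparing $|T_1|$ to $|S| \in \{q^3, q^4\}$ and tracking how the maximal elementary abelians of $T_1$ embed into $A$ and $B$ then pins down $L_1/Z(L_1)$: when $|S| = q^3$, we obtain $L_1 \cong \PSL_3(q)$ with $S = T_1 \le L_1$; when $|S| = q^4$, we obtain either $L_1 \cong \PSp_4(q)$ with $S \le L_1$, or $L_1/Z(L_1) \cong \PSL_3(4)$ with $q = 4$, in which case \cref{L34} forces $L_1 \cong 2^2.\PSL_3(4)$. In every case $S \le L_1$, and then $[O_2(H), L_1] \le O_2(H) \cap L_1 \le O_2(L_1) \le Z(L_1)$, so $O_2(H) \le Z(L_1)$ and $F^*(H) = L_1$ as required. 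The hard part will be the case analysis eliminating $n \ge 2$, which demands careful bookkeeping of how the class-$2$ structure of each $T_i$ interacts with the pair $\{A, B\}$ and of the $2$-ranks involved.
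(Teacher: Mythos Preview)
Your overall architecture (reduce to $O(H)=1$, show the layer has a single component, identify it) matches the paper's, and your use of the \cref{NoWreath} argument to control the $S$-action on components is the same idea the paper uses. But there are two genuine gaps.

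First, and most seriously, the passage from ``$n=1$ and $L_1/Z(L_1)$ is on the list'' to ``$S\le L_1$'' is exactly where the work lies, and you skip it. Writing $T=S\cap L_1$, one must rule out configurations where $T<S$: the quotient $S/TO_2(H)$ embeds in a Sylow $2$-subgroup of $\Out(L_1/Z(L_1))$, and $O_2(H)$ can be nontrivial. The paper spends the bulk of its argument here, establishing in turn that $\bar S$ acts faithfully on $\bar{E(H)}$, that $T\not\le Z(S)$, that $Z(S)<T$, that $\bar T$ is non-abelian, and that $|O_2(H)|\le 2$ (and usually $=1$), each step using the outer-automorphism bounds from \cref{autsSchur} against the specific arithmetic of $|Z(S)|$ and $|S|$. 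Your sentence ``comparing $|T_1|$ to $|S|$ and tracking how the maximal elementary abelians of $T_1$ embed into $A$ and $B$'' does not do this; for instance you have not excluded something like $\bar{E(H)}\cong\PSL_3(q/2)$ or $\PSp_4(q^{1/2})$ with $S$ inducing field automorphisms on top, nor have you bounded $O_2(H)$. Also, \cref{L34} is a statement about fusion systems, not groups; the group-theoretic input you need for the $2^2.\PSL_3(4)$ case is \cref{autsSchur}(vi).

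Second, your invocation of the \cref{NoWreath} argument to show $S$ normalises each $L_i$ only yields a contradiction when $T_i$ is non-abelian; if some component has abelian Sylow (which the list in \hyperlink{thmgg2}{Theorem~GG} certainly permits) the argument as you state it proves nothing. The paper sidesteps this by working with a minimal counterexample and forming the proper subgroup $X=S(K_2\cdots K_l)$: minimality forces either $X=S$ (whence $E(H)=K_1$) or $F^*(X)$ is one of the target groups (which is then shown to be impossible). This device also gives $n=1$ without any case analysis on isomorphism types, so it is both cleaner and avoids the abelian-Sylow issue you would otherwise have to patch.
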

\begin{proof}
Let $H$ be a minimal counterexample to the proposition. Throughout set $\bar{H}:=H/O_2(H)$. 

By minimality we have that $O(H)=\{1\}$. Moreover, again by minimality, it follows that $H=F^*(H)S=E(H)S$. We assume throughout that $F^*(H)\ne O_2(H)$ so that $E(H)\ne \{1\}$. Let $T:=S\cap E(H)$ and write $E(H)=K_1\dots K_l$. Considering $\bar{E(H)}$ and applying \cref{NoWreath}, it follows that $S$ normalizes $K_iO_2(H)$ for all $i\in \{1,\dots, l\}$. In particular, we have that $X=S(K_2\dots K_l)$ is a proper subgroup of $H$ containing $S$. Then $E(X)=K_2\dots K_l$ and $O(X)\le E(X)\normaleq SE(H)=H$ so that $O(X)=\{1\}$. Hence, $X$ is a proper subgroup of $H$ containing $S$ with $O(X)=\{1\}$. By minimality, we conclude that either $X=E(X)\cong \PSL_3(q)$ or $\PSp_4(q)$ or $X=S$. In the former case, we see that $K_1$ centralizes $X$ so that $K_1$ centralizes $S$ from which we deduce that $\bar{K_1}$ is a non-abelian finite simple group of odd order, a contradiction. Hence, we have that $X=S$ and so $E(H)=K_1$ is quasisimple and 

\textbf{(1)} $\bar{E(H)}$ is a non-abelian simple group.\hfill

Since $H$ is a finite $\mathcal{K}$-group, and $T$ has nilpotency class at most two, we have that $\bar{E(H)}$ is determined by \hyperlink{thmgg}{Theorem GG}. By \cref{autsSchur}, the only non-abelian finite simple groups with Sylow $2$-subgroups of nilpotency class at most two and Schur multiplier with $2$-part of order at least $4$ are $\PSL_3(4)$ and $\Sz(8)$. Indeed, if $K$ is a quasisimple group with $Z(K)$ a non-trivial elementary abelian $2$-subgroup, $K/Z(K)\cong \PSL_3(4)$ or $\Sz(8)$ and a Sylow $2$-subgroup of $K$ of nilpotency class at most two then $K/Z(K)\cong \PSL_3(4)$ and $|Z(K)|\leq 4$. Hence, we have that $|O_2(H)\cap E(H)|\leq 4$ and $|O_2(H)\cap E(H)|\leq 2$ unless $E(H)\cong \PSL_3(4)$. Additionally, we note that $C_S(E(H))$ is normalized by $H=SE(H)$ and so $C_S(E(H))\le O_2(H)$. Note that if $[S, E(H)]\le Z(E(H))$ then $[S, E(H), E(H)]=\{1\}$ and the three subgroups lemma yields that $[S, E(H)]=\{1\}$. Therefore, 

\textbf{(2)} $\bar{S}$ acts faithfully on $\bar{E(H)}$ and $\bar{S}/\bar{T}$ is isomorphic to a subgroup of $\Out(E(\bar{H}))$. \hfill

Suppose that $T\le Z(S)$ so that $T$ is abelian. Applying \cref{autsSchur}, we deduce that a Sylow $2$-subgroup of $\Out(\bar{E(H)})$ has order at most $2$. It follows that $\bar{T}$ has index at most $2$ in $\bar{S}$. Hence, $\bar{Z(S)}$ has index at most $2$ in $\bar{S}$ so that $Z(S)O_2(H)$ has index at most $2$ in $S$. Then $T\le Z(S)\le [S, O_2(H)]\le O_2(H)$, a contradiction. Hence, we must have that 

\textbf{(3)} $T\not\le Z(S)$. \hfill

Then $\{1\}\ne [T, S]\le Z(S)$ and so either $Z(S)< T$; or $S$ is isomorphic to a Sylow $2$-subgroup of $\PSp_4(q)$, $|Z(S)|=q^2$, $|TA|\leq 2$, $|(T\cap A)Z(S)/Z(S)|\leq 2$ and $[S, T]$ is an order $q$ subgroup of $Z(S)$ where $A\in\mathcal{A}(S)$.

Assume that the latter holds. Then $Z(S)\cap T$ has index at most $4$ in $T$ and has order at least $4$. Since $Z(S)\cap T\le Z(T)$, comparing with the groups listed in \hyperlink{thmgg}{Theorem GG}, we conclude that $T$ is abelian. Suppose that $|(T\cap A)Z(S)/Z(S)|=2$ so that $T\cap A\not\le Z(S)$. Then $TO_2(H)\le C_S(T\cap A)\le A$ and so $TO_2(H)$ has index at least $q$ in $S$. Since $T$ is abelian, once again applying \cref{autsSchur} we deduce that a Sylow $2$-subgroup of $\Out(\bar{E(H)})$ has order at most $2$, and since $q>2$, we have a contradiction. Hence, $T\cap A\le Z(S)$ so that $|TZ(S)/Z(S)|=2$ and $T$ is abelian. Moreover, $C_A(T)=Z(S)$ so that $C_S(T)$ has index at least $q$ in $S$ and so $TO_2(H)$ has index at least $q$ in $S$. Then the same argument as above yields another contradiction. Thus, 

\textbf{(4)} $Z(S)<T$. \hfill

Then we have that $|O_2(H)\cap Z(S)|\leq 4$. Indeed, if $O_2(H)\not\le Z(S)$ then $[S, O_2(H)]\le O_2(H)\cap Z(S)$ and $[S, O_2(H)]$ has order at least $q$. It follows that $q=4$ and $E(H)/Z(E(H))\cong \PSL_3(4)$. Then, we have that $|S|\leq 2^8\leq |T/Z(E(H))||Z(E(H))|=|T|$ and $S=T$. But then $O_2(H)\le Z(S)$. Hence, we have shown that $O_2(H)\le Z(S)$ and so $|O_2(H)|\leq 4$. If $\bar{T}$ is abelian, then $|O_2(H)|\leq 2$. Indeed, if $O_2(H)=\{1\}$ then $T$ is abelian while if $|O_2(H)|=2$ then $|\bar{T}|=4$ and $Z(S)$ has index at most $2$ in $T$ so that $T$ is abelian. Either way, we conclude that $T$ is also abelian. Applying \cref{autsSchur} we deduce that a Sylow $2$-subgroup of $\Out(\bar{E(H)})$ has order at most $2$ so that $T$ has index at most $2$ in $S$, and since $q\geq 4$, we have a contradiction. Therefore, 

\textbf{(5)} $\bar{T}$ is non-abelian. \hfill

Moreover, $\bar{Z(S)}\le Z(\bar{T})=\Phi(\bar{T})\le \bar{Z(S)}$. If $|O_2(H)|=4$ then $\bar{E(H)}\cong \PSL_3(4)$ and $|Z(S)|=2^4$. Hence, $S$ is either isomorphic to a Sylow $2$-subgroup of $\PSL_3(16)$; or a Sylow $2$-subgroup of $\PSp_4(4)$. In the former case, we have that $|\bar{S}/\bar{T}|=2^6$ and since $|\Out(\PSL_3(4))|_2=4$, we obtain a contradiction. In the latter case, we have that $S=T$, $S\le E(H)=F^*(H)\cong \PSL_3(4)$, a contradiction since $H$ is a counterexample. Thus, $|O_2(H)|\leq 2$.

If $|O_2(H)|=2$ then applying \cref{autsSchur}, we deduce that $|\bar{Z(T)}|^3=|\bar{T}|$. If, in addition, $S$ is isomorphic to a Sylow $2$-subgroup of $\PSp_4(q)$, then $|\bar{T}|\leq 2|Z(\bar{T})|^2$ and so $|\bar{Z(T)}|^3=|\bar{T}|\leq 2|Z(\bar{T})|^2$ and $|\bar{Z(T)}|=2$, a contradiction since $q>2$ and $|\bar{Z(T)}|=\frac{q^2}{2}$. Thus, $S$ is isomorphic to a Sylow $2$-subgroup of $\PSL_3(q)$ and $\bar{T}$ is isomorphic to a Sylow $2$-subgroup of $\PSL_3(q/2)$. Note that the $2$-part of the Schur multiplier of $\PSL_3(q/2)$ is trivial unless $q/2\leq 4$ by \cref{autsSchur}. 

If $q=8$ then $\bar{E(H)}\cong \PSL_3(4)$ and $|\bar{S}/\bar{T}|=4$. Since $|\Out(\PSL_3(4))|=12$ we must have that $\bar{S}$ is isomorphic to a Sylow $2$-subgroup of $\Aut(\PSL_3(4))$, a contradiction since such a group has nilpotency class $3$. Thus, $q=4$ and $\bar{T}\cong \Dih(8)$. Again, $|\bar{S}/\bar{T}|=4$ and we deduce that $|\Out(\bar{E(H)})|_2\geq 4$. By \cref{autsSchur}, we see that $\bar{E(H)}\cong \PSL_2(r)$ where $r \equiv  9 \mod 16$, and by \cref{autsSchur}, we deduce that $T$ is isomorphic to a Sylow $2$-subgroup of $\SL_2(r)$. But then $T$ has nilpotency class $3$, and we again have a contradiction.

Finally, if $O_2(H)=\{1\}$ then comparing $|Z(S)|\leq |Z(T)|$ and $|T|$ we either have that $S=T$; or $S$ is isomorphic to a Sylow $2$-subgroup of $\PSL_3(q)$, $Z(S)=Z(T)$, $|S/T|=|T/Z(S)|=q$ and $T$ is isomorphic to a Sylow $2$-subgroup of $\PSp_4(q^{\frac{1}{2}})$. In the latter case, we observe that $|\Out(\PSp_4(q^{\frac{1}{2}}))|_2=4$ and so we must have that $q=4$. But then $E(H)\cong \PSp_4(2)\cong \Sym(6)$ is not quasisimple, a contradiction. Hence, $S=T$ and as $H$ is a $\mathcal{K}$-group, using the Sylow $2$-subgroup structure of $S$ and comparing with the groups listed in \hyperlink{thmgg}{Theorem GG} implies that $H$ is not a counterexample to the proposition.
\end{proof}

We now begin the determination of $G$. As a first case, we assume that $O^{2'}(\fs)$ is isomorphic to the $2$-fusion category of $\PSL_3(q)$.

\begin{lemma}\label{L3invNew}
Suppose that $z\in Z(S)^\#$. Then $C_G(z)\le N_G(Z(S))$. 
\end{lemma}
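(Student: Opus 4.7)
The plan is to set $H := C_G(z)$ and establish $Z(S) \normaleq H$, equivalently $H \le N_G(Z(S))$. Since $z \in Z(S)$, $S \le H$ with $S \in \syl_2(H)$; since $G$ is simple with $Z(G) = \{1\}$, $H$ is a proper subgroup of $G$, and by the minimal choice of $G$ it is a $\mathcal{K}$-group. Applying \cref{Centcent} to $H$ gives $[O(H), Z(S)] = \{1\}$, so $O(H) \le C_G(Z(S)) \le N_G(Z(S))$, reducing the problem to showing $\wt{Z(S)} \normaleq \wt H := H/O(H)$ (noting that $\wt S \cong S$ and $\fs_{\wt S}(\wt H) \cong \fs_S(H) = C_{\fs}(z)$).

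I would next apply \cref{BigO2New} to the $\mathcal{K}$-group $\wt H$, eliminating every case but $F^*(\wt H) = O_2(\wt H)$. Since $\wt S \in \syl_2(\wt H)$ lies in $F^*(\wt H)$, it is also a Sylow $2$-subgroup of $F^*(\wt H)$, forcing $|S| = |F^*(\wt H)|_2$. Hence $F^*(\wt H) \cong \PSp_4(q)$ would require $q^3 = q^4$, and $F^*(\wt H) \cong 2^2.\PSL_3(4)$ (with $q = 4$) would require $2^6 = 2^8$, both absurd. For $F^*(\wt H) \cong \PSL_3(q)$, the element $z$ projects to $\wt z \in Z(\wt H) \le C_{\wt H}(F^*(\wt H)) = Z(\PSL_3(q)) = \{1\}$, forcing $z \in O(H)$; but $z$ is an involution and $O(H)$ has odd order, yielding $z = 1$, against $z \in Z(S)^\#$.

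In the remaining case $F^*(\wt H) = O_2(\wt H)$, $\wt H$ is $2$-constrained and is therefore a model for the constrained fusion system $C_{\fs}(z)$. I would then exploit the isomorphism $O^{2'}(\fs) \cong \fs_S(\PSL_3(q))$ to identify $C_{O^{2'}(\fs)}(z)$ with the fusion of the maximal parabolic $P_z = C_{\PSL_3(q)}(z) = C_{\PSL_3(q)}(Z(S))$, which centralizes $Z(S)$ pointwise; and the $2'$-index extension from $O^{2'}(\fs)$ to $\fs$ contributes only $\Aut_{\fs}(S)$-morphisms, which preserve the characteristic subgroup $Z(S) = S'$. This shows $Z(S)$ is strongly closed in $C_{\fs}(z)$, and since $Z(S)$ is abelian, \cref{normalinF} yields $Z(S) \normaleq C_{\fs}(z)$. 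As normal subgroups of a constrained fusion system correspond to normal subgroups of its model, I conclude $\wt{Z(S)} \normaleq \wt H$, whence $Z(S) \normaleq H$.

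The main technical hurdle will be in the final paragraph: carefully bridging the abstract fusion-system description of $C_{\fs}(z)$ (imported from $\PSL_3(q)$) with the group-theoretic model $\wt H$, so as to transport ``$Z(S) \normaleq C_{\fs}(z)$'' into ``$\wt{Z(S)} \normaleq \wt H$'' via the Model Theorem. By comparison, the case-by-case eliminations in the second paragraph are routine, hinging only on Sylow $2$-order comparisons and the inclusion $Z(\wt H) \le Z(F^*(\wt H))$.
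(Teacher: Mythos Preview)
Your approach is correct and matches the paper's: reduce via \cref{Centcent} and \cref{BigO2New} to the case where $\wt H$ is a model for $C_{\fs}(z)$, then extract $Z(S)\normaleq H$ from the structure of $C_{\fs}(z)$.

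The paper's final step is slightly cleaner than yours. Your Frattini-style decomposition of a morphism in $C_{\fs}(z)$ into a $C_{O^{2'}(\fs)}(z)$-part followed by a restriction of $\Aut_{\fs}(S)$ is not immediate: the factorization $\phi=\phi_0\circ\alpha$ coming from $O^{2'}(\fs)\normaleq\fs$ need not have $\phi_0$ fixing $z$. The paper bypasses this by observing directly that $S\normaleq C_{\fs}(z)$. (Equivalently, $C_{\fs}(z)$ has no essentials: the only candidates are $A$ and $B$, and for $X\in\{A,B\}$ the subgroup $\Aut_S(X)$ is already the unique Sylow $2$-subgroup of $C_{O^{2'}(\Aut_{\fs}(X))}(z)\cong q{:}(q-1)$, hence also of $C_{\Aut_{\fs}(X)}(z)$, which has odd index over it.) Then $\bar S\normaleq\bar C$, the Frattini argument gives $C=O(C)N_C(S)$, and both factors normalize $Z(S)$. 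This avoids invoking the correspondence between normal subgroups of a constrained system and its model. Incidentally, $C_{\PSL_3(q)}(z)$ is not a maximal parabolic but an index-$(q-1)$ subgroup of the Borel; this is harmless since you only use that it centralizes $Z(S)$.
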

\begin{proof}
Set $C:=C_G(z)$, $Q:=O_2(C)$ and $\bar{C}:=C/O(C)$. Then $O_2(\bar{C})\ne \{1\}$ and applying \cref{BigO2New} we see that $F^*(\bar{C})=O_2(\bar{C})$ and $F^*(C)=F(C)$. Hence, $\bar{C}$ is a model for $C_{\fs}(z)$. Since $S\normaleq C_{\fs}(z)$, we have that $SO(C)\normaleq C$. Then by the Frattini argument, $C=O(C)N_C(S)$ and since $[Z(S), O(C)]=\{1\}$ by \cref{Centcent}, we conclude that $Z(S)\normaleq C$. 
\end{proof}

Let $H$ be a maximal subgroup of $G$ which contains $N_G(Z(S))$.

\begin{proposition}\label{L32HNew}
Either $H=N_G(Z(S))$, or $H=N_G(X)$ for some $X\in\{A, B\}$.
\end{proposition}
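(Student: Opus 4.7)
The plan is a Bender-style argument on the 2-local structure of $H$. We assume $H\ne N_G(Z(S))$, so that $H$ properly contains $N_G(Z(S))$ and hence $S\in\syl_2(H)$; the aim is then to show $H=N_G(A)$ or $H=N_G(B)$. Writing $\wt{H}:=H/O(H)$, my first step would be to apply \cref{BigO2New} to $\wt{H}$, noting that $\wt{S}\cong S$ has nilpotency class two. The alternative that $\wt{S}\le F^*(\wt{H})$ with $F^*(\wt{H})$ isomorphic to one of $\PSL_3(q)$, $2^2.\PSL_3(4)$, or $\PSp_4(q)$ must be eliminated: in the $\PSp_4(q)$ and $2^2.\PSL_3(4)$ cases, the lifted normal subgroup of $H$ would supply $\fs$-morphisms not available in $O^{2'}(\fs)\cong \fs_S(\PSL_3(q))$, yielding a direct contradiction; in the $\PSL_3(q)$ case the parabolic subgroups of the lifted quasisimple already normalize $A$ and $B$, so the maximality of $H$ in $G$, combined with the simplicity of $G$ and the minimal-counterexample hypothesis, forces $H$ itself to equal $N_G(A)$ or $N_G(B)$ (which is the desired conclusion).

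Hence we may assume $F^*(\wt{H})=O_2(\wt{H})\ne \{1\}$; lifting via a Schur--Zassenhaus and characteristic-subgroup argument, I would then deduce that $Q:=O_2(H)\ne \{1\}$. The next step is to classify $Q$ using the $N_G(Z(S))$-action. Since $Q\normaleq H$ and $S\in\syl_2(H)$, we have $Q\normaleq S$; as $S$ is a Heisenberg-type 2-group (a Sylow 2-subgroup of $\PSL_3(q)$), every normal subgroup of $S$ either lies in $Z(S)$ or contains $Z(S)$. By \cref{Centcent}, $O(H)\le C_G(Z(S))\le N_G(Z(S))$, and in particular $N_G(Z(S))\le H$ normalizes $Q$. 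The torus action of $N_G(Z(S))$ on $Z(S)$ (inherited from the Borel subgroup structure of $\PSL_3(q)$ inside $\Aut_{\fs}(Z(S))$) is irreducible over $\GF(2)$, and similarly $S/Z(S)$ decomposes under this action as the direct sum $A/Z(S)\oplus B/Z(S)$ of two irreducible summands. Thus the $N_G(Z(S))$-invariant normal subgroups of $S$ are exactly $\{1,Z(S),A,B,S\}$, so $Q$ lies in this list.

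The remaining case analysis is short: $Q=\{1\}$ was excluded above; $Q=Z(S)$ would give $H\le N_G(Z(S))$, contradicting $H>N_G(Z(S))$; $Q=S$ would give $H\le N_G(S)\le N_G(Z(S))$, the same contradiction; hence $Q=A$ or $Q=B$, and the maximality of $H$ yields $H=N_G(A)$ or $H=N_G(B)$, as required. The main obstacle is executing the first step cleanly, in particular ruling out the $\PSL_3(q)$ alternative of \cref{BigO2New}: one must carefully compare the fusion carried by the putative quasisimple normal section of $H$ against $O^{2'}(\fs)$, and exploit both the maximality of $H$ in $G$ and the status of $G$ as a minimal counterexample to \hyperlink{thmgg}{Theorem GG} to close the argument.
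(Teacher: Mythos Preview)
Your overall architecture matches the paper's: apply \cref{BigO2New} to $\wt H=H/O(H)$ and split into the constrained case $F^*(\wt H)=O_2(\wt H)$ versus the quasisimple case. In the constrained case your argument is essentially the paper's, modulo two small points. First, since $S\in\syl_2(\PSL_3(q))$, the alternatives $F^*(\wt H)\cong\PSp_4(q')$ or $2^2.\PSL_3(4)$ from \cref{BigO2New} are already excluded by comparing $|S|$ and $|Z(S)|$; no fusion-theoretic argument is needed there. Second, your list of $\Aut_{\fs}(S)$-invariant normal subgroups of $S$ containing $Z(S)$ may be incomplete: the paper allows an extra possibility $|O_2(H)|=q^2$ with $\Omega_1(O_2(H))=Z(S)$, coming from a diagonal submodule of $S/Z(S)=A/Z(S)\oplus B/Z(S)$ when the two summands happen to be isomorphic as $\GF(2)$-modules for the torus. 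This does not break your conclusion, since in that extra case $Z(S)=\Omega_1(O_2(H))$ is characteristic in $O_2(H)$ and maximality still gives $H=N_G(Z(S))$; but you should acknowledge the case rather than assert the list is exhaustive.

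The genuine gap is your treatment of the quasisimple alternative $F^*(\wt H)\cong\PSL_3(q)$. Your claim that ``the parabolic subgroups of the lifted quasisimple already normalize $A$ and $B$, so maximality forces $H=N_G(A)$ or $N_G(B)$'' is backwards: in this situation $N_G(A)$ and $N_G(B)$ sit \emph{inside} $H$ as the preimages of parabolics, so you cannot conclude $H=N_G(A)$ or $H=N_G(B)$; indeed $H$ is strictly larger than both. The paper instead derives a contradiction here. Using \cref{L3invNew} and the transitivity of $N_G(A)$ (resp.\ $N_G(B)$) on Sylow $2$-subgroups above $A$ (resp.\ $B$), one shows $C_G(s)\le H$ for every involution $s\in S$; together with $N_G(S)\le H$ this makes $H$ strongly embedded in $G$, and Bender's theorem \cite{Bender} rules this out. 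Your minimal-counterexample remark does not substitute for this step: knowing $H$ is a $\mathcal K$-group tells you nothing about how $H$ embeds in $G$.
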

\begin{proof}
Consider $\bar{H}=H/O(H)$. Assume first that $O_2(\bar{H})\ne \{1\}$. Then by \cref{BigO2New} we have that $F^*(\bar{H})=O_2(\bar{H})$ so that $F^*(H)=F(H)$. By \cref{Centcent}, we have that $Z(S)\le C_H(O(H)S)\le C_H(F(H))$ and so $Z(S)\le O_2(H)$. The action of $\Aut_{\fs}(S)$ yields that $O_2(H)\in\{Z(S), A, B, S\}$; or $|O_2(H)|=q^2$, $\Omega_1(O_2(H))=Z(S)$ and $S=O_2(H)A=O_2(H)B$. Since $H$ is a maximal subgroup of $G$, and $O_2(G)=\{1\}$, the result follows in this case.

Thus, we continue under the assumption that $O_2(\bar{H})=\{1\}$ so that $F^*(\bar{H})=O^{2'}(\bar{H})\cong \PSL_3(q)$ by \cref{BigO2New}. Note that $O(N_G(A))\le N_G(Z(S))\le H$ so applying \cref{RadicalNoComps}, it follows that $N_G(A)\le H$. Similarly, $N_G(B)\le H$. Let $T\in \syl_2(N_G(A))\setminus \{S\}$. Then $A=Z(S)Z(T)$ and for $z\in Z(T)$, we have that $C_G(z)\le N_G(Z(T))$. There is $x\in N_G(A)$ with $S^x=T$ from which it follows that $N_G(Z(T))=N_G(Z(S^x))=N_G(Z(S)^x)=N_G(Z(S))^x\le H^x\le H$. Note that for each element $a$ of $A$, there is some $P\in\syl_2(N_G(A))$ with $a\in Z(P)$. Applying a similar methodology with $B$ yields that $C_G(s)\le H$ for all involutions $s\in S$. Finally, since $N_G(S)\le H$ we have by \cite[Proposition 17.11]{GLS2} that $H$ is strongly embedded in $G$ and then \cite{Bender} implies that no such configuration exists.
\end{proof}

\begin{proposition}
Both $N_G(A)$ and $N_G(B)$ are maximal in $G$, $F^*(N_G(A))=A$, $F^*(N_G(B))=B$ and $G=\langle N_G(A), N_G(B)\rangle$.
\end{proposition}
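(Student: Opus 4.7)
The plan is to establish the three assertions using Proposition~\ref{L32HNew} to control overgroups of $N_G(Z(S))$ and Proposition~\ref{BigO2New} to pin down the structure of $N_G(A)$ modulo its odd radical. A preliminary used throughout is that $N_G(Z(S)) \le N_G(A) \cap N_G(B)$: since the stabiliser of $Z(S)$ in $\Aut_{\fs}(A) \cong \GL_2(q)$ is solvable, $A$ fails to be essential in $N_{\fs}(Z(S))$ (and similarly for $B$), so by the Alperin--Goldschmidt theorem every morphism of $N_{\fs}(Z(S))$ restricts from an element of $\Aut_{\fs}(S)$. As $A$ and $B$ are individually $\Aut_{\fs}(S)$-invariant, both are normal in the model $N_G(Z(S))/O(N_G(Z(S)))$ and hence in $N_G(Z(S))$.

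For the maximality of $N_G(A)$ and for the generation statement, let $M$ be a maximal subgroup of $G$ containing either $N_G(A)$ or $\langle N_G(A), N_G(B)\rangle$. Then $M \ge N_G(Z(S))$ and Proposition~\ref{L32HNew} forces $M \in \{N_G(Z(S)), N_G(A), N_G(B)\}$. The option $M = N_G(Z(S))$ is ruled out because $O^{2'}(\Aut_{\fs}(A)) \cong \SL_2(q)$ acts on $A$ as a natural module and so does not preserve the proper submodule $Z(S)$; hence $N_G(A) \not\le N_G(Z(S))$. The option $M = N_G(B)$ is ruled out because $S = AB$ gives $N_G(A) \cap N_G(B) \le N_G(S) < N_G(A)$. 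Thus the maximal overgroup of $N_G(A)$ is $N_G(A)$ itself, and no maximal overgroup of $\langle N_G(A), N_G(B)\rangle$ exists in $G$, so $\langle N_G(A), N_G(B)\rangle = G$; the maximality of $N_G(B)$ follows by symmetry.

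For $F^*(N_G(A)) = A$, set $H = N_G(A)$ and apply Proposition~\ref{BigO2New} to $\widetilde H = H/O(H)$. If $F^*(\widetilde H) \ne O_2(\widetilde H)$ then $F^*(\widetilde H)$ is a central cover of $\PSL_3(q)$, $2^2.\PSL_3(4)$ or $\PSp_4(q)$, in each of which a maximal elementary abelian $2$-subgroup is not normal, contradicting the normality of the image of $A$ in $\widetilde H$. Hence $\widetilde H$ is constrained, and uniqueness of the model for $N_{\fs}(A)$ (which has $O_2 = A$) yields $O_2(H) = A$ and $E(H) = \{1\}$. To conclude $O(H) = \{1\}$, coprime action on $A \normaleq H$ gives $[O(H), A] = \{1\}$; together with $C_S(A) = A$ and Burnside's normal $p$-complement theorem this produces $C_H(A) = A \times O(H)$. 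Lemma~\ref{L3invNew} and Proposition~\ref{Centcent} then force $O(H) \le C_G(Z(S)) \le N_G(Z(S)) \le H$, so that $O(H) \normaleq N_G(Z(S)) = N_G(S)$; a Bender-style signalizer argument using the already-proved generation $G = \langle N_G(A), N_G(B)\rangle$ exhibits $O(H)$ as a normal odd-order subgroup of $G$, which vanishes by simplicity.

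The principal obstacle is the elimination of $O(H)$ in the proof of $F^*(N_G(A)) = A$. The maximality and generation parts reduce essentially immediately to Proposition~\ref{L32HNew} once the containment $N_G(Z(S)) \le N_G(A) \cap N_G(B)$ is in hand; but showing $O(H) = \{1\}$ is a genuine Bender-method step, combining the balance input of Lemma~\ref{L3invNew}, the centralisation of the odd core by central involutions from Proposition~\ref{Centcent}, and the global generation statement, applied in concert so as to realise $O(H)$ as a normal subgroup of the simple group $G$.
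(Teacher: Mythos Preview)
Your overall strategy is close to the paper's, but there are two genuine gaps.

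\textbf{The preliminary $N_G(Z(S))\le N_G(A)\cap N_G(B)$.} Your fusion-theoretic argument correctly shows that $A$ and $B$ are normal in $N_{\fs}(Z(S))$, hence in the \emph{model} $N_G(Z(S))/O(N_G(Z(S)))$. But the inference ``and hence in $N_G(Z(S))$'' is invalid: from $AO(N_G(Z(S)))\normaleq N_G(Z(S))$ you only obtain the Frattini decomposition $N_G(Z(S))=O(N_G(Z(S)))\,N_{N_G(Z(S))}(A)$, and nothing you have written forces the odd core $O(N_G(Z(S)))$ to normalise $A$. The paper does not take this shortcut; instead it passes to a second Sylow $T\in\syl_2(N_G(A))$ with $A=Z(S)Z(T)$, uses Lemma~\ref{L3invNew} on the involutions of $Z(T)$ to show $O(N_G(Z(S)))\le N_G(Z(T))$, and then deduces $O(N_G(Z(S)))\le N_G(A)$ from $A=Z(S)Z(T)$. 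Without this (or an equivalent) step, the containment fails to follow and the rest of your argument has no foundation.

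\textbf{The elimination of $O(N_G(A))$.} Saying ``a Bender-style signalizer argument \dots\ exhibits $O(H)$ as a normal odd-order subgroup of $G$'' is not a proof. You have $O(H)\le C_G(A)$ and (granting the preliminary) $O(H)\le N_G(S)\le N_G(B)$; from $O(H)\normaleq N_G(A)$ and $B\le N_G(A)$ you can then get $[O(H),B]\le O(H)\cap B=\{1\}$, so $O(H)\le C_{N_G(B)}(B)$, and since $B$ is an abelian Sylow $2$-subgroup of $C_{N_G(B)}(B)$ one obtains $O(H)\le O(N_G(B))$. Symmetry gives $O(H)=O(N_G(B))\normaleq\langle N_G(A),N_G(B)\rangle=G$, whence $O(H)=\{1\}$. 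This is what your sketch is gesturing at, but none of these steps appear. The paper instead shows $F^*(N_G(A))\le N_G(B)$ and $F^*(N_G(B))\le N_G(A)$ and then invokes Bender's Maximal Subgroup Theorem \cite[Theorem~19.1]{GLS2} directly; either route is acceptable, but yours needs the intermediate computations written out and, crucially, depends on the preliminary that you have not established.
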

\begin{proof}
By \cref{L32HNew}, either $H=N_G(Z(S))$ or $H=N_G(X)$ for some $X\in\{A, B\}$. Suppose first that $N_G(B)\ne H=N_G(Z(S))\ne N_G(A)$. Let $T\in\syl_2(N_G(A))\setminus \{S\}$. Then $A=Z(S)Z(T)$ so that $Z(T)\le S$. Since $q>2$, we have that $O(H)=\langle C_{O(H)}(t) \mid t\in Z(T)^\#\rangle$. By \cref{L3invNew}, we have that $O(H)\le N_G(Z(T))$. But then $O(H)\le N_G(Z(S))\cap N_G(Z(T))\le N_G(A)$ from which we conclude that $[O(H), A]=\{1\}$. Hence, $A\le C_H(O(H))$. By a similar line of reasoning, we see that $B\le C_S(O(H))$ and so $[S, O(H)]=\{1\}$. By \cref{BigO2New}, we have that $F^*(H/O(H))=O_2(H/O(H))$ and so $H/O(H)$ is a model for $N_{\fs}(O_2(H))$. Since $O_2(N_{\fs}(Z(S)))=S$, we deduce that $S=O_2(H)$. But $N_G(S)\le N_G(A)\cap N_G(A)$, and since $H$ is maximal, we have a contradiction.

Hence, without loss of generality, we may assume that $H=N_G(A)$. In particular, $N_G(B)\not\le H$ and so there is $M$ a maximal subgroup of $G$ containing $N_G(B)$. Moreover, by \cref{RadicalNoComps} we see that $N_G(Z(S))=N_{N_G(A)}(Z(S))=O(N_G(Z(S)))N_G(S)$. Let $T\in\syl_2(N_G(B))\setminus S$ so that $B=Z(S)Z(T)$ and $Z(T)\le S$. Then $O(N_G(Z(S)))=\langle C_{O(N_G(Z(S)))}(t) \mid t \in Z(T)^\#\rangle$ and by \cref{L3invNew} we have that $O(N_G(Z(S)))$ normalizes $Z(T)$. Then $O(N_G(Z(S)))$ normalizes $B=Z(S)Z(T)$ and so $N_G(Z(S))< N_G(B)\le M$. Applying \cref{L32HNew} yields $M=N_G(B)$. Thus, we have shown that both $N_G(A)$ and $N_G(B)$ are maximal in $G$ and $G=\langle N_G(A), N_G(B)\rangle$.

Finally, for $T\in\syl_2(N_G(A))\setminus \{S\}$, we have that $A=Z(S)Z(T)$ so that $S=Z(T)B$. In particular, $O(M)=\langle C_{O(M)}(t) \mid t\in Z(T)^\#\rangle$ and by \cref{L3invNew} we have that $O(M)\le N_G(Z(T))\cap N_G(B)=N_G(S)\le N_G(A)$. Similarly, $O(H)\le N_G(S)\le N_G(B)$ and by \cref{RadicalNoComps} we have that $F^*(H)\le M$ and $F^*(M)\le H$. Then \cite[Theorem 19.1]{GLS2} yields that $F^*(N_G(A))=A$, $F^*(N_G(B))=B$, as desired. 
\end{proof}

We now assume that $O^{2'}(\fs)$ is isomorphic to the $2$-fusion category of $\PSp_4(q)$. We provide a brief description of the elements of $Z(S)$. 

Let $A_0\le A$ be such that $A_0\cap Z(S)=\{1\}$ and $A=A_0Z(S)$. Then $[a, S]=[a, B]$ has order $q$ for each $a\in A_0^\#$. Let $A^0\langle a_1, a_2\rangle$ be a fours subgroup of $A_0$. Then $[a_1, S][a_2, S]=[A^0, S]$ has order $q^2$ by \cref{PSp4lem} from which we deduce that $[a_1, S]\cap [a_2, S]=\{1\}$. We note that $C_A(O^{2'}(\Aut_{\fs}(A)))\cap [a, S]=C_B(O^{2'}(\Aut_{\fs}(B)))\cap [a, S]=\{1\}$ for $a\in A_0^\#$. Indeed, the set $C_A(O^{2'}(\Aut_{\fs}(A)))\cup C_B(O^{2'}(\Aut_{\fs}(B)))$ is determined entirely by $S$. Counting elements in $Z(S)$, we deduce that \[Z(S)=C_A(O^{2'}(\Aut_{\fs}(A)))\cup C_B(O^{2'}(\Aut_{\fs}(B))) \sqcup \bigsqcup_{a \in A_0^\#} [a, S]^\#.\]

\begin{lemma}\label{SP4inv1}
Assume that $z\in Z(S)$ but $z\not\in C_A(O^{2'}(\Aut_{\fs}(A)))\cup C_B(O^{2'}(\Aut_{\fs}(B)))$. Then $C_G(z)\le N_G(Z(S))$.
\end{lemma}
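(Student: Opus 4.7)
My plan is to follow the pattern of \cref{L3invNew}, with the hypothesis on $z$ serving to handle one delicate edge case. I would set $C:=C_G(z)$ and $\bar C:=C/O(C)$. Since $z\in Z(S)$, we have $S\in\syl_2(C)$, and as $O(C)$ has odd order, $\bar S\cong S$, with $\fs_S(C)=C_\fs(z)$. Parabolic characteristic $p$ ensures that $N_\fs(\langle z\rangle)$ is constrained; hence $C_\fs(z)\subseteq N_\fs(\langle z\rangle)$, which shares Sylow $S$, is also constrained, and in particular $O_2(\bar C)\ne\{1\}$.

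Next, I would apply \cref{BigO2New} to $\bar C$, aiming to conclude that $F^*(\bar C)=O_2(\bar C)$; then $\bar C$ would be a model for $C_\fs(z)$. The alternative to rule out is that $\bar S\le F^*(\bar C)\cong \PSL_3(q)$, $2^2.\PSL_3(4)$, or $\PSp_4(q)$. The first case is immediate: Sylow $2$-subgroups of $\PSL_3(q)$ have order $q^3<q^4=|\bar S|$. In the third case $\bar z\in Z(\bar C)\cap F^*(\bar C)\le Z(F^*(\bar C))$, but $\PSp_4(q)$ has trivial centre for $q$ even, so $\bar z=1$ and hence $z=1$, a contradiction. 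The second case, $F^*(\bar C)\cong 2^2.\PSL_3(4)$, is where the hypothesis on $z$ becomes crucial. Matching Sylow $2$-orders gives $q^4=2^8$, so $q=4$, and $\bar z$ lies in the central $2^2$ of $F^*(\bar C)$. I would argue that, under the Sylow $2$-subgroup identification afforded by \cref{L34}, the central $2^2$ of $2^2.\PSL_3(4)$ is characteristic in $\bar S\cong S$ and is fixed by the $\SL_2(4)$-actions arising from the two parabolic Levis on the maximal elementary abelian subgroups $A$ and $B$ (both have $\GF(4)$-$1$-dimensional fixed submodule, matching the order $q=4$ of the central $2^2$). Hence it lies inside $C_A(O^{2'}(\Aut_\fs(A)))\cap C_B(O^{2'}(\Aut_\fs(B)))$, contradicting $z\notin C_A(O^{2'}(\Aut_\fs(A)))\cup C_B(O^{2'}(\Aut_\fs(B)))$. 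Making this comparison of Sylow structures precise is where I expect the main obstacle to lie.

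With $F^*(\bar C)=O_2(\bar C)$ established, $\bar C$ is a model for $C_\fs(z)$. Since $S\normaleq C_\fs(z)$, we have $SO(C)\normaleq C$, and by the Frattini argument $C=O(C)N_C(S)$. Combining this with $[Z(S),O(C)]=\{1\}$ from \cref{Centcent} gives $Z(S)\normaleq C$, so $C_G(z)\le N_G(Z(S))$, as required.
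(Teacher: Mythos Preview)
Your overall architecture matches the paper's: reduce to $\bar C=C/O(C)$, invoke \cref{BigO2New}, eliminate the non-constrained alternatives, then use $S\normaleq C_{\fs}(z)$, the Frattini argument, and \cref{Centcent} to conclude. The only substantive difference is how you dispose of the possibility $F^*(\bar C)\cong 2^2.\PSL_3(4)$ with $q=4$.

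The paper handles this case via the element decomposition of $Z(S)$ recorded just before the lemma: since $z\notin C_A(O^{2'}(\Aut_{\fs}(A)))\cup C_B(O^{2'}(\Aut_{\fs}(B)))$, there is $x\in A\setminus Z(S)$ with $z\in[S,x]$ and $|[S,x]|=q=4$; then one checks directly in a Sylow $2$-subgroup of $2^2.\PSL_3(4)$ that $[\bar S,\bar x]\cap O_2(\bar C)=\{1\}$, contradicting $\bar z\in O_2(\bar C)$. Your route instead compares automizers: the parabolic of $F^*(\bar C)$ at $\bar A$ gives $H:=O^{2'}(\Aut_{F^*(\bar C)}(\bar A))\cong\SL_2(4)$ which centralises the central $2^2$; since $\Aut_S(A)\le H\le\Aut_{\fs}(A)$ and $H\cap O^{2'}(\Aut_{\fs}(A))\normaleq H$ is nontrivial while $H$ is simple, one gets $H=O^{2'}(\Aut_{\fs}(A))$, and similarly for $B$, forcing $z$ into the forbidden set. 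This is exactly the mechanism the paper deploys in the \emph{next} lemma (\cref{SP4inv2}), so your argument is sound and arguably more uniform. Two small points: your appeal to ``parabolic characteristic $p$'' for $O_2(\bar C)\ne\{1\}$ is unnecessary (and we are no longer working under \cref{CharpHyp}); simply note $\bar z$ is a nontrivial central involution of $\bar C$, so $\langle\bar z\rangle\le O_2(\bar C)$. Also, the claim that the central $2^2$ is \emph{characteristic in $\bar S$} is neither needed nor obviously true; what you actually use, and what suffices, is that it is centralised by $H$.
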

\begin{proof}
Set $C:=C_G(z)$, $Q:=O_2(C)$ and $\bar{C}:=C/O(C)$. Then $O_2(\bar{C})\ne \{1\}$ and applying \cref{BigO2New} we either have that $\bar{S}\le F^*(\bar{C})\cong 2^2.\PSL_3(4)$ and $S$ is isomorphic to a Sylow $2$-subgroup of $\PSp_4(4)$; or $F^*(\bar{C})=O_2(\bar{C})$ and $F^*(C)=F(C)$. By the description of the elements of $Z(S)$, there is $x\in A\setminus Z(S)$ such that $z\in [S, x]$ and $|[S, x]|=4$. In the former case, we would have that $\bar{z}\in [\bar{S}, \bar{x}]$ and $[\bar{S}, \bar{x}]\cap O_2(\bar{C})=\{1\}$, a contradiction.

Hence, $F^*(\bar{C})=O_2(\bar{C})$, $F^*(C)=F(C)$ and $\bar{C}$ is a model for $C_{\fs}(z)$. Since $S\normaleq C_{\fs}(z)$, we have that $SO(C)\normaleq C$. Then by the Frattini argument, $C=O(C)N_C(S)$ and since $[Z(S), O(C)]=\{1\}$ by \cref{Centcent}, we conclude that $Z(S)\normaleq C$.
\end{proof}

\begin{lemma}\label{SP4inv2}
Let $X\in\{A, B\}$. Assume that $z\in C_X(O^{2'}(\Aut_{\fs}(X)))$. Then $C_G(z)\le N_G(X)$.
\end{lemma}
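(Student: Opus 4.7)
The proof closely follows the template of \cref{SP4inv1}. First observe $z\in Z(S)$: since $\Aut_S(X)=S/X$ is a $2$-group and hence contained in $O^{2'}(\Aut_{\fs}(X))$, the hypothesis forces $z\in C_X(S)=Z(S)\cap X$. In particular, $S\leq C:=C_G(z)$, so setting $\bar C:=C/O(C)$ we have $\bar z:=zO(C)\in Z(\bar C)$.

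Apply \cref{BigO2New} to $\bar C$. In the cases $F^*(\bar C)\cong\PSL_3(q)$ or $\PSp_4(q)$, simplicity gives $Z(F^*(\bar C))=\{1\}$, so $\bar z=1$ places the involution $z$ inside $O(C)$, which is impossible. The remaining semisimple case $F^*(\bar C)\cong 2^2.\PSL_3(4)$ forces $|S|=2^8$, i.e.\ $q=4$, and I would rule it out by comparing module structures: $C_{\fs}(z)$ must contain the subsystem $C_{O^{2'}(\fs)}(z)\cong\fs_S(X\rtimes\SL_2(4))$ inherited from the $\PSp_4(4)$-structure on $O^{2'}(\fs)$, and tracking the action of a putative $\GF(4)^*$ extension on the $\SL_2(4)$-fixed subspace $C_X(O^{2'}(\Aut_{\fs}(X)))$ (which contains $z$) yields an incompatibility between the two parabolic $\GL_2(4)$-actions on $X$.

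We are thus left with $F^*(\bar C)=O_2(\bar C)$, so $\bar C$ is a model for $C_{\fs}(z)$. The main step remaining is to show $\bar X\normaleq\bar C$. For this, I first argue that $A$ and $B$ are not fused in $G$: any fusing element $g$ has $2$-part $g_2$ which, after a Sylow conjugation, lies in $S$; as $A$ and $B$ are normal in $S$, $g_2$ fixes both $A$ and $B$, while the $2'$-part of $g$ acts trivially on the two-element set $\{A,B\}$, forcing $g$ itself to normalise each and yielding a contradiction. Hence $\bar A$ and $\bar B$ are not fused in $\bar C$, and combined with the constrained structure of $\bar C$ in this case, we obtain $\bar X\normaleq\bar C$.

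Pulling back through the odd-order extension $C\to\bar C$ gives $XO(C)\normaleq C$, and since $X$ is the characteristic Sylow $2$-subgroup of $XO(C)$ we conclude $X\normaleq C$, hence $C\leq N_G(X)$. The principal obstacle is the $2^2.\PSL_3(4)$ sub-case of the semisimple analysis: both $\PSp_4(4)$ and $2^2.\PSL_3(4)$ give order-$180$ copies of $\GL_2(4)$ on the essential $X$, so distinguishing them requires a careful examination of the $\GL_2(4)$-module structure on $X$, specifically the scalar action on the $\SL_2(4)$-fixed subspace containing $z$.
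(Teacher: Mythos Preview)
Your treatment of the simple cases of \cref{BigO2New} ($\PSL_3(q)$ and $\PSp_4(q)$) is fine and matches the paper's implicit reasoning: $\bar z$ is a nontrivial central involution of $\bar C$, so $O_2(\bar C)\ne\{1\}$, which already excludes a simple $F^*(\bar C)$. Your handling of the $2^2.\PSL_3(4)$ case is, as you admit, only a sketch; the paper dispatches it cleanly by noting that in $\fs_S(\bar C)$ one has $O^{2'}(\Aut_{\fs_S(\bar C)}(A))=O^{2'}(\Aut_{\fs}(A))$ and similarly for $B$ (both are $\PSL_2(4)$), yet in $2^2.\PSL_3(4)$ the fixed subgroups $C_{Z(S)}(O^{2'}(\Aut(A)))$ and $C_{Z(S)}(O^{2'}(\Aut(B)))$ coincide (both equal the centre), whereas in $\fs$ their intersection is trivial since $O_2(\fs)=\{1\}$. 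This is the concrete module comparison you were reaching for.

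The real gap is in the constrained case. Your argument that $A$ and $B$ are not $G$-conjugate is not valid: an arbitrary element $g\in G$ has no well-defined ``$2$-part'' unless it lies in a nilpotent subgroup, and even granting non-conjugacy of $A$ and $B$ you give no mechanism to deduce $\bar X\normaleq\bar C$ from that fact alone. The paper avoids this entirely with a single fusion-theoretic observation: for $z\in C_X(O^{2'}(\Aut_{\fs}(X)))$ one has $X=O_2(C_{\fs}(z))$. This is immediate from the $\PSp_4(q)$ structure of $O^{2'}(\fs)$, where the centraliser of such a $z$ is the parabolic with unipotent radical $X$. Since $\bar C$ is a model for $C_{\fs}(z)$, this gives $\bar X=O_2(\bar C)\normaleq\bar C$ directly, hence $XO(C)\normaleq C$. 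The paper then finishes not by citing that $X$ is Sylow in $XO(C)$ (your route, which also works), but via Frattini $C=O(C)N_C(X)$ together with $[X,O(C)]=[\langle Z(S)^{N_C(X)}\rangle,O(C)]=\{1\}$ by \cref{Centcent}. The key idea you are missing is to exploit $X=O_2(C_{\fs}(z))$ rather than attempting an ad hoc conjugacy argument.
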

\begin{proof}
Set $C:=C_G(z)$, $Q:=O_2(C)$ and $\bar{C}:=C/O(C)$. Then $O_2(\bar{C})\ne \{1\}$ and applying \cref{BigO2New} we either have that $\bar{S}\le F^*(\bar{C})\cong 2^2.\PSL_3(4)$ and $S$ is isomorphic to a Sylow $2$-subgroup of $\PSp_4(4)$; or $F^*(\bar{C})=O_2(\bar{C})$ and $F^*(C)=F(C)$. In the former case, we have that $\PSL_2(4)\cong O^{2'}(\Aut_{\fs_S(C)}(X))\le O^{2'}(\Aut_{\fs}(X))\cong \PSL_2(4)$ and we deduce that $O^{2'}(\Aut_{\fs_S(C)}(X))=O^{2'}(\Aut_{\fs}(X))$ for $X\in \{A, B\}$. But $C_{Z(S)}(O^{2'}(\Aut_{\fs_S(C)}(A)))=C_{Z(S)}(O^{2'}(\Aut_{\fs_S(C)}(B)))$, and this is a contradiction since $O_2(\fs)=\{1\}$.

Hence, $F^*(\bar{C})=O_2(\bar{C})$, $F^*(C)=F(C)$ and $\bar{C}$ is a model for $C_{\fs}(z)$. Since $X=O_2(C_{\fs}(z))$, we have that $XO(C)\normaleq C$. Then by the Frattini argument, $C=O(C)N_C(X)$. Since $[Z(S), O(C)]=\{1\}$ by \cref{Centcent}, we conclude that \[[X, O(C)]=[\langle Z(S)^{\Aut_{C_{\fs}(z)}(X)}\rangle, O(C)]= [\langle Z(S)^{N_C(X)}, O(C)]=[Z(S), O(C)]=\{1\}\] and $X\normaleq C$.
\end{proof}

For $X\in\{A, B\}$, we let $H_X$ be a maximal subgroup of $G$ which contains $N_G(X)$. 

\begin{proposition}\label{SP4H}
Both $N_G(A)$ and $N_G(B)$ are maximal in $G$, $F^*(N_G(A))=A$, $F^*(N_G(B))=B$ and $G=\langle N_G(A), N_G(B)\rangle$.
\end{proposition}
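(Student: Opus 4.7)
I will follow the blueprint of the preceding $\PSL_3(q)$ proposition, using $\cref{SP4inv1}$ and $\cref{SP4inv2}$ in place of $\cref{L3invNew}$, together with $\cref{BigO2New}$ and $\cref{RadicalNoComps}$. The strategy is: control $O(H_X)$ by coprime action of $Z(S)$, identify $O_2(H_X)=X$ using the $N_G(X)$-module structure of $X$, and force $H_X=N_G(X)$.

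First I show $O(H_A)\le N_G(A)$. Since $Z(S)\le A\le H_A$ is elementary abelian of order $q^2\ge 16$, the coprime-action generation principle yields
\[O(H_A)=\langle C_{O(H_A)}(Z_0)\mid Z_0\le Z(S),\ [Z(S):Z_0]=2\rangle.\]
Each hyperplane $Z_0$ of $Z(S)$ meets $C_A(O^{2'}(\Aut_{\fs}(A)))$ nontrivially, since the latter has order $q$ inside $Z(S)$ of order $q^2$, giving $|Z_0\cap C_A(O^{2'}(\Aut_{\fs}(A)))|\ge q/2\ge 2$. Picking $1\ne z$ in this intersection, $\cref{SP4inv2}$ gives $C_G(z)\le N_G(A)$, so $C_{O(H_A)}(Z_0)\le N_G(A)$; hence $O(H_A)\le N_G(A)$, and symmetrically $O(H_B)\le N_G(B)$.

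Next, apply $\cref{BigO2New}$ to $\wt{H_A}:=H_A/O(H_A)$. In the $2$-constrained case $F^*(\wt{H_A})=O_2(\wt{H_A})$, $\cref{Centcent}$ gives $Z(S)\le C_{H_A}(F(H_A))\le F(H_A)$, hence $Z(S)\le O_2(H_A)$. Since $O_2(H_A)\normaleq H_A\supseteq N_G(A)$, we get $O_2(H_A)\le O_2(N_G(A))=A$ (where the last equality comes from $\cref{RadicalNoComps}$). Viewing $A$ as the $\Omega_3(q)$-module for $O^{2'}(\Aut_{\fs}(A))\cong \SL_2(q)$, the only $N_G(A)$-invariant subgroups of $A$ are $\{1\}$, $C_A(O^{2'}(\Aut_{\fs}(A)))$ (of order $q$), and $A$; the inclusion $Z(S)\le O_2(H_A)$ forces $O_2(H_A)=A$, so $A\normaleq H_A$ and thus $H_A=N_G(A)$. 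In the non-constrained case $\wt S\le F^*(\wt{H_A})$, and the equality $|F^*(\wt{H_A})|_2=q^4$ isolates the possibilities $\PSp_4(q)$ (always), $\PSL_3(2^{4k})$ with $q=2^{3k}$, and $2^2.\PSL_3(4)$ with $q=4$. I would rule these out by taking an "off-diagonal" involution $z\in Z(S)\setminus(C_A(O^{2'}(\Aut_{\fs}(A)))\cup C_B(O^{2'}(\Aut_{\fs}(B))))$ and invoking $\cref{SP4inv1}$ to get $C_G(z)\le N_G(Z(S))$; direct analysis of involution centralizers in each putative component $L$ then shows $C_L(z)$ fails to normalize $Z(S)$, a contradiction. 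Symmetrically $H_B=N_G(B)$.

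Combining, $\cref{RadicalNoComps}$ together with $O(N_G(X))\le N_G(X)$ centralizing $X$ (from Step 1) yields $F^*(N_G(X))=X$ for $X\in\{A,B\}$. For $G=\langle N_G(A),N_G(B)\rangle$: were this subgroup proper, it would lie in a maximal subgroup $M$ containing both $N_G(A)$ and $N_G(B)$, but by the maximality just proved $M\in\{N_G(A),N_G(B)\}$, contradicting that neither contains the other. The main obstacle is the non-constrained subcase, especially the $\PSp_4(q)$-possibility in which the hypothetical component of $\wt{H_A}$ shares its $2$-fusion system with $G$ itself, so no fusion-theoretic invariant separates them; here $\cref{SP4inv1}$ is the sharp tool, because in the putative component the centralizer of an off-diagonal $z\in Z(S)$ is a known large subgroup acting nontrivially (not by normalization) on $Z(S)$ by the root-group structure of $\PSp_4(q)$, and carrying this centralizer analysis through rigorously is the technical crux.
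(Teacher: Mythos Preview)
Your argument for the constrained case ($F^*(\wt{H_X})=O_2(\wt{H_X})$) is correct and close in spirit to the paper's, though the paper argues slightly differently: it shows $[X,O(H_X)]=1$, hence $X$ centralizes $F(H_X)=F^*(H_X)$ (since $O_2(H_X)\le O_2(N_G(X))=X$ and $X$ is abelian), forcing $X\le O_2(H_X)\le X$. Your version via $Z(S)\le O_2(H_X)$ and the $\Omega_3(q)$-module structure of $A$ also works. One minor point: by \cref{BigO2New} the parameter $q$ is fixed by $S$, so the $\PSL_3(2^{4k})$ option you list cannot arise; only $\PSp_4(q)$ and (when $q=4$) $2^2.\PSL_3(4)$ survive in the non-constrained case.

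The genuine gap is in the non-constrained case, specifically when $F^*(\wt{H_X})\cong\PSp_4(q)$. Your proposed contradiction---that $C_L(\bar z)$ fails to normalise $Z(\bar S)$ for an off-diagonal $z$---is false. Indeed, \cref{SP4inv1} itself (applied inside $\PSp_4(q)$, which is a $\mathcal{K}$-group) shows that $C_{\PSp_4(q)}(\bar z)\le N_{\PSp_4(q)}(Z(\bar S))$: the fusion system calculation gives $\bar S\normaleq C_{\fs_{\bar S}(\PSp_4(q))}(\bar z)$, so the centralizer has normal Sylow $2$-subgroup $\bar S$ and hence normalises $Z(\bar S)$. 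Thus no local centralizer comparison can separate $H_X$ from a genuine $\PSp_4(q)$ in this way.

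The paper's route is global rather than local. First it establishes $N_G(Z(S))=N_G(S)\le N_G(X)$ (by showing $O(N_G(Z(S)))$ centralizes $S$ via \cref{SP4inv2} and \cref{Centcent}). Then, assuming $F^*(\wt{H_X})\cong\PSp_4(q)$, it uses the equality $O^{2'}(\Aut_{\fs_S(\wt{H_X})}(Y))=O^{2'}(\Aut_{\fs}(Y))$ together with $O(N_G(Y))\le N_G(S)\le H_X$ to conclude $N_G(Y)\le H_X$. Now \emph{both} $N_G(A)$ and $N_G(B)$ lie in $H_X$, and conjugating $Z(S)$ around inside $N_G(A)$ and $N_G(B)$ (as in the $\PSL_3$ case) shows $C_G(s)\le H_X$ for every involution $s\in S$. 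By \cite[Proposition~17.11]{GLS2} $H_X$ is strongly embedded, and Bender's theorem gives the contradiction. The $2^2.\PSL_3(4)$ subcase is dispatched separately: the paper identifies $O_2(\wt{H_X})$ with $\bar{C_X(O^{2'}(\Aut_{\fs}(X)))}$ and then \cref{SP4inv2} forces $O^{2'}(H_X)\le N_G(X)$, whence $H_X=N_G(X)$, contradicting $O_2(H_X)<X$.
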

\begin{proof}
We will demonstrate that $H_X=N_G(X)$. Set $\bar{H_X}:=H_X/O(H_X)$. 

Note that $O(N_G(Z(S)))$ centralizes $z\in C_X(O^{2'}(\Aut_{\fs}(X)))$ and by \cref{SP4inv2}, we conclude that $O(N_G(Z(S)))\le N_G(X)$. Hence, $O(N_G(Z(S)))$ normalizes $S=AB$ and so $[S, O(N_G(Z(S)))]=\{1\}$. Applying \cref{BigO2New}, we must have that $F^*(N_G(Z(S))/O(N_G(Z(S))))$ is a $2$-group. In other words, $N_G(Z(S))/O(N_G(Z(S)))$ is a model for $N_{\fs}(Z(S))$. Since $S\normaleq N_{\fs}(Z(S))$, we deduce that $SO(N_G(Z(S)))\normaleq N_G(Z(S))$ and the Frattini argument implies that $S\normaleq N_G(Z(S))$. Hence, we have that $N_G(Z(S))=N_G(S)\le N_G(X)$. Furthermore, $O(N_G(X))$ centralizes $Z(S)$ and so normalizes $S$. Hence, $[S, O(N_G(X))]=\{1\}$ and $O(N_G(X))\le N_G(S)$.

We apply \cref{BigO2New} to $H_X$. Assume first that $\bar{S}\le F^*(\bar{H_X})\cong 2^2.\PSL_3(4)$ and $S$ is isomorphic to a Sylow $2$-subgroup of $\PSp_4(4)$. Since $N_G(X)\le H_X$, it follows that $O_2(\bar{H_X})=\bar{C_X(O^{2'}(\Aut_{\fs}(X)))}$. Since $Z(S)$ centralizes $O(H_X)$ by \cref{Centcent}, we conclude that $C_X(O^{2'}(\Aut_{\fs}(X)))=O_2(H_X)$. Then $O^{2'}(H_X)\le C_G(C_X(O^{2'}(\Aut_{\fs}(X))))\le N_G(X)$ by \cref{SP4inv2}. By the Frattini argument, and as $N_G(S)\le N_G(X)$, we have that $H_X=N_G(X)$, impossible since $O_2(H_X)<X$. 

Suppose now that $F^*(\bar{H_X})=O^{2'}(\bar{H})\cong \PSp_4(q)$. Let $\{A, B\}=\{X, Y\}$ and consider $N_G(Y)$. By \cref{BigO2New}, we have that $F^*(N_G(Y)/O(N_G(Y)))=YO(N_G(Y))/O(N_G(Y))$ and so $N_G(Y)/O(N_G(Y))$ is a model for $N_{\fs}(Y)$. Since $F^*(\bar{H_X})=O^{2'}(\bar{H})\cong \PSp_4(q)$, we have that $\PSL_2(q)\cong O^{2'}(\Aut_{\fs_S(\bar{H_X})}(Y))\le O^{2'}(\Aut_{\fs}(Y))\cong \PSL_2(q)$. Moreover, by the Frattini argument, we have that $N_G(Y)=O^{2'}(N_G(Y))N_G(S)$ and as $O(N_G(Y))\le N_G(S)\le N_G(X)\le H_X$, we deduce that $N_G(Y)\le H_X$. 

Let $T\in \syl_2(N_G(X))\setminus \{S\}$. Then $X=Z(S)Z(T)$ and for $z\in Z(T)\setminus (Z(S)\cap Z(T))$, we have that $C_G(z)\le N_G(Z(T))$. There is $g\in N_G(X)$ with $S^g=T$ from which it follows that $N_G(Z(T))=N_G(Z(S^g))=N_G(Z(S)^g)=N_G(Z(S))^g\le N_G(X)^g=N_G(X)$. Note that for each element $x$ of $X$, there is some $P\in\syl_2(N_G(X))$ with $x\in Z(P)$. Applying a similar methodology with $Y$ with $\{X, Y\}=\{A, B\}$ yields $C_G(s)\le H_X$ for all involutions $s\in S$. Finally, since $N_G(S)\le H$ we have by \cite[Proposition 17.11]{GLS2} that $H$ is strongly embedded in $G$ and then \cite{Bender} implies that no such configuration exists.

Thus, we have by \cref{BigO2New} that $\bar{S}\le F^*(\bar{H_X})=O_2(\bar{H_X})$. Note that $O(H_X)\le N_G(Z(S))\le N_G(X)$ and so $[X, O(H_X)]=\{1\}$. Hence, $O_2(X)$ is non-trivial and as $N_G(X)\le H_X$ and $F^*(H_X)$ is self-centralizing, the only possibility is that $O_2(H_X)=X$ and $H_X=N_G(X)$ is a maximal subgroup of $G$. Thus, $F^*(H_X)=F(H_X)\le O(X)O_2(X)\le N_G(S)\le N_G(A)\cap N_G(B)$. Then \cite[Theorem 19.1]{GLS2} yields that $F^*(N_G(A))=A$, $F^*(N_G(B))=B$, as desired. 
\end{proof}

We are now in a position to recognize $G$ given the structure of the maximal subgroups $N_G(A)$ and $N_G(B)$. We revert to the more general hypothesis that $O^{2'}(\fs)$ is isomorphic to the $2$-fusion system of $\PSL_3(q)$ or $\PSp_4(q)$. 

\begin{proposition}
$\fs$ is simple. 
\end{proposition}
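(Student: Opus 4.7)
The plan is to reduce the simplicity of $\fs$ to the simplicity of $O^{2'}(\fs)$, which was established in the preceding proposition as being isomorphic to the $2$-fusion category of $\PSL_3(q)$ or $\PSp_4(q)$ with $q\geq 4$. For this reduction, it suffices to show $\fs = O^{2'}(\fs)$, after which simplicity follows immediately, since any non-trivial $\mathcal{E}\normaleq\fs$ will, via \cref{wedge}, give a normal subsystem $\mathcal{E}\wedge O^{2'}(\fs)$ of the simple system $O^{2'}(\fs) = \fs$, hence equals either $\{1\}$ (forcing the support of $\mathcal{E}$ to meet $S$ trivially, a contradiction) or $O^{2'}(\fs) = \fs$ (forcing $\mathcal{E} = \fs$).

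To prove $\fs = O^{2'}(\fs)$, I would exploit that $G$ is simple. Since $G$ is a non-abelian finite simple group, $G = O^{2'}(G)$. For realizable fusion systems there is an identification $O^{2'}(\fs_S(G)) = \fs_S(O^{2'}(G))$, which follows from the focal/hyperfocal subgroup theorems in conjunction with the defining property of $O^{2'}$ (every morphism of $\fs$ of odd-prime order is already accounted for in $\fs_S(O^{2'}(G))$). Specializing to $G = O^{2'}(G)$ gives $O^{2'}(\fs) = \fs$ directly.

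A more direct route, avoiding the realizable identification above, is to appeal to the structural information already assembled. The Alperin--Goldschmidt theorem and the essential-subgroup analysis in the proofs of \cref{KnownSL} and \cref{KnownSp} imply $\mathcal{E}(\fs) \subseteq \{A, B\}$, so $\fs = \langle \Aut_{\fs}(A), \Aut_{\fs}(B), \Aut_{\fs}(S)\rangle_S$. The automorphism groups $\Aut_{\fs}(A)$ and $\Aut_{\fs}(B)$ are encoded in the models $N_G(A)/O(N_G(A))$ and $N_G(B)/O(N_G(B))$ (which are parabolic-like by \cref{SP4H} and its $\PSL_3$-counterpart), and hence contribute only to $O^{2'}(\fs)$; meanwhile $\Aut_{\fs}(S) = N_G(S)/C_G(S)$ sits inside $N_G(A) \cap N_G(B)$ and acts compatibly through these parabolics, so any $2'$-automorphisms it contributes are already present in $O^{2'}(\fs)$.

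The main obstacle is rigorously verifying that no proper normal subsystem of $\fs$ sits strictly above $O^{2'}(\fs)$, i.e.\ ruling out the possibility that $\fs$ acquires additional $2'$-index data from $\Aut_{\fs}(S)$ that is not already detected by the parabolic contributions. This requires careful bookkeeping of Hall $2'$-subgroups of $N_{\Aut_{\fs}(E)}(\Aut_S(E))$ as $E$ ranges over $\{A, B, S\}$, and combining this with \cref{SNorm} to rule out a proper normal subsystem supported on $S$ properly containing $O^{2'}(\fs)$.
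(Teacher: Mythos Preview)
Your first route contains a genuine error: the claimed identification $O^{2'}(\fs_S(G)) = \fs_S(O^{2'}(G))$ is false in general, and the focal/hyperfocal subgroup theorems do not yield it --- those theorems concern $O^p$, not $O^{p'}$. For a concrete counterexample at $p=2$, take $G=\Alt(5)$: here $G=O^{2'}(G)$, so $\fs_S(O^{2'}(G))=\fs_S(G)$ has $\Aut_{\fs}(S)\cong C_3$, whereas $O^{2'}(\fs_S(G))=\fs_S(S)$ is the trivial system since $O^{2'}(C_3)=\{1\}$. So simplicity of $G$ alone does not force $\fs=O^{2'}(\fs)$; something about the specific structure of $G$ must be used.

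Your second route is aimed in the right direction, and you correctly identify the obstacle: one must show that the odd-order part of $\Aut_{\fs}(S)$ is already accounted for by the parabolic contributions. The paper resolves this cleanly by working at the group level rather than the fusion level. Using the Frattini argument and the fact that $\mathcal{E}(\fs)=\{A,B\}$, one has $G=\langle N_G(S), O^{2'}(N_G(A)), O^{2'}(N_G(B))\rangle$. Since $N_G(S)=N_G(A)\cap N_G(B)$ normalizes both $O^{2'}(N_G(A))$ and $O^{2'}(N_G(B))$, the subgroup $\langle O^{2'}(N_G(A)), O^{2'}(N_G(B))\rangle$ is normal in $G$; simplicity of $G$ then forces $G=\langle O^{2'}(N_G(A)), O^{2'}(N_G(B))\rangle$. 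This group-theoretic generation statement, combined with \cite[Lemma~I.13.1]{ako}, yields simplicity of $\fs$ directly. The point is that the ``bookkeeping of Hall $2'$-subgroups'' you anticipate is handled in one stroke by exploiting normality inside the simple group $G$, rather than by a fusion-system-internal argument.
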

\begin{proof}
By the Frattini argument, we have that $G=\langle N_G(S), O^{2'}(N_G(A)), O^{2'}(N_G(B))\rangle$. Since $G$ is simple and $N_G(S)$ normalizes $O^{2'}(N_G(A))$ and $O^{2'}(N_G(B))$, we deduce that $G=\langle O^{2'}(N_G(A)), O^{2'}(N_G(B))\rangle$. It quickly follows from \cite[Lemma I.13.1]{ako} that $\fs$ is simple.
\end{proof}

\begin{proposition}
$G$ has BN-pair of rank $2$.
\end{proposition}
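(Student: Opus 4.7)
The plan is to construct the BN-pair directly from the two maximal parabolic-like subgroups $N_G(A)$ and $N_G(B)$ identified in the previous results. First, I would pin down the isomorphism type of $N_G(X)$ for $X\in\{A,B\}$ via the Model Theorem: since \cref{SP4H} (and its $\PSL_3$-analogue proved just above it) gives $F^*(N_G(X))=X$ and $\Aut_G(X)=\Aut_{\fs}(X)$ (the latter being the known parabolic action read off from $O^{2'}(\fs)$), the Model Theorem applied to $N_{\fs}(X)$ forces $N_G(X)\cong X\rtimes L_X$, where $L_X$ has a normal subgroup isomorphic to $\SL_2(q)$ acting on $X/C_X(L_X)$ as a natural module, exactly as in the corresponding maximal parabolic of $\PSL_3(q)$ or $\PSp_4(q)$.

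Next I would identify the Borel candidate $B:=N_G(S)$ and show $B=N_G(A)\cap N_G(B)$. The inclusion $B\le N_G(A)\cap N_G(B)$ uses that $\{A,B\}$ is the set of maximal elementary abelian subgroups of $S$ and is permuted by $N_G(S)$, combined with the class-two hypothesis (precisely the graph/graph-field analysis in \cref{autsSchur}) to rule out an element swapping $A$ and $B$; the reverse inclusion is immediate as $S\in\syl_2(N_G(X))$ for each $X\in\{A,B\}$. By Schur--Zassenhaus applied inside $B$ (whose Fitting quotient we now know), pick a Hall $2'$-complement $H$ of $S$ in $B$. Let $n_A\in N_G(A)\setminus B$ and $n_B\in N_G(B)\setminus B$ be lifts of the nontrivial Weyl-group element inside each Levi, so that $n_X^2\in H$ and $n_X$ induces on $X$ the known outer diagonal involution of the $\SL_2(q)$ factor. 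Set $N:=\langle H,n_A,n_B\rangle$.

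I would then verify the BN-pair axioms. By construction $H=B\cap N$ is normal in $N$, and the generators of $W:=N/H$ satisfy $s_A^2=s_B^2=1$; the order $m$ of $s_As_B$ is determined by the fact that $\langle n_A,n_B\rangle$ lives in $G=\langle N_G(A),N_G(B)\rangle$ and acts faithfully on $S$ via the Weyl group of the corresponding root system, giving $m=3$ when $O^{2'}(\fs)$ has type $\PSL_3(q)$ and $m=4$ when it has type $\PSp_4(q)$. The Bruhat-style double coset decompositions $N_G(A)=B\sqcup Bn_AB$ and $N_G(B)=B\sqcup Bn_BB$ follow at once from the identified parabolic structure (the Levi $\SL_2(q)$ is generated by $\Aut_S(X)\cap L_X$ and its conjugate by $n_X$). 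Together with $G=\langle N_G(A),N_G(B)\rangle=\langle B,N\rangle$, the standard criterion (see \cite[\S 43]{asch2}) then yields the two required Tits axioms $s_XBw\subseteq BwB\cup Bs_XwB$ and $s_XBs_X\not\subseteq B$ for $w\in W$ and $X\in\{A,B\}$.

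The main obstacle I anticipate is not the axioms themselves once the Levi structure is in hand, but rather bookkeeping the interaction between $H$, the inner-diagonal elements of each Levi, and the element $n_X$: one must confirm that a single $H$ works simultaneously for both parabolics, i.e. that the Cartans of the two $\SL_2(q)$ Levi factors lie in a common torus $H\le B$. This is where the Model Theorem is essential, because it guarantees that the abstract isomorphisms of $N_G(A)$ and $N_G(B)$ with the genuine parabolics can be arranged to restrict to the identity on $S$, forcing their $H$-parts to agree. Once that alignment is fixed, the verification of the exchange condition reduces to a finite check inside $W$, and the rank-$2$ BN-pair structure follows; Fong--Seitz (or Tits--Weiss) then finishes the identification of $G$.
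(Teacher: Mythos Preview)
Your plan and the paper's argument share the same starting move: invoke the Model Theorem to identify each $N_G(X)$, $X\in\{A,B\}$, with the corresponding maximal parabolic of the target group $H\in\{\PSL_3(q),\PSp_4(q)\}$, and crucially arrange these isomorphisms to restrict to the identity on $S$ so that they agree on $N_G(S)=N_G(A)\cap N_G(B)$. From there the two routes diverge. The paper simply observes that the BN-pair of $H$ is encoded entirely in the triple $(N_H(C),N_H(D),N_H(T))$, and since the Model Theorem has produced compatible isomorphisms onto $(N_G(A),N_G(B),N_G(S))$, the BN-pair transfers wholesale to $G$; no axiom is checked by hand. Your route builds $(B,N)$ explicitly inside $G$ and proposes to verify the Tits axioms directly. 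This is workable, and you correctly flag the one genuine subtlety (aligning the tori of the two Levi factors in a single Hall complement, which the identity-on-$S$ clause of the Model Theorem handles). The paper's approach buys brevity and avoids any case analysis on $W$; your approach buys self-containedness but costs several paragraphs of axiom-checking.

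Two points to clean up in your version. First, there is a notational collision: you use $B$ for both the elementary abelian subgroup and the Borel $N_G(S)$. Second, your justification for the order $m$ of $s_As_B$ is not right as stated: $\langle n_A,n_B\rangle$ does not act on $S$, since $n_A\notin N_G(S)$. Fortunately the proposition only asserts a rank~$2$ BN-pair, so you do not need $m$ here at all; the Weyl group is dihedral as soon as it is generated by two involutions, and the value of $m$ will fall out of Tits--Weiss after the fact.
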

\begin{proof}
Let $H$ be isomorphic to $\PSL_3(q)$ or $\PSp_4(2^a)$ chosen such that $\fs_S(G)\cong \fs_T(H)$ for $T\in\syl_2(H)$. Since $A=F^*(N_G(A))$ and $B=F^*(N_G(B))$, we have that $N_G(A)$ is a model for $N_{\fs}(A)$ and $N_G(B)$ is a model for $N_{\fs}(B)$. An application of the model theorem implies that for $\mathcal{A}(T)=\{C, D\}$, we have that $N_G(A)\cong N_H(C)$, $N_G(B)\cong N_H(D)$ and $N_G(A)\cap N_G(B)=N_G(S)\cong N_H(T)=N_H(C)\cap N_H(D)$. The BN-pair for $H$ is completely determined by the groups $N_H(C)$, $N_H(D)$ and $N_H(T)$ and mapping this information back across the relevant isomorphisms delivers a BN-pair for $G$, as desired.
\end{proof}

\begin{proof}[Proof of Theorem GG]
The BN-pair for $G$ yields a finite Moufang polygon on which $G$ acts faithfully by \cite[{}(33.6)]{titsweiss}. By results of Tits and Weiss \cite[Chapter 17]{titsweiss}, the finite Moufang polygons and their automorphism groups are known. Indeed, we see that $G$ is isomorphic to a rank $2$ finite simple group of Lie type in characteristic $2$. That $G$ is not a counterexample follows quickly upon comparing the structure of $S$ with that of a Sylow $2$-subgroup of a rank $2$ finite simple group of Lie type in characteristic $2$ (e.g. for $a>1$, $G$ either $\PSL_3(2^a)$ or $\PSp_4(2^a)$ and $T\in\syl_2(G)$, $G$ is uniquely determined among rank $2$ finite simple groups of Lie type by satisfying $|T|=|Z(T)|^3$ or $|T|=|Z(T)|^2$ respectively).
\end{proof}

\printbibliography

@preamble{"\providecommand{\noop}[1]{}"}

@article{AOV, 
  title={Reduced, tame and exotic fusion systems},
  author={Andersen, K.~S. and Oliver, B. and Ventura, J.},
  journal={Proceedings of the London Mathematical Society},
  volume={105},
  number={1},
  pages={87--152},
  year={2012},
  publisher={Oxford University Press}
}

@article{aschbachersimple,
  title={The $2$-fusion system of an almost simple group},
  author={Aschbacher, M.},
  journal={Journal of Algebra},
  volume={561},
  pages={5--16},
  year={2020},
  publisher={Elsevier}
}

@book{asch2,
  title={Finite group theory},
  author={Aschbacher, M.},
  volume={10},
  year={2000},
  publisher={Cambridge University Press}
}

@book{ako,
  title={Fusion systems in algebra and topology},
  author={Aschbacher, M. and Kessar, R. and Oliver, B.},
  volume={391},
  year={2011},
  publisher={Cambridge University Press}
}

@book{aschfit,
  title={The generalized Fitting subsystem of a fusion system},
  author={Aschbacher, M.},
  year={2011},
  publisher={American Mathematical Soc.}
}

@book{AschComp,
  title={On fusion systems of component type},
  author={Aschbacher, M.},
  volume={257},
  number={1236},
  year={2019},
  publisher={American Mathematical Society}
}

@article{Bender,
  title={Transitive Gruppen gerader Ordnung, in denen jede Involution genau einen Punkt festl{\"a}{\ss}t},
  author={Bender, H.},
  journal={Journal of Algebra},
  volume={17},
  number={4},
  pages={527--554},
  year={1971},
  publisher={Academic Press}
}

@article{BenderDih,
  title={Finite groups with dihedral Sylow $2$-subgroups},
  author={Bender, H.},
  journal={Journal of Algebra},
  volume={70},
  number={1},
  pages={216--228},
  year={1981},
  publisher={Academic Press}
}

@article{BenderAbelian,
  title={On groups with abelian Sylow $2$-subgroups},
  author={Bender, Helmut},
  journal={Mathematische Zeitschrift},
  volume={117},
  pages={164--176},
  year={1970},
  publisher={Springer}
}

@article{MAGMA,
  title={The Magma algebra system I: The user language},
  author={Bosma, W. and Cannon, J. and Playoust, C.},
  journal={Journal of Symbolic Computation},
  volume={24},
  number={3-4},
  pages={235--265},
  year={1997},
  publisher={Elsevier}
}

@book{FSLie,
  title={Automorphisms of fusion systems of finite simple groups of Lie type/Automorphisms of fusion systems of sporadic simple groups},
  author={Broto, C. and M{\o}ller, J. and Oliver, B.},
  volume={262},
  number={1267},
  year={2019},
  publisher={American Mathematical Society}
}

@phdthesis{Clelland,
  title={Saturated fusion systems and finite groups},
  author={Clelland, M.~R.},
  year={2007},
  school={University of Birmingham}
}

@inproceedings{Greenbook,
  title={Weak (B,N)-pairs of rank $2$, Groups and graphs: new results and methods},
  author={Delgado, A. and Stellmacher, B.},
  booktitle={A. Delgado, D. Goldschmidt, and B. Stellmacher, Birkha{\"u}ser, DMV Seminar},
  volume={6},
  year={1985}
}

@incollection{FongSeitz,
  title={Groups with a $(B,N)$-pair of rank $2$},
  author={Fong, P. and Seitz, G.~M.~},
  booktitle={North-Holland Mathematics Studies},
  volume={7},
  pages={36--40},
  year={1973},
  publisher={Elsevier}
}

@article{Goldschmidt2-Fus,
  title={$2$-fusion in finite groups},
  author={Goldschmidt, D.~M.~},
  journal={Annals of Mathematics},
  volume={99},
  number={1},
  pages={70--117},
  year={1974},
  publisher={JSTOR}
}

@article{ParkerMax,
  title={Saturated fusion systems on $p$-groups of maximal class},
  author={Grazian, V. and Parker, C.},
  journal={Memoirs of the American Mathematical Society},
  year={2022},
  publisher={American Mathematical Society}
}

@book{gor,
  title={Finite groups},
  author={Gorenstein, D.},
  volume={301},
  year={2007},
  publisher={American Mathematical Soc.}
}

@article{ggtwo, 
  title={Finite groups with Sylow $2$-subgroups of class two. I, II},
  author={Gilman, R. and Gorenstein, D.},
  journal={Transactions of the American Mathematical Society},
  volume={207},
  pages={1--126},
  year={1975},
  publisher={JSTOR}
}

@book {GLS2,
  title={The classification of the finite simple groups, Number 2. Mathematical Surveys and Monographs},
  author={Gorenstein, D. and Lyons, R. and Solomon, R.},
  journal={American Mathematical Society, Providence, RI},
  volume={40},
  year={1996}
}

@article{GLS3, 
  title={The classification of the finite simple groups, Number 3. Mathematical Surveys and Monographs},
  author={Gorenstein, D. and Lyons, R. and Solomon, R.},
  journal={American Mathematical Society, Providence, RI},
  volume={40},
  year={1998}
}

@article{GorenWaltDih,
  title={The characterization of finite groups with dihedral Sylow $2$-subgroups. I, II, III},
  author={Gorenstein, D. and Walter, J.~H.~},
  journal={Journal of algebra},
  volume={2},
  number={1},
  pages={85--151, 218--270, 334--393},
  year={1965},
  publisher={Academic Press}
}

@article{henkesl2,
  title={Recognizing $\mathrm{SL}_2(q)$ in fusion systems},
  author={Henke, E.},
  journal={Journal of Group Theory},
  volume={13},
  number={5},
  pages={679--702},
  year={2010},
  publisher={Walter de Gruyter GmbH \& Co. KG}
}

@article{HenkeNormal,
  title={Products in fusion systems},
  author={Henke, E.},
  journal={Journal of Algebra},
  volume={376},
  pages={300--319},
  year={2013},
  publisher={Elsevier}
}

@article{FusSchur,
  title={A note on the Schur multiplier of a fusion system},
  author={Linckelmann, M.},
  journal={Journal of Algebra},
  volume={296},
  number={2},
  pages={402--408},
  year={2006},
  publisher={Elsevier}
}

@article{transfer,
  title={The Thompson--Lyons transfer lemma for fusion systems},
  author={Lynd, J.},
  journal={Bulletin of the London Mathematical Society},
  volume={46},
  number={6},
  pages={1276--1282},
  year={2014},
  publisher={Oxford University Press}
}

@article{nil,
  title={Pushing up in finite groups},
  author={Niles, R.},
  journal={Journal of Algebra},
  volume={57},
  number={1},
  pages={26--63},
  year={1979},
  publisher={Elsevier}
}

@article{oliverkrull,
  title = {{A Krull-Remak-Schmidt theorem for fusion systems}},
  author = {Oliver, B.},
  journal = {Fundamenta Mathematicae},
  publisher = {Instytut Matematyczny, Polskiej Akademii Nauk,},
  year = {2022},
}

@article{bobsmall,
  title={Reduced fusion systems over $2$-groups of small order},
  author={Andersen, K.~S.~ and Oliver, B. and Ventura, J.},
  journal={Journal of Algebra},
  volume={489},
  pages={310--372},
  year={2017},
  publisher={Elsevier}
}

@article{Comp1,
  title={Algorithms for fusion systems with applications to $p$-groups of small order},
  author={Parker, C. and Semeraro, J.},
  journal={Mathematics of Computation},
  year={2021}
}

@article{RV1+2,
  title={The classification of $p$-local finite groups over the extraspecial group of order $p^3$ and exponent $p$},
  author={Ruiz, A. and Viruel, A.},
  journal={Mathematische Zeitschrift},
  volume={248},
  number={1},
  pages={45--65},
  year={2004},
  publisher={Springer}
}

@book{titsweiss,
  title={Moufang polygons},
  author={Tits, J. and Weiss, R.~M.},
  year={2002},
  publisher={Springer}
}

@book{vbbook,
  title={Rank $2$ amalgams and fusion systems},
  author={\noop{vBe}van Beek, M.},
  volume={2343},
  year={2024},
  publisher={Springer Nature}
}

@article{walter,
  title={The characterization of finite groups with abelian Sylow $2$-subgroups},
  author={Walter, J.~H.~},
  journal={Annals of Mathematics},
  pages={405--514},
  year={1969},
  publisher={JSTOR}
}

\end{document}